\newcommand{\nc}{\newcommand}
\newcommand{\rc}{\renewcommand}
\definecolor{b}{rgb}{.1,.1,.7}
\definecolor{rr}{rgb}{.8,0,.3}
\definecolor{g}{rgb}{0,.5,0}
\definecolor{pp}{rgb}{.5,0,.7}
\definecolor{r}{rgb}{.6,0,.3}
\definecolor{y}{rgb}{.9,.99,.9}
\newcommand{\bbb}{\textcolor{b}}
\newcommand{\bblock}{\begin{block}}
\newcommand{\eblock}{\end{block}}
\def\R{\mathbb{R}}
\newcommand{\bit}{\begin{itemize}}
\newcommand{\een}{\end{enumerate}}
\newcommand{\eit}{\end{itemize}}
\newcommand{\ed}{\end{document}}
\nc{\Aut}{{	\operatorname{Aut}	}}
\nc{\codim}{{	\operatorname{codim}	}}
\nc{\Ob}{{	\operatorname{Ob}	}}
\nc{\PGL}{{	\operatorname{PGL}	}}
\nc{\supp}{{	\operatorname{supp}	}}
\nc{\tr}{{	\operatorname{tr}	}}
\nc{\Rep}{{	{\cal{R}}ep		}}
\nc{\one}{{	\mbox{\bf{1}}		}}
\nc{\iso}{	\overset{\sim}{\lra}	}
\nc{\nen}{\newenvironment}
\nc{\pr}{\protect}
\nc{\nn}{{\newline}}
\nc{\np}{{\newpage}}	
\nc{\lab}{	\label}
\nc{\npp}{{	\newpage\setcounter{page}{0}	}}
\nc{\setpa}{		\setcounter{part}		}
\nc{\setse}{		\setcounter{section}	}
\nc{\setsus}{		\setcounter{subsection}		}
\nc{\setsss}{		\setcounter{subsubsection}	}
\nc{\setpage}{		\setcounter{page}	}
\nc{\nfd}{ $$\text{ This version is preliminary and approximate, 
		             it is not for distribution. }$$	}
\nc{\noi}{{\noindent}}
\nc{\pf}{{	\noindent {\em Proof.}		}}
\nc{\epf}{ \fbox{\bf QED}	}
\nc{\heart}{{\tiny \cen{\tiny $\heartsuit $ }	}} 
\nc{\cont}{\tableofcontents}
\nc{\sbr}{{	\smallpagebreak	}}
\nc{\mbr}{{	\medpagebreak	}}
\nc{\bbr}{{	\bigpagebreak	}}
\nc{\bib}{		}
\rc{\b}{ 	\big         			}  
\nc{\lam}[1]{{ 	\text{\large $#1$	}	}}  
\nc{\smm}[1]{{ 	\text{\small $#1$	}	}}  
\nc{\fom}[1]{{ 	\text{\footnotesize $#1$	}	}}  
\nc{\tinm}[1]{{ \text{\tiny $#1$	}	}}  
\nc{\bu}{ \bullet         }  			
\nc{\bbu}{ \aa{\bbb \bullet}         }  	
\nc{\bus}{{	^\bullet	}}	 	
\nc{\bui}{{	_\bullet	}}	 	
\nc{\bem}{{	\begin{em}	}}
\nc{\eem}{{	\end{em} 	}}
\nc{\bbox}{{	\blackbox	}}	
\nc{\bx}{	\boxed	}		
\nc{\tbx}[1]{{\boxed{\tx{#1}}}}		
\nc{\mmbox}[1]{{	\mbox{$#1$}	}}	
\nc{\tbox}[1]{{		\mbox{\tx{#1}}	}}
\nc{\ot}{		\leftarrow			}
\nc{\tto}{		\longrightarrow			}
\nc{\ott}{		\longleftarrow			}
\nc{\too}[1]{{		\aa{#1}\rightarrow			}}
\nc{\oot}[1]{{		\aa{#1}\leftarrow			}}
\nc{\ttoo}[1]{{		\aa{#1}\longrightarrow			}}
\nc{\oott}[1]{{		\aa{#1}\longleftarrow			}}
\nc{\Too}[2]{{		\aa{#1}{\bb{#2}\rightarrow}		}}
\nc{\ooT}[2]{{		\aa{#1}{\bbb{#2}\leftarrow}		}}
\nc{\TToo}[2]{{		\aa{#1}{\bb{#2}\longrightarrow}		}}
\nc{\ooTT}[2]{{		\aa{#1}{\bbb{#2}\longleftarrow}		}}
\nc{\toot}[2]{{		\aa{#1}{\bb{#2}\rightleftarrows}	}}
\nc{\ttoot}[2]{{	\aa{#1}{\bb{#2}\rightleftrightarrows}	}}
\nc{\ra}{{	\rightarrow		}}
\nc{\laa}{{	\leftarrow	}}	
\nc{\lra}{{\longrightarrow}}
\nc{\lr}{{\leftrightarrow}}     	
\nc{\lrs}{{\rightleftarrows}}     	
\nc{\imp}{{\Rightarrow}}        	
\nc{\impp}{{\Leftarrow}}        	
\nc{\eq}{{\Leftrightarrow}}        	
\nc{\impl}{{\Longrightarrow}}        	
\nc{\imppl}{{\Longleftarrow}}        	
\nc{\eql}{{\Longleftrightarrow}}        	
	\nc{\Ra}{{\Rightarrow}}         	
	\nc{\LRa}{{\Leftrightarrow}}        	
\nc{\inj}{{\pr	\hookrightarrow	}}    		
\nc{\injj}{{\pr	\hookleftarrow	}}    		
\nc{\sur}{{	\twoheadrightarrow	}}	
\nc{\surr}{{	\twoheadleftarrow	}}	
\nc{\mm}{{	\mapsto		}}     		
\nc{\mmm}{{	\leftarrow\shortmid }}		
\nc{\ainj}[1]{{\aa{#1}{\pr\hookrightarrow}	}}    	
\nc{\ainjj}[1]{{\aa{#1}{\pr\hookleftarrow}	}}    	
\nc{\asur}[1]{{	\aa{#1}\twoheadrightarrow	}}	
\nc{\asurr}[1]{{\aa{#1}\twoheadleftarrow	}}	
\nc{\amm}[1]{{	\aa{#1}\mapsto		}}     	
\nc{\ammm}[1]{{	\aa{#1}\leftarrow\shortmid }}	
\nc{\syp}[1]{	^{ (#1) }		} 	
\nc{\up}[1]{	^{ (#1) }		} 	
\nc{\lp}[1]{	_{ (#1) }		}	
\nc{\hp}[1]{	^{ [#1] }		}	
\nc{\cle}{\preceq}		
\nc{\cl}{\prec}			
\nc{\cge}{\succeq}		
\nc{\cg}{\succ}			
\nc{\bb}{	\pr\underset 	}           
\rc{\aa}{ 	\pr\overset 	}            
\nc{\indd}{{ ${} \ \ \ \ \  \ \        {} $	}}	
\nc{\inddd}{{ 	\indd\indd			}}	
\nc{\nnd}{{ 	\nn  \indd 			}}	
\nc{\nndb}{{ 	\nn  \indd $\bullet$		}}	
\nc{\bce}{	\begin{center}	}
\nc{\ece}{	\end  {center}	}
\nc{\cen}[1]{	\begin{center}	{  #1}	\end  {center}	}
\nc{\bss}{{\backslash}}           		
\nc{\barr}{ 	\overline 	}      		
\nc{\ud}{	\underline	}		
\nc{\ti}{\tilde}              
\nc{\tii}{\widetilde}         
\nc{\hatt}{\widehat}				
\nc{\hata}{{	\bbb{ \hat{} }		}}	
\nc{\ch}{\check}              			
\nc{\cha}{{ 	\bbb{ \check{} }	}}      
\nc{\sub}{{	\subseteq	}}         
\nc{\subb}{{	\supseteq	}}         
\nc{\nsub}{{	\nsubseteq	}}         
\nc{\nsubb}{{	\nsupseteq	}}         %
\nc{\nin}{{	\notin	}}
\nc{\lb}{\langle}             				
\nc{\rb}{\rangle}
\nc{\lB}{	\left(	}             			
\nc{\rB}{	\right)	}
\nc{\BBl}{{	\bbb{ \left( \right.}	}}             	
\nc{\BBr}{{	\bbb{ \left. \right)}	}}
\nc{\Pa}[2]{ {\lb} #1 {,} #2 {\rb} }				
\nc{\cD}[1]{ \tx{ $$\CD {#1} \endCD $$ }  }		
\nc{\mat} {		\left(		\matrix	}	
\nc{\emat}{		\endmatrix	\right)	}
\nc{\sm} {		\left(		\smallmatrix	}	
\nc{\esm}{		\endsmallmatrix	\right)	}
\nc{\smat} {		\left(		\smallmatrix	}	
\nc{\esmat}{		\endsmallmatrix	\right)	}
\nc{\matr} {		\left[		\matrix	}	
\nc{\ematr}{		\endmatrix	\right]	}
\nc{\smr} {		\left[		\smallmatrix	}	
\nc{\esmr}{		\endsmallmatrix	\right]	}
\nc{\smatr} {		\left[		\smallmatrix	}	
\nc{\esmatr}{		\endsmallmatrix	\right]	}
\nc{\imat} {		\left.		\matrix	}	
\nc{\eimat}{		\endmatrix	\right.	}
\nc{\ism} {		\left.		\smallmatrix	}	
\nc{\eism}{		\endsmallmatrix	\right.	}
\nc{\ca}{		\left\{		\smallmatrix	}	
\nc{\eca}{		\endsmallmatrix	\right\}	}
\nc{\Ca}{		\left\{		\matrix		}	
\nc{\Eca}{		\endmatrix	\right.		}	
\nc{\eCa}{		\endmatrix	\right\}	}	
\nc{\com}{	\begin{diagram}	}
\nc{\ecom}{	  \end{diagram}	}
\nc{\tab}{	\begin{tabular}		}
\nc{\etab}{	\end{tabular}		}	
\nc{\hl}{{	\hline			}}
\nc{\Eq}{	\begin{equation}	}
\nc{\Eeq}{	\end{equation}	}
\nc{\aln}{	\begin{align}	}
\nc{\ealn}{	\end{align}	}
\nc{\Rpart}{	\rc{\thepart}{\Roman{part}}	}
\nc{\Apart}{	\rc{\thepart}{\arabic{part}}	}
\nc{\rref}[2]{\ref{#1}.\ref{#2}}
\nc{\pa}[1]{ 	\part{#1}		}
\nc{\se}[1]{ 	\section{\bf#1}		}
\nc{\ses}[1]{ 	\section*{\bf#1}		}
\nc{\sus}{ 	\subsection		}
\nc{\sss}{ 	\subsubsection		}
\nc{\Lem}{ 	\subsection{Lemma}		}
\nc{\lem}{ 	\subsubsection{Lemma}		}
\nc{\slem}{ 	\subsubsection*{Lemma}		}
\nc{\sublem}{ 	\subsubsection{ Sublemma}	}
\nc{\ssublem}{ \subsubsection*{ Sublemma}	}
\nc{\Lemm}{ 	\subsection{Lemma}		}
\nc{\lemm}{ 	\subsubsection{Lemma}		}
\nc{\slemm}{ 	\subsubsection*{Lemma}		}
\nc{\sublemm}{ 	\subsubsection{ Sublemma}	}
\nc{\ssublemm}{ \subsubsection*{ Sublemma}	}
\nc{\Pro}{ 	\subsection{Proposition}	}
\nc{\pro}{ 	\subsubsection{Proposition}	}
\nc{\spro}{ 	\subsubsection*{Proposition}	}
\nc{\Cor}{ 	\subsection{Corollary}		}
\nc{\cor}{ 	\subsubsection{Corollary}	}
\nc{\scor}{ 	\subsubsection*{Corollary}	}
\nc{\Corr}{ 	\subsection{Corollary}		}
\nc{\corr}{ 	\subsubsection{Corollary}	}
\nc{\scorr}{ 	\subsubsection*{Corollary}	}
\nc{\Theo}{ 	\subsection{Theorem}		}		
\nc{\theo}{ 	\subsubsection{Theorem}		}
\nc{\stheo}{ 	\subsubsection*{Theorem}	}
\nc{\pretheo}{ 	\subsubsection{Pretheorem}	}
\nc{\rem}{ 	\subsubsection{Remark}		}
\nc{\srem}{ 	\subsubsection*{Remark}	}
\nc{\rems}{ 	\subsubsection{Remarks}		}
\nc{\srems}{ 	\subsubsection*{Remarks}	}
\nc{\Def}{ 	\subsection{Definition}		}
\nc{\ddef}{ 	\subsubsection{Definition}	}
\nc{\comm}{ 	\subsubsection{Comment}		}
\nc{\scomm}{ 	\subsubsection*{Comment}	}
\nc{\comms}{ 	\subsubsection{Comments}		}
\nc{\scomms}{ 	\subsubsection*{Comments}	}
\nc{\claim}{ 	\subsubsection{Claim}		}
\nc{\sclaim}{ 	\subsubsection*{Claim}	}
\nc{\nota}{ 	\subsubsection{Notation}	}
\nc{\conj}{ 	\subsubsection{Conjecture}	}
\nc{\sconj}{ 	\subsubsection*{Conjecture}	}
\nc{\ex}{ 	\subsubsection{Example}		}
\nc{\sex}{ 	\subsubsection*{Example}	}
\nc{\exs}{ 	\subsubsection{Examples}	}
\nc{\sexs}{ 	\subsubsection*{Examples}	}
\nc{\Ex}{ 	\subsection{Example}		}
\nc{\sEx}{ 	\subsection*{Example}	}
\nc{\Exs}{ 	\subsection{Examples}	}
\nc{\sExs}{ 	\subsection*{Examples}	}
\nc{\que}{ 	\subsubsection{Question}	}
\nc{\ques}{ 	\subsubsection{Questions}	}
\nc{\sque}{ 	\subsubsection*{Question}	}
\nc{\sques}{ 	\subsubsection*{Questions}	}
\nc{\bi}{	\begin{itemize}\item		}
\rc{\i}{	\item			}
\nc{\ei}{ \end{itemize}	} 
\nc{\ben}{	\begin{enumerate}\item		}
\nc{\ftt}[1]{{\footnote{#1}}}
\nc{\fttt}[1]{{$^($\footnote{#1}$^)$}}
\nc{\bftt}[1]{\footnote{#1}}
\nc{\f}[1]{ \fbox{$ $}\footnote{ \fbox{!}#1 }\fbox{$ $}		}
\nc{\Ao}{{	\A^1	}}
\nc{\Po}{{	\P^1	}}
\nc{\So}{{	S^1	}}
\nc{\h}{{	\hslash	}}	
\nc{\All}{{	\forall		}}
\nc{\Exx}{{	\exists 	}}
\nc{\yy}{\infty}                       
\nc{\ys}{{  \frac{\infty}{2}  }}
\nc{\ii}{{i\in I}}
\nc{\ww}{{w\in W}}
\nc{\SES}[5]{{	0 @>>> {#1} @>{#2}>> {#3} @>{#4}>> {#5} @>>> 0	}}
\nc{\Ses}[3] {{	0 @>>> {#1} @>>>     {#2} @>>>     {#3} @>>> 0	}}
\nc{\pl}{{\oplus}}              		
\nc{\tim}{{\times}}             
\nc{\btim}{{\boxtimes}}
\nc{\ltim}{\ltimes}                  	%
\nc{\rtim}{\rtimes}			%
\nc{\ltr}{\triangleleft}        %
\nc{\rtr}{\triangleright}       %
\nc{\ten}{{	\otimes		}}            
\nc{\Lten}{{	\aa{L}\otimes	}}            
\nc{\Ltim}{{	\aa{L}\times	}}            
\nc{\Lcap}{{	\aa{L}\cap	}}            
\nc{\tenA}{	\bb{A}\ten	}
\nc{\tenB}{	\bb{B}\ten	}
\nc{\tenZ}{	\bb{\Z}\ten	}
\nc{\tenR}{	\bb{\R}\ten	}
\nc{\tenC}{	\bb{\C}\ten	}
\nc{\tenk}{	\bb{\k}\ten	}
\nc{\bten}{{\boxtimes}}         		
\nc{\con}{{ @>{\protect\cong}>> }}  	
\nc{\conl}{{ 	@>{\cong}>>	}}  	
\nc{\conn}{{    @<{\cong}<<  	}}  	
\nc{\Con}{{	\equiv		}}	
\nc{\appr}{{	\sim		}}	
\nc{\eqr}{{	\sim		}}	
\nc{\equi}{{	\sim		}}	
\nc{\fra}{ 	\frac	}     	
\nc{\ffr}[2]{{ 	\text{\footnotesize $\frac{#1}{#2}$	}	}}  
\nc{\ha}{{ \frac{1}{2} }}     		
	\nc{\half}{{ \frac{1}{2} }}    	
\nc{\ci}{{\circ}}               
\nc{\cd }{{\cdot}}            	
\nc{\cddd}{{\cdot\cdot\cdot}}	
\nc{\ox}{{	\OO_X		}}               
\nc{\omx}{{	\om_X		}}               
\nc{\Omx}{{	\Om_X^1		}}               
\nc{\Coh}{{	\CC oh		}}               %
\nc{\qcoh}{{	q\CC oh		}}               %
\nc{\xt}{{	X_*(T)		}}
\nc{\Xt}{{	X^*(T)		}}
\nc{\cfm}{{	co\fm		}}	
\nc{\cupp}{\bigcup}             
\nc{\capp}{\bigcap}
\nc{\pll}{\bigoplus}
\nc{\pii}{\prod}                
\nc{\ppii}{\bigprod}            
\nc{\cci}{\sqcup}              
\nc{\ccii}{\bigsqcup}
\nc{\wwe}{\bigwedge}            
\nc{\cce}{\bigcoprod}           
\nc{\aaa}{	\stackerel	}	
\nc{\edd}{{ \end{document}	}} 
\nc{\tx}{	\text		}		
\nc{\df}{{ \protect\overset{ \text{def}}= 	}}		
\nc{\dff}{{ \ \df\				}}		
\nc{\inv}{{ {}^{-1}      }}			
\nc{\thh}{	^{\text{th}}	}                     	
\nc{\st}{	^{\text{st}}	}                     	
\nc{\nd}{	^{\text{nd}}	}                     	
\nc{\rd}{	^{\text{rd}}	}                     	
\nc{\pmo}{{ 	\pm 1		}}
\nc{\mpo}{{ 	\mp 1		}}
\nc{\htt}{  \text{ht}}				
\nc{\emp}{{   \emptyset}}      			
\nc{\cowe}{{	\vee	}}			
\nc{\we}{{\wedge}}				
\nc{\wee}{{	\aa{\bullet}\wedge	}}		
\nc{\wetwo}{{     \pr\overset{2}\wedge       }}	
\nc{\limp}{{	\pr\underset {\leftarrow} \lim		}}	
\nc{\Limp}{{	\pr\underset {\leftarrow} {\bbb\lim}	}}	
\nc{\limi}{{	\pr\underset {\rightarrow}\lim		}}      
\nc{\Limi}{{	\pr\underset {\rightarrow}{\bbb\lim}	}}	
\nc{\llim}[1]{	 \bb{#1}\lim        	}   
\nc{\llimp}[1]{ \bb{#1}{ \pr\underset {\leftarrow} \lim       } }
\nc{\LLimp}[1]{ \bb{#1}{ \pr\underset {\leftarrow} {\bbb\lim} } } 	
\nc{\llimi}[1]{ \bb{#1}{ \pr\underset {\rightarrow}\lim       } }
\nc{\LLimi}[1]{ \bb{#1}{ \pr\underset {\rightarrow}{\bbb\lim} } }	
\let\Bbb\mathbb
\nc{\ppn}{{ {\Bbb P}^n }}            		
\nc{\pt}{	{ \text{pt} }	}		
\nc{\qlb}{{ \barr{{\Bbb Q}_l} }}      		
\nc{\ffq}{{  {\Bbb F}_q  }}           		
\nc{\ffp}{{  {\Bbb F}_p  }}           		
\nc{\tw}{   {}^{(1)}	}		
\nc{\Ab}{{ 	\AA b 		}}      		%
\nc{\Set}{{ 	\SS et 		}}      		%
\nc{\Top}{{ 	\TT op 		}}      		%
\nc{\Pic}{{ 	\tx{Pic}	}}      		%
\nc{\del}{{\partial }}
\nc{\delb}{{\partial }}
\nc{\dd}[2]{	\fra{d{#1}}{d{#2}}		}
\nc{\ddel}[2]{	\fra{\del{#1}}{\del {#2}}	}
\nc{\Spec}{{ 	\text{Spec}      		}} 
\nc{\Specf}{{ 	\text{Specf}      		}} 
\nc{\Spf}{{ 	\text{Spf}      		}} 
\nc{\hk}{{     \text{hyperk\"ahler} 	}}
\nc{\susy}{{\text{supersymmetry}}}
\nc{\ie}{{,\ \     \text{i.e.,}\ \ 	}}
\nc{\iif}{{\ \     \text{if}\ \ 	}}
\nc{\aand}{{\ \ \  \text{and}\ \ \ 	}}
\nc{\hence}{{\ \ \ \text{hence}\ \ \ 	}}
\nc{\while}{{\ \ \ \text{while}\ \ \ 	}}
\nc{\with}{{\ \ \  \text{with}\ \ \ 	}}
\nc{\oor}{{\ \     \text{or}\ \ 	}}
\nc{\foor}{{\ \     \text{for}\ \ 	}}
\nc{\suchthat}{{\ \     \text{such that}\ \ 	}}
\nc{\rk}{{\operatorname{rk}}}
\nc{\Ker}{{\operatorname{Ker}}}
\nc{\Coker}{{\operatorname{Coker}}}
\rc{\Im}{{ 	\text{Im} 	}}
\nc{\rank}{{	\ \text{rank} 	}}
\nc{\Res}{{	\  \text{Res}   }}
\nc{\Hom}{{\operatorname{Hom}}}
\nc{\End}{{	\text{End}	}}
\nc{\RHom}{{	\text{RHom}	}}
\nc{\HHom}{{	\text{$\HH$om}	}}
\nc{\RHHom}{{	\text{R$\HH$om} }}
\nc{\RGa}{{	\text{R$\Ga$}	}}
\nc{\EEnd}{{	\text{$\EE nd$}	}}
\nc{\AAut}{{	\text{$\AA ut$}	}}
\nc{\Ext}{{\operatorname{Ext}}}
\nc{\Tor}{{\operatorname{Tor}}}
\nc{\Der}{{	\text{Der}	}}
\nc{\ord	}{{ \text{ord} }}			
\nc{\divv	}{{ \text{div} }}			
\nc{\Lie	}{{ \text{Lie} }}
\nc{\timA} {{   \pr\underset{A}\tim             }}
\nc{\timB} {{   \pr\underset{B}\tim             }}
\nc{\timC} {{   \pr\underset{C}\tim             }}
\nc{\timG} {{   \pr\underset{G}\tim             }}
\nc{\timH} {{   \pr\underset{H}\tim             }}
\nc{\timN} {{   \pr\underset{N}\tim             }}
\nc{\timP}{{    \pr\underset{P}\tim             }}
\nc{\timQ}{{    \pr\underset{Q}\tim             }}
\nc{\timS} {{   \pr\underset{S}\tim             }}
\nc{\timT} {{   \pr\underset{T}\tim             }}
\nc{\timU} {{   \pr\underset{U}\tim             }}
\nc{\timV} {{   \pr\underset{V}\tim             }}
\nc{\timX} {{   \pr\underset{X}\tim             }}
\nc{\timY} {{   \pr\underset{Y}\tim             }}
\nc{\timZ} {{   \pr\underset{Z}\tim             }}
\nc{\ab}{{       ^{\text{ab}}   		}}
\nc{\af}{{       ^{\text{aff}}  		}}
\nc{\cod}{\text{codim}}	
\rc{\AA}{{\cal A}}
\nc{\BB}{{\cal B}} 
\nc{\CC}{{\cal C}}
\nc{\DD}{{\cal D}}
\nc{\EE}{{\cal E}}
\nc{\FF}{{\cal F}}
\nc{\GG}{{\cal G}}
\nc{\HH}{{\cal H}}
\nc{\II}{{\cal I}}
\nc{\JJ}{{\cal J}}
\nc{\KK}{{\cal K}}
\nc{\LL}{{\cal L}}
\nc{\MM}{{\cal M}}
\nc{\NN}{{\cal N}}
\nc{\OO}{{\cal O}}
\nc{\PP}{{\cal P}}
\nc{\QQ}{{\cal Q}}
\nc{\RR}{{\cal R}}
\rc{\SS}{{\cal S}}
\nc{\TT}{{\cal T}}
\nc{\UU}{{\cal U}}
\nc{\VV}{{\cal V}}
\nc{\WW}{{\cal W}}
\nc{\ZZ}{{\cal Z}}
\nc{\XX}{{\cal X}}
\nc{\YY}{{\cal Y}}
\nc{\A}{{\Bbb A }}
\nc{\B}{{\Bbb B}}
\nc{\C}{{\Bbb C}}
		\nc{\cc}{{\Bbb C}}
\nc{\Cs}{{\Bbb C^*}}
		\nc{\cs}{{\Bbb C^*}}
		\nc{\ccs}{{\Bbb C^*}}
\nc{\D}{{\Bbb D}}
\nc{\E}{{\Bbb E}}
\nc{\F}{{\Bbb F}}
\nc{\G}{{\Bbb G}}
	\nc{\hH}{{\Bbb H}}
\nc{\I}{{\Bbb I}}
\nc{\J}{{\Bbb J}}
\nc{\K}{{\Bbb K}}
	\nc{\lL}{{\Bbb L}}
\nc{\M}{{\Bbb M}}
\nc{\N}{{\Bbb N}}
	\nc{\oO}{{\Bbb O}}
	\nc{\pP}{{\Bbb P}}      
\nc{\Q}{{\Bbb Q}}
	\nc{\sS}{{\Bbb S}}
\nc{\T}{{\Bbb T}}
\nc{\U}{{\Bbb U}}
\nc{\V}{{\Bbb V}}
\nc{\W}{{\Bbb W}}
\nc{\Z}{{\Bbb Z}}
\nc{\X}{{\Bbb X}}
\nc{\Y}{{\Bbb Y}}
\let\P\pP
\nc{\fA}{{\frak A}}
\nc{\fB}{{\frak B}}
\nc{\fC}{{\frak C}}
\nc{\fD}{{\frak D}}
\nc{\fE}{{\frak E}}
\nc{\fF}{{\frak F}}
\nc{\fG}{{\frak G}}
\nc{\fH}{{\frak H}}
\nc{\fI}{{\frak I}}
\nc{\fJ}{{\frak J}}
\nc{\fK}{{\frak K}}
\nc{\fL}{{\frak L}}
\nc{\fM}{{\frak M}}
\nc{\fN}{{\frak N}}
\nc{\fO}{{\frak O}}
\nc{\fP}{{\frak P}}
\nc{\fQ}{{\frak Q}}
\nc{\fR}{{\frak R}}
\nc{\fS}{{\frak S}}
\nc{\fT}{{\frak T}}
\nc{\fU}{{\frak U}}
\nc{\fV}{{\frak V}}
\nc{\fW}{{\frak W}}
\nc{\fZ}{{\frak Z}}
\nc{\fX}{{\frak X}}
\nc{\fY}{{\frak Y}}
\nc{\fa}{{\frak a}}
\nc{\fb}{{\frak b}}
\nc{\fc}{{\frak c}}
\nc{\fd}{{\frak d}}
\nc{\fe}{{\frak e}}
\nc{\ff}{{\frak f}}
\nc{\fg}{{\frak g}}
\nc{\fh}{{\frak h}}
\nc{\fiI}{{\frak i}}  
	\nc{\ffi}{{\frak i}}  
\nc{\fj}{{\frak j}}
\nc{\fk}{{\frak k}}
\nc{\fl}{{\frak{l}}}
\nc{\fm}{{\frak m}}
\nc{\fn}{{\frak n}}
\nc{\fo}{{\frak o}}
\nc{\fp}{{\frak p}}
\nc{\fq}{{\frak q}}
\nc{\fr}{{\frak r}}
\nc{\fs}{{\frak s}}
\nc{\ft}{{\frak t}}
\nc{\fu}{{\frak u}}
\nc{\fv}{{\frak v}}
\nc{\fw}{{\frak w}}
\nc{\fz}{{\frak z}}
\nc{\fx}{{\frak x}}
\nc{\fy}{{\frak y}}
\nc{\al}{{\alpha }}
\nc{\be}{{\beta }}
\nc{\ga}{{\gamma }}
\nc{\de}{{\delta }}
\nc{\ep}{{\varepsilon }}
\nc{\vap}{{\epsilon }}
\nc{\ze}{{\zeta }}
\nc{\et}{{\eta }}
\rc{\th}{{\theta }}
\nc{\vth}{{\vartheta }}
\nc{\io}{{\iota }}
\nc{\ka}{{\kappa }}
\nc{\la}{{\lambda }}
\nc{\vpi}{{	\varpi		}}
\nc{\vrho}{{	\varrho		}}
\nc{\si}{{	\sigma 		}}
\nc{\ups}{{	\upsilon 	}}
\nc{\vphi}{{	\varphi 	}}
\nc{\om}{{	\omega 		}}
\nc{\Ga}{{\Gamma }}
\nc{\De}{{\Delta }}
\nc{\nab}{{\nabla}}
\nc{\na}{{\nabla}}
\nc{\Th}{{\Theta }}
\nc{\La}{{\Lambda }}
\nc{\Si}{{\Sigma }}
\nc{\Ups}{{\Upsilon }}
\nc{\Om}{{\Omega }}
\nc{\Aa}{{	\text{A}	}}
\nc{\Bb}{{	\text{B}	}}
\nc{\Cc}{{	\text{C}	}}
\nc{\Dd}{{	\text{D}	}}
\nc{\Ee}{{	\text{E}	}}
\nc{\Ff}{{	\text{F}	}}
\nc{\Gg}{{	\text{G}	}}
\nc{\Hh}{{	\text{H}	}}
\nc{\Ii}{{	\text{I}	}}
\nc{\Jj}{{	\text{J}	}}
\nc{\Kk}{{	\text{K}	}}
\nc{\Ll}{{	\text{L}	}}
\nc{\Mm}{{	\text{M}	}}
\nc{\Nn}{{	\text{N}	}}
\nc{\Oo}{{	\text{O}	}}
\nc{\Pp}{{	\text{P}	}}
\nc{\Qq}{{	\text{Q}	}}
\nc{\Rr}{{	\text{R}	}}
\nc{\Ss}{{	\text{S}	}}
\nc{\Tt}{{	\text{T}	}}
\nc{\Uu}{{	\text{U}	}}
\nc{\Vv}{{	\text{V}	}}
\nc{\Ww}{{	\text{W}	}}
\nc{\Zz}{{	\text{Z}	}}
\nc{\Xx}{{	\text{X}	}}
\nc{\Yy}{{	\text{Y}	}}
\nc{\bGa}{{	\bbb{\Ga}	}}
\nc{\bA}{{	\bbb{A}		}}
\nc{\bB}{{	\bbb{B}		}}
\nc{\bC}{{	\bbb{C}		}}
\nc{\bD}{{	\bbb{D}		}}
\nc{\bE}{{	\bbb{E}	}}
\nc{\bF}{{	\bbb{F}	}}
\nc{\bG}{{	\bbb{G}	}}
\nc{\bH}{{	\bbb{H}	}}
\nc{\bI}{{	\bbb{I}	}}
\nc{\bJ}{{	\bbb{J}	}}
\nc{\bK}{{	\bbb{K}	}}
\nc{\bL}{{	\bbb{L}	}}
\nc{\bM}{{	\bbb{M}	}}
\nc{\bN}{{	\bbb{N}	}}
\nc{\bO}{{	\bbb{O}	}}
\nc{\bP}{{	\bbb{P}	}}
\nc{\bQ}{{	\bbb{Q}	}}
\nc{\bR}{{	\bbb{R}	}}
\nc{\bS}{{	\bbb{S}	}}
\nc{\bT}{{	\bbb{T}	}}
\nc{\bU}{{	\bbb{U}	}}
\nc{\bV}{{	\bbb{V}	}}
\nc{\bW}{{	\bbb{W}	}}
\nc{\bX}{{	\bbb{X}	}}
\nc{\bY}{{	\bbb{Y}	}}
\nc{\bZ}{{	\bbb{Z}	}}
\nc{\ba}{{	\bbb{a}	}}
			\nc{\bbbb}{{	\bbb{b}	}}
\nc{\bc}{{	\bbb{c}	}}
\nc{\bd}{{	\bbb{d}	}}
			\nc{\bbe}{{	\bbb{e}	}}
			\nc{\bbf}{{	\bbb{f}	}}
\nc{\bg}{{	\bbb{g}	}}
\nc{\bh}{{	\bbb{h}	}}
			\nc{\bbi}{{	\bbb{i}	}}
\nc{\bj}{{	\bbb{j}	}}
			\nc{\bbk}{{	\bbb{k}	}}
\nc{\bl}{{	\bbb{l}	}}
\nc{\bm}{{	\bbb{m}	}}
\nc{\bn}{{	\bbb{n}	}}
\nc{\bo}{{	\bbb{o}	}}
\nc{\bp}{{	\bbb{p}	}}
\nc{\bq}{{	\bbb{q}	}}
\nc{\br}{{	\bbb{r}	}}
\nc{\bs}{{	\bbb{s}	}}
\nc{\bt}{{	\bbb{t}	}}
			\nc{\bbbu}{{	\bbb{u}	}}
\nc{\bv}{{	\bbb{v}	}}
\nc{\bw}{{	\bbb{w}	}}
\nc{\bxx}{{	\bbb{x}	}}
\nc{\by}{{	\bbb{y}	}}
\nc{\bz}{{	\bbb{z}	}}
\nc{\sA}{{\mathsf A}}
\nc{\sB}{{\mathsf B}}
\nc{\sC}{{\mathsf C}}
\nc{\sD}{{\mathsf D}}
\nc{\sE}{{\mathsf E}}
\nc{\sF}{{\mathsf F}}
\nc{\sG}{{\mathsf G}}
\nc{\sH}{{\mathsf H}}
\nc{\sI}{{\mathsf I}}
\nc{\sJ}{{\mathsf J}}
\nc{\sK}{{\mathsf K}}
\nc{\sL}{{\mathsf L}}
\nc{\sM}{{\mathsf M}}
\nc{\sN}{{\mathsf N}}
\nc{\sO}{{\mathsf O}}
\nc{\sP}{{\mathsf P}}
\nc{\sQ}{{\mathsf Q}}
\nc{\sR}{{\mathsf R}}
\rc{\sS}{{\mathsf S}}
\nc{\sT}{{\mathsf T}}
\nc{\sU}{{\mathsf U}}
\nc{\sV}{{\mathsf V}}
\nc{\sW}{{\mathsf W}}
\nc{\sX}{{\mathsf X}}
\nc{\sY}{{\mathsf Y}}
\nc{\sZ}{{\mathsf R}}
\nc{\sa}{{\mathsf a}}
\rc{\sb}{{\mathsf b}}
\rc{\sc}{{\mathsf c}}
\nc{\sd}{{\mathsf d}}
\nc{\sg}{{\mathsf g}}
\nc{\sh}{{\mathsf h}}
\nc{\sj}{{\mathsf j}}
\nc{\sk}{{\mathsf k}}
\nc{\sn}{{\mathsf n}}
\nc{\so}{{\mathsf o}}
\nc{\sq}{{\mathsf q}}
\nc{\sr}{{\mathsf r}}
\nc{\su}{{\mathsf u}}
\nc{\sv}{{\mathsf v}}
\nc{\sw}{{\mathsf w}}
\nc{\sx}{{\mathsf x}}
\nc{\sy}{{\mathsf y}}
\nc{\sz}{{\mathsf z}}
\nc{\toc}{{ 	\small{\tableofcontents} }}
\nc{\addl}{	\addcontentsline{toc}{subsection}	}
\nc{\all}{{ ^{(\alpha)} }}
\nc{\bee}{{ ^{(\beta)} }}
\nc{\gaa}{{ ^{(\gamma)} }}
\nc{\nnnn}{{ ^{ ( n ) } }}     
\nc{\nnn}{{ ^{ [ n ] } }}     
\nc{\GK}{{  	G(\KK)		}}
\nc{\GO}{{  	G(\OO)		}}
\nc{\Kh}{\tx{Ka\"hler\ }}
\nc{\Khs}{\tx{Ka\"hler structure\ }}
\nc{\Khss}{\tx{Ka\"hler structures\ }}
\nc{\GKh}{\tx{Generalized Ka\"hler\ }}
\nc{\GKs}{\tx{Generalized Ka\"hler structure\ }}
\nc{\GKss}{\tx{Generalized Ka\"hler structures\ }}
\nc{\gKs}{\tx{Generalized Ka\"hler structure\ }}
\nc{\gKss}{\tx{Generalized Ka\"hler structures\ }}
\nc{\sYM}{\text{super Young-Mills\ }}
\nc{\tFT}{\text{topological Field Theory\ }}
\rc{\top}{\tx{topological\ }}
\rc{\Top}{\tx{Topological\ }}
\nc{\TFT}{\text{Topological Field Theory\ }}
\nc{\TQFT}{\text{Topological Quantum Field Theory\ }}
\nc{\TQFTs}{\text{Topological Quantum Field Theories\ }}
\nc{\QFT}{\text{Quantum Field Theory\ }}
\nc{\QFTs}{\text{Quantum Field Theories\ }}
\nc{\FT}{\text{Field Theory\ }}
\nc{\HM}{\text{Hitchin moduli\ }}
\nc{\Hf}{\text{Hitchin fibration\ }}
\nc{\Wi}{\text{Wilson\ }}
\nc{\Wo}{\text{Wilson operator\ }}
\nc{\Wos}{\text{Wilson operators\ }}
\nc{\tH}{\text{t'Hooft\ }}
\nc{\tHo}{\text{t'Hooft operator\ }}
\nc{\Ho}{\text{t'Hooft operator\ }}
\nc{\tHos}{\text{t'Hooft operators\ }}
\nc{\Hos}{\text{t'Hooft operators\ }}
\nc{\Sd}{\text{S-duality\ }}
\nc{\Ld}{\text{Langlands duality\ }}
\nc{\Be}{\text{Bogomolny equations\ }}
\rc{\d}{\text{duality\ }}
\nc{\He}{\text{Hecke\ }}
\nc{\Heo}{\text{Hecke operators\ }}
\nc{\Hem}{\text{Hecke modifications\ }}
\nc{\Bgs}{\tx{Bogomolny equations\ }}
\nc{\Bg}{\tx{Bogomolny equation\ }}
\nc{\Hk}{{\text{Hyperk$\ddot{a}$hler} }}
\nc{\eqq}{{ 	\ =\			}}
\nc{\Cy}{{ 	C_\yy	}}
\nc{\Ay}{{ 	A_\yy	}}
\nc{\Ly}{{ 	L_\yy	}}
\nc{\Cm}{{ 	C_m	}}
\nc{\Am}{{ 	A_m	}}
\nc{\Lm}{{ 	L_m	}}
\rc{\Cy}{{	Cat_\yy			}}
\nc{\Cey}{{	Cat^{ex}_\yy		}}
\nc{\Cyt}{{	\tii{Cat}_\yy		}}
\nc{\Gm}{{	G_m			}}
\nc{\Uo}{{	U(1)	}}
\nc{\sqt}{{	\sqrt{2}	}}
\nc{\BGB}{{B\bss G/B}}
\nc{\Glb}{{ \barr{\GG_\la}	}}
\nc{\mut}{{	\mu_2	}}
\nc{\dx}{{	\dot x		}}
\nc{\ddx}{{	\ddot x		}}
\nc{\dy}{{	\dot y		}}
\nc{\ddy}{{	\ddot y		}}
\nc{\du}{{	\dot u		}}
\nc{\ddu}{{	\ddot u		}}
\nc{\Cyy}{	C^\yy		}
\nc{\hh}{{	\hatt\fh		}}
\nc{\hhp}{{	\hatt\fh_+	}}
\nc{\hhm}{{	\hatt\fh_-	}}
\nc{\jh}{{	J_{\fh}		}}
\nc{\jhs}{{	J_{\fh}		}}
\nc{\negg}{{ _{<0} 	}}
\nc{\pp}{{ _{>0} 	}}
\nc{\zb}{{\bar z}}
\let\d\del
\nc{\dbar}{{\bar \del}}
\nc{\p}{\psi}
\nc{\pb}{{	\bar\psi	}}
\nc{\pd}{{	\dot\psi	}}
\nc{\pbd}{{	\dot{\bar\psi}	}}
\nc{\pos}{{	\tx{\tiny{po}}	}}
\nc{\mom}{{	\tx{\tiny{mo}}	}}
\nc{\vac}{{	\tx{\tiny{vac}}	}}
\nc{\mb}[1]{{	\mbox{$#1$}	}}
\nc{\GGG}{{	\bbb \GG	}}
\nc{\cdG}{{	\bb{G}\cd	}}
\nc{\cdGs}{{	\bb{G^*}\cd	}}
\rc{\l}{{	\bbb l		}}
\nc{\pms}{{	\{\pm\}	}}
\nc{\yh}{{	\hatt\yy	}}
\nc{\hy}{{	\hatt\yy	}}
\nc{\bpl}{{\boxplus}}
\nc{\bcd}{{\boxdot}}
\nc{\gau}{{	e^{-x^2/2}		 }}
\nc{\istp}{{	\fra{ 1 }{ \sqrt{2\pi} } }}
\nc{\stp}{{	\sqrt{2\pi}		 }}
\nc{\lrb}[2]{	\lb #1,#2\rb	}
\nc{\lrbb}[2]{	\lb #1|#2\rb	}
\rc{\sq}{{	\sqcup		}}
\nc{\Qg}{{	\Q_{\ge 0}	}}
\nc{\Rg}{{	\R_{\ge 0}	}}
\nc{\Qgg}{{\Q_{> 0}	}}
\nc{\Rgg}{{\R_{> 0}	}}
\nc{\dagg}{{\dagger}}
\nc{\raa}[1]{\xrightarrow{#1}}
\rc{\laa}[1]{\xleftarrow{#1} }
\begin{document}





\title[]{
A Categorical Formulation of Algebraic Geometry
}

\author{	Bradley M. Willocks		}


\maketitle

\begin{abstract}                
We construct a category, $\Omega$, of which the objects are pointed categories and the arrows are pointed correspondences. The notion of a ``spec datum" is introduced, as a certain relation between categories, of which one has been given a Grothendieck topology.  A ``geometry" is interpreted as a sub-category of $\Omega$, and a formalism is given by which such a subcategory is to be associated to a spec datum, reflecting the standard construction of the category of schemes from the category of rings by affine charts.
\end{abstract}

\toc


\se{Introduction}


We construct in the present work a category $\Omega$ of equivalence classes of pointed correspondences between pointed categories. It serves as a common category for various geometries, with the intuition that a ``standard geometric" object should be a distinguished sheaf $\mathcal{O}_{X}$ in a category of sheaves of some type. Given a topological space $(X,\tau_{X})$, and a category $\mathcal{S}$, we consider the category $Sh((X,\tau_{X}),\mathcal{S})$ of $\mathcal{S}$-valued sheaves on the topological space $(X,\tau_{X})$. Then a ``standard geometric" object is a pair $(\mathcal{O}_{X},Sh((X,\tau_{X}),\mathcal{S})^{opp}) \in Ob(\Omega)$, determined by a sheaf $\mathcal{O}_{X}$ in the category $Sh((X,\tau_{X}),\mathcal{S})$. \ftt{For technical reasons the opposite categories $(Sh((Y,\tau_{Y}),\mathcal{O}_{Y}))^{opp}$ are taken.} A ``geometry" should be considered as a subcategory of $\Omega$.

By ``standard geometry" we mean a subcategory of $\Omega$ constructed from a notion of ``affine spaces". For this we define the notion of a ``spec datum", generally denoted by ``$\bar{sp}$", consisting of a pair of functors $sp : \mathcal{R} \rightarrow \mathcal{T}$ and $\mathcal{O} : \mathcal{R} \rightarrow (\mathcal{S})^{opp}$, with the category $\mathcal{T}$ being equipped with a Grothendieck topology and an ``admissibility structure". From a given spec datum $\bar{sp}$ we construct a functor $\tilde{sp} : \mathcal{R} \rightarrow \Omega$, referred to as an ``$\Omega$-lift" of the given spec datum. In classical algebraic geometry this is the construction of the structure sheaf of an affine scheme from the topological spec functor $sp : \mathfrak{Ring}^{opp} \longrightarrow \mathfrak{Top}$ going from the opposite category of commutative rings to topological spaces. The image of this functor $\tilde{sp}$ is called the ``category of affine schemes" associated to the spec datum. 

In classical algebraic geometry, a morphism of schemes $(f,f_{\sharp}) : (X,\mathcal{O}_{X}) \rightarrow (Y,\mathcal{O}_{Y})$ is a pair of arrows, $f : X \rightarrow Y$ in $\mathfrak{Top}$ and $f_{\sharp} : \mathcal{O}_{Y} \rightarrow f_{*}(\mathcal{O}_{X})$ in the category $Sh((Y,\tau_{Y}),\mathfrak{Ring})$ of sheaves on $Y$. Each of these is associated to the functor $F : (Sh((X,\tau_{X}),\mathfrak{Ring})^{opp})^{opp} \times Sh((Y,\tau_{Y}),\mathfrak{Ring})^{opp} \longrightarrow \mathfrak{Set}$, this functor being the composition of the functor \newline $((f_{*})^{opp})^{opp} \times_{\mathfrak{Cat}} (id)^{opp})$ with the $Hom$ functor $(Sh((Y,\tau_{Y}),\mathfrak{Ring})^{opp} \times Sh((Y,\tau_{Y}),\mathfrak{Ring}) \longrightarrow \mathfrak{Set}$ (here the functor $f_{*}$ is the usual pushforward of sheaves). We will refer to the arrows in $\Omega$ of the form $[(F,f_{\sharp})] \in Arr(\Omega)$ as the ``classical" arrows of schemes.

Besides merely containing the usual morphisms of schemes, there are $\Omega$-arrows \newline $(Sh((X,\tau_{X}),\mathfrak{Ring}),\mathcal{O}_{X}) \rightarrow (Sh((Y,\tau_{Y}),\mathfrak{Ring}),\mathcal{O}_{Y})$ for any co-correspondence \newline $Sh((X,\tau_{X}),\mathfrak{Ring}) \xrightarrow{F} C \xleftarrow{G} Sh((Y,\tau_{Y}),\mathfrak{Ring})$ between the categories of sheaves. This allows for ``non-standard" arrows, which may arise from hom sets in categories $C$ besides the categories of sheaves on $X$  or $Y$. In particular, any two objects in $\Omega$ have arrows between them.

Section 2 concerns categorical preliminaries, namely ($sk$)-limits and weakly enriched categories. The former is a generalization of the concept of a limit, intended for situations in which the strict limit either does not exist, or is too restrictive. In particular, they are used in defining limits of diagrams in $\mathfrak{Cat}$, for which diagrams are not required to commute strongly, but only up to natural isomorphisms.

In (2.4), we develop some notions of ``enriched sets" (a weak version, an $sk$-associative version, and a version requiring a compatible underlying category). An enrichment of a set $S$ over a tensor category $(A,\otimes)$ is an assignment of objects $h(x,y) \in Ob(A)$ for each $x,y \in S$, with composition arrows $h(x,y)\otimes h(y,z) \rightarrow h(x,z)$. In particular, sets of ``$A$-enriched functors" are given natural $A$-enrichments. Enriched sets and limits of $\mathfrak{Cat}$-valued diagrams are used in defining n-categories, in (2.6).

Section 3 of the present work defines the large category $\Omega$ and the canonical functors $\kappa_{C} : C \longrightarrow \Omega$ associated to every small category $C$. We define in (3.3), for any $\mathfrak{Cat}$-valued functor $F : I \longrightarrow \mathfrak{Cat}$, a subcategory $\Pi(F) \subseteq \Omega$,\ftt{In the text we denote $\Pi(F)$ by ``$\Pi_{\Gamma}(F)$".} consisting of arrows $[(\Phi,\phi)] \in Arr(\Omega)$ whose functors $\Phi$ are compositions of hom functors $Hom_{F(i)}$ (for some $i \in Ob(I)$) with functors of the form $F(f)$ (for some $f \in Arr(I)$). Given a spec datum $\bar{sp} = (sp : \mathcal{R} \longrightarrow \mathcal{T},...)$, we define a functor $F_{\bar{sp}} : \mathcal{T} \longrightarrow \mathfrak{Cat}$ so that the functors $F(f)$ are the pushforwards of sheaves. 

Recall that, given an $\Omega$-lift $\tilde{sp}$ of a spec datum $\bar{sp}$, we have defined a category of ``affine schemes", as the image of the functor $\tilde{sp} : \mathcal{R} \longrightarrow\Omega$. These affine schemes are a subcategory of $\Pi(F_{\bar{sp}})$. Now we define the ``category of schemes" for $(\bar{sp},\tilde{sp})$ as the subcategory $\Pi_{Sch}(\bar{sp},\tilde{sp}) \subseteq \Pi(F_{\bar{sp}})$ given by locally affine arrows, in analogy to the usual definition of schemes as locally affine spaces. In the classical case, in which the spec datum $\bar{sp} = (sp : \mathfrak{Ring} \longrightarrow \mathfrak{Top},...)$ is given by the usual topological spectrum, the resulting category $\Pi_{Sch}(\bar{sp},\tilde{sp})$ is equivalent to the classical category of schemes. This equivalence follows from the faithfulness of $\kappa_{\mathfrak{Ring}}$. 

Section 4 contains miscellaneous results intended to form some preliminary to future work. Section (4.1) concerns technicalities for manipulating ``admissibility structures" (a sort of ``rigid" Grothendieck topology), in order to accommodate pseudo-functors (which might respect composition only in a weakened sense). Section (4.2) contains a functor $\mathfrak{Cat} \longrightarrow \mathfrak{PreGrp}$, where $\mathfrak{PreGrp}$ is the category of semi-groups, consisting of sets with associative composition laws, which might lack inverses. We hope to use these to define homotopy and (co)homology groups in the setting of $\Omega$.


\se{Preliminaries, Limits and Enrichments}

\sss{} Sections (2.1) and (2.2) concerns set theoretical preliminaries and notation. Details regarding the set theory that we use are relevant to the inductive definition of higher categories, in (\ref{nCat}), and $\Omega$, in (\ref{omega}).

\sss{The Notion of an $(sk)$-(co)limit of a Functor} In (\ref{SkLim}) we define a variation of the concept of a limit of a functor. Given functors $J \xrightarrow{e} I \xrightarrow{F} A \xrightarrow{sk} B$, the $(sk,e)-limit$ of $F$ is defined as follows. 

One first constructs a certain category, denoted by ``$P = P(sk,e)$." Its objects are essentially pairs $(a,\alpha)$, where $a \in Ob(A)$ and $\alpha : Ob(J) \rightarrow Arr(A)$ is a natural transformation \ftt{Recall that for any categories $C,D \in Ob(\mathfrak{Cat})$, given any object $d \in Ob(D)$, we denote by $\Delta_{(C,D)}(d) : C \longrightarrow D$ the constant functor, which sends every arrow in $C$ to $id_{d}$; see (1.2.24).} $\Delta_{(J,A)}(a) \xrightarrow{\alpha} F\circ e$ which lifts to a natural transformation $\tilde{\alpha} : \Delta_{(I,B)}(sk(a)) \rightarrow sk \circ F$. This is to say that, given the notation of (2.2.23) regarding functor categories, the pullback of $\tilde{\alpha}$, the natural transformation $e^{*}(\tilde{\alpha}) = Hom_{U-\mathfrak{Cat}^{2}(1)}(e,id_{A})_{(1)} (\tilde{\alpha}) : \Delta_{(J,B)}(sk(a)) \rightarrow sk \circ F \circ e$, is equal to the pushforward of $\alpha$, the natural transformation $sk_{*}(\alpha) = Hom_{U-\mathfrak{Cat}^{2}(1)}(id_{J},sk)_{(1)}(\alpha) : \Delta_{(J,B)}(sk(a)) \rightarrow sk \circ F \circ e$. An arrow $\phi : (a,\alpha) \rightarrow (b,\beta)$ in the category $P$ is an arrow $f : a \rightarrow b$ in the category $A$ for which $\alpha(j) = \beta(j) \cdot f$ for any $j \in Ob(J)$. 

We now define the $(sk,e)$-limit of $F$ to be the colimit of the functor $P \longrightarrow A$ defined by $(a,\alpha) \mapsto a$. A ``colimit" is a pair $(l,\lambda)$ consisting of an object $l$ and a natural transformation $\lambda$ from the functor in question to the constant functor of $l$, satisfying a universal property. 

\sss{The Main Example of $sk$-limits} In cases concerning diagrams of categories, i.e. $A = U-\mathfrak{Cat}$, the functor $sk$ will usually be the quotient functor $Skel : U-\mathfrak{Cat} \longrightarrow U-\mathfrak{SCat}$, to the ``skeleton category of categories" (see \ref{SCat}), which identifies isomorphic pairs of functors. This is to say that the object set of each hom category $Hom_{U-\mathfrak{Cat}^{2}}^{(1)}(C,D)$ is replaced by the set of isomorphism classes of objects (the objects of a skeleton subcategory of the original hom category). The functor $e : J \longrightarrow I$ should be the inclusion of the subcategory of $I$ consisting of all of the identity arrows. Then the objects of the category $P(Skel,e)$ are pairs $(C,f)$ of a category $C$  and a family of functors $f_{i}:C\to F(i)$ for $i \in Ob(I)$,  
which  is a ``transformation'' of functors from a constant functor $\De_{I,C}:I\to \mathfrak{Cat}$
with value $C$, to $F$, which is ``natural up to isomorphism".
The meaning of this  is that for any arrow $g:i\to j$ in $ I$, the functors 
$F(g) \circ f(i)$ and $ f(j	)$ from $C$ to $F(j)$ are isomorphic. Then the $(sk,e)$-limit 
of $F$ is the colimit of all such categories $C$.

\sss{} We define in (2.2), for each tensor category $(A,\otimes)$ and functor $sk : A \longrightarrow B$ the category $WE_{(A,\otimes)(sk)}$ of $weakly$ $enriched$ $sets$.  These are sets $S$, with an assignment for every pair of elements $s,t \in S$ of an object $h_{S}(s,t) \in Ob(A)$, and a composition $h(s,t) \otimes h(t,u) \rightarrow h(s,u)$ for any elements $s,t,u \in S$. An enriched set is to be considered as a ``category" for which one is given ``hom objects in $A$" rather than hom sets. With this intuition, for any two enriched sets $S,T \in Ob(WE_{(A,\otimes)(sk)})$, the hom set $Hom_{WE_{(A,\otimes)(sk)}}(S,T)$ should be considered to be a ``category of $A$-functors". We give in (2.3) a construction by which one can enrich the set of ``functors" $Hom_{WE(A,\otimes)(sk)}(S,T)$ to an enriched set. 


\sss{} We define a notion $(U,n)-\mathfrak{Cat}$ of the category of $n$-categories in (2.4).\ftt{This is not one of the standard notions of an $n$-category. In particular, $(U,n)-\mathfrak{Cat}$ is not an $(n+1)$-category, i.e. an object of $(U',n+1)-\mathfrak{Cat}$. See (\ref{assncat}).} By the lemmas of the ``Enrichments" section (2.3), $(U,n)-\mathfrak{Cat}$ is naturally enriched over itself. In (2.5) we define a sort of co-simplicial structure on the category $(U,n)-\mathfrak{Cat}$ of $n$-categories. 
As usual denote by $\Delta$ the category of finite ordered sets, its arrows are order-preserving maps. Let $[n]=
\{0<\cddd< n-1\} \in Ob(\Delta)$ and denote by $\Delta_{[n]/} $ 
the category of arrows under the ordered set $[n]$.
For each $n$ we construct a functor 
$\rho : \Delta_{[n]/} \longrightarrow 
U'-\mathfrak{Cat}_{ (U,n)-\mathfrak{Cat} /} 
$ (denoted  also $\downarrow_{(U'-\mathfrak{Cat})}(ob_{(U'-\mathfrak{Cat})}(`(U.n)-\mathfrak{Cat}),id_{U'-\mathfrak{Cat}})
$).

The arrow $f_k : [m+1] = \{...,k,k+1,... \} \rightarrow [m] = \{...,k,...\}$ 
which identifies $k$ and $k+1$ is sent to the functor which ``collapses all $(n-k)$-categories into their component $(n-k-1)$-categories".\ftt{
By the collapse of a category $C$ to a set we mean the set of all morphisms
$\coprod_{(a,b)\in Ob(C)^2}\ C(a,b)$. This 
generalizes for  any $(m+1)$-category $C$, the $m$-category $\rho(f_k)(C)$ 
is such that every $k$-category $h(a,b)$ appearing in the structure of $C$ is replaced by the $(k-1)$-category $\coprod_{a',b' \in Ob(h(a,b))}\ h_{h(a,b)}(a',b')$.
}

The arrow $g_k : [m] \rightarrow [m+1]$ which omits $k$\ie ``skips the $k^{th}$ step", replaces all 
$(m-k)$-categories by their classifying $(m-k+1)$-categories.\ftt{
 having a trivial underlying set of objects and the same $(m-k)$-category as the only hom object, i.e. for any $m$-category $D$, the $(m+1)$-category $\rho(g)(D)$ is such that every $k$-category $h(a,b)$ appearing in the structure of $D$ is replaced by the $(k+1)$-category whose underlying set $\{ \emptyset \}$, is the singleton, so that $h(\emptyset,\emptyset) = h(a,b)$. Composition is trivial, being given by projection to the left (or right) component}.


\sus{Axioms - ZFC with Universes}
\label{Axioms}












We outline here the standard first order set theory, Zermelo Fraenkel with the Axiom of Choice, describing the generation of the formal statements of that language. We use the notion of Grothendieck universes, as sub-models of this same set theory, in order to handle the difficulties arising from classes.

In principle, all definitions, propositions, etc. are to be written in the language of first order logic. 

In any first order theory begins, a primitive statement may be introduced at any time, by its use.\ftt{ The list of primitive statements is a priori infinite. Assuming that any given work is ``complete," one could list all introduced primitive statements in the beginning.} We start with a class of primitive statements, given by the $=$ relation and the $\in$ relation (see \ref{Equality} and \ref{Element}). Any string of the form ``$\ulcorner x = y \urcorner$" or ``$\ulcorner x \in y \urcorner$" is a statement (Quotes ``$\ulcorner$" and ``$\urcorner$" appear around any non-negated statement, acting effectively as parentheses). The set of all statements is defined to be the closure of the set of primitive statements under the operations of quantification, negation, and conjunction. All further primitive statements are to be introduced with some assumption or definition by which they are rendered to be equivalent to some statement in the closure generated by the primitive statements given by the relations $=$ or $\in$. This is to say that, in principle, the only primitive statements are given by $=$ and $\in$, all others being essentially shorthand.

Negation is a statement modifier, formally denoted by ``$\neg$," with the usual interpretations (with Excluded Middle). Any variable symbol is brought into being by its use in any context, and each variable symbol, e.g. ``$x$" is associated to quantifiers denoted ``$\forall x$" and ``$\exists x$," which may modify any statement. A given variable, e.g. $x$ in a statement $\Psi$ is ``bound" if that statement is either of the form $\Psi = \forall x \Phi (x)$, or obtained from a statement $\Phi$ in which $x$ is bound by application either of quantifiers $\forall y$ (for variables $y$ distinct from $x$), or of negation $\neg$, i.e. of the form $\Psi = \forall y \Phi$ or $\Psi = \neg \Phi$. A statement may be regarded as meaningful, (in the sense in which a proof or disproof should be held to be, in principle,possible) only so far as every variable symbol appearing within it is bound. 

It is presumed that the variable symbols appearing within a statement may be clearly discerned (e.g. in the statement $\forall f, \ulcorner f \text{ is finite} \urcorner$, we presume that the ``f" in ``finite" should not be confused for the variable $f$). This is, in some sense, equivalent to the presumption that the quantifiers are rightly applied, or interpreted, i.e. that quantified statements appear where they are meant to appear, and the symbols appearing within a statement are rightly understood. 


The remainder of this subsection is a list of the axioms of $ZFC$. Several of the required set constructions are introduced as function symbols, i.e. maps from some tuple of the universe of discourse to itself. Parentheses indicate expected variable symbols. An axiom regarding the existence of Grothendieck universes is included.

\sss{Type}
The first order language used here is of one ``type". Every variable symbol refers to a set. There are two essential consequences. The first is that any variable symbol may be used in any formula in any position. The second is that all quantifiers have the same ``meaning", in that two statements which differ only by the substitution of one bound variable for another are semantically equivalent. \ftt{Since there is a universal equality sign (\ref{Element}), used to compare any two elements of the domain of this discourse, one can describe sets in an implicit fashion, by their use within a statement. 

In particular, in defining the set of $n$-categories (\ref{nCat}), we construct a finite set of sequences of sets $S_{i}(n)$ (each set $S_{i}$ being a sequence of sets) implicitly and inductively by requiring that each set $S_{i}(n+1)$ should be equal to a set constructed in a particular fashion from the sets $S_{j}(n)$, i.e. should satisfy a certain statement in which both $S_{i}$ and the $S_{j}$ appear as variable symbols. Since there is one type, the equality relations which appear in the statements in that definition are valid (i.e. there is no concern that one might wrongfully compare sets and classes, though the quantifiers $\forall S_{i}$ appearing in the definition range over the universe of discourse, since classes have no separate existence as referents.)

Furthermore, for any two variable symbols $x$ and $y$, for any statement $\Phi$, the formulae $\forall x \Phi(x)$ and $\forall y \Phi(y)$ are equivalent.}

This is to say that we presume that there is a single ``universe of discourse". By this we mean a class which implicitly contains all variables. The meaning of a quantifier in any given statement is determined by the consideration, in the place of the quantified variable, of all elements of this ``universe of discourse".

\sss{Equality}\label{Equality}
There is a binary relation, ``$=$", so that that statements of the form $\ulcorner x = y \urcorner$ are primitive.

\sss{Element}\label{Element}
There is a binary relation, ``$\in$," so that that statements of the form $\ulcorner x \in y \urcorner$ are primitive.

\sss{Empty Set}
The symbol $\emptyset$ denotes the ``empty set" (is a 0-ary function symbol), such that $\ulcorner\forall x, \neg\ulcorner x \in \emptyset \urcorner\urcorner$.

\sss{Singleton}
The unary function symbol ``$\{ ( \ ) \}$" denotes the singleton construction, so that 
$$
\ulcorner \forall x, \ulcorner \forall y, \ulcorner\ulcorner x \in \{y \} \urcorner \Longleftrightarrow \ulcorner x = y \urcorner\urcorner\urcorner\urcorner
$$

\sss{Union}
The binary function symbol ``$( \ ) \cup ( \ )$" denotes the union construction, so that 
$$
\ulcorner\forall x,\ulcorner\forall y, \ulcorner\forall z, \ulcorner\ulcorner x \in y \cup z \urcorner \Longleftrightarrow \ulcorner\ulcorner x \in y \urcorner \text{ or } \ulcorner x \in z \urcorner\urcorner\urcorner\urcorner\urcorner\urcorner
$$

\sss{Power Set}
The unary function symbol ``$2^{( \ )}$" denotes the power set construction, so that 
$$
\ulcorner\forall x, \ulcorner \forall y, \ulcorner\ulcorner x \in 2^{y} \urcorner \Longleftrightarrow \ulcorner \forall z, \ulcorner\ulcorner z \in x \urcorner \Longrightarrow \ulcorner z \in y \urcorner\urcorner\urcorner\urcorner\urcorner\urcorner
$$

\sss{Extension}
Extension, that a set is determined by the elements contained therein
$$
\ulcorner\forall x, \ulcorner\forall y, \ulcorner\ulcorner x = y \urcorner \Longleftrightarrow \ulcorner\forall z, \ulcorner\ulcorner z \in x \urcorner \Longleftrightarrow z \in y \urcorner\urcorner\urcorner\urcorner\urcorner\urcorner
$$

\sss{Comprehension Schema}
A schema is here used, since there is no separate type for statements of the language. By this we mean that the following is to be understood as determining an infinite list of function symbols and corresponding statements. 

For any statement $\Phi(x)$ of the language in which $x$ appears as an open $(un-quantified)$ variable, there is a unary function symbol ``$\{ ( \ ) ; \Phi(x) \}$". This sends a set $z$ to the set of all elements $y \in z$,satisfying the statement $\Phi(x)$, generally denoted by ``$\{ y \in z ; \Phi(y) \}$". Formally, we require that
$$
\ulcorner \forall z, \ulcorner \forall y, \ulcorner , \ulcorner\ulcorner y \in \{ z ; \Phi(x) \} \urcorner \Longleftrightarrow \ulcorner\ulcorner y \in z \urcorner \text{ and } \Phi(y) \urcorner\urcorner\urcorner\urcorner.
$$

\sss{Codomain}
For any set $x$ which defines a function on a set $w$, there exists a set $w'$ such that for any $(z_{1},z_{2}) \in x$, $z_{2} \in w'$. Formally,
$$
\ulcorner\forall w, \ulcorner \forall x,
$$
$$
\ulcorner\ulcorner\ulcorner\forall y, \ulcorner\ulcorner y \in x \urcorner \Longrightarrow \ulcorner \exists z_{1}, z_{1},
$$
$$
\ulcorner\ulcorner z_{1} \in w \urcorner \text{ and }  \ulcorner y = \{\{ z_{1}\}\} \cup \{\{z_{1}\} \cup \{ z_{2}\}\} \urcorner \text{ and }
$$
$$
\ulcorner\forall z_{2}', \ulcorner\ulcorner \{\{ z_{1} \}\} \cup \{\{ z_{1} \} \cup \{z_{2}'\}\} \in x \urcorner \Longrightarrow \ulcorner z_{2}' = z_{2} \urcorner\urcorner\urcorner\urcorner\urcorner\urcorner\urcorner \text{ and }
$$
$$
\ulcorner\forall z_{1}, \ulcorner\ulcorner z_{1} \in w \urcorner \Longrightarrow \ulcorner\exists z_{2}, \ulcorner \{\{ z_{1} \}\} \cup \{\{ z_{1} \} \cup \{z_{2} \}\} \in x \urcorner\urcorner\urcorner\urcorner\urcorner
$$
$$
\Longrightarrow \ulcorner \exists w', \ulcorner \forall z_{1},z_{2}, \ulcorner\ulcorner \{\{ z_{1} \}\} \cup \{\{ z_{1} \} \cup \{ z_{2} \} \} \in x \urcorner
\Longrightarrow \ulcorner z_{2} \in w' \urcorner\urcorner\urcorner\urcorner\urcorner\urcorner\urcorner
$$

\sss{Foundation}
Every set must have a minimal element with respect to the relation $\in$, of (\ref{Element}.1).
$$
\ulcorner \forall x, \ulcorner \exists y, \ulcorner\ulcorner y \in x \urcorner \text{ and } \ulcorner \forall z, \ulcorner\ulcorner z \in x \urcorner \Longrightarrow \neg \ulcorner z \in y \urcorner\urcorner\urcorner\urcorner\urcorner\urcorner
$$

\sss{Natural Numbers}
There is given a natural numbers object, i.e. nullary function symbols $\mathbb{N}$, $+_{\mathbb{N}}$, $\times_{\mathbb{N}}$, $0$, and $1$, such that $+_{\mathbb{N}},\times_{\mathbb{N}} : \mathbb{N} \times \mathbb{N} \rightarrow \mathbb{N}$ are commutative and distributive, with units $0$ and $1$ respectively, such that $\mathbb{N}, \{\{\{x \}\} \cup \{\{x \} \cup \{ +_{\mathbb{N}}(x,1) \} \} \in 2^{2^{\mathbb{N}}} ; \ulcorner x \in \mathbb{N} \urcorner \} , 0 )$ is a model of Peano arithmetic, translated into the present language for set theory.

\sss{Choice}
This is assumed in the form of Zorn's Lemma, i.e. that every inductively ordered set has a maximal element.

\sss{$\bold{Definition}$ of a Universe}
For any set $U$, $U$ is defined to be a $universe$ iff the tuple 
$$
(U, =|_{U}, \in |_{U}, \emptyset, \{ ( \ ) \} |_{U}, ( \ )\cup( \ )|_{U}, 2^{(\ )} |_{U}, (\mathbb{N},+_{\mathbb{N}},\times_{\mathbb{N}},0,1))
$$ 

satisfies the previous axioms, (\ref{Axioms}.2) - (\ref{Axioms}.13). It is assumed that
$$
\ulcorner \forall x, \ulcorner \exists U, \ulcorner\ulcorner U \text{ is a universe}\urcorner \text{ and } \ulcorner x \in U \urcorner\urcorner\urcorner\urcorner
$$

Within this setting, we define the notion of a category.

\sss{$\bold{Definition}$ of Pairs ``$(x,y)$"}
$\ulcorner \forall x, \ulcorner \forall y, \ulcorner (x,y) := \{ \{x \} \} \cup \{ \{ x \} \cup \{ y \} \} \urcorner\urcorner\urcorner$

\sss{$\bold{Definition}$ of Products ``$x \times_{\mathfrak{Set}} y$"}
$\ulcorner \forall x, \ulcorner \forall y, \ulcorner x \times_{\mathfrak{Set}} y := $ \newline $\{ (x',y') \in 2^{2^{2^{x \cup y}}} ; \ulcorner\ulcorner x' \in x \urcorner \text{ and } \ulcorner y' \in y \urcorner\urcorner \} \urcorner\urcorner\urcorner$

\sss{$\bold{Definition}$ of a Function ``$f : x \rightarrow y$"}
$\ulcorner \forall x, \ulcorner \forall y, \ulcorner \forall f, \ulcorner\ulcorner f : x \rightarrow y\urcorner \Longleftrightarrow$ \newline $\ulcorner\ulcorner f \subseteq x \times_{\mathfrak{Set}} y \urcorner \text{ and } \ulcorner \forall x', \ulcorner\ulcorner x' \in x \urcorner \Longrightarrow \ulcorner \exists ! y', \ulcorner\ulcorner y' \in y \urcorner \text{ and } \ulcorner (x',y') \in f \urcorner\urcorner\urcorner\urcorner\urcorner\urcorner\urcorner\urcorner\urcorner\urcorner$

Given two functions $f : x \rightarrow y$ and $g : y \rightarrow z$, we denote by ``$f \circ g : x \rightarrow z$" the compostion.

\sss{$\bold{Definition}$ of Associating Functions ``a"}
$\ulcorner \forall x, \ulcorner \forall y, \ulcorner \forall z, \ulcorner\forall a,$ \newline $ \ulcorner\ulcorner a$ $\text{ associates}$ $(x,y,z) \urcorner$ $\Longleftrightarrow$  $\ulcorner\ulcorner a : ((x \times_{\mathfrak{Set}} y) \times_{\mathfrak{Set}} z) \rightarrow (x \times_{\mathfrak{Set}} (y \times_{\mathfrak{Set}} z)) \urcorner \text{ and }$ \newline $\ulcorner \forall x', \ulcorner \forall y', \ulcorner \forall z', \ulcorner (((x,y),z),(x,(y,z))) \in a\urcorner\urcorner\urcorner\urcorner\urcorner\urcorner\urcorner$

\sss{$\bold{Definition}$ of a Category}\label{Category}
For any 5-tuple, $C = (O,A,H,\circ,id)$, we say that $C$ is a category iff it satisfies the following.

\ref{Category}.1. The function $H : O^{2} \rightarrow 2^{A}$ is such that $A = \bigcup_{x,y \in O}H(x,y)$ and $\forall x,x',y,y' \in O, \ulcorner\ulcorner (x,y) \neq (x',y') \urcorner \Longrightarrow \ulcorner H(x,y) \cap H(x'y') = \emptyset \urcorner\urcorner$, i.e. the set of arrows is the disjoint union of all hom sets.

\ref{Category}.2. The function $\circ$ has for its domain $O^{3}$, so that for any $x,y,z \in O$, the composition $\circ(x,y,z) : h(x,y) \times_{\mathfrak{Set}} h(y,z) \rightarrow h(x,z)$ is a function. We require the composition to be ``associative".

\ref{Category}.3. The function $id : O \rightarrow A$ is such that for any $x \in O$, for any $y \in O$, $id(x) \in h(x,x)$ is neutral with respect to the composition maps $\circ(x,x,y)$ and $\circ(y,x,x)$.

\sss{$\bold{Definition}$ of a Functor, $F = (F_{(0 )},F_{(1 )})$}\label{Functor}
A functor 
$$
F = (F_{(0)},F_{(1)}) : C = (O_{C},A_{C},H_{C},\circ_{C},id_{C}) \longrightarrow (O_{D},A_{D},H_{D},\circ_{D},id_{D}) = D
$$ 

is a pair of functions $F_{(0)} : O_{C} \rightarrow O_{D}$, $F_{(1)} : A_{C} \rightarrow A_{D}$, such that the following hold.

\ref{Functor}.1. For any $x,y \in O_{C}, \{ F_{(1)}(f) ; f \in H_{C}(x,y) \} \subseteq H_{D}(F_{(0)}(x),F_{(0)}(y))$.

\ref{Functor}.2. For any $x,y,z \in O_{C}$, for any $f \in H_{C}(x,y),g \in H_{C}(y,z)$, 
$$
\circ_{D}(F_{(0)}(x),F_{(0)}(y),F_{(0)}(z)) ((F_{(1)}(f),F_{(1)}(g))) = F_{(1)}(\circ_{C}(x,y,z)((f,g)))
$$

\ref{Functor}.3. For any $x \in O_{C}$, $F_{(1)}(id_{C}(x)) =  id_{D}(F_{(0)}(x))$.

If F is a functor, $F_{(0)}$ is the map on ``objects" and $F_{(1)}$ is the map on ``arrows". Often the subscripts will be omitted, and ``$F$" will refer to either the object map or the arrow map, e.g. $F(c)$, $F(\phi)$. The reader used to the standard notation can ignore the subscripts.


\sus{Notation} 
\label{Notation}
Parentheses indicate expected arguments.

\sss{}
Temporary definitions are denoted by $:_{t}=$. They are valid only within the definition, proposition, paragraph, etc. in which they appear. Definitions denoted by $:=$ are valid for all subsequent definitions, propositions, etc.

\sss{}
``$U$" will generally denote a universe, i.e. a model of set theory, and $U'$ will denote some universe containing $U$, i.e. $U \in U' \in U'' \in ...$.

\sss{}
If $U$ is some universe, then $U-\mathfrak{Cat}$ is the category of $U-small$ categories, i.e. its objects are categories $C = (O,A,H,\circ,id)$ for which $O,A \in U$, i.e. the set of arrows is $U$-small, and its arrows are functors $F : C \rightarrow D$.
We usually denote $U-\mathfrak{Cat}$
just by $\mathfrak{Cat}$ and unless otherwise stated ``category'' means a ``$U$-category''.

\sss{}
``$a\circ b$" denotes ``$a \circ_{C} b$", the composition of arrows in a category, as well as the composition of functions, where the context should eliminate the ambiguity.

\sss{}
For any category $C = (O,A,H,\circ,id)$, $Ob(C) = O$ is the class of objects of the category, and $Arr(C) = A$ is the class of arrows of $C$.

\sss{}
The functions $dom_{(C)}, codom_{(C)} : Arr(C) \longrightarrow Ob(C)$ are the domain and codomain maps for the category $C$.

\sss{}
The category
$
\star \text{ is the terminal category, having one arrow.}
$

\sss{}
The dual category functor\
$
( )^{opp} : \mathfrak{Cat} \longrightarrow \mathfrak{Cat} 
\text{ sends a category to its opposite}
$.
It may carry an  index based on   (i) when one wants to differentiate
between the map on objects, and the map on arrows.
Then it would be written as
$_{( )}( )^{opp}$.\ftt{ 
We move the subscript to the left because the aesthetic sense resists the appearance of a subscript of a superscript. The attachment of the subscript should be to the functor $^{opp}$, rather than to the category or functor on which it acts, and there is a fear that placement of the subscript after the term would suggest its attachment to the category or functor in the argument.
}

\sss{Functorial Products}

Let ``$\times$" denote the product of two categories (i.e. the objects of $C \times D$ are pairs of objects $(c,d)$ from the component categories and the arrows are pairs $(f,g) : (c,d) \rightarrow (c',d')$ of arrows from the component categories). Then there are two canonical product functors, one for categories and one for sets, defined as follows.

For sets, 

$\times_{U-\mathfrak{Set}} : (U-\mathfrak{Set})^{opp} \times U-\mathfrak{Set} \longrightarrow U-\mathfrak{Set}$

sends a pair of sets to their product, $(a,b) \mapsto \{ \pi \in Hom_{(U-\mathfrak{Set})}(\{a,b \} , a\cup b) ;  \pi(a) \in a \text{ and } \pi(b) \in b \}$ and a pair of functions to their product, so that $(f,g) \in Arr(U-\mathfrak{Set} \times U-\mathfrak{Set})$ is sent to the map from $a \times_{U-\mathfrak{Set}} b$ to $a' \times_{U-\mathfrak{Set}} b'$ determined by applying $f$ to the $a$ component and $g$ to the $b$ component, i.e. $(x,y) \mapsto (f(x),g(y))$. 

For categories,

$\times_{U-\mathfrak{Cat}} : (U-\mathfrak{Cat})^{opp} \times U-\mathfrak{Cat} \longrightarrow U-\mathfrak{Cat}$

sends a pair of categories to their product i.e. (a $U$-small restriction of $\times$ above) and sends a pair of functors to their product, $(F,G) \mapsto \newline ( ((a,b) \mapsto (F_{(0)}(a),G_{(0)}(b))_{a \in Ob(dom(F)), b \in Ob(dom(G))} , \newline ((f,g) \mapsto (F_{(1)}(f) , G_{(1)}(g)))_{f \in Arr(dom(F)), g \in Arr(dom(G))} )$.

Note that $\times_{U-\mathfrak{Set}}$ and $\times_{U-\mathfrak{Cat}}$ do {\em not} denote fibred products in the category of $U'$-categories.

\sss{}

For any category $C$, for any $\phi, \psi,f,g \in Arr(C)$, we define a relation \newline $(f,g) fibres_{(C)}(\phi,\psi)$ (``$f$ and $g$ fibre $\phi$ and $\psi$ in $C$") iff $codom(\phi) = codom(\psi)$ and $dom(f) = dom(g)$ and $f,g$ form a fibred product of $\phi,\psi$ (as in [1]). Informally, the $C$ may be omitted if it is understood.

\sss{}
The hom functor map $Hom_{()}$ is a map of large sets which assigns to a category $C$ the functor 
$(C)^{opp} \times C \longrightarrow \mathfrak{Set}$ which sends a pair of objects $(a,b)$ to their hom set, i.e. $(a,b) \mapsto Hom_{C}(a,b)$, and sends a pair of arrows $(f,g)$ to the map of hom sets given by composition, i.e. $(g,f) \mapsto (h \mapsto f \circ h \circ g )_{h \in Hom_{C}(codom(g),dom(f))}$, which lies in $Hom_{\mathfrak{Set}} (Hom_{C}(codom(g),dom(f)) , Hom_{C}(dom(g),codom(f)))$. In the standard notation it is written ``$Hom_{C}$." We include the parentheses from the point of view that ``$Hom_{C}"$ is the function $Hom_{( \ )}$ evaluated at $C$. \ftt{The hom map can be thought of as a single map (that on objects, assigning the hom set to a pair of objects), or a pair of maps (as given, a functor). Strictly speaking, the former is part of the data which determines a category, while the latter exists only after a definite category is constructed}

\sss{Category $
\downarrow_{( )}(,)
$ of arrows (in a third category) between two categories}. 

 If $A\raa{F}C$ and $B\raa{G}C$ are functors with 
the same codomians 
$codom_{(\mathfrak{Cat})}(F) = C = $ \newline $codom_{(\mathfrak{Cat})}(G)$, 
then 
$\downarrow_{(C)}(F,G)$ is the category of arrows from 
$A$ to $B$ in $C$ with respect to $F$ and $G$. 
So. the objects are triples $(a,\phi,b)$ where $a \in Ob(dom(F))$ and $b \in Ob(dom(G))$ and $\phi \in Hom_{C}(F(a),G(b))$, and the arrows $(a,\psi,b) \longrightarrow (a',\phi,b')$ are pairs $(f,g)$ where $f : a \rightarrow a'$ and $g : b \rightarrow b'$ such that $G(g) \cdot \psi = \phi \cdot F(f)$.

\sss{}
The symbol $ob_{( )}( )$ assigns to any category $C$ and an object $c \in Ob(C)$ the ``object functor," $ob_{(C)}(c)$, which sends the category with one arrow $ \star =( \{ \emptyset \} , \{ \emptyset \}, ... ) \xrightarrow{ob_{(C)}(c)} C$ to $C$ by mapping $\emptyset \mapsto c$ on objects and $\emptyset \mapsto id_{c}$ on arrows.

\sss{}
The {\em domain object functor}
$
dob
\downarrow_{( )}( , )$. 
If again $A\raa{F}C\laa{G}B$, then 
$$
\downarrow_{(C)}(F,G) 
\raa{
dob\downarrow_{(C )}(F ,G )
} 
dom_{(\mathfrak{Cat})}(F)
$$
is defined by sending any triple
$(a,\phi,b)$ in $\downarrow_{(C)}(F,G) $
to $a$\ie
one only remembers the domain of the arrow.

\sss{}
The {\em codomain object functor} $cob\downarrow_{( )}( , )$. If again $A\raa{F} C \laa{G} B$, then 
$$
\downarrow_{(C)}(F,G) 
\raa{
cob\downarrow_{(C )}(F ,G )
} 
dom_{(\mathfrak{Cat})}(G)
$$

is defined by sending a triple $(a,\phi,b)$ in $\downarrow_{(C)}(F,G)$ to $b$, i.e. one remembers only the codomain of the arrow.

\sss{
Symbol $
\langle ( ) \rangle_{Full( )}
$}
Here,
  $\langle S \rangle_{Full(C)}$ means the full subcategory of $C$ generated from $S \subseteq Ob(C)$.

\sss{}
$
\langle ( ) \rangle_{CatA( )}
$

  $\langle S \rangle_{CatA(C)}$ generates a subcategory from $S \subseteq Arr(C)$

\sss{}
$
\langle ( ) \rangle_{Equiv( )}
$

  $\langle R \rangle_{Equiv(S)}$ is the equivalence relation on $S$ generated by $R \subseteq S \times S$.

\sss{}
$
[ ]_{( )}
$

  $[f]_{(R)}$ is the equivalence class of $f$ with respect to the equivalence relation $R$.

\sss{}
\label{TCat}
$
\mathfrak{TCat}
$

is the $\textit{category of tensor categories}$. Objects are pairs $(A,\otimes : A \times A \rightarrow A)$, and arrows are pairs $(F : A \rightarrow B, \rho : \otimes_{B} \circ (F \times F) \rightarrow F \cdot \otimes_{A} )$, where $\rho$ is a natural transformation of functors (see [1]). Note that we do not require that $\rho$ should be an isomorphism, or that $(A,\otimes)$ should be equipped with an associator or a unit.

\sss{}
\label{ATCat}
$
\mathfrak{ATCat}
$

is the $\textit{category of associative tensor categories}$. Objects are triples $(A,\otimes,\alpha)$, where $(A,\otimes)$ $\in$ $Ob(\mathfrak{TCat})$ is a tensor category and $\alpha : \otimes \cdot (\otimes \times_{\mathfrak{Cat}} id_{A}) \rightarrow \otimes \cdot ( id_{A} \times_{\mathfrak{Cat}} \otimes) \cdot \alpha_{\mathfrak{Cat}}(A,A,A)$ is a natural isomorphism, where $\alpha_{\mathfrak{Cat}}(A,A,A) \in Hom_{\mathfrak{Cat}^{2}}( (A \times_{\mathfrak{Cat}} A) \times_{\mathfrak{Cat}} A , A \times_{\mathfrak{Cat}} (A \times_{\mathfrak{Cat}} A)))$ is the usual associator for the product category functor $\times_{\mathfrak{Cat}} : \mathfrak{Cat} \times \mathfrak{Cat} \longrightarrow \mathfrak{Cat}$. We refer to the natural isomorphism $\alpha$ as an ``associator" for $(A,\otimes)$.\ftt{In contrast to the usual definition, we do not require a self-consistency
condition for the associativity constraint $\al$ (the pentagon condition which requires that the two ways of ``associating" the tensor product of four objects should be the same).
Nether do we require unital structures or properties.}

\sss{The Functor $Hom_{U-\mathfrak{Cat}^{2}}$}
We do not define $U-\mathfrak{Cat}^{2}$, the 2-category of $U$-small categories, itself, but it appears as part of the symbol $Hom_{U-\mathfrak{Cat}^{2}}$, since this functor essentially constitutes the enrichment data associated to $U-\mathfrak{Cat}^{2}$. The ``enrichment" of $U-\mathfrak{Cat}$ over itself is given by a functor 
$$
Hom_{U-\mathfrak{Cat}^{2}} := (Hom_{U-\mathfrak{Cat}^{2}(0)} , Hom_{U-\mathfrak{Cat}^{2}(1)} ) 
: \ U-\mathfrak{Cat}^{opp} \times U-\mathfrak{Cat} \longrightarrow U-\mathfrak{Cat},
$$

defined by the two functions, $Hom_{U-\mathfrak{Cat}^{2}(i)}$, for $i = 0,1$, defined below. \footnote{Note also that the term ``enrichment" has not yet been defined, and is not yet necessary. This will be done in section (2.2).
}

The functor $Hom_{U-\mathfrak{Cat}^{2}}$ on objects is the function
$Hom_{U-\mathfrak{Cat}^{2}(0)}$ that sends a pair of categories $(C,D)$ to the $U$-category $Hom_{U-\mathfrak{Cat}^{2}(0)}(C,D)$ of 
functors $C \rightarrow D$. (Its set of objects is the set of functors $F : C \longrightarrow D$ and the set of arrows is the set of natural transformations $F \xrightarrow{\alpha} G$; see [1] or [2].)

On arrows the functor $Hom_{U-\mathfrak{Cat}^{2}}$ is given by the function $Hom_{U-\mathfrak{Cat}^{2}(1)}$ 
which sends 
an arrow $(G,F) \in Arr(U-\mathfrak{Cat}^{opp} \times_{U'-\mathfrak{Cat}} U-\mathfrak{Cat})$, given by a pair of functors $(C' \xrightarrow{G} C) \in (U-\mathfrak{Cat})^{opp}$ and $(D \xrightarrow{F} D') \in U-\mathfrak{Cat}$, to a functor $Hom_{U-\mathfrak{Cat}^{2}(1)}(G,F) : Hom_{U-\mathfrak{Cat}^{2}(0)}(C,D) \longrightarrow Hom_{U-\mathfrak{Cat}^{2}(0)}(C',D')$. 

We define $Hom_{U-\mathfrak{Cat}^{2}(1)}(G,F)$ on objects by forwards and backwards composition, i.e. the object map is given by $( H \mapsto F \circ H \circ G)_{H \in Hom_{U-\mathfrak{Cat}}(C,D)}$. 

We define the arrow map $Hom_{U-\mathfrak{Cat}^{2}(1)}(G,F)$ as follows. For a natural transformation $\alpha : H_{1} \rightarrow H_{2}$ between functors $H_{1},H_{2} : C \longrightarrow D$, we define 
$$
Hom_{U-\mathfrak{Cat}^{2}(1)}(G,F)(\alpha) := (i \mapsto F(\alpha(G(i))) )_{i\in Ob(C')} \in Arr(Hom_{U-\mathfrak{Cat}^{2}(0)}(C',D')),
$$

so that $ Hom_{U-\mathfrak{Cat}^{2}(1)}(G,F)(\alpha) : F \circ H_{1} \circ G \rightarrow F \circ H_{2} \circ G$.

\sss{}
Suppose that $I$ and $C$ are categories. Then the diagonal functor

$
\Delta_{(I,C)} : C \rightarrow Hom_{U-\mathfrak{Cat}^{2}(0)}(I,C)
$

sends an object $c \in Ob(C)$ to its constant functor, which sends every object in $I$ to $c$ and every arrow to the identity arrow of $c$. It sends an arrow $\phi : c_{1} \rightarrow c_{2}$ to the natural transformation $\Delta_{(I,A)(0)}(c_{1}) \rightarrow \Delta_{(I,A)(0)}(c_{2})$ which assigns to every $i \in Ob(I)$ the arrow $\phi : c_{1} \rightarrow c_{2}$. I.e., it is defined on objects by

$
\forall c \in Ob(C), \forall f \in Arr(I), \ \Delta_{(I,C)(0)}(c)_{(1)} (f) = id_{c}
$

and on arrows by the following

$
\forall \phi \in Arr(A), \ \Delta_{(I,C)(1)}(\phi) := (i \mapsto \phi)_{i \in Ob(I)}.
$

\sss{}
For any category $C \in Ob(U-\mathfrak{Cat})$, the Yoneda functors
$$
Yo_{(C)} : C \hookrightarrow Hom^{(1)}_{\mathfrak{Cat}^{2}}(C^{opp},U-\mathfrak{Set})
$$
$$
Yo^{opp}_{(C)} : C^{opp} \hookrightarrow Hom^{(1)}_{\mathfrak{Cat}^{2}}(C,U-\mathfrak{Set})
$$

send an object $c \in Ob(C)$ to the functors given by $Yo_{(C)(0)}(c) : x \mapsto Hom_{C}(x,c)$ and $Yo^{opp}_{(C)(0)}(c) : x \mapsto Hom_{C}(c,x)$.
\ftt{I imagine that it may be desirable to restrict the codomain of the Yoneda functors to a $U$-small sub-category, that their use might not force the unnecessary invocation of higher universes in certain circumstances.}

\sus{A Variation on Limits ($(sk,e)$-limits)}

\sss{The Use of $dob\downarrow_{(-)}$} Recall that $\Delta_{(J,A)} : A \longrightarrow Hom_{U-\mathfrak{Cat}^{2}(0)}(J,A)$ by sending an object $c$ to the $c$-valued constant functor, and $ob_{(Hom_{U-\mathfrak{Cat}^{2}(0)}(J,A))}(F \cdot e) : \star \longrightarrow Hom_{U-\mathfrak{Cat}^{2}(0)}(J,A)$ by sending the one arrow in $\star$ to $id_{F \cdot e}$. Recall also that the category $\downarrow_{(Hom_{U-\mathfrak{Cat}^{2}(0)}(J,A))} (\Delta_{(J,A)},ob_{Hom_{U-\mathfrak{Cat}^{2}(0)}(J,A)}(F\cdot e))$ of arrows is defined so that its objects are triples $(a,\alpha,\emptyset )$, where $a \in Ob(A)$, $\alpha : \Delta_{(J,A)}(a) \rightarrow F \cdot e$ is a natural transformation, and $\emptyset$ is the object in the category $\star$ (the category with one arrow). An arrow between $(a_{1},\alpha_{1},\emptyset ) \xrightarrow{(\phi,id_{\emptyset})} (a_{2},\alpha_{2},\emptyset )$ is a pair of arrows $(( a_{1} \xrightarrow{\phi} a_{2}) , id_{\emptyset} ) \in Arr(A) \times Arr(\star)$ for which $\alpha_{2} \cdot \Delta_{(J,A)(1)}(\phi) = \alpha_{1}$. 

An isomorphic category is given by forgetting both the $\emptyset$ symbol, and the $a$ term (since for any $j \in Ob(J)$, $a = dom(\alpha(j))$, so that $a$ is determined by $\alpha$), so that its objects are natural transformations $\alpha$, where $a \in Ob(A)$ and $\alpha : \Delta_{(J,A)}(a) \rightarrow F \cdot e$. If $\alpha_{1} : \Delta_{(J,A)}(a_{1}) \rightarrow F \cdot e$ and $\alpha_{2} : \Delta_{(J,A)}(a_{2}) \rightarrow F \cdot e$, then an arrow $\alpha_{1} \xrightarrow{\phi} \alpha_{2}$ is an arrow $( a_{1} \xrightarrow{\phi} a_{2}) \in Arr(A)$ for which $\alpha_{2} \cdot \Delta_{(J,A)(1)}(\phi) = \alpha_{1}$

\sss{$\bold{Definition}$ of an $(sk,e)$-Limit} \label{SkLim}

Consider functors
$J \xrightarrow{e} I \xrightarrow{F} A \xrightarrow{sk} B$. 

Consider the set of maps $\alpha : Ob(I) \rightarrow Arr(A)$ such that $sk \cdot \alpha$ defines a natural transformation from a diagonal functor to $sk\cdot F$. Let
$$
\mathcal{C} :_{t}= \downarrow_{(Hom_{U-\mathfrak{Cat}^{2}(0)}^{(1)}(I,B))} ( \Delta_{(I,B)} , ob_{(Hom_{U-\mathfrak{Cat}^{2}}^{(1)}( I,B ))}(sk\cdot F) )
$$

and
$$
\mathcal{D} :_{t}= \downarrow_{(Hom_{U-\mathfrak{Cat}^{2}(0)}^{(1)}(J,A))} ( \Delta_{(J,A)} , ob_{(Hom_{U-\mathfrak{Cat}^{2}}^{(1)}( J,A ))}(F\circ e) )
$$

and
$$
\mathcal{E} :_{t}= \downarrow_{(Hom_{U-\mathfrak{Cat}^{2}(0)}^{(1)}(J,B))} ( \Delta_{(J,B)} , ob_{(Hom_{U-\mathfrak{Cat}^{2}}^{(1)}( J,B ))}(sk\circ F\circ e) )
$$

Let $\varepsilon : P :_{t}= \mathcal{C} \times_{\mathcal{E}} \mathcal{D} \longrightarrow \mathcal{D}$ be one of the arrows of a fibred product, an arrow in $U'-\mathfrak{Cat}$. If $For$ is the functor which takes the object $a$ from an arrow $\Delta_{(J,A)}(a) \longrightarrow F \circ e$, an $(sk,e)$-limit is a colimit of $For \circ \varepsilon
$. 

This is explained in the following sections.

\ref{SkLim}.1. Let $P$ be the full sub-category of the category 
$$
\downarrow_{(Hom_{U-\mathfrak{Cat}^{2}(0)}(J,A))} (\Delta_{(J,A)},ob_{Hom_{U-\mathfrak{Cat}^{2}(0)}(J,A)}(F\circ e)) \subseteq Hom_{U-\mathfrak{Cat}^{2}(0)}(J,A)_{/ F \circ e}
$$

whose objects are natural transformations $\alpha$, such that for some $a \in Ob(A)$ we have $\alpha : \Delta_{(J,A)}(a) \rightarrow F \cdot e$, such that there exists a natural transformation $\tilde{\alpha} : \Delta_{(I,B)}(sk(a))\rightarrow sk\cdot F$ such that the natural transformation $Hom_{U-\mathfrak{Cat}^{2}(1)}(id_{J},sk)(\alpha):  \Delta_{(J,B)}(sk(a)) \rightarrow sk \cdot F \cdot e$ given by sending $j \in Ob(J)$ to $sk(\alpha (j)) : sk(a) \rightarrow sk(F(e(j)))$ is equal to the natural transformation $Hom_{U-\mathfrak{Cat}^{2}(1)}(e,id_{B})(\tilde{\alpha}) : \Delta_{(J,B)}(sk(a)) \rightarrow sk \cdot F \cdot e$ given by sending $j \in Ob(J)$ to $\tilde{\alpha}(e(j)) : sk(a) \rightarrow sk(F(e(j)))$.

\ref{SkLim}.2. Denote by $\varepsilon : P \longrightarrow \downarrow_{(Hom_{U-\mathfrak{Cat}^{2}(0)}(J,A))}(\Delta_{(J,A)},ob_{(Hom_{U-\mathfrak{Cat}^{2}(0)}(J,A))}(F\cdot e))$ the inclusion, given by $\alpha \mapsto (a,\alpha,\emptyset )$. Denote also by $p$ the functor, 
$$
p :_{t}= dob\downarrow_{(Hom_{U-\mathfrak{Cat}^{2}(0)}(J,A))}(\Delta_{(J,A)},ob_{(Hom_{U-\mathfrak{Cat}^{2}(0)}(J,A))}(F \cdot e) \cdot \varepsilon : P \longrightarrow A.
$$

Thus, $p$ is given by sending $\alpha \mapsto a$, and the fibre of $p$ over any given $a \in Ob(A)$ is the set of natural transformations $\Delta_{(J,A)}(a) \xrightarrow{\alpha} F\cdot e $ such that for some natural transformation $\tilde{\alpha} : sk\cdot \Delta_{(I,A)}(a) = \Delta_{(I,B)}(sk(a)) \longrightarrow sk \cdot F$, one has
$$
Hom_{U-\mathfrak{Cat}^{2}(1)}(id_{B},e)(\tilde{\alpha}) = Hom_{U-\mathfrak{Cat}^{2}(1)}(id_{J},sk)(\alpha)
.$$

In other words, 
$$
P \subseteq \downarrow_{(Hom_{U-\mathfrak{Cat}^{2}(0}(J,A))}(\Delta_{(J,A)},ob_{(Hom_{U-\mathfrak{Cat}^{2}(0)}(J,A))}(F \cdot e)
$$ 

is the full subcategory which contains all objects $\alpha$ such that the image of $\alpha$ in \newline $Hom_{U-\mathfrak{Cat}^{2}(0)}(J,B)$ under the functor $Hom_{U-\mathfrak{Cat}^{2}(1)}(id_{J},sk)$ has a lift to \newline $Hom_{U-\mathfrak{Cat}^{2}(0)}(I,B)$ by the functor $Hom_{U-\mathfrak{Cat}^{2}(1)}(e,id_{B})$ (we denote this lift by $\tilde{\alpha}$).


\ref{SkLim}.3.
Then the $(sk,e)$-limit of $F$ is the colimit of $p$, i.e., for any pair $(l,\lambda) \in Ob(A) \times Arr(Hom_{U-\mathfrak{Cat}^{2}(0)}(P,A))$ for which $\lambda : p \rightarrow \Delta_{(P,A)}(l)$, we say that $(l,\lambda)$ is an $(sk)-limit(F)$ iff $(l,\lambda)$ is a $colimit(p)$ (in the sense in which $(\lambda,l)$ is a universal arrow in $Hom_{U-\mathfrak{Cat}^{2}(0)}(P,A)$, from $p$ to the constant functor $\Delta_{(P,A)} : A \longrightarrow Hom_{U-\mathfrak{Cat}^{2}(0)}(P,A)$).







 





\begin{ex}
In the above, if either $e$ or $sk$ is an identity functor, then the $(sk)$-limit of $F$ is the limit $(l,\lambda)$ of $F$, if the latter exists. 

If $e = id_{I}$, then $P$ consists of all natural transformations $\alpha : \Delta_{(I,A)}(a) \rightarrow F \circ e = F$ for which there exists some lift $\tilde{\alpha} : \Delta_{(I,B)}(sk(a)) \rightarrow sk \circ F$. But $\tilde{\alpha} = Hom_{U-\mathfrak{Cat}^{2}(1)}(sk,id_{I})(\alpha)$ would be such a lift. Therefore any $\alpha : \Delta_{(I,A)}(a) \rightarrow F$ has a lift. Furthermore, for any $i \in Ob(I)$, $\alpha(i) : a \rightarrow F(i)$, and for any $\beta : \Delta_{(I,A)}(b) \rightarrow F$, and any arrow $\phi : \alpha \rightarrow \beta$ in $P$, by definition of $P$, we have that $\beta(i) \circ \phi = \alpha(i)$. Therefore, by the definition of a colimit, there exists a unique $\alpha_{l}(i) : l \rightarrow F(i)$ such that for any $(\Delta_{(I,A)}(a) \xrightarrow{\alpha} F) \in Ob(P)$, $\alpha(i) = \alpha_{l}(i) \circ \lambda(\alpha)$. Since each colimit arrow $\alpha_{l}(i)$ is determined by the arrows $\alpha (i)$ which come from natural transformations $\alpha$, the assignment $\alpha_{l} = (i \mapsto \alpha_{l}(i))_{i \in Ob(I)}$ determines a natural transformation $\Delta_{(I,A)}(l) \rightarrow F$. Therefore $\alpha_{l} \in Ob(P)$. If the limit of $F$ exists, then it is isomorphic to a terminal object in $P$, $\alpha_{t} \in Ob(P)$. But by the above argument, this terminal object $\alpha_{t}$ determines a colimit arrow $\lambda (\alpha_{t}) : a_{t} \rightarrow l$, and being a terminal object in $P$ there is a unique arrow $e_{l} : l \rightarrow a_{t} = dom(\alpha_{t}(i))$ in $P$. By the definition of terminal objects, $e_{l} \circ \lambda(\alpha_{t}) = id_{a_{t}}$ 
\end{ex}

\begin{lem}
(Inclusion, via right exactness) Given $sk, F, e : J \rightarrow I$, $\varepsilon : P \rightarrow$

$\downarrow_{(Hom_{(U-\mathfrak{Cat}^{2})(0)} (J, A) )}(\Delta_{(J,A)}, ob_{(Hom_{(U-\mathfrak{Cat}^{2})(0)}(J,A) )} (F \circ e) )$, $l$, and $\lambda$ as above, suppose further that $sk$ is right exact, and $Ob(I) = Ob(J)$. For each $i \in Ob(I) = Ob(J)$, consider the arrow induced from the colimit $l$ to $F(i)$ by $\alpha \mapsto \alpha (i)$, where $(p,\alpha,\cdot) \in Ob(P)$ is an object in $P$. Then this assignment determines an object $(l,\alpha_{l} , \cdot ) \in Ob(P)$.

I.e. the $(sk)$-limit determines an object,
$$
(l, (j \mapsto \lambda ( ( (b,f,\cdot) \mapsto f(j) )_{(b,f,\cdot) \in Ob(P)} ) )_{j \in Ob(J)}, \cdot ) \in Ob(P)
$$

in $P$.
\end{lem}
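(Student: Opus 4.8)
The plan is to interpret the claimed family as the tuple of factorizations of certain cocones through the colimit $(l,\lambda)$, and then to verify the two membership conditions for $P$: that the family is a natural transformation $\Delta_{(J,A)}(l)\to F\circ e$ (making it an object of the ambient comma category $\downarrow_{(\ldots)}(\Delta_{(J,A)}, ob(F\circ e))$), and that it carries a lift $\tilde{\alpha}_{l}$ of the kind in (\ref{SkLim}.1). First, fix $j\in Ob(J)$ and consider the assignment $c_{j}\colon (b,f,\cdot)\mapsto f(j)$ on objects of $P$. Because an arrow $(b_{1},f_{1},\cdot)\xrightarrow{\phi}(b_{2},f_{2},\cdot)$ of $P$ is an arrow $\phi\colon b_{1}\to b_{2}$ of $A$ satisfying $f_{1}(j)=f_{2}(j)\circ\phi$ for all $j$, the assignment $c_{j}$ is a cocone from $p$ to the constant object $F(e(j))$. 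The universal property of the colimit $(l,\lambda)$ then yields a unique arrow $\alpha_{l}(j)\colon l\to F(e(j))$ with $f(j)=\alpha_{l}(j)\circ\lambda((b,f,\cdot))$ for every $(b,f,\cdot)\in Ob(P)$; this $\alpha_{l}(j)$ is exactly the component $\lambda(((b,f,\cdot)\mapsto f(j)))$ named in the statement.

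Next, I would verify that $\alpha_{l}=(j\mapsto\alpha_{l}(j))$ is natural over $J$, i.e. that $F(e(g))\circ\alpha_{l}(j)=\alpha_{l}(j')$ for each $g\colon j\to j'$ of $J$. Since $\alpha_{l}(j')$ is the unique arrow factoring $c_{j'}$ through $\lambda$, it suffices to check that $F(e(g))\circ\alpha_{l}(j)$ factors it as well: composing with an arbitrary leg gives $(F(e(g))\circ\alpha_{l}(j))\circ\lambda((b,f,\cdot))=F(e(g))\circ f(j)=f(j')$, the last step being the genuine naturality of $f$ over $J$ (each object $f$ of $P$ being a natural transformation $\Delta_{(J,A)}(b)\to F\circ e$). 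Uniqueness then forces the equality, so $\alpha_{l}$ is a natural transformation and $(l,\alpha_{l},\cdot)$ is an object of the comma category. This part is essentially the computation already carried out in the $e=id$ Example above.

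The substantive step is exhibiting the lift $\tilde{\alpha}_{l}$, and this is where right exactness of $sk$ enters. I would invoke it to conclude that $sk$ sends the colimit $(l,\lambda)$ of $p$ to a colimit $(sk(l),sk\circ\lambda)$ of $sk\circ p$ in $B$. Applying the functor $sk$ to the defining equation of $\alpha_{l}(i)$ shows $sk(\alpha_{l}(i))$ is the unique factorization through $sk\circ\lambda$ of the cocone $(b,f,\cdot)\mapsto sk(f(i))$. Setting $\tilde{\alpha}_{l}(i):=sk(\alpha_{l}(i))$ for $i\in Ob(I)=Ob(J)$, I prove naturality of $\tilde{\alpha}_{l}$ over the whole of $I$ by rerunning the uniqueness argument downstairs in $B$: for $g\colon i\to i'$ in $I$, $(sk(F(g))\circ sk(\alpha_{l}(i)))\circ sk(\lambda((b,f,\cdot)))=sk(F(g))\circ sk(f(i))=sk(f(i'))$, where the final equality is precisely the $sk$-naturality over $I$ built into the definition of $Ob(P)$ (note that $f$ itself need not be natural over $I$; only $sk\circ f$ is, via the lift $\tilde{f}$). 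Uniqueness of factorizations through $sk(l)$ then forces $sk(F(g))\circ sk(\alpha_{l}(i))=sk(\alpha_{l}(i'))$, so $\tilde{\alpha}_{l}\colon\Delta_{(I,B)}(sk(l))\to sk\circ F$ is a natural transformation. Finally, because $e$ is the identity on objects, both $Hom_{U-\mathfrak{Cat}^{2}(1)}(id_{J},sk)(\alpha_{l})$ and $Hom_{U-\mathfrak{Cat}^{2}(1)}(e,id_{B})(\tilde{\alpha}_{l})$ send $j$ to $sk(\alpha_{l}(j))$, so the compatibility of (\ref{SkLim}.1) holds and $(l,\alpha_{l},\cdot)\in Ob(P)$.

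The main obstacle I anticipate is making precise the sense in which $sk$ preserves the colimit defining $l$: that colimit is indexed by the possibly large category $P$, so I would need to confirm that the intended meaning of right exact is strong enough to preserve this specific colimit rather than only finite colimits or cokernels, and I would isolate this as the single external hypothesis on $sk$. Everything else is bookkeeping: keeping the composition order consistent with the preceding Example, and---most importantly---keeping separate the genuine naturality of each $f$ over $J$ (used for the comma-category condition) from the weaker $sk$-naturality over $I$ that defines membership in $P$ (used for the lift). With those distinctions in place, the argument is a twofold application of the standard fact that the legs of a colimit assemble into a natural transformation, once in $A$ over $J$ and once in $B$ over $I$.
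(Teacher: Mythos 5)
Your proof is correct and is essentially the paper's own argument, written out in full: the paper's one-sentence proof likewise uses right exactness to send $(l,\lambda)$ to the colimit of $sk\circ p$, and then obtains the lift $\tilde{\alpha}_{l}$ because arrows out of that colimit are determined by their components, which ``commute after forward composition'' (i.e.\ after applying $sk$) --- exactly your uniqueness-of-factorization step in $B$. Your explicit separation of genuine naturality over $J$ in $A$ from the $sk$-naturality over $I$ supplied by the lifts $\tilde{f}$, and your flagging that ``right exact'' must mean preservation of this particular colimit, are just the bookkeeping the paper leaves implicit.
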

\begin{proof}
If the colimit $(l,\lambda)$ is sent to the colimit of the forward composition by $sk$ of the $dob\downarrow$ diagram on $P$, then arrows from $l$ to it are yet determined by their pullbacks to the components of the forward composition of the colimit diagram, which commute after forward composition.
\end{proof}

\begin{lem}
(Uniqueness, via monic) For any $sk : A \rightarrow B, F : I \rightarrow A \in Arr(U-\mathfrak{Cat})$, for any $(l, \lambda) \in Ob(A) \times Arr( Hom_{U-\mathfrak{Cat}^{2}(0)} (P,A) )$, ($P$ being as above) if the arrow from the $(sk)$-limit$(F)$ to the product $\prod_{j\in J} F(j)$ induced by the arrows from the previous lemma, i.e. $\lambda_{\prod}( (j \mapsto \lambda ( ((b,f,\cdot) \mapsto f(j) )_{(b,f,\cdot) \in Ob(P)}) ) )_{j \in Ob(J)} ) ) : l \rightarrow \prod_{j \in Ob(J)} F_{(0)}\circ e_{(0)} (j)$, is monic, then for each $(b,f,\cdot) \in Ob(P)$, the coproduct arrow $b \rightarrow l$ is the unique arrow $\lambda'$ for which $\lambda ( ( (b,f,\cdot) \mapsto f(j) )_{(b,f,\cdot) \in Ob(P)} ) \circ \lambda' = f(j)$.
\end{lem}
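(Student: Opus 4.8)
The plan is to read this as the standard cancellation property of a monomorphism. Throughout I would write $\alpha_{l}(j)$ for the factoring map $l \to F(e(j))$ out of the colimit that is supplied by its universal property, i.e. $\alpha_{l}(j) = \lambda((b,f,\cdot) \mapsto f(j))_{(b,f,\cdot)\in Ob(P)}$, and I would write $\lambda_{(b,f,\cdot)} : b \to l$ for the component of the cocone $\lambda$ at the object $(b,f,\cdot)$ (the ``coproduct arrow''). Keeping these two uses of $\lambda$ apart --- the cocone mapping into $l$ versus the induced maps out of $l$ --- is the one piece of bookkeeping the argument demands. I would then abbreviate the induced map into the product by $\pi = \lambda_{\prod}((j \mapsto \alpha_{l}(j))_{j\in Ob(J)}) : l \to \prod_{j\in Ob(J)} F(e(j))$, which by hypothesis is monic and whose projection to the $j$-th factor is exactly $\alpha_{l}(j)$.

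First I would dispatch existence. Exactly as in the preceding lemma, the rule $(b,f,\cdot) \mapsto f(j)$ is a cocone over $p$: an arrow $\phi : (b,f,\cdot) \to (b',f',\cdot)$ of $P$ is by definition an $A$-arrow with $f'(j) \circ \phi = f(j)$ for every $j$, which is precisely the compatibility that makes $\alpha_{l}(j)$ well defined and that records $\alpha_{l}(j) \circ \lambda_{(b,f,\cdot)} = f(j)$ for every object $(b,f,\cdot) \in Ob(P)$ and every $j \in Ob(J)$. Hence the coproduct arrow $\lambda_{(b,f,\cdot)}$ is itself an arrow $\lambda'$ satisfying $\alpha_{l}(j) \circ \lambda' = f(j)$ for all $j$, so at least one such arrow exists.

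For uniqueness I would suppose $\lambda' : b \to l$ is any arrow with $\alpha_{l}(j) \circ \lambda' = f(j)$ for every $j \in Ob(J)$. Then for each $j$ the projections to the $j$-th factor of $\pi \circ \lambda'$ and of $\pi \circ \lambda_{(b,f,\cdot)}$ coincide, both equalling $\alpha_{l}(j) \circ \lambda' = f(j) = \alpha_{l}(j) \circ \lambda_{(b,f,\cdot)}$, so the universal property of the product $\prod_{j} F(e(j))$ forces $\pi \circ \lambda' = \pi \circ \lambda_{(b,f,\cdot)}$. Cancelling the monomorphism $\pi$ gives $\lambda' = \lambda_{(b,f,\cdot)}$, which is the assertion. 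The main obstacle is therefore not mathematical depth but the overloaded notation: once the directions are fixed and the product's projections are identified with the $\alpha_{l}(j)$, the proof reduces to a single monic cancellation. I would also remark that, unlike the preceding lemma, this argument uses neither right exactness of $sk$ nor the assumption $Ob(I)=Ob(J)$; it needs only that the colimit $(l,\lambda)$ and the product exist, so that the maps $\alpha_{l}(j)$ and $\pi$ are defined, together with the monicity of $\pi$.
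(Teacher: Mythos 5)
Your proposal is correct, and it is exactly the argument the paper has in mind: the paper's own proof is literally ``Trivial,'' with the following remark noting this is ``the uniqueness of factorization usually associated to limits,'' and your existence step (the cocone identity $\alpha_{l}(j)\circ\lambda_{(b,f,\cdot)}=f(j)$ from the colimit's universal property) plus uniqueness step (equal projections force $\pi\circ\lambda'=\pi\circ\lambda_{(b,f,\cdot)}$, then cancel the monic $\pi$) is precisely that standard argument spelled out. Your closing observation --- that neither right exactness of $sk$ nor $Ob(I)=Ob(J)$ is needed here, only existence of the colimit, the product, and monicity of the induced map --- is also accurate and a worthwhile clarification of how this lemma's hypotheses differ from those of the preceding inclusion lemma.
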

\begin{proof}
Trivial.
\end{proof}

\begin{rem}
This is the uniqueness of factorization usually associated to limits. 
\end{rem}

\sss{$\bold{Definition}$ of the Skeleton Functor}\label{SCat}
Define the category $U-\mathfrak{SCat} \in Ob(U'-\mathfrak{Cat})$ so that
$$
Ob(U-\mathfrak{SCat}) = Ob(U-\mathfrak{Cat})
$$ 

and for any $x,y \in Ob(U-\mathfrak{SCat})$, $Hom_{(U-\mathfrak{SCat})} (x,y)$ is the the set
$$
Hom_{(U-\mathfrak{SCat})} (x,y) := \{ [F] \subseteq Ob(Hom_{U-\mathfrak{Cat}^{2}(0)}(x,y)); F \in Hom_{U-\mathfrak{Cat}}(x,y) \}
$$ 

of isomorphism classes of functors $x \xrightarrow{F} y$, where $[F] = [G]$ iff $F \cong G$, i.e. iff there exists an isomorphism $F \xrightarrow{\alpha} G$ of functors.

Define the functor 
$$
Skel : U-\mathfrak{Cat} \rightarrow U-\mathfrak{SCat}
$$

so that $Skel$ is the identity map on the objects and the quotient map $F \mapsto [F]$ on the arrows.

\begin{ex}
Consider $\phi,\psi \in Arr(U-\mathfrak{Cat})$, with the same codomain. The ($Skel$)-limit of the diagram is the subcategory $L$ of $dom(\phi) \times_{U-\mathfrak{Cat}} dom(\psi)$ such that $Arr(L) = \{ f \in Arr(dom(\phi)\times_{U-\mathfrak{Cat}}dom(\psi) ; \exists u,v \in Arr(codom(\phi)), u,v \text{ are isomorphisms and } u \circ \pi_{\phi}(f) = \pi_{\psi}(f) \circ v \}$. Any category with such functors into the two domain categories that the composition of functors on one side is isomorphic to the composition of functors on the other side factors through $L$ via the compositions of the projections with the embedding into the product. By the monic lemma the factorization is unique. However the conclusion of the inclusion lemma might not apply to it, i.e. the two compositions $L \rightarrow dom(\phi) \rightarrow codom(\phi)$ and $L \rightarrow dom(\psi) \rightarrow codom(\psi) = codom(\phi)$ might not be isomorphic, since I might imagine having two different pairs of arrows $(f_{1},g_{1})$, and $(f_{2},g_{2})$, such that the isomorphisms $u_{1},v_{1} \in Arr(codom(\phi))$ which form the commuting square $u_{1} \circ f_{1} = g_{1} \circ v_{1}$ differ from the isomorphisms $u_{2}, v_{2} \in Arr(codom(\phi))$ which form the commuting square $u_{2}\circ f_{2} = g_{2} \circ v_{2}$. 
\end{ex}

\sss{Lemma, for Reduction to the Standard Limit}
If 
$$
(l, (j \mapsto \lambda ( ( (b,f,\cdot) \mapsto f(j) )_{(b,f,\cdot) \in Ob(P)} ) )_{j \in Ob(J)}, \cdot ) \in Ob(P)
$$

and the limit arrows are unique then this is the usual limit.

\begin{proof}
Trivial.
\end{proof}

\sss{Functoriality}\label{LimFun} An arrow of functors $F \circ e \rightarrow G \circ e$ which lifts to an arrow of functors $sk \circ F \rightarrow sk \circ G$ (i.e. an arrow in the fibred product of the two functors $Hom_{U-\mathfrak{Cat}^{2}(1)}(e,id_{B})$ and $Hom_{U-\mathfrak{Cat}^{2}(1)}(id_{J},sk)$ ) induces a map from the $(sk,e)$-limit of $F$ to that of $G$, using the colimit map. I.e. $\alpha : F \rightarrow G$ implies that $\alpha (dom(\phi)) \circ F(\phi) = G(\phi) \circ \alpha(codom(\phi))$, so that for any arrow $\beta : \Delta_{(J,C)(0)}(c) \rightarrow F\circ e$ associated to $(a,\beta,\emptyset) \in Ob(P)$ (notation as in the first definition), $Hom_{U-\mathfrak{Cat}^{2}(1)}(e,id_{A})(\alpha) \circ \beta$ also commutes after applying $sk$ (i.e. comes from an arrow in $Hom_{U-\mathfrak{Cat}^{2}(0)}(I,B)$). Therefore each such $a$ has an arrow into the $sk$-limit of $G$ from the colimit diagram of the definition, which induces a map from the colimit diagram which determines the $sk$-limit of $F$.

\ref{LimFun}.1. Given a diagram $F : I' \longrightarrow Hom_{U-\mathfrak{Cat}^{2}(0)}(I,A)$, and a choice of an $(sk,e)$-limit $(l(i),\lambda(i))$ for any object $i \in Ob(I')$, the construction of (\ref{LimFun}) determines a function $Arr(I') \longrightarrow Arr(A)$

\ref{LimFun}.2. If for any $i \in Ob(I')$, the $(sk,e)$-limit $(l(i),\lambda(i))$ is included in $P(i)$ ($P(i)$ being as in the definition of the $(sk,e)$-limit for $F(i)$) then (\ref{LimFun}.1) determines a functor $I' \longrightarrow A$.

\begin{rem}
Roughly speaking, one takes the colimit of the domains of all limit diagrams on the trivial category which, when forwards composed with $sk$, are the backwards composition by $e$ of an actual limit diagram of $sk \circ F$. Definition (2.3) following this remark is dual to Definition (2.1).
\end{rem}

\sss{$\bold{Definition}$ of the $(sk)$-Colimit}\label{SkColim} Consider functors $J \xrightarrow{e} I \xrightarrow{F} A \xrightarrow{sk} B$.

\ref{SkColim}.1. Let $P$ be the full sub-category of the category 
$$
\downarrow_{(Hom_{U-\mathfrak{Cat}^{2}}(J,A))} (ob_{Hom_{U-\mathfrak{Cat}^{2}}(J,A)}(F\circ e),\Delta_{(J,A)}) \subseteq Hom_{U-\mathfrak{Cat}^{2}(0)}(J,A)_{ \backslash F \circ e}
$$ 

of arrows, whose objects are given by natural transformations from functor $F \circ e$
to functor $\Delta_{(J,A)}(a)$, i.e. triples $(\emptyset,\alpha,a)$ for varying $a \in Ob(A)$, such that there exists a natural transformation $\tilde{\alpha}$ from functor $sk\circ F$ to functor $\Delta_{(I,B)}(a)$ such that the natural transformation from $sk \circ F$ to $\Delta_{(J,B)}(sk(a))$ is equal to the natural transformation given by sending $j \in Ob(J)$ to $\tilde{\alpha}(e(j))$, i.e. by the set
$$
\{ \alpha : \Delta_{(J,A)}(p) \xrightarrow{\alpha} F\circ e;
$$
$$
\exists \tilde{\alpha} : sk\circ \Delta_{(I,A)}(p) = \Delta_{(I,B)}(sk(p)) \longrightarrow sk \circ F, Hom_{U-\mathfrak{Cat}^{2}(1)}^{(1)}(id_{B},e)(\tilde{\alpha}) = \alpha \}
,$$ 


so as to be given by the category of arrows from the diagonal functor to the object functor of $F \circ e$ in the category of functors from $J$ to $A$.

\ref{SkColim}.2. Suppose that $\varepsilon : P \longrightarrow \downarrow_{(Hom_{U-\mathfrak{Cat}^{2}}(J,A))}(\Delta_{(J,A)},ob_{(Hom_{U-\mathfrak{Cat}^{2}}(J,A))}(F\circ e))$ is the inclusion.

\ref{SkColim}.3.
For any $sk : A \rightarrow B, F : I \rightarrow A \in Arr(U-\mathfrak{Cat})$, $codom(F) = dom(sk)$ implies that any pair $(l,\lambda ) \in Ob(A) \times Arr(Hom_{(U-\mathfrak{Cat}^{2})(0)} (A,U-\mathfrak{Set}) )$, $(l,\lambda)$ is a $(sk,e)-colimit(F)$ iff $(l,\lambda)$ is a limit $(cob\downarrow_{(Hom_{(U-\mathfrak{Cat}^{2})(0)} (J,A) )}$

$( ob_{(Hom_{(U-\mathfrak{Cat}^{2})(0)} (J,A) )} (F \circ e) , \Delta_{(J,A)} ) \circ \varepsilon_{c} )$.

\begin{lem}
(Inclusion via exactness) Dual to the above.
\end{lem}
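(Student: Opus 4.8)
The plan is to deduce this lemma from its limit counterpart by formal duality, since the $(sk,e)$-colimit construction of Definition~(\ref{SkColim}) is exactly the $(sk,e)$-limit construction of Definition~(\ref{SkLim}) applied to the opposite data. Concretely, given $J \raa{e} I \raa{F} A \raa{sk} B$, I would pass to the opposite functors $F^{opp} : I^{opp} \to A^{opp}$ and $sk^{opp} : A^{opp} \to B^{opp}$ (with $e^{opp} : J^{opp} \to I^{opp}$), and observe that the category $P$ governing the $(sk,e)$-colimit of $F$ is isomorphic to the opposite of the category $P$ governing the $(sk^{opp},e^{opp})$-limit of $F^{opp}$. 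Under this identification the defining cocone condition of~(\ref{SkColim}.1) --- a natural transformation $F \circ e \to \Delta_{(J,A)}(a)$ lifting through $sk$ to $sk \circ F \to \Delta_{(I,B)}(sk(a))$ --- becomes exactly the cone lifting condition of~(\ref{SkLim}.1) with all arrows reversed.

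The first step is to make the comma-category duality precise: I would check that $\downarrow_{(C)}(F,G)^{opp} \cong \downarrow_{(C^{opp})}(G^{opp},F^{opp})$, and correspondingly that the domain-object functor $dob\downarrow$ is carried to the codomain-object functor $cob\downarrow$ under $(\ )^{opp}$. This is precisely the reason~(\ref{SkColim}.3) characterizes the $(sk,e)$-colimit via $cob\downarrow$ and a \emph{limit}, whereas~(\ref{SkLim}.3) uses $dob\downarrow$ and a colimit. The second step is to verify that the lifting condition itself dualizes: the two functors $Hom_{U-\mathfrak{Cat}^{2}(1)}(e,id_{B})$ (pullback along $e$) and $Hom_{U-\mathfrak{Cat}^{2}(1)}(id_{J},sk)$ (pushforward along $sk$) that cut out $P$ in~(\ref{SkLim}.1) are interchanged with their opposites, so the equalizing condition defining $P$ is preserved under arrow reversal. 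Since $Ob(I^{opp}) = Ob(I)$ and $Ob(J^{opp}) = Ob(J)$, the hypothesis $Ob(I) = Ob(J)$ is self-dual, and the hypothesis that $sk$ is left exact translates to $sk^{opp}$ being right exact.

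With these identifications in place, I would simply invoke the previous inclusion lemma for the $(sk^{opp},e^{opp})$-limit of $F^{opp}$: it produces an object of the opposite $P$, and transporting back along $(\ )^{opp}$ yields the desired object $(\cdot,\alpha_{l},l) \in Ob(P)$, where the cocone $\alpha_{l} = (j \mapsto \lambda((\,(\cdot,f,b)\mapsto f(j)\,)_{(\cdot,f,b)\in Ob(P)}))_{j\in Ob(J)}$ is assembled from the universal (now limiting) arrows $\lambda$. The content transported across is that, because $sk$ is left exact, it carries the limit $(l,\lambda)$ of the $cob\downarrow$ diagram to the corresponding limit after composition with $sk$, so the induced family commutes after applying $sk$ and hence satisfies the lifting condition defining $P$.

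The main obstacle I expect is bookkeeping rather than anything conceptual: one must confirm that every auxiliary construction in~(\ref{SkLim}) and~(\ref{SkColim}) --- the comma categories, the inclusions $\varepsilon,\varepsilon_{c}$, and especially the roles of the pullback and pushforward functors $Hom_{U-\mathfrak{Cat}^{2}(1)}(e,id_{B})$ and $Hom_{U-\mathfrak{Cat}^{2}(1)}(id_{J},sk)$ --- is genuinely exchanged under $(\ )^{opp}$ with no hidden asymmetry. In particular the direction of the lift (cone versus cocone) and the matching of left exactness with preservation of the relevant \emph{limit} must be tracked carefully; once that compatibility is established the result is immediate from the limit version.
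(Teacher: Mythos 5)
Your proof is correct and takes essentially the same route as the paper: the paper disposes of this lemma with the single phrase ``dual to the above,'' and your formal dualization --- identifying the comma category of the colimit construction with the opposite of the one in the limit construction, exchanging $dob\downarrow$ with $cob\downarrow$, and matching left exactness of $sk$ with right exactness of $sk^{opp}$ --- is precisely the argument that phrase leaves implicit. The bookkeeping you flag (cone versus cocone, the roles of pullback along $e$ and pushforward along $sk$) is exactly the content being suppressed, and your check that the hypotheses $Ob(I)=Ob(J)$ and exactness are carried correctly across $(\ )^{opp}$ closes it.
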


\begin{lem}
(Uniqueness via epic) Dual to the above.
\end{lem}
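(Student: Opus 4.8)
The plan is to deduce this from the ``Uniqueness, via monic'' lemma by the formal duality relating the two constructions, exactly as its one-word proof and the tag ``Dual to the above'' suggest. First I would observe that the $(sk,e)$-colimit of $F : I \rightarrow A$ is, by its definition in (\ref{SkColim}) together with the reformulation (\ref{SkColim}.3), nothing but the $(sk,e)$-limit of the induced functor $F : I \rightarrow A^{opp}$ formed inside the opposite category: where (\ref{SkLim}) builds the comma category $\downarrow_{(\cdots)}(\Delta_{(J,A)}, ob_{(\cdots)}(F\circ e))$ of arrows \emph{out of} the diagonal, (\ref{SkColim}.1) builds $\downarrow_{(\cdots)}(ob_{(\cdots)}(F\circ e),\Delta_{(J,A)})$ of arrows \emph{into} the diagonal, and these two interchange upon applying the functor $(\ )^{opp}$ to $A$ (and correspondingly to $B$, to $sk$, and to $F$). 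The subcategory $P$ cut out by the lifting condition on $\tilde\alpha$ is self-dual under this operation, so the whole defining diagram of the colimit is the image under $(\ )^{opp}$ of the defining diagram of the limit.

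Next I would run the standard dictionary through this identification. Under $(\ )^{opp}$ the coproduct $\coprod_{j \in Ob(J)} F(j)$ becomes the product in $A^{opp}$, an epimorphism becomes a monomorphism, and the comparison arrow from the coproduct to the $(sk,e)$-colimit $l$ --- the one assembled, exactly as in the ``Inclusion via exactness'' lemma, from the structure maps $(b,f,\cdot)\mapsto f(j)$ --- is carried to the comparison arrow from the $(sk,e)$-limit to the product $\prod_{j} F(j)$ that appears in the monic lemma. Hence the hypothesis ``this arrow is epic'' is transported to the hypothesis ``the dual arrow is monic,'' which is precisely the hypothesis of the ``Uniqueness, via monic'' lemma, and the factoring arrows $l \rightarrow b$ in the present statement are the images of the factoring arrows $b \rightarrow l$ there.

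Finally, since that lemma is already available, its conclusion --- the uniqueness of the arrow factoring through the limit --- transports back along $(\ )^{opp}$ to give the asserted uniqueness of $\lambda'$, completing the argument. I expect no genuine mathematical difficulty; the only point demanding care, and therefore the main (still routine) obstacle, is pinning down the first paragraph's identification, namely that the $(sk,e)$-colimit construction really is the image of the $(sk,e)$-limit construction under the opposite-category functor, and in particular that the two subcategories $P$ and their comparison maps correspond. Once this is verified the conclusion is immediate, matching the ``Trivial'' proof of the dual.
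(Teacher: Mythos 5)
Your proposal is correct and takes the same route the paper intends: the paper offers no proof beyond the phrase ``Dual to the above'' (with the monic lemma itself proved only by ``Trivial''), and your argument simply makes that duality explicit by transporting the comma category $P$, the coproduct-to-colimit comparison arrow, and the epic hypothesis through $(\ )^{opp}$ to the monic lemma. The only cosmetic imprecision is that the dualized data should be written $F^{opp}\colon I^{opp}\to A^{opp}$ (with $e^{opp}$ and $sk^{opp}$) rather than $F\colon I\to A^{opp}$, but this does not affect the argument.
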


\begin{ex}
Consider $\phi$,$\psi \in Arr(U-\mathfrak{Cat})$, with the same domain. The ($Skel$)-colimit of the diagram is the category $L$ such that its set of objects is the disjoint union of the objects of the codomain categories and the arrows are the formal compositions of the disjoint union of arrows in $Arr(codom(\phi))$, $Arr(codom(\psi))$, and arrows $e_{a} : \phi_{(0)}(a) \rightarrow \psi_{(0)}(a)$, $e_{a}^{-1} : \psi_{(0)}(a) \rightarrow \phi_{(0)}(a)$ formally added for each $a \in Ob(dom(\phi)) = Ob(dom(\psi))$, with the relation generated by requiring that $\forall f \in Arr(dom(\phi)), \phi_{(1)}(f) \circ e_{dom(f)} = e_{codom(f)} \circ \psi_{(1)}(f)$. If $l_{\phi} : codom(\phi) \rightarrow L$ and $l_{\psi} : codom(\psi) \rightarrow L$ are given by the $U-\mathfrak{Set}$ coproduct maps then for any $l'_{\phi}, l'_{\psi} \in Arr(U-\mathfrak{Cat})$ such that $l'_{\phi}\circ \phi \cong \l'_{\psi} \circ \psi$, there is an arrow $q : L \rightarrow codom(l'_{\phi}) = codom(l'_{\psi})$ such that $l'_{\phi} = q\circ l_{\phi}$ and $l'_{\psi} = q \circ l_{\psi}$. If an isomorphism $\alpha : l'_{\phi} \circ \phi \rightarrow l'_{\psi} \circ \psi$ is specified (or vice versa), then there is a unique $q : L \rightarrow codom(l'_{\phi})$ such that  $Hom_{(U-\mathfrak{Cat}^{2})(1)}((id_{dom(\phi)},q))_{(1)}( (a \mapsto e_{a})_{a \in Ob(dom(\phi))} ) = \alpha$ (and vice versa).
\end{ex}

\begin{lem}
(Reduction) Dual to the above.
\end{lem}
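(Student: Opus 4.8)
The plan is to prove this dual Reduction lemma by exactly the device already used for the dual Inclusion and Uniqueness lemmas above: apply the opposite-category functor $(\ )^{opp}$ and invoke the Reduction lemma already established for $(sk,e)$-limits. Concretely, starting from the data $J \xrightarrow{e} I \xrightarrow{F} A \xrightarrow{sk} B$ of Definition (\ref{SkColim}), I would pass to the dual data $J^{opp} \xrightarrow{e^{opp}} I^{opp} \xrightarrow{F^{opp}} A^{opp} \xrightarrow{sk^{opp}} B^{opp}$ and show that the colimit construction of (\ref{SkColim}) for $F$ is carried, term by term, onto the limit construction of (\ref{SkLim}) for $F^{opp}$, so that the conclusion can simply be read off the limit case.

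The central step is to set up the duality dictionary. First I would check that $(\ )^{opp}$ interchanges the two comma categories: a cocone $F\circ e \to \Delta_{(J,A)}(a)$, i.e. an object of $\downarrow_{(Hom_{U-\mathfrak{Cat}^{2}(0)}(J,A))}(ob(F\circ e),\Delta_{(J,A)})$ as in (\ref{SkColim}), corresponds under dualization to a cone $\Delta_{(J^{opp},A^{opp})}(a)\to F^{opp}\circ e^{opp}$, i.e. an object of the comma category $\downarrow_{(Hom_{U-\mathfrak{Cat}^{2}(0)}(J^{opp},A^{opp}))}(\Delta_{(J^{opp},A^{opp})},ob(F^{opp}\circ e^{opp}))$ used in (\ref{SkLim}). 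The point to verify is that the lifting condition cutting out $P$ is self-dual: the requirement that the image of $\alpha$ under $Hom_{U-\mathfrak{Cat}^{2}(1)}(id_J,sk)$ admit a lift along $Hom_{U-\mathfrak{Cat}^{2}(1)}(e,id_B)$ is turned by $(\ )^{opp}$ into precisely the lifting condition defining $P$ in (\ref{SkLim}), because the pushforward and pullback maps $Hom_{U-\mathfrak{Cat}^{2}(1)}(-,-)$ commute with passage to opposite categories. Hence the category $P$ of (\ref{SkColim}) is isomorphic, via $(\ )^{opp}$, to the category $P$ of (\ref{SkLim}) for $F^{opp}$, the vertex functor sending an object to its cocone-vertex corresponds to its dual, and a colimit of that functor for $F$ is exactly a limit of the dual functor for $F^{opp}$; thus the $(sk,e)$-colimit of $F$ equals the $(sk^{opp},e^{opp})$-limit of $F^{opp}$.

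With this dictionary in hand I would translate the two hypotheses directly. The statement that the $(sk,e)$-colimit determines an object of $P$ becomes the statement that the $(sk^{opp},e^{opp})$-limit of $F^{opp}$ determines an object of the dual $P$, and the uniqueness of the colimit arrows becomes the uniqueness of the limit arrows. These are exactly the hypotheses of the Reduction lemma for $(sk,e)$-limits proved above, so that lemma applies verbatim in $A^{opp}$ and identifies the $(sk^{opp},e^{opp})$-limit of $F^{opp}$ with the ordinary limit of $F^{opp}$ in $A^{opp}$. Dualizing the conclusion once more identifies the $(sk,e)$-colimit of $F$ with the ordinary colimit of $F$ in $A$, which is the assertion.

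I expect the only genuine obstacle to lie in the bookkeeping of the middle step, namely confirming that every auxiliary construction feeding into the definition of $P$ — the diagonal functors $\Delta$, the object functors $ob$, and above all the two functors $Hom_{U-\mathfrak{Cat}^{2}(1)}(id_J,sk)$ and $Hom_{U-\mathfrak{Cat}^{2}(1)}(e,id_B)$ expressing the lifting condition — is intertwined correctly by $(\ )^{opp}$, so that the fibred product defining $P$ in (\ref{SkColim}) is carried isomorphically onto the fibred product defining $P$ in (\ref{SkLim}). Once this \emph{naturality of} $(\ )^{opp}$ \emph{with respect to the hom-functor data} is recorded, the remainder of the argument is as immediate as the (trivial) proof of the Reduction lemma for limits.
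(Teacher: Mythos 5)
Your proposal is correct and is essentially the paper's own argument: the paper disposes of this lemma with the single phrase ``dual to the above,'' and your passage to opposite categories via $(\ )^{opp}$, together with the check that the comma categories and the lifting condition defining $P$ in (\ref{SkColim}) correspond under dualization to those of (\ref{SkLim}), is exactly the standard content of that phrase. The only difference is that you make the duality dictionary explicit where the paper leaves it implicit.
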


\begin{lem}
(Functoriality) An arrow of functors $F \rightarrow G$ induces a map from the ($sk$)-colimit of $F$ to that of $G$, using the limit map.
\end{lem}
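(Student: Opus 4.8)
The plan is to dualize the argument of (\ref{LimFun}) line for line, exchanging cones for cocones and, correspondingly, replacing the colimit step used there by a limit step here. Recall from (\ref{SkColim}.3) that the $(sk,e)$-colimit of $F$ is realized as a \emph{limit}: writing $P_{F}$ for the full subcategory of $\downarrow_{(Hom_{U-\mathfrak{Cat}^{2}(0)}(J,A))}(ob_{(Hom_{U-\mathfrak{Cat}^{2}(0)}(J,A))}(F\circ e),\Delta_{(J,A)})$ whose objects are cocones $\gamma : F\circ e \rightarrow \Delta_{(J,A)}(a)$ admitting a lift $\tilde{\gamma}$ through $sk$, the $(sk,e)$-colimit is the limit of the codomain-object functor $q_{F} : P_{F} \rightarrow A$ of (\ref{SkColim}.3), which sends $\gamma \mapsto a$. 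As in (\ref{LimFun}), I take the hypothesis to carry the dual lifting condition: the arrow $\eta : F \rightarrow G$ is such that its whiskering $Hom_{U-\mathfrak{Cat}^{2}(1)}(e,id_{A})(\eta) : F\circ e \rightarrow G\circ e$ lifts to an arrow $sk\circ F \rightarrow sk\circ G$.

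First I would build from $\eta$ a functor $\Phi_{\eta} : P_{G} \rightarrow P_{F}$ by precomposition: a cocone $\gamma : G\circ e \rightarrow \Delta_{(J,A)}(a)$ in $P_{G}$ is sent to $\gamma \circ Hom_{U-\mathfrak{Cat}^{2}(1)}(e,id_{A})(\eta) : F\circ e \rightarrow \Delta_{(J,A)}(a)$, and an arrow $(f,id)$ of cocones is sent to itself. Since precomposition does not alter the codomain object $a$ of a cocone, $\Phi_{\eta}$ commutes strictly with the codomain-object functors, i.e. $q_{F}\circ \Phi_{\eta} = q_{G}$. Granting this, the remainder is formal: writing $(l,\lambda)$ for the $(sk,e)$-colimit of $F$ (so $(l,\lambda)=\lim q_{F}$) and $(l',\lambda')$ for that of $G$, the limiting cone $\lambda$ restricts along $\Phi_{\eta}$ to a cone over $q_{F}\circ \Phi_{\eta} = q_{G}$ with the same apex $l$, and the universal property of $l'=\lim q_{G}$ then yields the unique arrow $l \rightarrow l'$ asserted by the lemma. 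This is the ``limit map'' referred to in the statement, dual to the colimit map invoked in (\ref{LimFun}).

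The one step that is not purely formal --- and the exact dual of the only substantive point in (\ref{LimFun}) --- is checking that $\Phi_{\eta}$ actually lands in the admissible subcategory $P_{F}$, i.e. that the precomposed cocone $\gamma \circ Hom_{U-\mathfrak{Cat}^{2}(1)}(e,id_{A})(\eta)$ again admits a lift through $sk$. Here I would paste the given lift $\tilde{\gamma}$ of $\gamma$ onto the assumed lift of $\eta$; their composite is a natural transformation out of $sk\circ F$ whose restriction along $e$, computed by $Hom_{U-\mathfrak{Cat}^{2}(1)}(e,id_{B})$, agrees with the pushforward $Hom_{U-\mathfrak{Cat}^{2}(1)}(id_{J},sk)$ of the precomposed cocone, which is precisely the compatibility demanded in (\ref{SkColim}.1). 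I expect this verification --- that the two lifts compose to a lift of the composite, compatibly with the whiskering-by-$e$ and push-by-$sk$ operations --- to be the main (indeed the only) obstacle, and it is exactly dual to the assertion in (\ref{LimFun}) that ``$Hom_{U-\mathfrak{Cat}^{2}(1)}(e,id_{A})(\alpha)\circ\beta$ also commutes after applying $sk$''.
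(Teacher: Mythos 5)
Your proposal is correct and is exactly the argument the paper intends: the paper leaves this lemma as the dual of its functoriality discussion for $(sk,e)$-limits in (\ref{LimFun}), and you carry out that dualization faithfully --- realizing the $(sk,e)$-colimit as the limit of the codomain-object functor on the admissible cocone category, building the precomposition functor $\Phi_{\eta} : P_{G} \rightarrow P_{F}$, and verifying admissibility by pasting the lift of $\gamma$ with the lift of $\eta$, which is the dual of the only substantive check in (\ref{LimFun}). Nothing is missing.
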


\begin{rem}
Products and coproducts are not affected by $sk$.
\end{rem}

\sus{Definitions regarding Enrichments}
We will define weak enrichment of sets and categories. 
Sets will be enriched over tensor categories $(A,\otimes)$
and categories over triples $(A,\otimes,F)$
where tensor category $(A,\otimes)$ comes with a tensor 
functor $F: (A,\ten) \longrightarrow (\mathfrak{Set},\times)$.

A weak  enrichment of a set $s$ over $(A,\ten)$
adds to $s$ a category-like structure, a version of $\Hom$ which has values in $A$
(rather than in sets) but without any associativity or unital requirements. We later introduce, for each functor $sk : A \longrightarrow B$, a category of weakly enriched sets, ``associative up to $sk$," in that the associativity diagrams are commutative after the functor $sk$ is applied to them. A weak enrichment of a category $C$ over $(A,\ten)$ with respect to a tensor functor $(F,\rho) : (A,\otimes) \rightarrow (\mathfrak{Set},\times_{\mathfrak{Set}})$ is
a weak enrichment of the set $Ob(C)$ over $(A,\otimes)$ which is compatible with the
$Hom_C$, this compatibility being formulated in terms of the tensor functor $(F,\rho)$.

\sss{$\bold{Definition}$ of a Weakly Enriched Set}
A 
weak enrichment of a set $s\in Ob(U-\mathfrak{Set})$ over a tensor category $(A, \otimes) \in Ob(U-\mathfrak{TCat})$ (see \ref{TCat}) 
is a 
pair
of a map $h : s^{2} \rightarrow Ob(A)$
and a ``composition map'' 
$ \circ : s^{3} \rightarrow Arr(A) )$ such that for any $a,b,c \in s$, 
$$
\circ (a,b,c) :h(a,b) \otimes h(b,c) \longrightarrow h(a,c).
$$

\let\tensor\ten
\newcommand{\USet}{{ U-\mathfrak{Set} }}
\newcommand{\Cat}{{ \mathfrak{Cat} }}
\newcommand{\UCat}{{ U-\mathfrak{Cat} }}
\newcommand{\UTCat}{{ U-\mathfrak{TCat} }}

\sss{$\bold{Definition}$ of the Category of Weak Enrichments}\label{CatWE}
 For any $(A, \otimes ) \in Ob(U-\mathfrak{TCat})$, 
the category of $(A,\otimes)$-enriched sets $WE( A , \otimes ) \in Ob(U-\mathfrak{Cat})$ has as objects weak enrichements of sets  $S=(s,h_S,\circ_S)$,
and for two weak enrichments $S$ and $T$ an arrow 
$f: S = (s, h_{S}, \circ_{S}) \rightarrow T = (t, h_{T}, \circ_{T} )$ is a pair of functions 
$f=( f_{1} : s \rightarrow t , f_{2} : s^{2} \rightarrow Arr(A) )$
such that the following hold.

\ref{CatWE}.1. $\forall a,b \in s,\ \  f_{2}(a,b) : h_{S}(a,b) \longrightarrow h_{T}(f_{1}(a),f_{1}(b))$, and 

\ref{CatWE}.2. $\forall a,b,c \in s$,  
$$
\circ_{T} ( f_{1}(a), f_{1}(b), f_{1}(c) ) \circ ( f_{2}(a,b) \otimes f_{2}(b,c) ) = f_{2}(a,c) \circ \circ_{S} ( a,b,c ),
$$ 

i.e. the compositions commute with the arrows defining a ``functor from $S$ to $T$". 

\begin{lem}
\label{TCatFun}

The above construction, of $WE(A,\otimes)$, extends to a functor $WE : U-\mathfrak{TCat} \longrightarrow U'-\mathfrak{Cat}$, from the category of tensor categories to the category of categories.

For any functor of tensor categories $(F,\rho) : (A,\otimes_{A}) \rightarrow (B,\otimes_{B})$ define a functor $WE(F,\rho) : WE(A,\otimes_{A}) \rightarrow WE(B,\otimes_{B})$ from the category of weak enrichments over $(A,\otimes_{A})$ to that of $(B,\otimes_{B})$ as follows.

\ref{TCatFun}.1. 
It sends an object $S=(s,h,\ci)$
of 
$WE(A,\otimes_{A})$ 
to the triple 
$F(S)=(s,h',\circ')$ 
where for 
$a,b,c\in s$,
$$
h'(a,b)= F(h(a,b)) \text{ and } \circ'(a,b,c) = F(\circ(a,b,c)) \circ \rho(h(b,c),h(a,b)).
$$
 

\ref{TCatFun}.2.
It sends an arrow 
$\phi: S = (s,h_{s},\circ_{s}) \rightarrow (t,h_{t},\circ_{t}) = T$ in $WE(A,\otimes_{A})$ 
(here $s^2\ni (a,b)\mm \phi(a,b)\in Arr(A)$)
to the arrow $F(\phi):F(S)\to F(T)$ that sends 
$
(a,b)\in s^2$ to  $F(\phi(a,b))) \in Arr(B)$.



\end{lem}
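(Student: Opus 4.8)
The plan is to establish functoriality on two levels: first that each $WE(F,\rho)$ is a genuine functor $WE(A,\otimes_A) \to WE(B,\otimes_B)$ as asserted in \ref{TCatFun}.1--\ref{TCatFun}.2, and then that the global assignment $(F,\rho) \mapsto WE(F,\rho)$ respects identities and composition in $U-\mathfrak{TCat}$. Throughout I write $\rho_{x,y} : F(x)\otimes_B F(y) \to F(x\otimes_A y)$ for the component of the structure transformation $\rho$ at $(x,y)\in Ob(A)^2$, so that the composition data of \ref{TCatFun}.1 reads $\circ'(a,b,c) = F(\circ(a,b,c))\circ \rho_{h(a,b),h(b,c)}$.

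First I would check that \ref{TCatFun}.1 lands in $Ob(WE(B,\otimes_B))$: the map $h'=F\circ h$ has codomain $Ob(B)$, and the composite $\circ'(a,b,c)$ has the correct source and target, since $\rho_{h(a,b),h(b,c)}$ runs from $h'(a,b)\otimes_B h'(b,c)=F(h(a,b))\otimes_B F(h(b,c))$ to $F(h(a,b)\otimes_A h(b,c))$, whence $F(\circ(a,b,c))$ carries it to $F(h(a,c))=h'(a,c)$. This is a pure type-check resting only on the definition of $\rho$ as an arrow of $U-\mathfrak{TCat}$ (see \ref{TCat}). For the arrow map, that $F(\phi)$ of \ref{TCatFun}.2 satisfies the analogue of \ref{CatWE}.1 is immediate on applying $F$ to $\phi_2(a,b)$.

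The crux is the compatibility \ref{CatWE}.2 for $F(\phi)$, namely
$$\circ'_T(\phi_1 a,\phi_1 b,\phi_1 c)\circ(F(\phi_2(a,b))\otimes_B F(\phi_2(b,c))) = F(\phi_2(a,c))\circ\circ'_S(a,b,c).$$
Here the naturality of $\rho$ enters. Substituting the definition of $\circ'_T$ and applying naturality of $\rho$ at the pair of arrows $(\phi_2(a,b),\phi_2(b,c))$ rewrites $\rho_{\,\cdot\,}\circ(F(\phi_2(a,b))\otimes_B F(\phi_2(b,c)))$ as $F(\phi_2(a,b)\otimes_A \phi_2(b,c))\circ\rho_{h_S(a,b),h_S(b,c)}$; fusing the two $F$-images by \ref{Functor}.2, invoking the original identity \ref{CatWE}.2 for $\phi$ inside $WE(A,\otimes_A)$, and expanding back through the definition of $\circ'_S$ yields the right-hand side. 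I expect this to be the one genuinely nontrivial step, with everything else bookkeeping. It is worth stressing that the calculation uses \emph{only} the naturality of $\rho$ and the functoriality of $F$ --- never that $\rho$ is invertible nor any associativity or unit coherence --- which is exactly what the weak, non-associative, non-unital setting of \ref{TCat} and \ref{CatWE} allows.

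Finally I would confirm the two functoriality axioms for $WE$. That $WE(F,\rho)$ preserves composition and identities of arrows \emph{inside} $WE(A,\otimes_A)$ follows from \ref{Functor}.2--\ref{Functor}.3 applied to the evident componentwise composite $(\psi\circ\phi)_2(a,b)=\psi_2(\phi_1 a,\phi_1 b)\circ\phi_2(a,b)$. For the outer functoriality, the identity arrow $(\mathrm{id}_A,\mathrm{id})$ has $\rho=\mathrm{id}$, so $h'=h$ and $\circ'=\circ$, giving $WE(\mathrm{id})=\mathrm{id}$. For composition, given $(F,\rho)$ followed by $(G,\sigma)$ whose composite in $U-\mathfrak{TCat}$ carries the structure transformation $\tau_{x,y}=G(\rho_{x,y})\circ\sigma_{F(x),F(y)}$, I would expand $WE(G,\sigma)\circ WE(F,\rho)$ on an object $S$ and match $\circ'''(a,b,c)=G(\circ'(a,b,c))\circ\sigma_{\,\cdot\,}$ against $\circ''(a,b,c)=(G\circ F)(\circ(a,b,c))\circ\tau_{\,\cdot\,}$ using \ref{Functor}.2 for $G$ together with the formula for $\tau$; the arrow components agree by \ref{Functor}.2 alone. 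The only care needed is to pin down that the composite arrow of $U-\mathfrak{TCat}$ indeed carries this $\tau$, after which both sides coincide termwise and the lemma follows.
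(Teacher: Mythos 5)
Your proof is correct, and it supplies precisely the verification the paper leaves implicit: Lemma \ref{TCatFun} is stated there with no proof at all (the statement is the construction itself), and the intended argument is exactly your combination of naturality of $\rho$ with functoriality of $F$ to get the hom-compatibility condition \ref{CatWE}.2 for $F(\phi)$, together with the routine checks that $WE(F,\rho)$ preserves identities and composition and that $WE$ respects the composite structure map $\tau_{x,y}=G(\rho_{x,y})\circ\sigma_{F(x),F(y)}$. One minor point: the paper writes the structure map as $\rho(h(b,c),h(a,b))$, with arguments in the opposite order from yours; as you implicitly recognized, only the reading $\rho_{h(a,b),h(b,c)} : F(h(a,b))\otimes_{B}F(h(b,c)) \rightarrow F(h(a,b)\otimes_{A}h(b,c))$ type-checks against postcomposition with $F(\circ(a,b,c))$, so this is a typo (or a reversed argument convention) in the paper that you correctly normalized.
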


\sss{$\bold{Definition}$ of an Weakly Enriched Category}\label{WECat}
A weak enrichment of a category $C$ with respect to a tensor functor $(A, \otimes) \xrightarrow{(F,\rho)} (U-\mathfrak{Set},\times_{U-\mathfrak{Set}})$ is a quadruple $(C,h,\circ,\phi)$ such that $C \in Ob(U-\mathfrak{Cat})$ is a category, $h$ and $\circ$ define a weak enrichment of the set $Ob(C)$, and $\phi : Ob(C)^{2} \rightarrow Arr(U-\mathfrak{Set})$ is a function, such that

\ref{WECat}.1. For any $a,b \in Ob(C)$, $\phi(a,b) : F(h(a,b)) \rightarrow Hom_{C}(a,b)$ is an isomorphism;

\ref{WECat}.2. For any $a,b,c \in Ob(C)$, the composition 
$$
\circ_{C}(a,b,c) : Hom_{C}(b,c) \times_{U-\mathfrak{Set}} Hom_{C}(a,b) \rightarrow Hom_{C}(a,c)
$$
 
of hom sets in $C$ is given by the weak enrichment, i.e.
$$
\circ_{C}(a,b,c) = \phi(a,c)^{-1} \circ F(\circ(a,b,c)) \circ \rho(h(b,c),h(a,b)) \circ (\phi(b,c) \times_{U-\mathfrak{Set}} \phi(a,b))
$$

\sss{$\bold{Definition}$ of the Category of Weakly Enriched Categories}\label{CatWECat}

The category \newline
$WE_{\mathfrak{Cat}}(F,\rho)$ of categories weakly enriched over a tensor category 
$(A,\otimes)$ with respect to a tensor functor
$(F,\rho): (A,\otimes) \to (\USet , \times_{U-\mathfrak{Set}})$, has objects which are
categories $(C,h,\circ,\phi)$ weakly enriched over $(A, \otimes)$.
An arrow 
$f : (C,h_{C},\circ_{C},\phi) \rightarrow (D,h_{D},\circ_{D},\psi)$
consists  
of a functor
$(f_{0},f_{1}) : C \longrightarrow D$ 
and a function $f_{2} : Ob(C)^{2} \longrightarrow Arr(A)$, 
such that 

\ref{CatWECat}.1. $(f_{0},f_{2}) : (Ob(C),h_{C},\circ_{C}) \rightarrow (Ob(D),h_{D},\circ_{D})$ is an arrow of weak enrichments of sets; 

\ref{CatWECat}.2. For any $a,b \in Ob(C)$,
$$
F_{1}(f_{2}(a,b)) = \psi(f_{0}(a),f_{0}(b)) \circ F(f_{2}(a,b)) \circ \phi(a,b)^{-1}
$$ 

i.e. the functor agrees with that implied by the enrichment.

\sss{}\label{TCatFunC}
One can construct a functor from the category of tensor categories over the tensor category of sets $U-\mathfrak{TCat}_{/(U-\mathfrak{Set},\times_{U-\mathfrak{Set}})}$ to the category of categories, i.e. 
$$
WE_{\mathfrak{Cat}}( ) := (WE_{\mathfrak{Cat}0}( ), WE_{\mathfrak{Cat}1}( )) :U-\mathfrak{TCat}_{/(U-\mathfrak{Set},\times_{U-\mathfrak{Set}})} \longrightarrow U'-\mathfrak{Cat} 
$$

in analogue to the construction of Lemma \ref{TCatFun}, as follows. For any arrow $(\Phi,\rho) : (F,\rho_{F}) \longrightarrow (G,\rho_{G})$ of tensor categories $(F,\rho_{F}) : (A,\otimes_{A}) \longrightarrow (U-\mathfrak{Set},\times_{U-\mathfrak{Set}})$ and $(G,\rho_{G}) : (B,\otimes_{B}) \longrightarrow (U-\mathfrak{Set},\times_{U-\mathfrak{Set}})$ over $(\mathfrak{Set},\times_{U-\mathfrak{Set}} )$ define a functor \newline $WE_{\mathfrak{Cat}0}(F,\rho_{F}) \longrightarrow WE_{\mathfrak{Cat}0}(G,\rho_{G})$.

\ref{TCatFunC}.1. It is defined on an object $(C,h,\circ,\phi) \in Ob(WE_{\mathfrak{Cat}0}(F,\rho))$ by
$$
(C, h, \circ ,\phi ) \longmapsto (C, \Phi_{(0)} \circ h, ((a,b,c) \mapsto
\Phi_{(1)}(\circ (a,b,c)) \circ \rho (h(b,c),h(a,b)) )_{a,b,c \in Ob(\mathcal{C})},\phi ).
$$

\ref{TCatFunC}.2. It is deifined on arrows $(F,F_{2}) : (C,h_{C},\circ_{C},\phi) \rightarrow (D,h_{D},\circ_{D},\psi)$ by
$$
(F,F_{2}) \mapsto WE_{\mathfrak{Cat}1}(\Phi,\rho)(F,F_{2}) := (F,\Phi_{(1)} \circ F_{2}).
$$

\sss{$\bold{Definition}$ of Two Forgetful Functors}\label{DefForWE}

Define the following two functors.

\ref{DefForWE}.1. For any tensor functor $(F,\rho) : (A,\otimes) \longrightarrow (U-\mathfrak{Set},\times_{U-\mathfrak{Set}})$, the forgetful functor $For^{WE(F,\rho)}_{WE( dom(F, \rho) )} : WE_{\mathfrak{Cat}}(A, \otimes, F) \longrightarrow WE_{\mathfrak{Cat}}(A, \otimes )$ from the category of weakly enriched categories with respect to $(F,\rho)$ to weakly enriched sets with respect to $(A,\otimes)$ is the functor given by passing from a category $C$ to its set of objects $Ob(C)$. More precisely, it is defined on an object $(C,h,\circ,\phi) \in Ob(WE_{\mathfrak{Cat}}(F,\rho) )$ by 
$$
(C,h,\circ,\phi) \mapsto (Ob(C),h,\circ )
$$

and on an arrow $(f,f_{2}) \in Arr(WE_{\mathfrak{Cat}}(F,\rho))$ by
$$
(f,f_{2}) \mapsto (f_{(0)},f_{2})
$$

\ref{DefForWE}.2. The forgetful functor from the category of weakly enriched categories to the category of categories $For^{WE( F,\rho)}_{\mathfrak{Cat}} : WE_{\mathfrak{Cat}}(F,\rho) \longrightarrow U-\mathfrak{Cat}$ is the functor which forgets the enrichment structure, returning the underlying category. I.e. it sends a weakly enriched category $(C,h,\circ,\phi)$ to $C$.

\sss{$\bold{Definition}$ of the Category $WE_{(A,\otimes)(sk)}$} \label{DefWESk}
For any $sk: A \longrightarrow B \in Arr(\mathfrak{Cat})$, define the category $WE_{(A,\otimes)(sk)} \in Ob(\mathfrak{Cat})$ of $((A,\otimes),sk)$-enriched sets. 

\ref{DefWESk}.1. Its objects are sets enriched over $A$. 

\ref{DefWESk}.2. The hom sets
$$
Hom_{WE_{(A,\otimes)(sk)}}((S,h_{S},\circ),(T,h_{T},\circ_{T})) =
$$
are the pairs of maps of sets $(F_{0},F_{1}) \in Arr(\mathfrak{Set})^{2}$ such that $F_{0} : S \rightarrow T$ and $F_{1} : S^{2} \rightarrow Arr(A)$ and 

\ref{DefWESk}.2.1.
For any $a,b \in S, F_{1}(a,b) \in Hom_{A}(h_{S}(a,b),h_{T}(F_{0}(a),F_{0}(b))$.

\ref{DefWESk}.2.2. $F_{1}$ respects composition after applying $sk$.

\begin{rem}
Roughly speaking, $F_{0}$ is the map between objects of enriched sets, and $F_{1} : h_{S} \rightarrow h_{T}\circ F$ is the ``natural transformation of hom functors," (there are no non-trivial arrows in $S$). This means that applying the ``functor," $(F_{0},F_{1})$, then composing in $T$, versus composing in $S$ and  then applying the functor, gives two arrows in $A$, such that $sk$ of one arrow is equal to $sk$ of the other.
\end{rem}

\begin{lem}
$WE_{(A,\otimes)(sk)}$ is a category.
\end{lem}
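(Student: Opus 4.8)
The plan is to check the axioms of Definition \ref{Category} for the datum $WE_{(A,\otimes)(sk)}$. Since (\ref{DefWESk}) specifies only the objects and the hom sets, the first task is to \emph{supply} the composition and identity functions and then verify that they are well defined, associative, and unital. I would define the composite of $(F_0,F_1) : S \to T$ and $(G_0,G_1) : T \to U$ to be $(G_0\circ F_0, H_1)$, where $H_1(a,b)$ is the composite in $A$ of $F_1(a,b) : h_S(a,b) \to h_T(F_0 a, F_0 b)$ with $G_1(F_0 a, F_0 b) : h_T(F_0 a, F_0 b) \to h_U(G_0 F_0 a, G_0 F_0 b)$, and I would take the identity on $S = (s,h_S,\circ_S)$ to be $(id_s, (a,b)\mapsto id_{h_S(a,b)})$. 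With these choices, condition (\ref{DefWESk}.2.1) for the composite is immediate, since it is just the statement that the two displayed arrows compose in $A$ to an arrow $h_S(a,b) \to h_U(G_0 F_0 a, G_0 F_0 b)$; the identity satisfies (\ref{DefWESk}.2.1) trivially because its object map is $id_s$.

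The one step that requires genuine work is showing that condition (\ref{DefWESk}.2.2) is closed under this composition; this is the main obstacle. Here I would compute $sk$ applied to each side of the composition-compatibility equation for $H_1$ over a triple $a,b,c \in s$. The key inputs are three. First, functoriality of the tensor functor $\otimes : A \times A \to A$, which gives the interchange identity $H_1(a,b)\otimes H_1(b,c) = (G_1(F_0 a,F_0 b)\otimes G_1(F_0 b,F_0 c))\circ(F_1(a,b)\otimes F_1(b,c))$. Second, functoriality of $sk$, so that $sk$ converts composites in $A$ into composites in $B$ and sends identity arrows to identity arrows. Third, the two instances of (\ref{DefWESk}.2.2) holding separately for $(F_0,F_1)$ and for $(G_0,G_1)$. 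After substituting the interchange identity and pushing $sk$ through via its functoriality, I would apply the $G$-compatibility (after $sk$) to the outer factor involving $\circ_U$ and the $G_1$'s, and then the $F$-compatibility (after $sk$) to the remaining factor involving $\circ_T$ and the $F_1$'s; reassembling with functoriality of $sk$ yields exactly $sk\bigl(H_1(a,c)\circ \circ_S(a,b,c)\bigr)$, as required. The only delicate point is that each compatibility equation holds \emph{only} after $sk$ is applied, so the rewriting must be ordered so that $sk$ is already in front of the relevant composite before each hypothesis is invoked; functoriality of $sk$ is precisely what makes this legitimate.

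Finally, associativity and unitality are routine. On the object component, associativity and neutrality reduce to those of ordinary function composition, and on the $F_1$ component they reduce to associativity of composition in $A$ together with the compatibility of the object maps (for the units one also uses that $\otimes$ sends a pair of identities to an identity). For the identity one checks directly that the defining composite with $(id_s,(a,b)\mapsto id_{h_S(a,b)})$ returns the original arrow, using $id\circ f = f = f\circ id$ in $A$. I would also record the set-theoretic sanity check that each hom collection is a genuine set, being a subset of $Hom_{\mathfrak{Set}}(s,t)\times Hom_{\mathfrak{Set}}(s^2, Arr(A))$, so that the construction indeed lands in $\mathfrak{Cat}$ as asserted.
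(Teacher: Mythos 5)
Your proposal is correct and takes essentially the same route as the paper's proof: the paper likewise defines the composite componentwise and devotes the entire argument (prefaced by ``the issue is composition'') to showing the $sk$-compatibility condition is preserved, via exactly your chain of steps --- interchange from functoriality of $\otimes$, functoriality of $sk$, then the $G$-compatibility followed by the $F$-compatibility. The paper omits the identity, associativity, and set-theoretic checks as routine, which you also flag as routine but spell out.
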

\begin{proof}
The issue is composition. Given composible arrows
$$
((S,h_{S},\circ_{S}) \xrightarrow{(F_{0},F_{1})} (T,h_{T},\circ_{T})), ((T,h_{T},\circ_{T}) \xrightarrow{(G_{0},G_{1})} (U,h_{U},\circ_{U})) \in Arr(WE_{(A,\otimes)(sk)})
$$

Starting from the result of application of the functor $sk$ to the arrow which uses the composition $\circ_{U}$,
$$
sk(
$$
$$
\circ_{U}(G_{0}\circ F_{0}(a),G_{0}\circ F_{0}(b), G_{0}
\ \circ\ 
F_{0}(c))
$$
$$
\circ ((G_{1}(F_{0}(a),F_{0}(b))\circ F_{1}(a,b)) \otimes (G_{1}(F_{0}(b),F_{0}(c))\circ F_{1}(b,c)))
$$
$$
) =
$$
by functoriality of $\otimes$ 
$$
sk(
$$
$$
\circ_{U}(G_{0}\circ F_{0}(a),G_{0}\circ F_{0}(b)
, G_{0}\circ F_{0}(c)) \circ
$$
$$
(G_{1}(F_{0}(a),F_{0}(b)) 
\otimes G_{1}(F_{0}(b),F_{0}(c))) \circ
$$
$$
(F_{1}(a,b) \otimes F_{1}(b,c))
$$
$$
) =
$$
by functoriality of $sk$
$$
sk(\circ_{U}(G_{0}\circ F_{0}(a),G_{0}\circ F_{0}(b), G_{0}\circ F_{0}(c))) \circ 
$$
$$
sk((G_{1}(F_{0}(a),F_{0}(b)) \otimes G_{1}(F_{0}(b),F_{0}(c)))) \circ 
$$
$$
sk((F_{1}(a,b) \otimes F_{1}(b,c))) =
$$
by $(G_{0},G_{1}) \in Arr(WE_{(A,\otimes)(sk)})$,
$$
sk(G_{1}(F_{0}(a),F_{0}(c))) \circ sk(\circ_{T}(F_{0}(a),F_{0}(b),F_{0}(c))) \circ sk((F_{1}(a,b) \otimes F_{1}(b,c))) = 
$$
by $(F_{0},F_{1}) \in Arr(WE_{(A,\otimes)(sk)})$,
$$
sk(G_{1}(F_{0}(a),F_{0}(c))) \circ sk(F_{1}(a,c))\circ sk(\circ_{S}(a,b,c))
$$
\end{proof}


\begin{lem}
If $(A,\otimes)$ has products, then so does $WE_{(A,\otimes)(sk)}$. The product is functorial.
\end{lem}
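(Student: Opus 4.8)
The plan is to transport the product of $A$ to $WE_{(A,\otimes)(sk)}$ hom-object-wise, then to check the universal property directly; the only genuine work is the compatibility of $sk$ with products. Given $S=(s,h_S,\circ_S)$ and $T=(t,h_T,\circ_T)$, I would take $s\times t$ as the underlying set and set $h_{S\times T}((a,b),(a',b')):=h_S(a,a')\times h_T(b,b')$, the product in $A$, with projections $\pi_1,\pi_2$. The composition $\circ_{S\times T}((a,b),(a',b'),(a'',b''))$ is the unique arrow into this product whose two legs are $\circ_S(a,a',a'')\circ(\pi_1\otimes\pi_1)$ and $\circ_T(b,b',b'')\circ(\pi_2\otimes\pi_2)$ (here $\pi_i\otimes\pi_i$ is formed by functoriality of $\otimes$). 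Since \ref{DefWESk}.1 asks nothing of an enriched set beyond the typing of $h$ and $\circ$, this already is an object of $WE_{(A,\otimes)(sk)}$. The projections $\pi_S,\pi_T$ carry the set projections on objects and $\pi_1,\pi_2$ on hom-objects; by the very definition of $\circ_{S\times T}$ one has $\pi_1\circ\circ_{S\times T}=\circ_S\circ(\pi_1\otimes\pi_1)$ on the nose, so each projection respects composition strictly, hence also after $sk$, and is an arrow by \ref{DefWESk}.2.

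For the universal property, given $(U_0,U_1):W\to S$ and $(V_0,V_1):W\to T$, define the mediator by $G_0:=\langle U_0,V_0\rangle$ and $G_1(x,y):=\langle U_1(x,y),V_1(x,y)\rangle$, the pairing supplied by the product in $A$. Because composition in $WE_{(A,\otimes)(sk)}$ acts on arrow-components by $(x,y)\mapsto (\pi_S)_1(G_0x,G_0y)\circ G_1(x,y)=\pi_1\circ\langle U_1,V_1\rangle=U_1(x,y)$, the relations $\pi_S\circ G=U$ and $\pi_T\circ G=V$ hold strictly; uniqueness is forced, since $G_0$ must be the set-pairing and the requirements $\pi_1\circ G_1=U_1$, $\pi_2\circ G_1=V_1$ pin down $G_1$ through the product in $A$.

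It remains to see that $(G_0,G_1)$ is itself an arrow, i.e. satisfies \ref{DefWESk}.2.2, and this is the crux. Expanding both sides of that identity for $G$, using $\langle f,g\rangle\circ h=\langle f\circ h,g\circ h\rangle$, functoriality of $\otimes$, and $\pi_1 G_1=U_1$, $\pi_2 G_1=V_1$, reduces the comparison to the two pairings
$$
\langle\, \circ_S\circ(U_1\otimes U_1),\ \circ_T\circ(V_1\otimes V_1)\,\rangle \quad\text{and}\quad \langle\, U_1(x,z)\circ\circ_W,\ V_1(x,z)\circ\circ_W\,\rangle .
$$
Their first legs already agree after $sk$ because $U$ is an arrow, and their second legs because $V$ is. The genuine obstacle is to pass from \emph{each leg agrees after $sk$} to \emph{the pairings agree after $sk$}: this needs $sk$ to send $\pi_1,\pi_2$ to a jointly monic pair in $B$, equivalently $sk(\langle X_1,X_2\rangle)=\langle sk(X_1),sk(X_2)\rangle$. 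I therefore expect to carry out the proof under the hypothesis that $sk$ preserves products (the condition making $sk$ commute with pairing); granting it, both sides of \ref{DefWESk}.2.2 for $G$ coincide, so $G$ is a well-defined arrow and the universal property is established.

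Functoriality of $\times$ then follows formally: on arrows $f:S\to S'$ and $g:T\to T'$ one sets $f\times g$ to be the unique mediator for the cone $(f\circ\pi_S,g\circ\pi_T)$, and preservation of identities and composites is read off from the uniqueness clause just proved; the terminal object (empty product) is handled identically, using the terminal object of $A$ as the single hom-object. Thus the only step that is not bookkeeping is the interaction of $sk$ with the pairing map, and I would flag the preservation of products by $sk$ as the hypothesis the argument actually consumes.
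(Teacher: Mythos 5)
The paper states this lemma with no proof at all, so there is no argument of the author's to set yours against; the most one can say is that your construction --- underlying set $s\times t$, hom objects $h_S(a,a')\times h_T(b,b')$, composition obtained by pairing the legs $\circ_S\circ(\pi_1\otimes\pi_1)$ and $\circ_T\circ(\pi_2\otimes\pi_2)$ --- is exactly the hom-object-wise recipe the paper uses for its other (co)limit claims (the later lemma that $U(n+1)-\mathfrak{Cat}$ has products and coproducts, and \ref{LemLimWE}), so in spirit you and the author take the same route, and your strict verification of the projections and of uniqueness of the mediator is correct.

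The substantive content of your write-up is the hypothesis you flag, and you are right to flag it: it is genuinely consumed, and it is genuinely absent from the statement. Membership of the forced mediator $\langle U,V\rangle$ in $Arr(WE_{(A,\otimes)(sk)})$ reduces, as you compute, to the implication that $sk(f_i)=sk(g_i)$ for $i=1,2$ forces $sk(\langle f_1,f_2\rangle)=sk(\langle g_1,g_2\rangle)$, i.e.\ to joint monicity of $sk(\pi_1),sk(\pi_2)$ in $B$; nothing in \ref{DefWESk} grants this for an arbitrary functor $sk$. Two refinements of your phrasing: joint monicity of the images of the projections is implied by, but strictly weaker than, preservation of products, so it is the better hypothesis to record; and the terminal object actually needs no hypothesis at all, since any two arrows into the terminal object of $A$ are equal before $sk$ is applied. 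It is also worth observing that the missing hypothesis holds in the paper's intended instances: for $sk=Skel : U-\mathfrak{Cat}\longrightarrow U-\mathfrak{SCat}$ of (\ref{SCat}), a natural transformation into a product category is the same thing as a pair of natural transformations, so $\pi_1 F\cong \pi_1 G$ and $\pi_2 F\cong \pi_2 G$ force $F\cong G$, which is precisely joint monicity of $Skel(\pi_1),Skel(\pi_2)$. So your proof, with the jointly-monic condition added to the hypotheses (and verified for $Skel$), is a correct completion of the claim; at the stated level of generality the lemma should either carry that hypothesis or be regarded as unproved.
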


\sss{$\bold{Definition}$ of $(sk)$-Associativity}
Consider an associative tensor category $(A, \otimes, \alpha)$ $\in Ob(U-\mathfrak{ATCat})$ (see \ref{ATCat}). A weakly $(A,\ten)$-enriched set $(S, h, \circ) \in Ob(WE (A, \otimes) )$, 
is said to be {\em ($sk,\alpha$)-associative} for a functor
$sk:A\to B$ (an arrow in $\UCat$)
if for any $a,b,c,d \in S$, 
$$
sk_{(1)} (\circ(a,b,d) \circ ( id_{h(a,b)} \otimes \circ(b,c,d) ) \circ \alpha(h(a,b), h(b,c), h(c,d)) ) =
$$
$$
sk_{(1)} (\circ(a,c,d) \circ (\circ (a,b,c) \otimes id_{h(c,d)} ) )
,$$ i.e. the standard self-consistency  diagram (pentagram) for the enriched composition $\circ$ is required to commute  after applying the functor $sk$
$$
\begin{CD}
h(c,d) \otimes ( h(b,c) \otimes h(a,b))
@>\alpha(h(c,d),h(b,c),h(a,b)) >>
(h(c,d) \otimes h(b,c)) \otimes h(a,b)
\\
@V{id_{h(c,d)} \otimes \circ(a,b,c)}VV
@V{\circ(b,c,d) \otimes id_{h(a,b)}}VV
\\
h(c,d) \otimes h(a,c)
@.
h(b,d) \otimes h(a,b)
\\
@V{\circ(a,c,d)}VV
@V{\circ(a,b,d)}VV
\\
h(a,d)
@>=>>
h(a,d).
\end{CD}
$$

If the associator $\alpha$ is understood, then we will write ``$(sk)$-associative".

\sss{$\bold{Definition}$ of $WE_{Ass(A,\otimes)(sk,\alpha)}$} Suppose that $(A,\otimes)$ has an associator $\alpha$ (see \ref{ATCat}). Define $WE_{Ass(A,\otimes)(sk,\alpha)}$ to be the full subcategory of $WE_{(A,\otimes)(sk)}$ generated by enriched sets $(S,h_{S},\circ_{S}) \in Ob(WE_{(A,\otimes)(sk)})$ which are $(sk,\alpha)$-associative. If the associator is understood, then we will denote this by ``$WE_{Ass(A,\otimes)(sk)}$".

\sus{Enrichment of $Hom_{WE_{(A,\otimes)(sk)}}(I,C)$}

Consider a tuple of functors $\{ p_{i} : I_{i} \longrightarrow A \}_{i=1}^{n}$. Suppose that for each $i \in \{1,...,n \}$, the colimit $colim \ p_{i} \in Ob(A)$ exists, with universal arrows $e_{i(x_{i})} : p_{i}(x_{i}) \rightarrow colim \ p_{i}$. Suppose that the colimit of the functor $\otimes_{i=1}^{n} p_{i} : \prod_{i=1}^{n} I_{i} \longrightarrow A$ defined by $(x_{i})_{i=1}^{n} \mapsto \otimes_{i=1}^{n}p_{i}(x_{i})$ is also an object in $A$. Consider the arrow $(colim \otimes_{i=1}^{n} p_{i} \rightarrow \otimes_{i=1}^{n} colim \ p_{i}) \in Arr(A)$ induced by $(x_{i})_{i=1}^{n} \mapsto \otimes_{i=1}^{n} e_{i(x_{i})}$; i.e. by tensoring the universal arrows together. The following lemma states that under certain conditions on the $p_{i}$, the above defines a natural transformation with respect to arrows of functors $\phi_{i} : p_{i} \rightarrow q_{i}$.

The $(A,\otimes)$-enrichment of the hom-sets in $WE_{(A,\otimes)(sk)}$ involves such colimits, and the definition of the composition requires that the above arrows should be isomorphisms. This means that the ``forward and backward composition functors" to be introduced in lemma \ref{PushPull} below are determined by the arrows between products $\prod h_{S}(...) \rightarrow \prod h_{T}(...)$.

\sss{Lemma on the Naturality of $\tau$}\label{LemNatTau}
Suppose that $\{ F_{i},G_{i} : I_{i} \longrightarrow A \}_{i=1}^{n}$ are functors, and $\{ (F_{i} \xrightarrow{\phi_{i}} G_{i}) \}_{i=1}^{n}$ are arrows of functors. Suppose that for each $i \in \{ 1,...,n\}$, $P_{F_{i}} \subseteq \downarrow_{(Hom^{(1)}_{U-\mathfrak{Cat}^{2}}(J_{i},A))}(\Delta_{(J_{i},A)},F_{i}\circ \varepsilon_{i} )$ and $P_{G_{i}} \subseteq \downarrow_{(Hom^{(1)}_{U-\mathfrak{Cat}^{2}}(J_{i},A))}(\Delta_{(J_{i},A)},G_{i}\circ \varepsilon_{i} )$ are subcategories, where $\varepsilon_{i} : J_{i} \subseteq I_{i}$ is the subcategory with only identity arrows.

\ref{LemNatTau}.1. Suppose that the functors $p_{F_{i}} : P_{F_{i}} \longrightarrow A$ and $p_{G_{i}} : P_{G_{i}} \longrightarrow A$ are as in the conditions of the limit inclusion lemma (i.e. $colim ( p_{F_{i}} )$ determines an object in $P_{F_{i}}$, with the analogue holding for $G_{i}$) 

\ref{LemNatTau}.2. Define an arrow of sets $\tau_{( \ )} : \prod_{i=1}^{n} Ob(Hom^{(1)}_{U-\mathfrak{Cat}^{2}}(I_{i},A)) \rightarrow Arr(A)$ so that for any $(H_{i})_{i=1}^{n} \in \prod_{i=1}^{n} Ob(Hom^{(1)}_{U-\mathfrak{Cat}^{2}}(I_{i},A))$, $\tau_{((H_{i})_{i=1}^{n}))} : colim (\bigotimes_{i=1}^{n} p_{H_{i}}) \rightarrow \bigotimes_{i=1}^{n} colim(p_{H_{i}})$ is the universal arrow for the colimit induced by the assignment (where $\lambda_{(i)}$ is the natural transformation defining the colimit of $p_{H_{i}}$)
$$
((a_{(i)},f_{(i)})_{i=1}^{n} \mapsto \otimes_{i=1}^{n} \lambda_{(i)}((a_{(i)},f_{(i)})) \ )_{(a_{(i)},f_{(i)})_{i=1}^{n} \in \prod_{i=1}^{n} P_{H_{i}}}
$$ 

\ref{LemNatTau}.3. If $u=u_{(p)} : colim (\bigotimes_{i=1}^{n} p_{F_{i}}) \rightarrow colim (\bigotimes_{i=1}^{n} p_{G_{i}})$ is the universal arrow for the colimit induced by the assignment $$
((a_{( \ )} , f_{( \ )}) \mapsto \lambda'_{G}((\otimes_{i=1}^{n}a_{(i)},\otimes_{i=1}^{n}\phi_{i}\circ f_{(i)} )) \ )_{(a_{( \ )} , f_{( \ )}) \in Ob(\prod_{i=1}^{n}P_{F_{i}})}
$$

then
$$
\otimes_{i=1}^{n} \lambda_{G_{j}}((a,\phi_{j}\circ f )) \circ \tau_{F_{( \ )}} = \tau_{G_{( \ )}} \circ u
$$

I.e., $\tau : colim (\bigotimes_{i=1}^{n} p_{i}) \rightarrow \bigotimes_{i=1}^{n} colim(p_{i})$ is ``natural at $(\phi_{i})_{i=1}^{n}$."

\begin{proof}
By the monic arrow condition, the arrows involved are situated above the products, $\prod_{a \in Ob(I_{i})} F_{i}(a)$, so that the arrows $\otimes_{i=1}^{n}a_{(i)} \rightarrow \otimes_{i=1}^{n} l_{G_{i}}$ are pure tensors respecting the arrows $\phi_{i}$.
\end{proof}

The following lemma defines a weak enrichment of the set $Hom_{WE_{(A,\otimes)(sk)}}(C,D)$. To any ``$(A,\otimes)$-functors" $\Phi,\Psi : C \rightarrow D$, one attaches a category $P$, and defines the hom object between $\Phi$ and $\Psi$ to be a colimit of a certain functor $P \longrightarrow A$. Roughly speaking, $P$ keeps track of all arrows into to the product $\prod_{x \in Ob(C)}h_{D}(\Phi(x),\Psi(x))$  which respect the composition with any arrows ``coming from some $h_{C}(x,y)$," after one applies $sk$. $P$ is a full sub-category of the category of arrows over $\prod_{x \in Ob(C)}h_{D}(\Phi(x),\Psi(x))$. The objects of $P$ are all arrows $(a \xrightarrow{\pi} \prod_{x \in Ob(C)}h_{D}(\Phi(x),\Psi(x))$, such that for any $x,y \in Ob(C)$, for any $(t_{0} \xrightarrow{t} h_{C}(x,y)) \in Arr(A)$, tensoring $\pi$ with $t$, projecting to the $y$-component $\prod_{x \in Ob(C)}h_{D}(\Phi(x),\Psi(x)) \rightarrow h_{D}(\Phi(y),\Psi(y))$, and composing in $D$ is $(sk)$-equal to tensoring $t$ with $\pi$, projecting to the $x$-component, and composing in $D$. One defines $p : P \longrightarrow A$ to be the functor which remembers the domain of a given arrow. One associates to $\Phi$ and $\Psi$ the object $colim(p) \in Ob(A)$ (assuming that the colimit exists).

One composes, i.e. defines, for all $(A,\otimes)$-functors $\Phi,\Psi,\Xi$, an arrow 
$$
(h(\Phi,\Psi) \otimes h(\Psi,\Xi) \xrightarrow{\circ} h(\Phi,\Xi)) \in Arr(A)
$$ 

by taking the inverse of the arrow $colim \ (P_{\Phi,\Psi} \otimes P_{\Psi,\Xi}) \rightarrow (colim \ P_{\Phi,\Psi}) \otimes (colim \ P_{\Psi,\Xi})$ (that this is an isomorphism is assumed), and recognizing $colim \ (P_{\Phi,\Psi} \otimes P_{\Psi,\Xi})$ as an object in $P_{\Phi,\Xi}$ by using the composition in $D$ and the projection for the products to define arrows $P_{\Phi,\Psi}(x) \otimes P_{\Psi,\Xi}(y) \rightarrow \prod_{x \in Ob(C)}h_{D}(\Phi(x),\Xi(x))$. As an object in $P_{\Phi,\Xi}$, $colim \ (P_{\Phi,\Psi} \otimes P_{\Psi,\Xi})$ has assigned to it an arrow into $colim \ P_{\Phi,\Xi}$, which is defined to be the hom object assigned to $\Phi$ and $\Xi$. One composes this colimit arrow with the inverse of the first arrow to define the composition arrow.

\sss{Lemma on the Enrichment of $Hom_{WE_{(A,\otimes)(sk)}}(C,D)$} \label{HomEnr}

Suppose that $(A,\otimes)$ has a symmetrizer and associator for the tensor. 

\ref{HomEnr}.1. For any $\Phi,\Psi \in Hom_{WE(A,\otimes)(sk)}(C,D)$, define 
$$
P \subseteq \downarrow_{( A )}(id_{A} , ob_{(A)}(\prod_{x \in Ob(C)} h_{D}(\Phi(x),\Psi(x))) )
$$

to be the full subcategory generated by objects (i.e. arrows $a \xrightarrow{\pi} \prod_{x \in Ob(C)} h_{D}(\Phi(x),\Psi(x))$ in $A$) such that for any $x,y \in Ob(C), (t_{0} \xrightarrow{t} h_{C}(x,y)) \in Arr(A)$, 
$$
sk(\circ_{D} \circ (id_{h_{D}(\Phi(y),\Psi(y))} \otimes \Psi(x,y)) \circ ((\pi \circ \pi_{y}) \otimes t)) =
$$
$$ 
sk(\circ_{D} \circ (\Phi (x,y) \otimes id_{h_{D}(\Phi(x),\Psi(x))}) \circ \sigma \circ ((\pi \circ \pi_{x}) \otimes t))
$$

Then define $\bar{h}_{WE(A,\otimes)(sk)1}(C,D)(\Phi,\Psi)$ to be the colimit of the domain object functor $p : P \longrightarrow A$ defined by $(a,f) \mapsto a$.

\ref{HomEnr}.2. Suppose that the compostion on $D$ is $(sk)$-associative, and the arrows $u = u_{(p)} : colim \otimes_{i=1}^{n} \ p_{i} \rightarrow \otimes_{i=1}^{n} colim \ p_{i}$ are isomorphisms, defined as in the previous lemma, and $p = \{ (1,p_{\Phi,\Psi}) \} \cup \{ (2,p_{\Psi,X} ) \} : \{1,2\} \rightarrow Arr(\mathfrak{Cat})$. Then define the composition $\circ_{WE(A,\otimes)(sk)}(C,D)(\Phi,\Psi,X) \in Arr(A)$ by taking it to be the composition of the colimit arrow $e : colim (-_{1} \otimes -_{2}) \rightarrow h_{WE(A,\otimes)(sk)}(C,D)(\Phi,X)$ associated to the object $ (colim (-_{1} \otimes -_{2}),\phi ) \in Ob(P_{\Phi,X})$ determined by the arrow
$$
\phi : colim (-_{1} \otimes -_{2}) \rightarrow \prod_{a \in Ob(C)} h_{D}(\Phi(a), X(a))
$$

induced by sending any given $((a,f),(b,g)) \in Ob(P_{\Phi,\Psi}\times_{\mathfrak{Cat}} P_{\Psi,X})$ to the product arrow given to the assignment
$$
a \mapsto \circ_{D}(\Phi(a),\Psi(a),X(a))\circ (\pi_{(\Phi,\Psi)a}\circ f \otimes \pi_{(\Psi,X)a}\circ g)
$$

with $u^{-1}$, I.e.
$$
\circ_{WE(A,\otimes)(sk)}(C,D)(\Phi,\Psi,X) := e \circ u^{-1}
$$

\ref{HomEnr}.3. Define $\bar{h}_{WE(A,\otimes)(sk)}(C,D) := $
$$
(Hom_{WE_{(A,\otimes)(sk)}}(C,D),\bar{h}_{WE(A,\otimes)(sk)1}(C,D)( \ , \ ) , \circ_{WE(A,\otimes)(sk)}(C,D)( \ , \ , \ ) ) \in Ob(WE(A,\otimes))
$$

i.e. part i. gives the hom objects and part ii. gives the composition.

\ref{HomEnr}.4. $\bar{h}_{WE(A,\otimes)(sk)}(C,D)$ is $(sk)$-associative. If $(A,\otimes)$ has a unit $I$ such that $\circ_{D}$ is $(Yo^{opp}_{(0)}(I))$-associative, then so does $\bar{h}_{WE(A,\otimes)(sk)}(C,D)$.

\begin{rem}
The enrichment on $Hom_{WE_{(A,\otimes)(sk)}}(C,D)$, i.e. the objects $h(\Phi,\Psi)$ defined in the previous lemma for $(A,\otimes)$-functors $\Phi$ and $\Psi$, were initially constructed as $(sk)$-equalizers. I believe that the present construction can also be realized as an $(sk)$-equalizer, but by use of a diagram containing arrows of the form $[ ( Homfun(A)\circ (( - \otimes J) \times id_{A}), \circ \circ (\pi \otimes id_{J})) ] \in Arr(\Omega)$, and with restrictions on $A$.
\end{rem}

\sss{$\bold{Definition}$ of the Enriched Arrows Functor}
If $(A,\otimes) \in Ob(\mathfrak{TCat})$ has coproducts, then define
$$
\bar{Arr}_{(A,\otimes)} : WE_{(A,\otimes)} \longrightarrow A
$$

by $(S,h,\circ) \mapsto \coprod_{s,t \in S} h(s,t)$ and $(F_{0},F_{1}) \mapsto \coprod_{s,t \in S} F_{1}(s,t)$.

\begin{rem}
The functor $sk$ is not referred to in this definition. $\bar{Arr}_{(A,\otimes)}$ is the ``enriched arrow functor."
\end{rem}

\begin{lem}
$\bar{Arr}_{(A,\otimes)}$ is faithful.
\end{lem}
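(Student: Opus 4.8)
The plan is to prove faithfulness by fixing two objects $S=(S,h_S,\circ_S)$ and $T=(T,h_T,\circ_T)$ of $WE_{(A,\otimes)}$ (abusing notation so that $S,T$ also denote the underlying sets) and showing that the assignment $(F_0,F_1)\mapsto\coprod_{a,b\in S}F_1(a,b)$ is injective on $Hom_{WE_{(A,\otimes)}}(S,T)$. The only structure available is the universal property of the coproducts $\bar{Arr}(S)=\coprod_{a,b\in S}h_S(a,b)$ and $\bar{Arr}(T)=\coprod_{a',b'\in T}h_T(a',b')$, so the whole argument is organized around recovering the pair $(F_0,F_1)$ from the single arrow $\bar{Arr}(F_0,F_1)$ by precomposing with the coproduct injections $\iota^S_{(a,b)}:h_S(a,b)\to\bar{Arr}(S)$.

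First I would unwind the definition of $\bar{Arr}$ on arrows. By construction $\bar{Arr}(F_0,F_1)$ is the unique arrow out of $\bar{Arr}(S)$ whose $(a,b)$-component is $F_1(a,b):h_S(a,b)\to h_T(F_0(a),F_0(b))$ followed by the injection $\iota^T_{(F_0(a),F_0(b))}$ of the corresponding summand of $\bar{Arr}(T)$; that is, $\bar{Arr}(F_0,F_1)\circ\iota^S_{(a,b)}=\iota^T_{(F_0(a),F_0(b))}\circ F_1(a,b)$. Since the family $\{\iota^S_{(a,b)}\}$ is jointly epic, two arrows $(F_0,F_1)$ and $(G_0,G_1)$ satisfy $\bar{Arr}(F_0,F_1)=\bar{Arr}(G_0,G_1)$ if and only if for every $(a,b)\in S^2$
$$
\iota^T_{(F_0(a),F_0(b))}\circ F_1(a,b)=\iota^T_{(G_0(a),G_0(b))}\circ G_1(a,b).
$$
This reduction is valid for any $A$ with coproducts and is the routine half of the argument.

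It then remains to read off $F_0=G_0$ and $F_1=G_1$ from these componentwise equalities. Taking $b=a$ and comparing which summands of $\bar{Arr}(T)$ the two sides factor through gives $(F_0(a),F_0(a))=(G_0(a),G_0(a))$, hence $F_0=G_0$; once the two target injections are known to coincide, cancelling the injection $\iota^T_{(F_0(a),F_0(b))}$ yields $F_1(a,b)=G_1(a,b)$ for all $a,b$, so $(F_0,F_1)=(G_0,G_1)$.

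The main obstacle is exactly this last step, and it is where I expect the statement to need more than is printed. Reading the index of the target summand off of $\iota^T_{\mu}\circ p=\iota^T_{\nu}\circ q$ requires the coproducts of $A$ to be \emph{disjoint} (distinct injections admit no common factorization unless $\mu=\nu$), and the final cancellation requires the injections to be \emph{monic}. Both hold in the extensive or concrete tensor categories one has in mind, but neither follows from the bare existence of coproducts assumed in the statement: if $A$ is the terminal tensor category, then every $h(a,b)$ and every $\coprod$ collapses to the single object, $\bar{Arr}$ sends all arrows to the identity, and faithfulness fails for any $S,T$ with more than one function $S\to T$. Accordingly, the honest form of the lemma carries the implicit hypothesis that $A$ have disjoint coproducts with monic injections, and the three steps above constitute a complete proof under that hypothesis.
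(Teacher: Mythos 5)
The paper offers no proof of this lemma at all --- it is stated bare --- so there is no argument of the author's to compare yours against; your proposal has to stand on its own. Its first two steps do: the componentwise identity $\bar{Arr}(F_{0},F_{1})\circ\iota^{S}_{(a,b)}=\iota^{T}_{(F_{0}(a),F_{0}(b))}\circ F_{1}(a,b)$ and the reduction via the jointly epic family $\{\iota^{S}_{(a,b)}\}$ are correct for any $A$ with coproducts. Your observation that the lemma as printed is false is also correct, and is the most valuable part of the write-up: in the terminal tensor category every weak enrichment collapses as you describe, and $\bar{Arr}$ sends all the distinct arrows $(F_{0},F_{1})$ between two fixed enriched sets to one and the same arrow.

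The gap is in your final claim that disjoint coproducts with monic injections make the three steps a complete proof. They do not, because the weak enrichments of this paper carry no units: nothing forces a hom object to be ``nonempty'', and your index-reading step silently assumes it is. Concretely, take $A=(U-\mathfrak{Set},\times_{U-\mathfrak{Set}})$, which has disjoint coproducts with monic (indeed injective) coprojections. Let $S=(\{a\},h_{S},\circ_{S})$ with $h_{S}(a,a)=\emptyset$, and $T=(\{x,y\},h_{T},\circ_{T})$ with every $h_{T}(s,t)=\emptyset$; all composition maps are the unique maps $\emptyset\times\emptyset\to\emptyset$, and condition (\ref{CatWE}.2) holds trivially. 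Then $(F_{0},F_{1})$ with $F_{0}(a)=x$ and $(G_{0},G_{1})$ with $G_{0}(a)=y$ (and $F_{1}(a,a)=G_{1}(a,a)=id_{\emptyset}$) are distinct arrows of $WE(A,\otimes)$, yet $\bar{Arr}$ sends both to the unique map $\emptyset\to\emptyset$, since $\bar{Arr}(S)=\bar{Arr}(T)=\emptyset$. The failure is exactly at ``comparing which summands the two sides factor through'': by disjointness, an arrow equalizing two distinct coprojections factors through the initial object, so when $h_{S}(a,b)$ itself admits a map to the initial object the index $(F_{0}(a),F_{0}(b))$ cannot be recovered, and with empty homs faithfulness genuinely fails. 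A correct statement needs a further hypothesis ruling out this degeneracy --- for instance that no hom object of the domain admits a morphism to an initial object of $A$, or that one restricts to unital enrichments with unit arrows $I\to h(a,a)$ for a unit $I$ that is not sub-initial. Under such a hypothesis your three steps do assemble into a proof.
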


The following lemma concerns the self-enrichment of the category $WE_{(A,\otimes)(sk)}$. The enriched hom set defined in the previous lemma is denoted by ``$\bar{h}_{WE(A,\otimes)(sk)}(B,C)$." Part (i) of the following lemma defines the ``forward composition/pushforward functor," \newline $\bar{h}_{WE(A,\otimes)(sk)}(B,C) \rightarrow \bar{h}_{WE(A,\otimes)(sk)}(B,D)$. Part (ii) defines the ``backward composition/ pullback functor," $\bar{h}_{WE(A,\otimes)(sk)}(C,D) \rightarrow \bar{h}_{WE(A,\otimes)(sk)}(B,D)$. Part (iii) states that one can use these to define an arrow $(\bar{h}_{WE(A,\otimes)(sk)}(B,C) \times \bar{h}_{WE(A,\otimes)(sk)}(C,D)$ $\rightarrow$ $\bar{h}_{WE(A,\otimes)(sk)}(B,D))$ $\in Arr(WE_{(A,\otimes)(sk)})$ which gives the enriched composition in $WE_{(A,\otimes)(sk)}$.

\sss{Lemma on Composition Functors} \label{PushPull}
Given $(sk) \in Arr(Cat)$, for any $(F : C \rightarrow D)$, $(G : B \rightarrow C ) \in Arr(WE_{(A,\otimes)(sk)})$,

\ref{PushPull}.1.
$$
(F_{*} : \bar{h}_{WE(A,\otimes)(sk)}(B,C) \rightarrow \bar{h}_{WE(A,\otimes)(sk)}(B,D)) \in Arr(WE_{(A,\otimes)(sk)})
$$

is induced by $\prod_{a \in Ob(B)} h_{C}(\Psi_{1}(a),\Psi_{2}(a)) \rightarrow \prod_{a\in Ob(B)} h_{D}(F\circ \Psi_{1}(a),F\circ\Psi_{2}(a))$, which induces a functor $P_{\bar{h}_{WE(A,\otimes)(sk)}(B,C)(\Psi_{1},\Psi_{2})} \longrightarrow P_{\bar{h}_{WE(A,\otimes)(sk)}(B,D)(F\circ\Psi_{1},F\circ\Psi)}$, so that an arrow is induced from the colimit of the first diagram ($p_{\bar{h}_{WE(A,\otimes)(sk)}(B,C)(\Psi,\Psi_{2})}$ to the colimit of the second $p_{\bar{h}_{WE(A,\otimes)(sk)}(B,D)(\F\circ\Psi_{1},F\circ\Psi)}$.

\ref{PushPull}.2.
$$
(G^{*} : \bar{h}_{WE(A,\otimes)(sk)}(C,D) \rightarrow \bar{h}_{WE(A,\otimes)(sk)}(B,D)) \in Arr(WE_{Set(sk)}(A,\otimes))
$$

is induced by $\prod_{a \in Ob(C)} h_{D}(\Phi_{1}(a),\Phi_{2}(a)) \rightarrow \prod_{a \in Ob(B)} h_{D}(\Phi_{1}\circ G(a), \Psi_{2} \circ G(a))$, which is the product map induced by the assignment $(a \mapsto \pi_{G(a)})$.

These are analogues to the usual forward and backward functors associated to composition on either end of a functor category $Hom(B,C)$.

\ref{PushPull}.3. From an arrow of functors $\alpha : \times_{A} \rightarrow \otimes$, the previous two constructions, and the product structure, construct an arrow in $WE_{Set(A,\otimes)}$

$$
\bar{h}_{WE(A,\otimes)(sk)}(B,C) \times_{WE_{(A,\otimes)(sk)}} \bar{h}_{WE(A,\otimes)(sk)}(C,D) \longrightarrow \bar{h}_{WE(A,\otimes)(sk)}(B,D)
$$

(Not unique. Corresponding to the choice of the path $F_{1}\circ G_{1} \rightarrow F_{1} \circ G_{2} \rightarrow F_{2} \times G_{2}$)

\ref{PushPull}.4. Defining $\bar{\circ} : Ob(WE_{Ass(A,\otimes)(sk)})^{3} \mapsto Arr(WE_{Ass(A,\otimes)(sk)})$ by sending $(B,C,D) \in Ob(WE_{Ass(A,\otimes)(sk)}$ to the arrow in (iii),
$$
(Ob(WE_{Assoc(sk)}(A,\otimes)),\bar{h}_{WE_{(sk)}(A,\otimes)},\bar{\circ})
$$

is an $(WE_{Ass(sk)}(A,\otimes),\times_{WE_{Assoc(sk)}}(A,\otimes)$-enriched set, whose composition is $(Ob)$-\newline associative and $(sk\circ\bar{Arr}_{(A,\otimes)})$-associative.

\begin{proof}
Parts i. and ii. consist only in checking for $(sk)$-commutativity so that the constructions can be made. Part iii., states that for any $C,D,E \in Ob(WE(A,\otimes))$, for any $\Phi_{1},\Phi_{2},\Phi_{3} \in Ob(\bar{h}_{WE(A,\otimes)(sk)}(C,D))$, for any $\Psi_{1},\Psi_{2},\Psi_{3} \in Ob(\bar{h}_{WE(A,\otimes)(sk)}(D,E)$,
$$
\circ_{\bar{h}_{(C,E)}}(\Psi_{1}\circ \Phi_{1} , \Psi_{2} \circ \Phi_{2}, \Psi_{3} \circ \Phi_{3}) \circ
$$
$$
( \circ_{\bar{h}_{(C,E)}}(\Psi_{1}\circ \Phi_{1} , \Psi_{1} \circ \Phi_{2}, \Psi_{2}\circ \Phi_{2}) \otimes \circ_{\bar{h}_{(C,E)}}(\Psi_{2}\circ\Phi_{2} ,\Psi_{2}\circ \Phi_{3}, \Psi_{3}\circ\Phi_{3})) \circ
$$
$$
((\Phi_{2}^{*}\otimes\Psi_{1*}) \otimes (\Phi_{3}^{*} \otimes \Psi_{2*})) \circ \sigma_{1*} =_{(sk)}
$$
$$
\circ_{\bar{h}_{(C,E)}}(\Psi_{1}\circ\Phi_{1} , \Psi_{1}\circ\Phi_{3} , \Psi_{3}\circ\Phi_{3}) \circ(\Phi_{3}^{*} \otimes \Psi_{1*}) \circ (\circ_{\bar{h}_{(D,E)}}(\Psi_{1},\Psi_{2},\Psi_{3})\otimes \circ_{\bar{h}_{(C,D)}}(\Phi_{1},\Phi_{2},\Phi_{3}) ))
$$

given that $colim(p) \in P$ with a monic arrow into the relevant product, and that $\forall f,g : x \rightarrow \prod_{i \in I}y_{i}$, $\forall i \in I, sk(\pi_{i}\circ f) = sk(\pi\circ g) \Longrightarrow sk(f) = sk(g)$.

All arrows between the objects $\bar{h}_{WE(A,\otimes)(sk)}(C,D) \rightarrow \bar{h}_{WE(A,\otimes)(sk)}(C',D')$ commute with monic arrows $\bar{h}_{WE(A,\otimes)(sk)}(C,D) \rightarrow \prod_{c \in Ob(C)} h_{D}(F(c),G(c))$.

After taking the inverse of the isomorphism $\otimes colim \ p_{i} \leftarrow colim \otimes \ p_{i}$ (that this is an isomorphism is assumed), these maps are determined by the arrows $\Psi_{i*}$ and $\Phi_{i}^{*}$. On the components of the product $\Phi_{i}^{*}$ come from identity arrows and $\Psi_{i*}$ from $\Psi(a,b)$.
$$
\text{Diagram with two arrows,}
$$
$$
\prod_{a \in Ob(D)} h_{E}(\Psi_{1}(a),\Psi_{2}(a)) \otimes \prod_{a \in Ob(D)} h_{E}(\Psi_{2}(a),\Psi_{3}(a)) \otimes
$$
$$
\prod_{a \in Ob(C)} h_{D}(\Phi_{1}(a),\Psi_{2}(a)) \otimes \prod_{a \in Ob(C)} h_{D}(\Phi_{2}(a),\Phi_{3}(a))
$$
$$
\rightarrow h_{E}(\Psi_{1}\circ\Phi_{1}(a),\Psi_{3}\circ\Phi_{3}(a))
$$

 (one side is $\Phi^{*}_{3} \otimes \Phi^{*}_{3} \otimes \Psi_{1*} \otimes \Psi_{1*}$ and the other is $\Phi_{2}^{*} \otimes \Phi_{3}^{*} \otimes \Psi_{1*} \otimes \Psi_{2*}$). The $\Phi^{*}_{3} \otimes \Phi^{*}_{3} \otimes \Psi_{1*} \otimes \Psi_{1*}$ side is
$$
\Pi \rightarrow
$$
$$
h_{E}(\Psi_{1}\circ\Phi_{3} (a) , \Psi_{2} \circ \Phi_{3}(a)) \otimes h_{E}(\Psi_{2},\Phi_{3}(a),\Psi_{3}\circ\Phi_{3}(a)) \otimes h_{D}(\Phi_{1}(a),\Phi_{2}(a))\otimes h_{D}(\Phi_{2}(a),\Phi_{3}(a))
$$
$$
\xrightarrow{id \otimes id \otimes \Psi_{1}(\Phi_{1}(a),\Phi_{2}(a)) \otimes \Psi_{1}(\Phi_{2}(a),\Phi_{3}(a))}
$$
$$
h_{E}(\Psi_{1}\circ\Phi_{3}(a),\Psi_{2}\circ\Phi_{3}(a))\otimes h_{E}(\Psi_{2}\circ \Phi_{3}(a),\Psi_{3}\circ\Phi_{3}(a)) \otimes
$$
$$
h_{E}(\Psi_{1}\circ\Phi_{1}(a),\Psi_{1}\circ\Phi_{2}(a))\otimes h_{E}(\Psi_{1}\circ\Phi_{2}(a) , \Psi_{1}\circ\Phi_{3}(a))
$$
$$
\xrightarrow{\circ_{E}} h_{E}(\Psi_{1}\circ\Phi_{1}(a),\Psi_{3}\circ\Phi_{3}(a))
$$

The $\Phi_{2}^{*} \otimes \Phi_{3}^{*} \otimes \Psi_{1*} \otimes \Psi_{2*}$ side is
$$
\Pi \rightarrow
$$
$$
h_{E}(\Psi_{1}\circ\Phi_{2}(a),\Psi_{2}\circ\Phi_{2}(a)) \otimes h_{E}(\Psi_{2},\Psi_{3}(a),\Psi_{3}\circ\Phi_{3}(a)) \otimes h_{D}(\Phi_{1}(a),\Phi_{2}(a)) \otimes h_{D}(\Phi_{2}(a),\Phi_{3}(a))
$$
$$
\xrightarrow{id \otimes id \otimes \Psi_{1}(\Phi_{1}(a),\Phi_{2}(a)) \otimes \Psi_{2}(\Phi_{2}(a),\Phi_{3}(a)) \circ \sigma}
$$
$$
h_{E}(\Psi_{2}\circ\Phi_{3}(a),\Psi_{3}\circ\Phi_{3}(a))\otimes h_{E}(\Psi_{1}\circ\Phi_{2}(a),\Psi_{2}\circ\Phi_{2}(a))\otimes 
$$
$$
h_{E}(\Psi_{2}\circ\Phi_{2}(a),\Psi_{2}\circ\Phi_{3}(a))\otimes h_{E}(\Psi_{1}\circ\Phi_{1}(a),\Psi_{1}\circ\Phi_{1}(a))
$$
$$
\xrightarrow{\circ_{E}}  h_{E}(\Psi_{1}\circ\Phi_{1}(a),\Psi_{3}\circ\Phi_{3}(a))
$$

By definition of $P_{\bar{h}_{(C,D)}(\Psi_{1},\Psi_{2})}$, in particular, ``commutativity" of the composition with any arrow going through a hom object of $C$, the two arrows are $(sk)$-equal.

\end{proof}
\begin{rem}
On underlying ``objects" this is the usual composition (e.g. 1-composition, of functors).
\end{rem}

\subsection{n-Categories}

An n-Category is defined inductively as an object in the category of (n-1)-enriched categories. 

The refutations of this approach (that it returns strict n-categories) which I've read referred only to enrichments associative in the strict sense. I therefore expect that requiring only ($sk$)-associativity (in a sense to be made precise below) should sidestep this. n-Categories with their basic structures are inductively defined, referring to each other (and therefore inseparable).

\sss{
The inductive  construction of $n$-categories
}
We define, inductively and simultaneously, the 
\ben
``forgetful functors" (``objects functors'') $F(n)$, 
\i
natural transformations $\rho (n)$, 
\i
the ``associators" $\alpha(n)$, 
\i
the ``product functors" $\times (n)$, 
\i
``symmetrizers" $\sigma (n)$, 
\i
``unit objects" $I(n)$, 
\i
right and left unit arrows $\rho_{u}(n)$, $\lambda_{u}(n)$, 
\i
$(n)-equivalence$ of
$(n)$-categories,
\i
$(n)$-equivalence of $(n)$-functors, 
\i
the $(n)$-skeleton functor $sk(n)$, and 
\i
the $U'$-category $U(n)-\mathfrak{Cat}$. 
\een
Here, for any $n\in\mathbb{N}$ the category of $(n)$-categories $U(n)-\mathfrak{Cat}$ 
is the category of sets that are weakly enriched over the category 
$U(n-1)-\mathfrak{Cat}$ of $(n-1)$-categories. 

\sss{$\bold{Definition}$ of n-Category}\label{nCat}
Assuming that we have defined these objects for all integers $\le n$ we define them for $n+1$.

\ref{nCat}.1.
The ``forgetful", or ``objects" functor
is defined on $n$-categories and it takes an $n$-category (an enriched set) to the underlying set 
$$
F(n+1) := Yo^{opp}_{((U,n+1)-\mathfrak{Cat}) (0)}(I(n+1)) : (U,n+1)-\mathfrak{Cat} \longrightarrow U-\mathfrak{Set}
$$

\ref{nCat}.2.
Define a natural transformation
$$
\rho(n) : \times_{U'-\mathfrak{Set}} \circ (F(n) \times_{U'-\mathfrak{Cat}} F(n)) \rightarrow F(n) \circ \times (n)
$$

by the identity maps. 

\ref{nCat}.3.
Define the $(n)$-associator
$$
\alpha (n+1) : \times(n+1) \circ (\times(n+1) \times_{U'-\mathfrak{Cat}}  id_{U'-\mathfrak{Cat}} ) \rightarrow \times(n+1) \circ (id_{(U,n+1)-\mathfrak{Cat}} \times_{U'-\mathfrak{Cat}} \times(n+1))
$$

as arrow of functors $((U,n+1)-\mathfrak{Cat})^{3} \longrightarrow (U,n+1)-\mathfrak{Cat}$, defined on objects by the associator and on hom objects by $\alpha (n)$. 

\ref{nCat}.4.
Define the $(n)$-product functor
$$
\times(n+1) : (U,n+1)-\mathfrak{Cat} \times_{U'-\mathfrak{Cat}} (U,n+1)-\mathfrak{Cat} \longrightarrow (U,n+1)-\mathfrak{Cat}
$$

on objects by the usual product functor (arrow in $U'-\mathfrak{Cat}$), defined on objects by the usual product functor and on hom objects by $\times (n)$, $\sigma (n)$, and $\alpha(n)$. 

\ref{nCat}.5.
Define the symmetrizing transformation
$$
\sigma (n+1) : \times(n+1) \rightarrow \times \sigma_{U'-\mathfrak{Cat}}
$$

on underlying objects by the usual symmetrizer and on hom objects by $\times(n)$ and $\sigma(n)$. 

\ref{nCat}.6.
Define the unit object
$$
I(n+1) \in Ob((U,n+1)-\mathfrak{Cat})
$$

having $\{ \emptyset \}$ as its underlying set, and $I(n)$ for the hom object. 

\ref{nCat}.7.
Define the left and right unit arrows
$$
\rho_{u}(n+1), \lambda_{u}(n+1) \in Arr(Hom_{U'-\mathfrak{Cat}^{2}}^{(1)}((U,n+1)-\mathfrak{Cat},(U,n+1)-\mathfrak{Cat}))
$$
$$
\rho_{u}(n+1) : - \times I(n+1) \rightarrow Id
$$
$$
\lambda_{u}(n+1) : I(n+1)\times - \rightarrow Id
$$

By the usual units on objects and $\rho_{u}(n)$ and $\lambda_{u}(n)$ on the hom objects. 

\ref{nCat}.8.
Define, for any $C,D \in Ob((U,n+1)-\mathfrak{Cat})$, the statement

$(C,D) \text{ are }(n+1)equivalent_{0} \Longleftrightarrow$

There exist $F : C \rightarrow D$, $G : D \rightarrow C$, such that for any $(c_{1},c_{2}) \in Ob(C)$, $(d_{1},d_{2}) \in Ob(D)$, $F_{(1)}(c_{1},c_{2})$ and $G_{(1)}(d_{1},d_{2})$ are $(n)$-equivalences, and  $(G\circ F,id_{C}), (F\circ G, id_{D})$ are $(n+1)$-equivalent$_{1}$.

\ref{nCat}.9.
Define, for any $C,D \in Ob((U,n+1)-\mathfrak{Cat})$, for any $F,G \in Hom_{(U,n+1)-\mathfrak{Cat})}(C,D)$, the statement 

$(F,G)$ are $(n+1)-equivalent_{1} \Longleftrightarrow$ 

There exist 
$$
\phi \in F(n)(\bar{h}_{WE((U,n)-\mathfrak{Cat},\times(n))(sk(n))}(C,D)(F,G))
$$
$$
\psi \in F(n)(\bar{h}_{WE((U,n)-\mathfrak{Cat},\times(n))(sk(n))}(C,D)(G,F))
$$, 

such that the various arrows
$$
\bar{h}_{WE((U,n)-\mathfrak{Cat},\times(n))(sk(n))}(C,D)(F,F) \rightarrow \bar{h}_{WE((U,n)-\mathfrak{Cat},\times(n))(sk(n))}(C,D)(F,G)
$$
$$
\bar{h}_{WE((U,n)-\mathfrak{Cat},\times(n))(sk(n))}(C,D)(G,G) \rightarrow \bar{h}_{WE((U,n)-\mathfrak{Cat},\times(n))(sk(n))}(C,D)(G,F)
$$

given by the composition of $\bar{\circ}$, the arrow $\bar{I(n)} \rightarrow \bar{h}_{WE((U,n)-\mathfrak{Cat},\times(n))(sk(n))}(C,D)(F,G)$ associated to $\phi$ or $\psi$ (see \ref{HomEnr} part(iv).) and a unit arrow ($\lambda_{u}(n)$ or $\rho_{u}(n)$), are $(n)$-equivalences$_{0}$ (Slightly loose usage. Adapt part (8).) (i.e. they $(sk(n))$-invert one another).

Roughly speaking there are $(sk(n))$-natural transformations between $F$ and $G$, which induce forward and backward compostion functors by the unit and enrichment lemma, which are $(n)$-equivalences, and such that $\phi \circ \psi$ and $\psi \circ \phi$ induce $(n)$-equivalent functors to the identities for the respective hom objects. 

\ref{nCat}.10.
Define the $(n)$-skeleton functor $sk(n)$ as a quotient functor
$$
sk(n) : (U,n)-\mathfrak{Cat} \longrightarrow Q 
$$

where $Q$ is the category defined by
$$
Ob(Q) = Ob((U,n)-\mathfrak{Cat})
$$
$$
Hom_{(Q)}(C,D) :_{t}= \{ [ F ]_{(n)eq} \in \bold{2}^{(Hom_{((U,n)-\mathfrak{Cat})}(C,D))} ; F \in Hom_{((U,n)-\mathfrak{Cat})}(C,D) \}
$$

where $[F]_{(n)eq} = [G]_{(n)eq}$ iff $(F,G)$ are $(n)$-equivalent. 

\ref{nCat}.11.
Define the category of $(n+1)$-categories
$$
(U,n+1)-\mathfrak{Cat} := WE_{Ass((U,n)-\mathfrak{Cat},\times(n)(sk(n),\alpha(n))}
$$

to be the category of sets $(sk(n))$-associatively enriched over over the category of $(n)$-categories

\sss{} Parts (ii) and (iii) of the following lemma give construction for limits and colimits in $WE(A,\otimes)$, to be applied to the (co)limits appearing in the construction of the enriched hom sets.

\sss{Lemma on Limits and Colimits in $WE(A,\otimes)$} \label{LemLimWE}
For any $(A,\otimes) \in Ob(\mathfrak{TCat})$,

\ref{LemLimWE}.1. $For : WE_{(sk)}(A,\otimes) \longrightarrow WE_{(term \circ sk)}(A,\otimes)$ is faithful, where $term :$ $codom(sk)$ $\longrightarrow$ $\star$ is the functor whose codomain is the terminal category. I.e. one forgets that one had had a composition requirement.

\ref{LemLimWE}.2. The limit of $F : I \longrightarrow WE(A,\otimes)$ can be constructed by the limit of the underlying sets and $(a_{i},b_{i})_{i\in I} \mapsto lim F'_{1}$, where $F_{1}' : I \longrightarrow A$ is defined on objects by
$$
F_{1(0)}' : j \mapsto h_{F_{(0)}(j)}(a_{j},b_{j}) )
$$

\ref{LemLimWE}.3. For any $F : I \longrightarrow WE_{(sk)}(A,\otimes)$, if $\tau : colim \circ \otimes \rightarrow \otimes \circ (colim \times _{\mathfrak{Cat}} colim )$ is an isomorphism where hom objects $h_{F_{(0)}(i)}(x,y)$ are concerned, then the colimit can be similarly constructed, by 
$$
([(a,i)],[(b,j)]) \mapsto colim F'_{1} \circ cob\downarrow_{(Hom^{(1)}_{\mathfrak{Cat}^{2}}((\{ 1,2 \} ,...), I)} (ob_{(I)}(i) \cup ob_{(I)}(j),\Delta_{ (\{ 1,2 \})} )
$$

i.e. taking the colimit of all hom objects below both $i$ and $j$. Define composition by the arrow induced by tensoring the colimit arrows assigned to $([(a,i)],[(b,j)])$ and $([(b,j)],[(c,k)])$, composed with the inverse of $\tau$.

\begin{rem} 
The explicit description of limits and (co)limits is applied to verify in the following lemma the isomorphism required for part (ii) of \ref{HomEnr}.
\end{rem}

\begin{lem}
$\forall n \in \mathbb{N}$, $colim \circ \times(n) \rightarrow \times(n) \circ (colim \times _{\mathfrak{Cat}} colim )$ is an isomorphism.
\end{lem}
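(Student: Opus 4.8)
The plan is to argue by induction on $n$, reducing the claim at level $n+1$ to the cartesian-product statement in $U-\mathfrak{Set}$ together with the claim at level $n$, via the explicit descriptions of (co)limits in $WE(A,\otimes)$ furnished by Lemma \ref{LemLimWE}.

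\textbf{Base case.} At the base of the inductive definition, $(U,0)-\mathfrak{Cat} = U-\mathfrak{Set}$ and $\times(0) = \times_{U-\mathfrak{Set}}$, so the assertion is that the cartesian product of sets sends the colimit of a pair of diagrams to the product of their colimits. Since $U-\mathfrak{Set}$ is cartesian closed, each endofunctor $-\times_{U-\mathfrak{Set}} X$ has a right adjoint and hence preserves colimits; combining this with the interchange of iterated colimits gives, for $F : I \to U-\mathfrak{Set}$ and $G : J \to U-\mathfrak{Set}$,
$$
colim_{I\times J}\bigl(F\times_{U-\mathfrak{Set}}G\bigr) \cong colim_{I}\,colim_{J}\bigl(F(i)\times_{U-\mathfrak{Set}}G(j)\bigr) \cong \bigl(colim_{I}F\bigr)\times_{U-\mathfrak{Set}}\bigl(colim_{J}G\bigr),
$$
and one checks that this isomorphism is exactly the comparison arrow of the statement.

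\textbf{Inductive step.} Assume the claim for $n$ and write $A = (U,n)-\mathfrak{Cat}$, $\otimes = \times(n)$, so that $(U,n+1)-\mathfrak{Cat} = WE_{Ass((U,n)-\mathfrak{Cat},\times(n))(sk(n),\alpha(n))}$ by (\ref{nCat}.11). The inductive hypothesis says precisely that the comparison arrow $\tau$ of Lemma \ref{LemLimWE} is an isomorphism for $(A,\otimes)$, so the hypothesis of (\ref{LemLimWE}.3) is met and colimits in $(U,n+1)-\mathfrak{Cat}$ may be computed levelwise: the underlying set of a colimit is the colimit of the underlying sets in $U-\mathfrak{Set}$, and each hom object is the colimit in $A$ of the corresponding hom objects, with composition induced by $\tau^{-1}$. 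Recalling from (\ref{nCat}.4) that $\times(n+1)$ acts by the cartesian product on underlying sets and by $\times(n)$ (with $\sigma(n)$ and $\alpha(n)$) on hom objects, I would compare the two enriched sets $colim_{I\times J}\bigl(\times(n+1)\circ(F\times_{\mathfrak{Cat}}G)\bigr)$ and $\times(n+1)(colim_{I}F,\,colim_{J}G)$ separately on underlying sets and on hom objects. On underlying sets the comparison is the cartesian-product comparison in $U-\mathfrak{Set}$, an isomorphism by the base case; on each hom object the comparison is the $\times(n)$-comparison arrow for $A$, an isomorphism by the inductive hypothesis. Since an arrow of enriched sets is invertible as soon as it is a bijection on underlying sets and an isomorphism on every hom object, the comparison arrow for $\times(n+1)$ is an isomorphism, completing the induction.

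\textbf{Main obstacle.} The substantive point is not the two reductions themselves but verifying that the canonical comparison arrow for $\times(n+1)$ genuinely decomposes into the underlying-set comparison and the hom-object comparisons, with no residual contribution from the associator and symmetrizer data $\alpha(n),\sigma(n)$ entering the enriched composition, and that both the product and the colimit remain inside the $(sk(n),\alpha(n))$-associative subcategory $WE_{Ass}$ rather than merely in $WE$. Concretely this amounts to checking that the levelwise colimit formula of (\ref{LemLimWE}.3) is natural in the sense required to identify its hom-object components with the $\times(n)$-comparison arrows, which is exactly where the naturality of $\tau$ established in Lemma \ref{LemNatTau} is invoked.
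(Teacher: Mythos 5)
Your proposal is correct and follows essentially the same route as the paper's own proof: an induction on $n$ in which the comparison arrow $\tau_{n+1}$ is decomposed, via the explicit description of products and colimits of enriched sets in Lemma \ref{LemLimWE}, into the set-level comparison on underlying sets and the $\times(n)$-comparison $\tau_{n}$ on hom objects. The only cosmetic difference is the base case, where the paper simply exhibits the bijection $[(a_{i},b_{j})] \mapsto ([a_{i}],[b_{j}])$ while you derive the same isomorphism from cartesian closedness of $U-\mathfrak{Set}$ and interchange of colimits.
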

\begin{proof}
On the level of sets, this is the isomorphism given by $[(a_{i},b_{j})] \mapsto ([a_{i}],[b_{j}])$. By the previous lemma the product of enriched sets is given by taking the products of their hom objects, so that $\tau_{n+1} : colim \circ \times(n+1) \rightarrow \times(n+1) \circ (colim \times _{\mathfrak{Cat}} colim )$ is determined by $\tau_{0}$ on underlying set and $\tau_{n}$ on hom objects. By induction, $\tau_{n}$ is for any $n$ an isomorphism.
\end{proof}

The ``meaning" of the following theorem consists in the special cases of parts (iii) and (iv) of \ref{PushPull}.

\sss{$\bold{Theorem}$ on $(U,n)-\mathfrak{Cat}$}
The category $(U,n)-\mathfrak{Cat}$ is weakly enriched over itself. I.e. $((U,n+1)-\mathfrak{Cat},\bar{h}_{WE((U,n)-\mathfrak{Cat},\times (n))(sk(n)},\bar{\circ} (n) ) \in Ob(WE((U,n+1)-\mathfrak{Cat},\times (n+1))$. The hom set agrees with that given by applying the objects functor $Ob = F(n)$ to the hom $n$-category, i.e.  $Ob\circ\bar{Hom}_{(U,n)-\mathfrak{Cat}} \cong Hom_{(U,n)-\mathfrak{Cat}}$.

\begin{proof}
One must check that the constructions of \ref{HomEnr} (see part(ii)) and \ref{PushPull} can be applied at each step.

$sk(n)$-associativity is part of the definition of $(U,n)-\mathfrak{Cat}$. The isomorphism of the previous lemma is the only other requirement.
\end{proof}

\begin{rem}
The restriction of $WE((U,n)-\mathfrak{Cat},\times (n))$ to the subcategory of $(sk(n))$-associative enrichments is necessary for the construction of the hom set enrichment, which is necessary for the definition of the next skeleton functor, $sk(n+1)$).
\end{rem}

\begin{rem}
That $(U,n+1)-\mathfrak{Cat}$ as an enriched set is $sk(n+1)$-associative (and therefore properly an $(n+2)$-category) was expected, but not yet clear to me. By part (iv) of \ref{PushPull} it is associative with respect to the objects functor and $sk(n)\circ \bar{Arr}_{((U.n)-\mathfrak{Cat},\times (n))}$, i.e. it is $sk(n)$-associative with respect to each hom object ($n$-category).  The difficulty seems to be in inferring, from the arrows giving the equivalences within the hom objects, arrows giving equivalences from without. I suspect that this should be easier to do for particular types of $n$-categories.
\end{rem}

\begin{ex}
$(2)-\mathfrak{Cat} \in Ob(WE((2)-\mathfrak{Cat},\times(2)))$. The skeleton is used at the level of the hom objects, so that only the usual skeleton, $sk(1)$, is seen in this case. The objects are enriched sets.
$$
O = Ob((2)-\mathfrak{Cat}) = \{ \bar{C} = (C,h,\circ ) \}
$$

where the composition is $(sk)$-associative, where $sk = sk(1) : \mathfrak{Cat} \longrightarrow Q$ is the quotient functor determined by identifying isomorphic arrows (functors). The arrows are arrows of enriched sets
$$
\Phi = (\Phi_{0},\Phi_{1}) : ( C,h_{C} ,\circ_{C}) \rightarrow (D,h_{D},\circ_{D})
$$

respecting composition after the application of $(sk)$.

By the Hom-enrichment construction one associates to any $C,D \in Ob((2)-\mathfrak{Cat})$, $\Phi, \Psi \in Hom_{((2)-\mathfrak{Cat})}(C,D)$, the category $P_{\Phi,\Psi}$ of all arrows $( x \xrightarrow{f} \prod_{c \in Ob(C)} h_{D}(\Phi(c),\Psi(c))$ satisfying the $(sk)$-commutativity requirement. $p : P_{\Phi,\Psi} \longrightarrow \mathfrak{Cat}$ is the functor defined by $(( x, f ) \mapsto x)_{(x,f) \in Ob(P_{\Phi,\Psi})}$. By definition $\bar{h}_{2-\mathfrak{Cat}}(a,b)(\Phi,\Psi) := colim P_{\Phi,\Psi}$

The description of the enrichment on $(2)-\mathfrak{Cat}$ requires, for any $(C,D,E) \in O$, an arrow
$$
(\bar{h}_{2-\mathfrak{Cat}} (C,D) \times \bar{h}_{2-\mathfrak{Cat}}(D,E) \xrightarrow{\circ} \bar{h}_{2-\mathfrak{Cat}}(C,E)) \in Arr((2)-\mathfrak{Cat})
$$

representing composition. That the above is an arrow in $(2)-\mathfrak{Cat}$, interpreted, means that for any $\Phi_{1},\Phi_{2},\Phi_{3} \in Hom_{((2)-\mathfrak{Cat})}(C,D), \Psi_{1},\Psi_{2},\Psi_{3} \in Hom_{((2)-\mathfrak{Cat})}(D,E)$, 
$$
F \cong G \in Hom_{\mathfrak{Cat}}(
$$
$$
\bar{h}_{(2)-\mathfrak{Cat}}(C,D)(\Psi_{1},\Psi_{2}) \times \bar{h}_{(2)-\mathfrak{Cat}}(C,D)(\Psi_{2},\Psi_{3})) \times (\bar{h}_{2-\mathfrak{Cat}}(D,E)(\Phi_{1},\Phi_{2}) \times \bar{h}_{2-\mathfrak{Cat}}(\Phi_{2},\Phi_{3})),
$$
$$
\bar{h}_{2-\mathfrak{Cat}}(C,E)(\Phi_{1}\circ\Psi_{1},\Phi_{3}\circ\Psi_{3}))
$$

where
$$
F :_{t}= \bar{\circ}(C,E) \circ (\Phi_{1*} \times \Psi_{3}^{*}) \circ (\bar{\circ}(C,D) \times \bar{\circ}(D,E))
$$
$$
G :_{t}= \bar{\circ}(C,E) \circ (\bar{\circ}(C,E) \times \bar{\circ}(C,E)) \circ ((\Phi_{1*} \times \Psi_{2}^{*}) \times (\Phi_{2*} \times \Psi_{3}^{*})) \circ \sigma 
$$

where $\bar{\circ}(C,D)$ denotes the enriched composition in $\bar{h}_{2-\mathfrak{Cat}}(C,D)$. I.e., there is a function (arrow of sets) $\alpha : Ob(dom(F)) = Ob(dom(G)) \rightarrow Arr(\mathfrak{Cat})$ defining a natural isomorphism between the functors $F$ and $G$.

\end{ex}

\begin{pro}
If the $P$-colimit inclusion condition is satisfied for $(U,n)-\mathfrak{Cat}$, regarding the construction of the hom enrichment, then it is satisfied for $(U,n+1)-\mathfrak{Cat}$ as well. I.e., the two arrows $colim \ p \otimes e_{0} \rightarrow \prod_{c \in Ob(C)} h_{D}(\Phi(c),\Psi(c))$, one from right composition and the other from left composition, are $(n+1)$-equivalent.
\end{pro}
\begin{proof}
The forgetful functor is at each step given by the objects functor. In this case, $P$ is given by all arrows $(a\xrightarrow{\pi} \prod_{c \in Ob(C)} h_{D}(\Phi(c),\Psi(c))) \in Arr((U,n)-\mathfrak{Cat})$, such that for any arrow $(e_{0} \xrightarrow{e} h_{C}(x,y)) \in Arr((U,n)-\mathfrak{Cat})$ into a hom object in $C$, the two arrows (if $\otimes= \times(n)$) 
$$
r_{(a)},l_{(a)} :a \otimes e_{0} \rightarrow \bar{h}_{D}(\Phi(x),\Psi(y))
$$

one given by composition with $e_{0}$ on one side and the other by composition on the other, are $sk(n)$-equivalent. Therefore a choice of an $(n+1)$-equivalence of $(n+1)$-functors is still a choice of
$$
\phi \in  F(n)(\bar{h}_{WE((U,n)-\mathfrak{Cat},\times(n))(sk(n))}(r,l))
$$
$$
\psi \in  F(n)(\bar{h}_{WE((U,n)-\mathfrak{Cat},\times(n))(sk(n))}(l,r))
$$

where $\bar{h}_{WE((U,n)-\mathfrak{Cat},\times(n))(sk(n))}(r,l)$ is itself by construction a colimit of the domain object functor
$$
p = dob\downarrow_{((U,n)-\mathfrak{Cat})} (id_{(U,n)-\mathfrak{Cat}},ob_{((U,n)-\mathfrak{Cat})}(\prod_{x \in Ob(t_{0})} h_{D}(r_{(0)}(x),l_{(0)}(x)))) \circ \varepsilon : 
$$
$$
P \longrightarrow (U,n)-\mathfrak{Cat}.
$$

By the inclusion condition for the $n$ case the hom object assigned to $r$ and $l$ has a monic arrow into the product of hom objects $h_{D}(r_{(0)}(x),l_{(0)}(x))$. By the isomorphism of the previous lemma and the construction of the colimit in $WE_{(A,\otimes)(sk)}$ in the lemma before that, an arrow of functors $\phi \in Ob(\bar{h}_{WE((U,n)-\mathfrak{Cat},\times(n))(sk(n))}(l_{colim \ p},r_{colim \ p})))$ is a map of sets
$$
\phi : Ob(colim \ p \times (n) e_{0}) \cong Ob(colim \ p) \times Ob(e_{0}) = 
$$
$$
\{ (a, \pi) ; a \in dom(\pi) \text{ and } (dom(\pi),\pi) \in Ob(P) \} \times Ob(e_{0})
$$
$$
\rightarrow \bigcup Ob(h_{D}(l_{colim \ p} ,r_{colim \ p} ))
$$

Claim - That a choice argument implies the existence of a natural isomorphism $\phi$ from the natural isomorphism $\phi_{i}$.
\end{proof}

\sus{Addresses}


We introduce the notion of an address, which is sequence of hom objects, each nested within the previous by the n-categorical enrichment. It is essentially a book-keeping tool, meant to record the ``location of a $k$-arrow within an $n$-category."

\sss{$\bold{Definition}$ of the Empty $n$-Category}
$\emptyset_{U(1)-\mathfrak{Cat}} := (\emptyset , \emptyset, \emptyset, \emptyset, \emptyset ) \in Ob(U-\mathfrak{Cat}) = Ob(U(1)-\mathfrak{Cat})$ is the empty category, and $\forall n \in \mathbb{N}, \emptyset_{U(n+2)-\mathfrak{Cat}} := (\emptyset_{U(1)-\mathfrak{Cat}}, \emptyset, \emptyset) \in Ob(U(n+2)-\mathfrak{Cat})$ is the empty (n+2)-category.

\sss{$\bold{Definition}$ of Addresses}\label{DefAdd}

We define two address functions, one for objects in $(U,n)-\mathfrak{Cat}$ and one for arrows.

\ref{DefAdd}.1. For any $n \in \mathbb{N}$, $fAdd_{U(n)0} : Ob(U(n)-\mathfrak{Cat}) \rightarrow U'$ is defined to be the function which sends an $n$-category $x \in Ob(U(n))-\mathfrak{Cat})$ to the set of functions $\alpha : \{ 1,...,j \} \rightarrow U'$ such that for any $k \in \{ 1,...,j \}$, where $j \in \{0,...,n\}$, 
$$
\alpha (k) = ( a(k),b(k),C(k),h(k),\circ (k)
$$
$$
h(k)(a(k),b(k)) = (C(k+1),h(k+1),\circ (k+1) )
$$
$$
a(k),b(k) \in Ob(C(k))
$$
$$
x = (C(0),h(0),\circ(0))
$$
For any $n \in \mathbb{N}$, $Add_{U(n)0} : Ob(U(n)-\mathfrak{Cat}) \rightarrow U'$ is the function which sends an $n$-category $x$ as above to the set of functions $\alpha : \{ 0,...,j \} \rightarrow U'$ such that there exist $a,b,C,h,\circ$ for which $\alpha = (a(k),b(k))_{k \in \{ 0,...,j \}}$ and $(a(k),b(k),C(k),h(k),\circ (k) ) \in fAdd_{U(n)0}(x)$.

These assign to each $n$-category its set of ``(full) addresses," being sequences \newline $(a(i),b(i),\mathcal{C}(i),h(i),\circ (i))$ such that $(a(i+1),b(i+1))$ is a pair of objects in the base category $\mathcal{C}(i)$ of the $(n-i-1)$-category associated to the previous pair $(a(i),b(i))$ by the enrichment. $fAdd$ refers to the former list and $Add$ to the truncated latter.

The ``length," $|\alpha | = | (a,b)|$, will denote its order as a set.

\ref{DefAdd}.2. For any $n \in Ob(\mathbb{N})$, $Add_{U(n)1} : Arr(U(n)-\mathfrak{Cat}) \rightarrow U'$ is defined to be the function which sends $\phi \in Arr(U(n)-\mathfrak{Cat})$ to a function
$$
S : Add_{U(n)0}(dom(\phi)) \rightarrow \bigcup_{k \in \mathbb{N}} Arr(U(k)-\mathfrak{Cat})
$$
defined inductively, by requiring that
$$
S : \emptyset \mapsto \phi
$$
and that for any $(a,b) \in Add_{U(n)0}(dom(\phi))$, for any $\bar{\phi} \in Arr(U(n-|(a,b)|)-\mathfrak{Cat})$, $S(a,b) := \bar{\phi}$ iff there exists $(a_{0},b_{0}) \in Add_{U(n)0}(dom(\phi))$ such that
$$
|(a_{0},b_{0})|+1 = |(a,b)| \text{ and } (a,b)|_{0,...,|(a_{0},b_{0})|-1} = (a_{0},b_{0})
$$

and there exists $\psi = ((f_{0},f_{1}),f_{2}) \in Arr(U(n-|(a,b)|+1)-\mathfrak{Cat})$, such that 
$$
\psi = S(a_{0},b_{0}) \text{ and } f_{2}(a(|(a,b)|),b(|(a,b)|)) = \bar{\phi}
$$

This associates to every arrow of $n$-categories a function which sends an address for the domain category to the arrow of ($n-k$)-categories assigned to it by the original arrow.

\begin{rem}
That the above definition consists of two maps, one for $n$-categories and the other for arrows of $n$-categories, suggests some functor giving an alternate description of $n$-categories.
\end{rem}

\sss{$\bold{Definition}$ of the Functors $Inc^{U(m)-\mathfrak{Cat}}_{U(n)-\mathfrak{Cat}}$ and $For^{U(m)-\mathfrak{Cat}}_{U(n)-\mathfrak{Cat}}$} \label{DefIncFor}
For any $n,m \in \mathbb{N} \backslash \{ 0 \}$ such that $n < m$, define functors $Inc^{U(m)-\mathfrak{Cat}}_{U(n)-\mathfrak{Cat}} : U(n)-\mathfrak{Cat} \longrightarrow U(m)-\mathfrak{Cat}$ and $For^{U(m)-\mathfrak{Cat}}_{U(n)-\mathfrak{Cat}} : U(m)-\mathfrak{Cat} \longrightarrow U(n)-\mathfrak{Cat}$ inductively, by the following.

\ref{DefIncFor}.1. For any $x = (C,h,\circ) \in Ob(U(n+1)-\mathfrak{Cat})$,
$$
Inc_{0U(n+1)-\mathfrak{Cat}}(x) :_{t}= (C,Inc_{0U(n)-\mathfrak{Cat}} \circ h, Inc_{1U(n)-\mathfrak{Cat}} \circ \circ )
$$
and for any $\phi = (\phi_{0},\phi_{2}) \in Arr(U(n+1)-\mathfrak{Cat})$,
$$
Inc_{1U(n+1)-\mathfrak{Cat}}(\phi) :_{t}= (\phi_{0},Inc_{U(n)-\mathfrak{Cat}}(\phi_{2}))
$$
so that $Inc_{U(n+1)-\mathfrak{Cat}} := (Inc_{0U(n+1)-\mathfrak{Cat}},Inc_{1U(n+1)-\mathfrak{Cat}}) : U(n+1)-\mathfrak{Cat} \longrightarrow U(n+2)-\mathfrak{Cat}$.

Now temporarily define $Inc_{U(1)-\mathfrak{Cat}} : U-\mathfrak{Cat} \longrightarrow U(2)-\mathfrak{Cat}$ to be the functor which sends a category $C$ to the 2-category with enrichment $h_{C}(a,b) := (Hom_{C}(a,b), \{ id_{f} ; f \in Hom_{C}(a,b) \} ,... )$ given by attaching only identity arrows. Define
$$
Inc^{U(m+1)-\mathfrak{Cat}}_{U(n)-\mathfrak{Cat}} := Inc_{U(m)-\mathfrak{Cat}} \circ Inc^{U(m)-\mathfrak{Cat}}_{U(n)-\mathfrak{Cat}}, \text{ and }
$$
$$
Inc^{U(2)-\mathfrak{Cat}}_{U(1)-\mathfrak{Cat}} := Inc_{U(1)-\mathfrak{Cat}}
$$

\ref{DefIncFor}.2. Similarly, for any $x = (C,h,\circ ) \in Ob(U(m+1)-\mathfrak{Cat})$,
$$
For_{0}^{U(m+1)-\mathfrak{Cat}}(x) :_{t}= (C,For_{0}^{U(m)-\mathfrak{Cat}} \circ h ,For_{1}^{U(m)-\mathfrak{Cat}} \circ \circ )
$$

and for any $\phi = (\phi_{0},\phi_{2}) \in Arr(U(m+1)-\mathfrak{Cat})$,
$$
For^{U(m+1)-\mathfrak{Cat}}_{1}(\phi) :_{t}= (\phi_{0},For^{U(m)-\mathfrak{Cat}}_{1}(\phi_{2}))
$$

so that $For^{U(m+1)-\mathfrak{Cat}} :_{t}= (For^{U(m+1)-\mathfrak{Cat}}_{0},For^{U(m+1)-\mathfrak{Cat}})_{1}) : U(m+1)-\mathfrak{Cat} \longrightarrow U(m)-\mathfrak{Cat}$.

Now temporarily define $For^{U(2)-\mathfrak{Cat}} : U(2)-\mathfrak{Cat} \longrightarrow U-\mathfrak{Cat}$ to be the functor which forgets the enrichment. Define
$$
For^{U(m+1)-\mathfrak{Cat}}_{U(n)-\mathfrak{Cat}} := For^{U(m)-\mathfrak{Cat}}_{U(n)-\mathfrak{Cat}} \circ For^{U(m+1)-\mathfrak{Cat}} \text{ and }
$$
$$
For^{U(n)-\mathfrak{Cat}}_{U(n)-\mathfrak{Cat}} := id_{U(n)-\mathfrak{Cat}}
$$

\begin{lem}
$\forall n \in \mathbb{N}$, $U(n+1)-\mathfrak{Cat}$ has products and coproducts.
\end{lem}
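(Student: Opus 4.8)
The plan is to argue by induction on $n$, handling products and coproducts separately and relying on the two construction results already available for weakly enriched sets. For the base case $n=0$ we have $(U,1)-\mathfrak{Cat}=U-\mathfrak{Cat}$, which has products (the product categories supplied by $\times_{U-\mathfrak{Cat}}$) and coproducts (disjoint unions of categories), with initial object the empty category $\emptyset_{U(1)-\mathfrak{Cat}}$. So I would assume the claim, together with the existence of an initial object, for $(U,n)-\mathfrak{Cat}$, and produce products and coproducts in $(U,n+1)-\mathfrak{Cat}=WE_{Ass((U,n)-\mathfrak{Cat},\times(n))(sk(n),\alpha(n))}$.

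For products, recall that this category is the full subcategory of $WE_{((U,n)-\mathfrak{Cat},\times(n))(sk(n))}$ cut out by the $(sk(n),\alpha(n))$-associative objects. By the inductive hypothesis $((U,n)-\mathfrak{Cat},\times(n))$ has products, so the lemma asserting that products in a tensor category $(A,\otimes)$ lift to functorial products in $WE_{(A,\otimes)(sk)}$ produces the product $S\times T$ as the enriched set with underlying set $s\times t$, hom objects $h_{S\times T}((a,b),(a',b'))=h_S(a,a')\times(n)h_T(b,b')$, and componentwise composition. Since $(U,n+1)-\mathfrak{Cat}$ is a \emph{full} subcategory, it then suffices to check that $S\times T$ is again $(sk(n),\alpha(n))$-associative whenever $S$ and $T$ are; once this is known, $S\times T$ automatically serves as the product inside $(U,n+1)-\mathfrak{Cat}$.

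For coproducts I would instead invoke Lemma \ref{LemLimWE}, whose colimit part constructs $colim\,F$ in $WE((U,n)-\mathfrak{Cat},\times(n))$ provided the comparison $\tau:colim\circ\otimes\rightarrow\otimes\circ(colim\times_{\mathfrak{Cat}}colim)$ is an isomorphism on hom objects; for $\otimes=\times(n)$ this is exactly the content of the lemma stating that $colim\circ\times(n)\rightarrow\times(n)\circ(colim\times_{\mathfrak{Cat}}colim)$ is an isomorphism. Specializing to a discrete index category yields the coproduct: the underlying set is the disjoint union $\coprod_k s_k$, while $h((x,i),(y,j))$ is $h_{S_i}(x,y)$ when $i=j$ and is the initial $n$-category $\emptyset_{U(n)-\mathfrak{Cat}}$ (the empty colimit over the objects lying below two distinct components) when $i\neq j$. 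Here I use that $(U,n)-\mathfrak{Cat}$ has an initial object and that $\times(n)$ absorbs it, so the cross-component composition maps exist and are forced; this makes the associativity pentagon commute at every cross-component vertex, so the coproduct again lies in the associative subcategory and inherits the universal property from Lemma \ref{LemLimWE}.

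The main obstacle is the associativity-preservation step common to both constructions, namely that $(sk(n),\alpha(n))$-associativity is stable under $\times(n)$. For $S\times T$ the pentagon is the componentwise $\times(n)$-product of the pentagons of $S$ and $T$, and each of its two sides is the $\times(n)$-product of the corresponding sides for $S$ and $T$; concluding that $sk(n)$ identifies them amounts to showing that $\times(n)$ carries a pair of $(n)$-equivalent arrows to an $(n)$-equivalent arrow. This compatibility of $(n)$-equivalence with the product functor is itself an inductive statement about the enrichment (it unwinds to the claim that a product of the witnessing forward and backward $(n-1)$-arrows again witnesses an equivalence), and it is the one place where the weakened, $sk$-based notion of associativity must be handled with care rather than by a strict diagram chase; the coproduct case is then comparatively easy, since the initial object absorbs every cross-component composite.
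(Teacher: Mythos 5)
Your constructions coincide with the paper's: the paper's proof is a two-line direct induction that takes the product componentwise (underlying set the product of underlying sets, hom objects the $\times(n)$-products $\prod_{i} h_{\mathcal{C}_{i}}(x_{i},y_{i})$) and the coproduct as the disjoint union with $h_{\coprod_{i}\mathcal{C}_{i}}((a,j),(b,k)) = \emptyset_{U(n)-\mathfrak{Cat}}$ for $j \neq k$ and $h_{\mathcal{C}_{j}}(a,b)$ for $j = k$ --- exactly what your argument produces. What differs is the route and, with it, the level of honesty about what remains to be checked. The paper cites no auxiliary results; you instead obtain the product from the (unproved) lemma that $WE_{(A,\otimes)(sk)}$ inherits products from $(A,\otimes)$, and the coproduct from Lemma \ref{LemLimWE} specialized to a discrete index category, where the empty colimit over objects lying below two distinct components correctly yields $\emptyset_{U(n)-\mathfrak{Cat}}$ on cross-component homs. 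This decomposition buys you something the paper's proof lacks: since $(U,n+1)-\mathfrak{Cat}$ is by definition the \emph{full} subcategory $WE_{Ass((U,n)-\mathfrak{Cat},\times(n))(sk(n),\alpha(n))}$ of the ambient category of weakly enriched sets, producing a product or coproduct there requires verifying that the constructed object is again $(sk(n),\alpha(n))$-associative, a condition the paper's proof never mentions. You isolate this as the claim that $\times(n)$ sends pairs of $(n)$-equivalent arrows to $(n)$-equivalent arrows, so that the product of the two component pentagons still commutes after $sk(n)$; you are also right that the coproduct case is essentially free, since at cross-component vertices the compositions factor through products with $\emptyset_{U(n)-\mathfrak{Cat}}$ (which are empty), and at within-component vertices the pentagon is that of a single component. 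Note, though, that you state this associativity-preservation claim and sketch why it should hold inductively but do not prove it; relative to the paper this is not a defect (the paper silently assumes it), but it is the one substantive obligation that neither your proposal nor the paper's proof discharges.
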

\begin{proof}
For products, by induction on $n$. At the base take the usual product category. For any tuple $(x_{i})_{i \in S}$, $(y_{i})_{i \in S}$, use the inductive step to take the product $\prod_{i \in S} h_{\mathcal{C}_{i}}(x_{i},y_{i})$.

For coproducts, at the base take the usual coproduct category (objects are the disjoint union. $Hom_{\coprod_{i \in S}}((a,j),(b,k))$ is $\empty$ for $j \neq k$, and $Hom_{\mathcal{C}_{j}}(a,b)$ for $j = k$). If $n \geq 1$, then for the enrichment, $h_{\coprod_{i \in S} \mathcal{C}_{i}}((a,j),(b,k))$  is $\emptyset_{U(n)-\mathfrak{Cat}}$ for $j \neq k$, and $h_{\mathcal{C}_{j}}(a,b)$ for $j = k$.
\end{proof}

\sss{$\bold{Definition}$ of Products and Coproducts}
$\prod _{U(n)-\mathfrak{Cat}}$ and $\coprod _{U(n)-\mathfrak{Cat}}$ will be functions \newline $\bigcup_{S \in U} Hom_{U'-\mathfrak{Set}}(S,Ob(U(n)-\mathfrak{Cat})) \longrightarrow Ob(U(n)-\mathfrak{Cat})$, the canonical constructions described in the previous lemma's proof.

\sss{$\bold{Definition}$ of the Restricted Simplicial Sets}
Define $\Delta \in Ob(U-mathfrak{Set})$ to to be the simplicial category, i.e. its objects are finite ordered sets and its arrows are order-preserving functions.

For any $n \in Ob(U-\mathfrak{Set})$, define the category $\Delta_{(n)} := \Delta_{\backslash (\{ j \in \mathbb{N} ; j \leq n-1 \} , \leq_{\mathbb{N}} )} = \downarrow_{(\Delta)}(ob_{(\Delta)}((\{ j \in \mathbb{N} ; j \leq n-1 \} , \leq_{\mathbb{N}} )),id_{(U-\mathfrak{Cat})(\Delta)} )$. This is the arrow category under the set with $n$ elements.

\sss{$\bold{Definition}$ of Primitive Arrows}
$\forall n \in \mathbb{N}$, $\forall f \in Arr(\Delta)$, $f$ is primitive iff $||dom(f)|-|codom(f)|| = 1$. $\forall \phi = (f, e, id_{\circ}) \in Arr(\Delta_{(n)})$, $\phi$ is primitive iff $f$ is primitive.

\begin{lem}
Any arrow in $\Delta$ or $\Delta_{(n)}$ is a composition of primitive arrows.
\end{lem}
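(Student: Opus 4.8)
The plan is to reduce the case of $\Delta$ to the classical factorization of an order-preserving map as a surjection followed by an injection, together with the observation that each of these decomposes into cardinality-changing generators, and then to obtain the case of $\Delta_{(n)}$ by lifting such a factorization along the coslice projection. Throughout I use the diagrammatic composition order fixed by the definition of function composition.

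First I would treat $\Delta$. Given an order-preserving map $f : [m] \to [n]$, factor it through its image as $f = \pi \circ \iota$, where $\pi : [m] \twoheadrightarrow [k]$ is surjective and order-preserving, $\iota : [k] \hookrightarrow [n]$ is injective and order-preserving, and $[k]$ is the ordered set in bijection with $\mathrm{im}(f)$. Since a composition of compositions of primitive arrows is again a composition of primitive arrows, it suffices to treat $\pi$ and $\iota$ separately. For the injective factor I would induct on $n-k$: if $n=k$ then $\iota$ is the identity, which is the empty composition; if $n>k$ then $\iota$ omits some $j \in [n]$, and removing $j$ exhibits $\iota$ as an injection $[k] \to [n-1]$ followed by the coface $[n-1] \to [n]$ inserting $j$. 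The coface raises cardinality by exactly $1$, hence is primitive, and the induction closes. Dually, for $\pi$ I would induct on $m-k$: if $m>k$ some adjacent pair $i,i+1$ of $[m]$ shares a value, and collapsing them factors $\pi$ through the codegeneracy $[m] \to [m-1]$, which lowers cardinality by $1$ and so is primitive. Combining the two factorizations shows $f$ is a composition of primitive arrows of $\Delta$.

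Next I would handle $\Delta_{(n)}$. By the definition of $\downarrow_{(\Delta)}$ an arrow from $([n] \xrightarrow{p} b)$ to $([n] \xrightarrow{q} b')$ is a triple whose only nontrivial datum is an order-preserving $g : b \to b'$ with $p \circ g = q$, and by the definition of primitivity this arrow is primitive precisely when $g$ is primitive in $\Delta$. Applying the previous step, factor $g = g_{1} \circ \cdots \circ g_{\ell}$ into primitive arrows of $\Delta$, and for each $t$ equip the intermediate target $c_t$ with the structure map $p \circ g_{1} \circ \cdots \circ g_{t} : [n] \to c_t$. These structure maps make every triangle commute, so each $g_t$ lifts to an arrow of $\Delta_{(n)}$, and this lifted arrow is primitive because its underlying $\Delta$-map $g_t$ is. Hence the original arrow of $\Delta_{(n)}$ is a composition of primitive arrows.

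The main obstacle I anticipate is not the underlying mathematics, since the epi-mono generation of $\Delta$ is entirely standard, but rather the bookkeeping imposed by this paper's conventions: keeping the diagrammatic composition order consistent, interpreting identities as empty compositions so that the $n=k$ and $m=k$ base cases are genuinely covered, and verifying that the lift to the coslice $\Delta_{(n)}$ yields legitimate objects, that is, that each intermediate structure map out of $[n]$ is well defined and that the commuting triangle survives at every stage of the factorization.
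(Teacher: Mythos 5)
Your proof is correct, and in fact there is nothing in the paper to compare it against: the lemma is stated there without any proof at all (it is immediately followed by the next subsection on the pseudo-simplicial structure), so your argument supplies a justification the paper omits. What you give is the standard one: factor an order-preserving map through its image as a surjection followed by an injection, decompose the injection into cofaces (each inserting one element) and the surjection into codegeneracies (each collapsing one adjacent pair), note that each of these changes cardinality by exactly $1$ and hence is primitive in the paper's sense, and then lift to the coslice $\Delta_{(n)}$ by equipping each intermediate object $c_{t}$ with the composite structure map out of $[n]$, so that every triangle commutes and primitivity is inherited from the underlying $\Delta$-arrow. You were also right to flag the two points of bookkeeping that actually matter here: the paper's composition convention is diagrammatic ($f \circ g$ means ``first $f$, then $g$''), which you use consistently in both the image factorization and the coslice triangles; and identities, which are not primitive (cardinality difference $0$), must either be read as empty compositions or, if one insists on nonempty ones, written as a coface followed by the codegeneracy collapsing the inserted element. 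With those conventions settled, your induction closes both cases and the lemma is established.
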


\sss{Lemma on a Pseudo-Simplicial Structure on $(U.n)-\mathfrak{Cat}$}\label{LemSimp}
For any $n \in \mathbb{N}$, there exists a unique 
$$
\rho \in Hom_{(U''-\mathfrak{Cat})}(\Delta_{(n)}, \downarrow_{(U'-\mathfrak{Cat})}(ob_{(U'-\mathfrak{Cat})}(U(n)-\mathfrak{Cat}),id_{U'-\mathfrak{Cat}}))
$$ 

such that for any $\phi = (f,id_{(\{1,...,n\}, \leq )}) \in Arr(\Delta_{(n)})$, $f$ is primitive implies the following.

\ref{LemSimp}.1. If $f$ injective, then $\rho_{(1)}(\phi) : U(|dom(f)|)-\mathfrak{Cat} \longrightarrow U(|codom(f)|)-\mathfrak{Cat}$ is defined on objects by
$$
\rho_{(1)}(\phi)_{(0)} : (C,h,\circ) \mapsto (D,\bar{h},\bar{\circ})
$$

iff
$$
For^{U(|codom(f)|)-\mathfrak{Cat}}_{U(|dom(f)|-1)-\mathfrak{Cat}}(D) = For^{U(|dom(f)|)-\mathfrak{Cat}}_{U(|dom(f)|-1)-\mathfrak{Cat}}(C)
$$

and for any full address $\alpha = (a,b,C_{\alpha},h_{\alpha},\circ_{\alpha}) \in fAdd_{U(|dom(f)|)}((C,h,\circ ))$, for any $k \in \{ 1,...,|dom(f)| \}$, $f(k+1) = f(k) + 2$ implies
$$
Ob(h_{\alpha}(k)(a(k),b(k))) = \{ \emptyset \} \text{ and }
$$
$$
\alpha \in fAdd_{U(|codom(f)|)}((D,\bar{h},\bar{\circ})) \text{ and }
$$
$$
\forall \bar{\alpha} = (\bar{a},\bar{b},C_{\bar{\alpha}},h_{\bar{\alpha}},\circ_{\bar{\alpha}} ) \in fAdd_{U(|codom(f)|)}((D,\bar{h},\bar{\circ})),
$$
$$
\ulcorner |\bar{\alpha}| = |\alpha |+1 \text{ and } \bar{\alpha}_{\{0,...,k\} } = \alpha \urcorner  \Longrightarrow h_{\bar{\alpha}}(|\bar{\alpha}|)(\emptyset ,\emptyset ) = h_{\alpha}(|\alpha|)(a(k),b(k))
$$

The functor $\rho_{1}(\phi)$ is defined on arrows by
$$
\rho_{1}(\phi)_{(1)} : F = ((F_{0},F_{1}),F_{2}) \mapsto ((G_{0},G_{1}),G_{2}) = G
$$

iff
$$
For^{U(|codom(f)|)-\mathfrak{Cat}}_{U(|dom(f)|-1)-\mathfrak{Cat}}(G) = For^{U(|dom(f)|)-\mathfrak{Cat}}_{U(|dom(f)|-1)-\mathfrak{Cat}}(F)
$$

and for any address $\alpha = (a,b) \in Add_{U(|codom(f)|)}(dom(G))$, for any $k \in \{ 1,...,|dom(f)| \}$,
$$
\ulcorner f(k+1) = f(k)+2 \text{ and } |\alpha | = k+1 \urcorner \Longrightarrow 
$$
$$
Add_{U(|codom(f)|)1}(G)(\alpha) = Add_{U(|dom(f)|)1}(F)(\alpha|_{ \{ 0,...,k \} } )
$$

\ref{LemSimp}.2. If $f$ is surjective, then $\rho_{1}(\phi) : U(|dom(f)|)-\mathfrak{Cat} \longrightarrow U(|codom(f)|)-\mathfrak{Cat}$ is defined on objects by
$$
\rho_{1}(\phi)_{(0)} :(C,h,\circ) \mapsto (D,\bar{h},\bar{\circ})
$$

iff
$$
For^{U(|dom(f)|)-\mathfrak{Cat}}_{U(|codom(f)|-1)-\mathfrak{Cat}}(C) = For^{U(|codom(f)|)-\mathfrak{Cat}}_{U(|codom(f)|-1)-\mathfrak{Cat}}(D)
$$

and for any full address $\alpha = (a,b,C_{\alpha},h_{\alpha},\circ_{\alpha}) \in fAdd_{U(|codom(f)|)0}(D)$, for any $k \in \mathbb{N}$, $f(k+1) = f(k)$ implies
$$
(C_{\alpha}(k), h_{\alpha}(k), \circ_{\alpha}(k) ) = \coprod_{\bar{\alpha} \in S } (C_{\bar{\alpha}}(k+1),h_{\bar{\alpha}}(k+1), \circ_{\bar{\alpha}}(k+1))
$$

where 
$$
S = \{ \bar{\alpha} = (\bar{a},\bar{b},C_{\bar{\alpha}},h_{\bar{\alpha}},\circ_{\bar{\alpha}} ) \in fAdd_{U(|dom(f)|)0}(C) ;
$$
$$
(\bar{a},\bar{b})_{\{0,...,k-1\}} = (a,b)_{ \{ 0,...,k-1\} } \text{ and } |\bar{\alpha}| = k+1 \}.
$$

The functor $\rho_{1}(\phi)$ is defined on arrows by
$$
\rho_{1}(\phi) : F = ((F_{0},F_{1}),F_{2}) \mapsto ((G_{0},G_{1}),G_{2}) = G
$$

iff
$$
For^{U(|dom(f)|)-\mathfrak{Cat}}_{U(|codom(f)|-1)-\mathfrak{Cat}}(F) = For^{U(|codom(f)|)-\mathfrak{Cat}}_{U(|codom(f)|-1)-\mathfrak{Cat}}(G)
$$

and for any full address $\alpha = (a,b,C_{\alpha},h_{\alpha},\circ_{\alpha}) \in fAdd_{U(|codom(f)|)0}(C)$, for any $k \in \mathbb{N}$, $f(k+1) = f(k)$ and $|\alpha | = k+1$ imply
$$
Add_{U(|codom(f)|)1}(G)(\alpha) = \coprod_{\bar{\alpha} \in S} Add_{U(|dom(f)|)1}(F)(\bar{\alpha})
$$

I.e. if $f$ is injective, delete the k-th step, replacing it with the coproduct of all n-k-1 categories appearing in the enriched homs there. If surjective, add a step, a base category with only one object, leaving its enriched hom as that which had preceded it.

\sss{Lemma on Representing the k-Arrows Functor}
Adopt the notation of (\ref{LemSimp}). Then for any arrow $f \in Arr(\Delta_{(n)})$ if the functor $R : (U,n)-\mathfrak{Cat} \longrightarrow (U,|f|)-\mathfrak{Cat}$ given by requiring that $\rho(f) = ( \cdot , R , (U,|codom(f)|)-\mathfrak{Cat})$, then the functor
$$
F_{(U,n)-\mathfrak{Cat}} \circ \rho(f) : (U,n)-\mathfrak{Cat} \longrightarrow U'-\mathfrak{Set}
$$

is representable. 

\sss{Remark} I expect there to be some enriched version of this.

\begin{lem}
Adjunction of functors given to opposite pairs of primitive arrows by $\rho$.
\end{lem}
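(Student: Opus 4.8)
The plan is to realize the asserted adjunction as the image under $\rho$ of a standard adjoint pair in the simplicial category, and to check the triangle identities at the single address-depth at which the two primitive arrows act. First I would fix an opposite pair of primitive arrows in $\Delta_{(n)}$: a coface $p$ and the codegeneracy $q$ at a common index $k$, related by the simplicial identity $q\circ p=\mathrm{id}$. By \ref{LemSimp} the two induced functors are explicit. Writing $K:=\rho(q)$ and $B:=\rho(p)$, one of them (the \emph{collapse} functor) replaces the depth-$k$ hom-$(n{-}k)$-category $h(a,b)$ by the coproduct $\coprod_{a',b'}h_{h(a,b)}(a',b')$ of its hom objects, while the other (the \emph{classifying} functor) wraps the depth-$k$ data into a one-object $(n{-}k)$-category whose single hom object is the old structure and whose composition is projection. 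Functoriality of $\rho$ (the content of \ref{LemSimp}) together with $q\circ p=\mathrm{id}$ yields $K\circ B=\rho(q)\circ\rho(p)=\rho(\mathrm{id})=\mathrm{id}$ on the nose, and this identity is taken as the counit $\varepsilon$ of the adjunction $K\dashv B$.

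Next I would construct the unit $\eta:\mathrm{id}\Rightarrow B\circ K$. The relevant $2$-cell already lives in $\Delta$: the pointwise order gives $\mathrm{id}_{[m+1]}\le p\circ q$, and this inequality is realized as the family of arrows that collapse all depth-$k$ objects to the single object of the classifying category and include each hom object into $\coprod_{a',b'}h(a',b')$ by the canonical coproduct injection (using the coproducts in $(U,n)-\mathfrak{Cat}$ constructed earlier). The essential subtlety, which I expect to be the main obstacle, is that $\eta_Y$ is a morphism of the weakly enriched structure only up to $(n{-}k{-}1)$-equivalence rather than on the nose: since composition in $B(K(Y))$ is given by projection, the square comparing ``compose in $Y$, then inject into the $(a,c)$-summand'' against ``inject the two legs, then project'' lands in different coproduct summands and commutes only after the appropriate skeleton functor is applied. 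I would supply the witnessing equivalence using the enriched-hom and $(sk)$-associativity machinery of \ref{HomEnr} and \ref{PushPull}, exactly as those lemmas produce the composition cells of the self-enrichment of $(U,n)-\mathfrak{Cat}$.

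Then I would verify the two triangle identities up to $(n)$-equivalence. Because $K\circ B=\mathrm{id}$ and $\eta$ is assembled from coproduct injections, applying $K$ to $\eta$ re-collects all hom objects and returns the identity, so $(\varepsilon K)\circ(K\eta)$ reduces to $\mathrm{id}_K$; dually $\eta_{B(X)}$ is itself the identity up to $(n)$-equivalence, giving $(B\varepsilon)\circ(\eta B)\simeq\mathrm{id}_B$. The residual discrepancies are precisely the projection-composition cells introduced above, and they cancel because the corresponding composites of order-$2$-cells in $\Delta$ are the simplicial identities, which $\rho$ respects.

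Finally I would reduce the depth-$k$ statement to depth $0$ and induct. The modification performed by $\rho(p)$ and $\rho(q)$ is localized at one address-depth inside the nested enrichment, so by the recursive clauses of \ref{LemSimp} (phrased through the functors $For$ and $Inc$ of \ref{DefIncFor}) and the functoriality of $WE$ in \ref{TCatFun} and \ref{TCatFunC}, the general case follows by transporting the top-level ($k=0$) adjunction into the hom-$(n{-}k)$-categories along the enrichment. The base case $k=0$ is the explicit $K\dashv B$ computation above, with collapse equal to $Arr(-)$ and $B$ the one-object category with projection composition. The hard part throughout is controlling this projection-composition: one must check that the higher cells witnessing the unit and the triangle identities assemble coherently across all $n$ levels, and it is here that the $(sk)$-associativity built into the definition of $(U,n)-\mathfrak{Cat}$ in \ref{nCat}, together with the colimit and monic hypotheses of \ref{HomEnr} and \ref{PushPull}, are indispensable.
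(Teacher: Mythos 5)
The first thing you should know is that the paper contains no proof of this lemma whatsoever: it is a bare one-line assertion (``Adjunction of functors given to opposite pairs of primitive arrows by $\rho$''), which does not even specify which functor is the left adjoint or in what sense (strict, or up to the $(n)$-equivalences of the framework) the adjunction is meant. So there is no argument of the paper to compare yours against, and your proposal must stand on its own. Its skeleton is sensible: you read off the two functors correctly from \ref{LemSimp} (classifying for the injective primitive arrow, collapse for the surjective one, the paper's closing summary sentence notwithstanding), the computation $K\circ B=\rho(q)\circ\rho(p)=\rho(q\circ p)=\rho(\mathrm{id})=\mathrm{id}$ is a legitimate use of functoriality of $\rho$ and the simplicial identity, and you are right that once a unit $\eta:\mathrm{id}\Rightarrow B\circ K$ exists, both triangle identities collapse to essentially trivial checks. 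You also locate the genuine crux: whether the injection-built $\eta_Y$ is a morphism at all.

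That crux, however, is exactly where the argument fails, and the machinery you invoke cannot repair it. The morphism condition (\ref{DefWESk}.2.2) demands that $sk$ of ``compose in $Y$, then inject into the $(a,c)$-summand'' equal $sk$ of ``inject, then apply the projection composition of $B(K(Y))$''; the first map lands in the summand $h(a,c)$ of $\coprod_{a',b'}h(a',b')$, the second in the summand $h(a,b)$ (or $h(b,c)$, depending on the choice of projection). In the paper's own construction of coproducts of $n$-categories, the hom object between objects lying in distinct summands is the empty $n$-category $\emptyset_{U(n)-\mathfrak{Cat}}$, so there is no arrow, hence no $(n)$-equivalence and no $sk(n)$-identification, between two functors hitting different summands: the enriched hom objects of \ref{HomEnr} that would have to contain your ``witnessing equivalence'' are built from arrows into a product with an empty factor, and \ref{PushPull} together with the $(sk)$-associativity of \ref{nCat} gives you nothing here. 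Thus $\eta$ is not an arrow of $(U,n)-\mathfrak{Cat}$ in any sense the paper defines, and $K\dashv B$ cannot be established this way (the opposite handedness $B\dashv K$ fails even more simply, since a counit $B(K(Y))\to Y$ would require an unavailable choice of object of $Y$ and a map out of the coproduct). Your appeal to ``composites of order-$2$-cells in $\Delta$'' is also unfounded: in this paper $\Delta_{(n)}$ is a $1$-category and $\rho$ a $1$-functor, so there is no $2$-categorical structure for $\rho$ to respect. If the lemma is true at all, it is only in some weakened sense of adjunction (for instance with the unit merely a pre-functor in the sense of \ref{DefPreCat}, or an adjunction between hom data after a further collapse) that the paper never defines; as it stands, the statement is too vague to prove and your construction, at its one essential step, produces something that is not a morphism.
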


\sss{Conjecture on $(sk)$-associativity for a Subcategory of $n-Cat$}\label{assncat}

For any $n \in \mathbb{N}$, for any $\bar{B} = (B,h_{B},\bar{\circ}) \in Ob((WE(U,n)-\mathfrak{Cat},\times(n)))$, if there exists $C \in Ob(U-\mathfrak{Cat})$, and 
$$
(Add(\bar{B}) \xrightarrow{\Phi} Arr(U-\mathfrak{Cat})), (Add(\bar{B}) \xrightarrow{\varepsilon} Arr(U-\mathfrak{Cat}) \in Arr(U'-\mathfrak{Set})
$$

satisfying the following, properties, then $\bar{B}$ is $sk(n)$-associative.


\ref{assncat}.1. $C$ has colimits.

\ref{assncat}.2. For any address $\beta = (B_{i},h_{i},\bar{\circ}_{i},a_{i},b_{i})_{i \in \{ 1,...,k \} } \in Add(\bar{B})$, for some $c,d \in Ob(C)$, the functor
$$
\varepsilon(\beta) : E \longrightarrow C_{/c} = \downarrow_{(C)}(id_{(C)},ob_{(C)}(c))
$$

is faithful, and
$$
\Phi(\beta) : For^{(U,n-|\beta|)-\mathfrak{Cat}}_{U-\mathfrak{Cat}} (B_{|\beta|}) \longrightarrow Hom^{(1)}_{U-\mathfrak{Cat}^{2}}(E,C_{/d})
$$

is an equivalence of categories, where ``$|\beta|$" denotes the order of $\beta$ as a set of pairs, i.e. the number of categories or pairs of objects appearing in the sequence.

\ref{assncat}.3. The functors $\Phi(\beta)$ agree with the composition given by the Hom enrichment lemma, (\ref{PushPull}), up to natural isomorphism. Explanation follows.

\ref{assncat}.3.1. Let there be three addresses $\beta,\beta_{1},\beta_{2} \in Add(\bar{B})$, such that 
$$
|\beta_{1}| = |\beta_{2}| = |\beta_{3}| = |\beta|+1 \text{ and } \beta = \beta_{1} \cap \beta_{2} \cap \beta_{3}
$$
and 

$$
b_{1(|\beta|+1)} = a_{2(|\beta|+1)}
\text{ and }
a_{3(|\beta|+1)} = a_{1(|\beta|+1)}
\text{ and }
b_{3(|\beta|+1)} = b_{2(|\beta|+1)}
$$

i.e. the addresses $\beta_{1}$ and $\beta_{2}$ correspond to a triple $a_{1(k+1)},b_{1(k+1)}=a_{2(k+1)},b_{2(k+1)} \in Ob(C_{k})$ in the underlying category for one of the hom objects, composed to yield $\beta_{3}$.

\ref{assncat}.3.2. Then there is a natural isomorphism of functors
$$
\bar{\circ}_{\mathfrak{Cat}} \circ (\Phi(\beta_{1}) \times \Phi(\beta_{2})) \cong \Phi(\beta_{3})\circ For^{(U,n-|\beta|)-\mathfrak{Cat}}_{U-\mathfrak{Cat}}(\bar{\circ}),
$$

where $\bar{\circ}_{\mathfrak{Cat}}$ is that of (Enrichment, \ref{PushPull}) for $(U,2)-\mathfrak{Cat}$.








\se{The Category $\Omega$ of Pointed Correspondences and the Spec Functors}

In (3.1) we define a category, $\Omega$, intended to contain several ``categories of geometric objects''.\ftt{See [6]). Most of the $\mathbb{F}_{1}$-geometries there are based upon the incorporation of alternate notions of affine space (e.g. by monoids). How Borger's geometry might be satisfactorily included is less clear.} The arrows of $\Omega$ are partially given by cocorrespondences of categories $C^{opp} \times D \longrightarrow \mathfrak{Set}$. In particular, any two objects of $\Omega$ have arrows between them, and we hope in future work to use this to extend or compare existing geometries, by considering arrows between schemes of different characteristics or between objects ``originating from different geometries".

In (3.2) we define the notion of a spec datum, a tuple generally denoted by ``$\bar{sp}$", which is essentially a functor $\mathcal{R} \longrightarrow \mathcal{T}$, whose codomain is equipped with a Grothendieck topology. Under certain conditions, one can associate to a spec datum $\bar{sp}$ an ``$\Omega$-lift", which should be a functor $\mathcal{R} \longrightarrow \Omega$.  

In (3.3), we define, for any functor $F : I \longrightarrow U-\mathfrak{Cat}$, a subcategory $\Pi_{\Gamma}(F) \subset \Omega$. Given a spec datum $\bar{sp}$, we define a particular functor $F_{\bar{sp}}$ (see \ref{ExPiGTop}). From this we define, given an $\Omega$-lift $\tilde{sp}$ of a spec datum $\bar{sp}$, a subcategory $\Pi_{Sch}(\bar{sp},\tilde{sp}) \subseteq \Pi_{\Gamma}(F_{\bar{sp}})$, in analogy to the definition of the category of schemes by the usual affine chart construction.

\sus{Definitions Regarding the Category of Pointed Correspondences}

We denote by $\Omega$ a  category of pointed categories 
whose arrows are pointed correspondences. There is, for every category $C$, a canonical functor $\kappa_{C} : C \longrightarrow \Omega$. 
A functor $F:C'\to C''$
 between two categories induces a a natural transformation 
between the two canonical functors
$\kappa_{C'}\to \kappa_{C''}$. 

It is intended that $\Omega$ should serve as a common locale 
for several types of geometries.
Here, ``geometry'' is used in the sense of categories such that their
objects have some local structure 
(e.g. those of schemes or manifolds). This is formalized by requiring
any  ``geometry'' $\Pi$ 
to be a sub-category of $\Om$ 
generated in $\Omega$ from a 
subcategory interpreted as
``inclusion of the affine objects''. The objects in these ``geometric" subcategories of $\Omega$ are given by pairs $(Sh,\mathcal{O}_{x})$, where $Sh$ is the category of sheaves on some Grothendieck topology and $\mathcal{O}_{x} \in Ob(Sh)$.

It is intended that $\Omega$-arrows, are given by correspondences between the categories of sheaves associated to ``schemes". They therefore should allow for morphisms beside the usual ones (i.e. those constructed from functors beside the usual pushforward/pullback functors), such as algebraic correspondences.

\sss{$\bold{Definition}$ of $\Omega$} \label{omega}
For any two universes $U \in U'$,
the $omega$ $category$ for $(U,U')$
is a $U'$ category $\Om$
whose objects are pointed $U$-categories and arrows are
the isomorphism classes of pointed $U$-correspondences.


\ref{omega}.1. The set of objects is the set of categories with a distinguished object, i.e. an object is a pair $(C,x)$, where $C \in Ob(U-\mathfrak{Cat})$ and $x \in Ob(C)$.

\ref{omega}.2. Suppose that $C,D \in U-\mathfrak{Cat}$ are categories, with $x \in Ob(C)$, and $y \in Ob(D)$. Consider the set of all pairs $(F,\phi)$, where $F : C^{opp} \times D \longrightarrow U-\mathfrak{Set}$ is a functor and $\phi \in F(x,y)$.
Define an equivalence relation on this set by requiring that 
$$
(F,\phi) \equiv (G,\psi)
$$ 
iff there exists an isomorphism of functors $\Phi : F \longrightarrow G$, such that $\Phi(x,y)(\phi) = \psi$. 

Then the hom set $Hom_{\Omega}((C,x),(D,y))$ is defined to be the set of all equivalence classes $[(F,\phi)]$ of such pairs.\ftt{Formally, they should carry the information of their intended domain and codomain, so that the hom sets should be disjoint. This is a peculiarity of the definition here employed of a category, which requires that a set of arrows should be specified.}

\ref{omega}.3. $\textit{(The relation } R'(F,G,x,z)\textit{)}$. In order to define the composition in $\Omega$, we define, for any functors 
$$
F : C^{opp} \times D \longrightarrow U-\mathfrak{Set} \longleftarrow D^{opp} \times E : G
$$

and for any $x \in Ob(C)$ and $z \in Ob(E)$, an equivalence relation $R'(F,G,x,z)$ on the set 
$$
\coprod_{y \in Ob(D)} F(x,y) \times G(y,z)
$$

It is that which is generated by requiring that for any $y,y' \in Ob(D)$, for any $(\phi,\psi) \in F(x,y)\times G(y,z)$, for any $(\phi',\psi') \in F(x,y')\times G(y',z)$, 
$$
(\phi,\psi) \cong_{R'(F,G,x,z)} (\phi',\psi')
$$ 

if there exists an arrow $(y \xrightarrow{f} y' ) \in Arr(D)$ such that 
$$
G(f,id_{z})(\psi') = \psi \text{ and } F(id_{x},f)(\phi) = \phi'
$$

This is to say roughly that we require $(\phi,\psi)$ and $(\phi',\psi')$ to be equivalent if they can be related by an arrow $(y \xrightarrow{f} y')$ in the middle category $D$, in the sense that $\phi' = F(f)\circ \phi$ and $\psi = \psi'  \circ G(g)$.

For convenience the relation will be written $R(F,G,x,z) = R(F,G)$ when $x$ and $z$ are understood from the context.

\ref{omega}.4. For any composable arrows $(C,x)\xrightarrow{[(F,\phi)]} (D,y) \xrightarrow{[(G,\psi)]} (E,z)$ in $\Omega$, the composition is defined by 
$$
[(G,\psi)] \circ [(F,\phi)]:= [(F *_{\Omega} G,[(\phi,\psi)]_{R'(F,G,x,z)})]
$$

where the functor $F *_{\Omega} G : C^{opp} \times E \longrightarrow U-\mathfrak{Set}$ is defined to send a pair $(x',z') \in Ob(C^{opp} \times E)$ to the set of equivalence classes with respect to $R'(F,G,x',z')$ of pairs $(\psi',\phi') \in F(x',y') \times G(y',z')$ over varying $y'$, and the distinguished equivalence class of pairs is $[(\psi,\phi)]_{R'(F,G,x,z)}$.

\ref{omega}.5. For $(C,x) \in Ob(\Omega)$, we define the identity arrow $id_{(C,x)} : (C,x) \rightarrow (C,x)$ to be
$$
id_{(C,x)} := [(Hom_{C},id_{(C)(x)})]_{R}
$$

i.e. it is defined by the pair consisting of the hom functor $Hom_{C} : C^{opp} \times C \longrightarrow U-\mathfrak{Set}$ and $id_{x} \in Hom_{C}(x,x)$.

\begin{pro} \label{omegaincat}
Every universe within a universe $U \in U'$ has a unique category $\Omega \in Ob(U'-\mathfrak{Cat})$ as constructed above.
\end{pro}
\begin{proof}
The details of the proof are in the following subsections. As a set, $\Omega$ is determined by the definition. It remains to show that it is a category, in particular, we will show in the following that the composition defined in (\ref{omega}.4) is associative (see \ref{omegaincat}.2) and that the identity properties hold (see \ref{omegaincat}.3). Associativity comes from the usual natural isomorphisms associating two different product functors, $((-_{1} \times -_{2}) \times -_{3}) \rightarrow (-_{1} \times ( -_{2} \times -_{3}))$. The identity is given by $id_{\Omega((C,x))} := [(Hom_{C},id_{x})]$. The required natural isomorphism for the identity is found by applying (\ref{omega}.3) to any pair $(f,g) \in Hom_{C}(a,x) \times F(x,y)$ to show that the arrow of functors given by $(f,g) \mapsto F_{(1)}((f,id_{y}))(g)$ is injective. Surjectivity is obvious.

\ref{omegaincat}.1. Recall the functors $F *_{\Omega} G$ constructed in (\ref{omega}.4), with functors $F$ and $G$ as in that section.

\ref{omegaincat}.2. $(Associativity)$. For any composible arrows $((B,c) \xrightarrow{[(F,\phi)]} (C,c) \xrightarrow{[(G,\psi)]}$ \newline $(D,d) \xrightarrow{[(H,\chi)]} (E,e)) \in Arr(\Omega)$, define the associator $\alpha$, the isomorphism of functors required in (\ref{omega}.2), by sending each object $(b,e) \in Ob(B^{opp} \times E)$ to the map of sets
$$
\alpha(b,e) : ((F *_{\Omega} G) *_{\Omega} H)(b,e) \rightarrow (F *_{\Omega} (G *_{\Omega} H))(b,e)
$$

defined by
$$
\alpha(b,e) : [ ( \phi ', [ (\psi' , \chi' ) ]_{R'(F,G)} ) ]_{R'(F *_{\Omega}G,H)} \mapsto [ ( [( \phi ',  \psi') ]_{R'(G,H)}, \chi' ) ]_{R'(F,G*_{\Omega} H)} )
$$

Obviously $\alpha$ is an isomorphism.

\ref{omegaincat}.3. $(Identity)$.  For any $(C,x) \xrightarrow{[(F,\phi)]} (D,y)\in Arr(\Omega)$, one must construct natural isomorphisms $F*_{\Omega}Hom_{C} \rightarrow F$, for the left unit, and $Hom_{D} *_{\Omega} F \rightarrow F$, for the right unit (See \ref{omega}.4). 


Define the left unit isomorphism by sending an object $(x',y') \in Ob(C^{opp} \times D)$ to the map of sets
$$
u_{l}(x',y') : (Hom_{D}*_{\Omega}F)(x',y') \rightarrow  F(x',y')
$$

defined by
$$
u_{l}(x',y') :  [ ( \psi' , \phi') ]_{R'(Hom_{D},F)} \mapsto F( \psi',id_{(dom_{(C)}(\phi')}) (\phi') .
$$

The following gives the right unit isomorphism, mapping $(x',y') \in Ob(C^{opp} \times D)$ to the map of sets
$$
u_{r}(x',y') : (F*_{\Omega}Hom_{C})(x',y') \rightarrow  F(x',y')
$$

defined by
$$
u_{r}(x',y') :  [ (\psi', \phi' ) ]_{R'(F,Hom_{C})} \mapsto F( id_{(codom_{(D)}(\psi')} , \phi' )(\psi') .
$$

\ref{omegaincat}.3.1. One must show that $u_{l}$ and $u_{r}$ are isomorphisms of functors. The argument is symmetric, and so only the right side (involving $u_{r}$) will be explicitly written. First injectivity. 

Consider any objects 
$$
x' \in Ob(C), y',y'',y''' \in Ob(D),
$$

any arrows 
$$
\psi'' \in Hom_{D}(y'',y') \text{ and } \psi''' \in Hom_{D}(y''',y'),
$$

and any elements
$$
\phi'' \in F(x',y'') \text{ and } \phi''' \in F(x',y''').
$$


Suppose that  $u_{l}$ sends two elements (equivalence classes of pairs) to the same element: 
$$
u_{r}(x',y')( [ (\phi'',\psi'') ]_{R'(Hom_{D},F)} ) =   u_{r}(x',y')([ (\phi''',\psi''') ]_{R'(Hom_{D},F)} ).
$$

Then the two elements of $F(x',y')$ are the same:
$$
F(id_{x'},\psi'')(\phi'') = F(id_{x'},\psi''')(\phi''').
$$

Therefore the relation of (\ref{omega}.3) implies an equality:
$$
[ (\phi'',\psi'') ]_{R'(Hom_{D},F)} = [ (F(id_{x'},\psi'')(\phi'') , id_{y'} )]_{R'(Hom_{D},F)} =
$$
$$
[ (F(id_{x'},\psi''')(\phi''') , id_{y'} )]_{R'(Hom_{D},F)} = [ (\phi''',\psi''') ]_{R'(Hom_{D},F)}.
$$

\ref{omegaincat}.3.2. Surjectivity of the map $u_{l}$ is straightforward. Given any objects
$$
x' \in Ob(C)\text{ and } y' \in Ob(D),
$$

for any element $\phi' \in F_{(0)}(x',y')$,
$$
u_{r}(x',y')( [ (\phi' , id_{y'} )]_{R'(Hom_{D},F)} ) = \phi' .
$$

\ref{omegaincat}.3.3. So is the compatibility of the distinguished elements,

$$
u_{r}(x,y)( [ (\phi , id_{y} ) ]_{R'(Hom_{D},F)} ) = \phi \circ id_{y} = \phi.
$$

\ref{omegaincat}.3.4. A symmetric argument should be given for $u_{l}$ and the left identity)

This completes the argument for the identity.
\end{proof}

\sss{Convention}
If a universe $U$ is understood, then ``$\Omega$" should refer to the above category. If not, then ``$\Omega_{U}$" will.

\sss{Remark}
All hom sets in $\Omega$ are non-empty. To see this, let $\star$ be the terminal category, i.e. that having only one object, $\emptyset \in Ob(\star)$, and one arrow, $id_{\emptyset}$. Denote, for any objects $(C,x),(D,y) \in Ob(\Omega)$, their terminal functors by $C \xrightarrow{t_{C}} \star \xleftarrow{t_{D}} D$. Then $[(Hom_{\star}\circ (t_{C}^{opp} \times t_{D}), id_{\emptyset})] \in Hom_{\Omega}((C,x),(D,y))$.

\sss{$\bold{Definition}$ of the Canonical Functor $\kappa$} \label{Can}
For any category $C$, define the functor $\kappa_{C} : C \longrightarrow \Omega$ by sending objects $x$ to $(C,x)$, i.e. so that $x$ is distinguished within $C$, and arrows $\phi$ to $[(Hom_{C},\phi)]$, i.e. so that $\phi$ is distinguished within the hom set given by $C$.

\begin{rem}
That $\kappa_{C(0)}$ (the map on objects) is injective is trivial.
\end{rem}

\begin{lem}
For any $(I,i) \in Ob(\Omega)$, for any $C \in Ob(U-\mathfrak{Cat})$, the functor $Yo_{\Omega }(I,i) \circ \kappa_{C}^{opp} : C^{opp} \longrightarrow U'-\mathfrak{Set}$ (where $Yo_{\Omega} : \Omega \hookrightarrow Hom^{(1)}_{U''-\mathfrak{Cat}}(\Omega^{opp},U'-\mathfrak{Set})$ is the Yoneda functor), sends an object $c \in Ob(C)$ to the set of pairs consisting of isomorphism classes of functors $F : C^{opp} \longrightarrow Hom_{U-\mathfrak{Cat}^{2}}^{(1)}(I , U-\mathfrak{Set})$ with distinguished elements of the set $F(c)(i)$ given by $i$.
\end{lem}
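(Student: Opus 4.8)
The plan is to unwind the definitions until the statement becomes the cartesian-closed (currying) adjunction for $U-\mathfrak{Set}$-valued functors, and then to match the distinguished points and the equivalence relations on both sides. First I would compute the value of the composite at an object $c \in Ob(C)$. Since $\kappa_{C}^{opp}$ agrees with $\kappa_{C}$ on objects (passing to the opposite functor does not alter the object map), we have $\kappa_{C}^{opp}(c) = (C,c)$, so by the definition of the Yoneda functor $Yo_{\Omega}$ together with the construction of $\Omega$ in (\ref{omega}),
$$
(Yo_{\Omega}(I,i)\circ \kappa_{C}^{opp})(c) = Hom_{\Omega}((C,c),(I,i)).
$$
By (\ref{omega}.2) this hom set is, by definition, the set of equivalence classes $[(F,\phi)]$ where $F : C^{opp}\times I \longrightarrow U-\mathfrak{Set}$ is a functor, $\phi \in F(c,i)$, and $(F,\phi)\equiv(G,\psi)$ precisely when there is an isomorphism of functors $\Phi : F \to G$ with $\Phi(c,i)(\phi)=\psi$. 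The task is therefore to identify these uncurried pointed correspondences with the curried pointed functors $C^{opp}\to Hom^{(1)}_{U-\mathfrak{Cat}^{2}}(I,U-\mathfrak{Set})$ appearing in the statement, together with their distinguished elements.

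Next I would exhibit the currying bijection on pointed functors explicitly. To a pair $(F,\phi)$ I assign $\Theta(F,\phi) = (\tilde{F},\phi)$, where $\tilde{F}$ is defined on objects by $\tilde{F}(x) := F(x,-)$ (the functor $j\mapsto F(x,j)$, $g\mapsto F(id_{x},g)$) and on an arrow $f$ of $C^{opp}$ by the natural transformation with components $\tilde{F}(f)_{j} := F(f,id_{j})$; the distinguished point is unchanged since $\tilde{F}(c)(i)=F(c,i)$. Functoriality of $\tilde{F}$ and naturality of each $\tilde{F}(f)$ follow termwise from the functoriality of $F$ on the product category $C^{opp}\times_{U-\mathfrak{Cat}} I$. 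In the reverse direction I recover $F$ from $(\tilde{F},\phi)$ by $F(x,j):=\tilde{F}(x)(j)$ on objects and, on a pair of arrows $(f,g)$, by the common value of the two composites built from $\tilde{F}(f)_{j},\tilde{F}(f)_{j'},\tilde{F}(x)(g)$ and $\tilde{F}(x')(g)$, these agreeing by naturality of $\tilde{F}(f)$. One checks that $\Theta$ and this assignment are mutually inverse; this is precisely the cartesian closed structure of $\mathfrak{Cat}$ made explicit through the functor $Hom_{U-\mathfrak{Cat}^{2}}$ defined earlier.

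Finally I would check that this bijection descends to equivalence classes. A natural transformation $\Phi : F\to G$ corresponds bijectively to a natural transformation $\tilde{\Phi} : \tilde{F}\to\tilde{G}$ with $\tilde{\Phi}(x)_{j} = \Phi(x,j)$; moreover $\Phi$ is an isomorphism iff all its components are, iff $\tilde{\Phi}$ is an isomorphism, and the pointedness condition $\Phi(c,i)(\phi)=\psi$ is literally the condition $\tilde{\Phi}(c)(i)(\phi)=\psi$. Hence the equivalence relation on $(F,\phi)$ corresponds under $\Theta$ to isomorphism of pointed functors $(\tilde{F},\phi)$, so the induced map on quotient sets is a bijection. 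This identifies $Hom_{\Omega}((C,c),(I,i))$ with the set of isomorphism classes of functors $F : C^{opp}\to Hom^{(1)}_{U-\mathfrak{Cat}^{2}}(I,U-\mathfrak{Set})$ equipped with a distinguished element $\phi \in F(c)(i)$, as claimed.

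The step I expect to be the main obstacle is not any single categorical identity but the bookkeeping required to carry it out inside the paper's explicit, universe-sensitive set theory. One must confirm that $Hom^{(1)}_{U-\mathfrak{Cat}^{2}}(I,U-\mathfrak{Set})$ is a legitimate object (a $U'$-category, since its object set of functors need not be $U$-small), that the collections of pointed-isomorphism classes on either side are genuine $U'$-sets so that the composite indeed lands in $U'-\mathfrak{Set}$ as the codomain of $Yo_{\Omega}$ requires, and that the currying correspondence is compatible with the chosen encodings of $\times_{U-\mathfrak{Cat}}$ and of the functor categories. The categorical content is routine; the care lies in the universes and in matching the explicit, composition-free description of the hom sets of $\Omega$ to the exponential adjunction.
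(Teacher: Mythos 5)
Your proposal is correct. The paper states this lemma with no proof at all, so there is nothing to compare against; your argument --- identifying $(Yo_{\Omega}(I,i)\circ\kappa_{C}^{opp})(c)$ with $Hom_{\Omega}((C,c),(I,i))$, unwinding (\ref{omega}.2), and applying the currying correspondence between functors $C^{opp}\times I \longrightarrow U-\mathfrak{Set}$ and functors $C^{opp}\longrightarrow Hom^{(1)}_{U-\mathfrak{Cat}^{2}}(I,U-\mathfrak{Set})$, together with the check that natural isomorphisms and the pointedness condition $\Phi(c,i)(\phi)=\psi$ transport across the bijection --- is evidently the intended justification, and it supplies the verification the paper omits. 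Your closing caveat about universes is also well placed: since $U-\mathfrak{Set}$ is only a $U'$-category, the curried target $Hom^{(1)}_{U-\mathfrak{Cat}^{2}}(I,U-\mathfrak{Set})$ lives in $U'-\mathfrak{Cat}$, which is consistent with the lemma's stated codomain $U'-\mathfrak{Set}$ but deserves exactly the attention you give it.
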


\begin{lem}
For any category $C$, $\kappa_{C}$ is faithful iff the only automorphism of the identity functor $Id_{C} : C \longrightarrow C$ is the identity, i.e. $Aut_{End^{(1)}_{U-\mathfrak{Cat}^{2}} ( C )} ( Id_{C} ) = \{ id_{Id_{C}} \}$.
\end{lem}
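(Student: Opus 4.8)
The plan is to unwind faithfulness through the equivalence relation of (\ref{omega}.2) and then to classify the natural automorphisms of the hom functor. Faithfulness of $\kappa_{C}$ concerns, for each pair of objects $x,x'$, the injectivity of the map $Hom_{C}(x,x') \to Hom_{\Omega}((C,x),(C,x'))$ sending $\phi$ to $[(Hom_{C},\phi)]$ (see \ref{Can}). By the definition of the equivalence relation in (\ref{omega}.2), for a parallel pair $\phi,\psi : x \to x'$ we have $\kappa_{C}(\phi) = \kappa_{C}(\psi)$ if and only if there exists an isomorphism of functors $\Phi : Hom_{C} \to Hom_{C}$ with $\Phi(x,x')(\phi) = \psi$. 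Thus the lemma reduces to understanding the group $Aut(Hom_{C})$ of natural automorphisms of $Hom_{C} : C^{opp} \times C \to U-\mathfrak{Set}$.

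The key step is to show that $Aut(Hom_{C})$ is canonically isomorphic to $Aut(Id_{C})$. Given any natural endomorphism $\Phi$ of $Hom_{C}$, I would set $\eta_{a} := \Phi(a,a)(id_{a}) \in Hom_{C}(a,a)$. Applying the naturality of $\Phi$ to the arrow $(id_{a},h) : (a,a) \to (a,b)$ of $C^{opp} \times C$, evaluated at $id_{a}$ (and using $Hom_{C}(id_{a},h)(id_{a}) = h$), yields $\Phi(a,b)(h) = h \circ \eta_{a}$; applying it instead to $(h,id_{b}) : (b,b) \to (a,b)$, evaluated at $id_{b}$, yields $\Phi(a,b)(h) = \eta_{b} \circ h$. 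Equating the two expressions gives $h \circ \eta_{a} = \eta_{b} \circ h$ for every $h : a \to b$, which is exactly the naturality of $\eta = (\eta_{a})_{a}$ as a transformation $Id_{C} \to Id_{C}$. Conversely, each $\eta \in End(Id_{C})$ defines $\Phi^{\eta}(a,b)(h) := \eta_{b} \circ h$, natural by the same identity, and $\Phi^{\eta}$ is invertible precisely when every $\eta_{a}$ is, with inverse $\Phi^{\eta^{-1}}$ (naturality of $\eta^{-1}$ following from that of $\eta$). These assignments are mutually inverse, so $Aut(Hom_{C}) \cong Aut(Id_{C})$, and every such $\Phi$ acts by $\Phi(x,x')(\phi) = \phi \circ \eta_{x} = \eta_{x'} \circ \phi$.

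Combining the two reductions finishes the argument. For parallel $\phi,\psi : x \to x'$, the relation $\kappa_{C}(\phi) = \kappa_{C}(\psi)$ holds iff there is some $\eta \in Aut(Id_{C})$ with $\psi = \phi \circ \eta_{x}$. If $Aut(Id_{C}) = \{ id_{Id_{C}} \}$, then $\eta_{x} = id_{x}$ forces $\psi = \phi$, so $\kappa_{C}$ is faithful. Conversely, if some $\eta \neq id_{Id_{C}}$ exists, I would choose an object $x$ with $\eta_{x} \neq id_{x}$ and take $\phi = id_{x}$, $\psi = \eta_{x}$; these are distinct arrows $x \to x$ with $\kappa_{C}(\phi) = \kappa_{C}(\psi)$ realized by $\Phi^{\eta}$, so $\kappa_{C}$ is not faithful. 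This yields the desired equivalence.

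The main obstacle I anticipate is the second step: keeping the variance bookkeeping in $C^{opp} \times C$ exactly straight, so that the two naturality computations genuinely produce post- and pre-composition by $\eta$ and their comparison reproduces the naturality square for $\eta$, and verifying that invertibility of $\Phi$ transfers to invertibility of each component $\eta_{a}$ (so that one lands in $Aut(Id_{C})$ rather than merely $End(Id_{C})$). The remaining verifications are routine.
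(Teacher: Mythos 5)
Your proposal is correct and follows essentially the same route as the paper's proof: both directions rest on the correspondence between natural automorphisms of $Hom_{C}$ and natural automorphisms of $Id_{C}$, which the paper invokes as a ``Yoneda-type argument'' via $c \mapsto \Phi(c,c)(id_{c})$ and you carry out explicitly. Your write-up simply fills in the details the paper leaves implicit (the two naturality computations, the transfer of invertibility to the components $\eta_{a}$, and the explicit non-faithful pair $id_{x}$, $\eta_{x}$ in the converse direction).
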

\begin{proof}
The failure of $\kappa_{C}$ to be faithful would mean that there are distinct arrows $\phi, \psi : x \rightarrow y \in Arr(C)$ such that there is an isomorphism 
$$
(Hom_{C} \xrightarrow{\Phi} Hom_{C}) \in Arr( Hom^{(1)}_{U-\mathfrak{Cat}^{2}}( C^{opp} \times C , U - \mathfrak{Set}) )
$$

such that $\Phi(x,y)(\phi) = \psi$ . Consider the map of sets $f : Ob(C) \rightarrow Arr(C)$ given by $f: c \mapsto \Phi (c,c) (id_{c})$. By a Yoneda-type argument, $f$ is a non-trivial automorphism of the functor $Id_{C}$.

Now the reverse. If $g : Id_{C} \rightarrow Id_{C}$ is an natural isomorphism of functors, then define an arrow of functors, $\Phi : Hom_{C} \rightarrow Hom_{C}$, by requiring that for any $x,y \in Ob(C)$, for any $\phi \in Hom_{C}(x,y)$,
$$
\Phi(x,y) : \phi \mapsto \phi \circ g(x) = g(y) \circ \phi
$$

Naturalness on either side follows from the naturalness of $g$. Since $g$ is invertible, $\Phi$ can be inverted by
$$
\Phi^{-1}(x,y) : \phi \mapsto \phi \circ g(x)^{-1} = g(y)^{-1} \circ \phi,
$$

which is similarly also a natural transformation, since $g^{-1}$ is natural.
\end{proof}

\sss{Lemma} \label{gen} Suppose that a category $C$ is generated by objects $G \subseteq Ob(C)$, i.e. that $\prod_{g \in G} Yo^{opp}(g) : C \longrightarrow U-\mathfrak{Set}$ is faithful.

\ref{gen}.1. The functor $\kappa_{C}$ is faithful iff the set of automorphisms of each generator $g \in G$ is trivial.

\ref{gen}.2. If a category $C$ has an initial object $e$, then for any index category $I$ the category of functors from $I$ to $C$ has a set of generators, given by the left Kan extensions of the diagonal functors $\Delta_{(I_{i/},C)(0)}(g)$ for $g \in G$, along the inclusions in $I$ 
$$
i : I_{i/} = \downarrow_{(I)}( ob_{(I)}(i),id_{(I)}) \xrightarrow{cob} I
$$

of the arrow categories under varying objects in $I$, where $cob$ is the functor which remembers the codomain object of an arrow.

\begin{cor}
For any universes $U \in U' \in U''$, for any $D \in Ob(U-\mathfrak{Cat})$, $\kappa_{C}$ for $U''$ is faithful if $C$ is either $Hom_{U'-\mathfrak{Cat}^{2}}^{(1)}(D,U-\mathfrak{Set})$, $Hom_{U'-\mathfrak{Cat}^{2}}^{(1)}(D,U-\mathfrak{Cat})$ or some category of diagrams in either.
\end{cor}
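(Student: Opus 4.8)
The plan is to reduce the faithfulness of $\kappa_{C}$ to a statement about automorphisms of a generating family, and to produce such a family from Lemma \ref{gen}. By the lemma immediately preceding \ref{gen}, $\kappa_{C}$ is faithful precisely when the only natural automorphism of $\mathrm{Id}_{C}$ is the identity; by \ref{gen}.1 this follows once one exhibits a generating set $G \subseteq \Ob(C)$ each of whose members is rigid, i.e. has $\Aut_{C}(g) = \{\mathrm{id}\}$. So the corollary reduces to two tasks: (i) find a generating family for $C$, and (ii) show every generator is rigid.

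For task (i) I would invoke \ref{gen}.2. Both candidate targets $E \in \{U-\mathfrak{Set}, U-\mathfrak{Cat}\}$ possess an initial object (the empty set, resp. the empty category $\emptyset_{U(1)-\mathfrak{Cat}}$) and a small generating set: the singleton $\{\emptyset\}$ generates $U-\mathfrak{Set}$, while the pair $\{\star, \mathbf{2}\}$ (the terminal category and the arrow category $\{0 \to 1\}$) generates $U-\mathfrak{Cat}$, since $\Hom(\star,-)$ and $\Hom(\mathbf{2},-)$ recover objects and arrows. Writing $C = Hom^{(1)}_{U'-\mathfrak{Cat}^{2}}(D,E)$ as the functor category $\mathrm{Fun}(D,E)$ and applying \ref{gen}.2 with index category $I = D$, I obtain a generating set consisting of the left Kan extensions $\mathrm{Lan}_{cob}\,\Delta_{(D_{d/},E)}(g)$ along the codomain inclusions $cob : D_{d/} \to D$, indexed by $d \in \Ob(D)$ and $g$ in the generating set of $E$. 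A direct colimit computation (identifying $\pi_{0}$ of the relevant comma category with $\Hom_{D}(d,-)$) shows these are the corepresentable-weighted functors: for $E = U-\mathfrak{Set}$, $g = \{\emptyset\}$, one gets exactly the corepresentables $\Hom_{D}(d,-)$, and for $E = U-\mathfrak{Cat}$ the analogous copowers of $\star$ and $\mathbf{2}$. The third case, a category of diagrams in either, is $\mathrm{Fun}(J,\mathrm{Fun}(D,E)) \simeq \mathrm{Fun}(J \times D, E)$, again a functor category into the same $E$; so it is subsumed by taking $J \times D$ as the new index category and the argument is unchanged.

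For task (ii) I would compute the automorphism group of a generator by a co-Yoneda argument: a natural automorphism of $\mathrm{Lan}_{cob}\,\Delta(g)$ is pinned down by its effect on the distinguished element and reduces to an automorphism of $g$ in $E$ together with the reindexing action of $\Aut_{D}(d)$ on the corepresentable. When $g$ is one of the rigid generators above (so $\Aut_{E}(g) = \{\mathrm{id}\}$ in each case), the residual contribution is exactly $\Aut_{D}(d)$, whence the generator $\Hom_{D}(d,-)$, and its $U-\mathfrak{Cat}$-analogues, has automorphism group naturally isomorphic to $\Aut_{D}(d)$.

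This is where the main obstacle lies, and I want to be explicit about it. The hypothesis of \ref{gen}.1 demands that each generator be rigid, but the computation above shows the generator attached to $d$ carries the automorphisms $\Aut_{D}(d)$, which need not be trivial for an arbitrary $D \in \Ob(U-\mathfrak{Cat})$; comparing with the center criterion of the preceding lemma, $\Aut(\mathrm{Id}_{\mathrm{Fun}(D,E)})$ is governed by the families $(\tau_{d})_{d}$ of object-automorphisms that are natural in $D$, i.e. by $\Aut(\mathrm{Id}_{D})$. The crux of an unconditional proof is therefore to rule out all nontrivial such central automorphisms: one must show that no nontrivial reindexing automorphism of a corepresentable generator is realizable inside $C$ compatibly with the whole generating family. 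I would attempt this by exploiting that the generating family is jointly faithful and that $E$ is itself center-free with rigid generators, and then try to push these rigidity properties of $E$ through the Kan-extension construction. Making that transfer work for every $D$ — rather than only for those $D$ whose identity functor admits no nontrivial natural automorphism — is the hard part, and the step most likely to require either an additional rigidity input on $D$ or a more careful choice of generators than the corepresentables supplied by \ref{gen}.2.
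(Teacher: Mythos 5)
Your reconstruction coincides with the only argument the paper has to offer: the corollary is stated without proof and is evidently meant to fall out of Lemma \ref{gen}, with (\ref{gen}.2) supplying the Kan-extension generators of the functor category (which, as you correctly compute, come out as the corepresentables $Hom_{D}(d,-)$, resp.\ their copowers by $\star$ and the arrow category) and (\ref{gen}.1) converting rigidity of generators into faithfulness of $\kappa_{C}$. So on the reduction itself you and the paper agree, and your identification of where the argument sticks is exactly right.

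But the obstacle you isolate in your final paragraph is not merely ``the hard part'' --- it is unfixable, because the statement is false for general $D$. By the paper's own criterion (the lemma preceding \ref{gen}), $\kappa_{C}$ is faithful iff $Id_{C}$ admits only the trivial automorphism. Now let $z : Id_{D} \to Id_{D}$ be any nontrivial natural automorphism and set $\tau_{F} := F(z)$, i.e.\ $(\tau_{F})_{d} = F(z_{d})$: naturality of $\tau_{F}$ in $d$ is precisely centrality of $z$, naturality of $\tau$ in $F$ is naturality of each $\mu : F \to G$ at the arrows $z_{d}$, and $\tau$ is nontrivial because on $Hom_{D}(d,-)$ it acts by postcomposition with $z$, sending $id_{d}$ to $z_{d}$. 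Concretely, take $D$ to be the one-object category with automorphism group $\mathbb{Z}/2$; then $Hom^{(1)}_{U'-\mathfrak{Cat}^{2}}(D,U-\mathfrak{Set})$ is the category of $\mathbb{Z}/2$-sets, and $x \mapsto z \cdot x$ ($z$ the nontrivial element) is a nonidentity automorphism of the identity functor, so $\kappa_{C}$ is \emph{not} faithful; the same $\tau$ works with target $U-\mathfrak{Cat}$ and persists in diagram categories via $Hom^{(1)}(J,Hom^{(1)}(D,E)) \cong Hom^{(1)}(J\times D,E)$. Hence the corollary requires the additional hypothesis $\Aut(Id_{D}) = \{ id_{Id_{D}} \}$ --- exactly the ``rigidity input on $D$'' you predicted --- and your center computation ($\Aut(Id_{Hom^{(1)}(D,E)})$ is governed by central families $(z_{d})_{d}$, i.e.\ by $\Aut(Id_{D})$) is the correct calibration. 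Note, moreover, that even under that added hypothesis one should argue through your center computation rather than through (\ref{gen}.1) literally: for $D$ the one-object category on the centerless group $S_{3}$, the generator supplied by (\ref{gen}.2) still has automorphism group $S_{3}$ while $\kappa_{C}$ \emph{is} faithful, so the ``iff'' of (\ref{gen}.1) is itself overstated, and its hypothesis can never be verified for these generators. Your refusal to paper over this step was the right call; the defect lies in the corollary as printed, not in your proof strategy.
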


\begin{cor}
Every $U-\mathfrak{Cat}$ object has a faithful functor into the omega of $U''$.
\end{cor}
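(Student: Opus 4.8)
The plan is to reduce the statement to the preceding corollary by first enlarging the given category via the Yoneda embedding. Fix $C \in Ob(U-\mathfrak{Cat})$. Note that the canonical functor $\kappa_{C}$ itself will in general fail to be faithful: by the faithfulness lemma this would force $Id_{C}$ to have no non-trivial automorphisms, and by (\ref{gen}.1) it would force the generators of $C$ to be rigid, whereas a typical $C$ has objects with non-trivial automorphism groups. So $\kappa_{C}$ cannot be used directly, and the first move must be to factor $C$ through a category on which $\kappa$ is already known to be faithful.

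First I would take the Yoneda embedding
$$
Yo_{(C)} : C \hookrightarrow \widehat{C} := Hom^{(1)}_{U'-\mathfrak{Cat}^{2}}(C^{opp}, U-\mathfrak{Set}),
$$
which is full and faithful, and in particular faithful. Since $C$ is $U$-small, the codomain $\widehat{C}$ is a $U'$-category, and it is a presheaf category of exactly the shape treated in the preceding corollary, namely $Hom^{(1)}_{U'-\mathfrak{Cat}^{2}}(D, U-\mathfrak{Set})$ with $D = C^{opp} \in Ob(U-\mathfrak{Cat})$. That corollary therefore applies and tells us that the canonical functor $\kappa_{\widehat{C}} : \widehat{C} \longrightarrow \Omega$ into the omega for the pair $(U', U'')$ (whose objects are pointed $U'$-categories, i.e. the omega of $U''$) is faithful. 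The functor I would exhibit is then the composite $\kappa_{\widehat{C}} \circ Yo_{(C)} : C \longrightarrow \Omega$, and it is faithful because a composite of faithful functors is faithful: injectivity on hom-sets is preserved under composition of the induced maps $C(a,a') \to \widehat{C}(Yo\,a, Yo\,a') \to Hom_{\Omega}(\kappa_{\widehat{C}}Yo\,a, \kappa_{\widehat{C}}Yo\,a')$.

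The hard part is not in this final assembly but is already discharged by the preceding corollary, so here I would only need to keep two points in view. The first is the reason $\kappa_{\widehat{C}}$ is faithful despite the non-rigidity of $C$: although the representable presheaves generating $\widehat{C}$ may carry non-trivial automorphisms (indeed $Aut_{\widehat{C}}(Yo\,c) \cong Aut_{C}(c)$), the presheaf category also admits the alternative generating set produced by (\ref{gen}.2) out of the single rigid generator $\{\ast\}$ of $U-\mathfrak{Set}$, and it is this rigid generating set that makes $\kappa_{\widehat{C}}$ faithful through (\ref{gen}.1). The second is the universe bookkeeping, which I would record explicitly: $C$ being $U$-small makes $\widehat{C}$ a $U'$-category, so that pointed $U'$-categories are precisely the objects of the omega for $(U',U'')$, matching the phrase "the omega of $U''$" in the statement. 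With these observations the composite above is the required faithful functor, completing the argument.
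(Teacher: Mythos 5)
Your proposal is correct and takes essentially the same route the paper intends for this corollary: compose the faithful Yoneda embedding $Yo_{(C)} : C \hookrightarrow Hom^{(1)}_{U'-\mathfrak{Cat}^{2}}(C^{opp},U-\mathfrak{Set})$ with the canonical functor $\kappa$ of that presheaf category, which the immediately preceding corollary (resting on \ref{gen}.1 and \ref{gen}.2) asserts is faithful into the omega of $U''$, and use that a composite of faithful functors is faithful. Your added remarks on the universe bookkeeping and on why rigidity of the generators from \ref{gen}.2, rather than of the representables, is what drives the faithfulness of $\kappa$ on the presheaf category are exactly the considerations implicit in the paper.
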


\sss{$\bold{Definition}$ of Natural Transformations $\kappa_{tw,F}$ and $\kappa_{cotw}$}\label{kappatwist}
For any $(C \xrightarrow{F} D) \in Arr(U-\mathfrak{Cat})$, define the $\kappa-twist$ of $F$ to be the function $\kappa_{tw,F} : Ob(C) \rightarrow Arr(\Omega)$ given by composing the Hom functor $Hom_{D}$ with $F^{opp}$ for one of the arguments, with the identity arrow $id_{F(c)}$ in the codomain of $F$,
$$
\kappa_{tw,F} := (c \mapsto [(Hom_{D} \circ (F^{opp} \times_{U-\mathfrak{Cat}} id_{D} ) , id_{F(c)} ) ] )_{c \in Ob(C)}.
$$

The $\kappa-cotwist$ of $F$ is similarly given by $F$ and $id_{F(c)}$, being a function $\kappa_{cotw,F} : Ob(C) \rightarrow Arr(\Omega)$, so that
$$
\kappa_{cotw,F} := (c \mapsto [(Hom_{D} \circ ((id_{D}^{opp} \times_{U-\mathfrak{Cat}} F ) , id_{F(c)} ) ] )_{c \in Ob(C)}.
$$

\begin{lem}
Given a functor $F$ as above, the $\kappa$-twist and $\kappa$-cotwist of $F$ are natural transformations of functors,
$$
\kappa_{C} \xrightarrow{\kappa_{tw,F}} \kappa_{D} \circ F ,
$$
$$
\kappa_{D} \circ F \xrightarrow{\kappa_{cotw,F}} \kappa_{C},
$$

i.e. arrows in the category $Hom_{U'-\mathfrak{Cat}^{2}}^{(1)}(C,\Omega)$ of functors.
\end{lem}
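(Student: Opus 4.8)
The plan is first to check that $\kappa_{tw,F}$ has the right type to be a transformation $\kappa_{C}\to\kappa_{D}\circ F$, and then to verify the naturality square for each $g:c\to c'$ directly from the composition law (\ref{omega}.4). The functor $M:=Hom_{D}\circ(F^{opp}\times_{U-\mathfrak{Cat}}id_{D}):C^{opp}\times D\to U-\mathfrak{Set}$ has value $M(a,d)=Hom_{D}(F(a),d)$, so the distinguished element $id_{F(c)}$ of $\kappa_{tw,F}(c)$ forces its codomain object to be $F(c)$; thus $\kappa_{tw,F}(c)$ is an $\Omega$-arrow $(C,c)\to(D,F(c))$, that is $\kappa_{C}(c)\to(\kappa_{D}\circ F)(c)$, as required. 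Using $\kappa_{C}(g)=[(Hom_{C},g)]$ and $(\kappa_{D}\circ F)(g)=\kappa_{D}(F(g))=[(Hom_{D},F(g))]$, the whole lemma reduces to the commutativity in $\Omega$ of the square whose horizontal arrows are the twists and whose vertical arrows are these two, i.e.\ to the equation $\kappa_{tw,F}(c')\circ\kappa_{C}(g)=(\kappa_{D}\circ F)(g)\circ\kappa_{tw,F}(c)$ in $Hom_{\Omega}((C,c),(D,F(c')))$.

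Next I would compute the two sides from (\ref{omega}.4). The left-hand composite has underlying correspondence $Hom_{C}*_{\Omega}M$ and distinguished class $[(g,id_{F(c')})]_{R'(Hom_{C},M,c,F(c'))}$, while the right-hand composite has underlying correspondence $M*_{\Omega}Hom_{D}$ and distinguished class $[(id_{F(c)},F(g))]_{R'(M,Hom_{D},c,F(c'))}$. The key point is that each of these is one of the unit compositions already analysed in Proposition \ref{omegaincat}: $Hom_{C}*_{\Omega}M$ is the precomposition of $M$ with an identity correspondence on its domain, and $M*_{\Omega}Hom_{D}$ is the postcomposition of $M$ with an identity correspondence on its codomain.

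I would then invoke the co-Yoneda collapse underlying the unit isomorphisms of (\ref{omegaincat}.3). The isomorphism $Hom_{C}*_{\Omega}M\cong M$ sends $[(\phi,\psi)]$, with $\phi\in Hom_{C}(a,c'')$ and $\psi\in Hom_{D}(F(c''),d)$, to $M(\phi,id_{d})(\psi)=\psi\circ F(\phi)$ (precomposition, because $M$ is contravariant in its first slot); on the distinguished class this gives $id_{F(c')}\circ F(g)=F(g)$. The isomorphism $M*_{\Omega}Hom_{D}\cong M$ sends $[(\phi,\psi)]$, with $\phi\in Hom_{D}(F(a),d')$ and $\psi\in Hom_{D}(d',d)$, to $M(id_{a},\psi)(\phi)=\psi\circ\phi$; on the distinguished class this gives $F(g)\circ id_{F(c)}=F(g)$. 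Hence both composites are carried by an isomorphism of correspondences to the single arrow $[(M,F(g))]\in Hom_{\Omega}((C,c),(D,F(c')))$, so by the equivalence of (\ref{omega}.2) they are equal, which is precisely naturality of $\kappa_{tw,F}$.

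The assertion for $\kappa_{cotw,F}$ follows by the symmetric argument, with $M':=Hom_{D}\circ(id_{D}^{opp}\times_{U-\mathfrak{Cat}}F):D^{opp}\times C\to U-\mathfrak{Set}$, $M'(d,a)=Hom_{D}(d,F(a))$, so that $\kappa_{cotw,F}(c)$ is an arrow $(D,F(c))\to(C,c)$ and the two composites become $M'*_{\Omega}Hom_{C}$ and $Hom_{D}*_{\Omega}M'$, both collapsing to $[(M',F(g))]$ with the roles of pre- and postcomposition interchanged. I expect the only real obstacle to be bookkeeping: confirming that unwinding (\ref{omega}.4) yields exactly these two unit compositions with the stated distinguished classes, and keeping straight that contravariance of $Hom_{D}$ in its first argument makes $M(\phi,id_{d})$ act by precomposition with $F(\phi)$ rather than by postcomposition. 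Beyond that, no new coend computation is needed, since the collapse is the same one already performed for the unit laws in Proposition \ref{omegaincat}.
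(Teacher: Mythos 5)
Your proof is correct. The paper states this lemma without proof, so there is no argument of its own to compare against; your computation --- checking first that $id_{F(c)} \in M(c,F(c))$ forces $\kappa_{tw,F}(c)$ to be an arrow $(C,c)\to(D,F(c))$, then unwinding the composition law of (\ref{omega}.4) into the two composites $[(Hom_{C}*_{\Omega}M,[(g,id_{F(c')})])]$ and $[(M*_{\Omega}Hom_{D},[(id_{F(c)},F(g))])]$, and collapsing both to $[(M,F(g))]$ via the unit isomorphisms of Proposition \ref{omegaincat} --- is precisely the argument the paper's machinery supplies. The only point worth stating explicitly is the one you implicitly rely on: the unit isomorphisms of (\ref{omegaincat}.3) are isomorphisms of the underlying functors, independent of which distinguished element is carried, so they apply verbatim to your composites even though these carry $[(g,id_{F(c')})]$ and $[(id_{F(c)},F(g))]$ rather than the classes with identities in the hom slot that appear in the unit laws themselves; the equivalence relation of (\ref{omega}.2), applied at the marked pair $(c,F(c'))$, then yields equality of both sides with $[(M,F(g))]$, and the symmetric bookkeeping for $M'=Hom_{D}\circ(id_{D}^{opp}\times_{U-\mathfrak{Cat}}F)$ disposes of the cotwist.
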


\sus{$\Omega$-Lifts}

An $\Omega$-lift is essentially the construction of a structure sheaf associated to the topological spec functor $sp : \mathcal{R} \longrightarrow \mathcal{T}$ between categories, when the codomain of the concerned functor has been given a Grothendieck topology, and a functorial notion of an open immersion. 

An admissibility structure is defined to be some sub-functor of a ``fibre functor," which essentially assignes to each object in $\mathcal{T}$ a ``category of open subsets." For any functor $\mathcal{O} : \mathcal{R} \longrightarrow \mathcal{S}$ one constructs a sheaf on the the category assigned to each object $sp(x) \in Ob(\mathcal{T})$, by sheafifying the Kan extension of $\mathcal{O}$ restricted to the pre-image category of the functor which sends the arrow category in $\mathcal{R}$ over $x$ to the arrow category in $\mathcal{T}$ over $sp(x)$.

\sss{$\bold{Definition} \textit{ of a Fibre Functor}$} \label{fibre}
For any universes $U \in U'$, for any category $C \in Ob(U-\mathfrak{Cat})$ a functor $(C^{opp} \xrightarrow{Fib} U-\mathfrak{Cat}) 
$ is a fibre functor 
if the following, (\ref{fibre}.1) and (\ref{fibre}.2) hold.

\ref{fibre}.1. For any $c \in Ob(C)$, the category $Fib(c)$ is the 
category $C_{/c}$ of objects in $C$ that lie over $c$\ie of morphisms
$x\to c$.

\ref{fibre}.2. For any $(c_{1} \xrightarrow{f} c_{2}) \in Arr(C)$, the functor $Fib(f) : C_{/c_{2}} \longrightarrow C_{/c_{1}}$ is  the pullback functor, i.e., it
sends objects $x \xrightarrow{a} c_2$ to their fibred products by $f$, i.e. 
$$
Fib(f) : (x \xrightarrow{a} c_{2}) \mapsto (c_{1} \times_{c_{2}} x \xrightarrow{b} c_{1})
;$$
and it  sends an arrow $\phi :  (x\xrightarrow{a}c_{2}) \rightarrow (y\xrightarrow{b} c_{2})$ (i.e. $\phi:x\to y$ with $b\circ \phi = a$), to its pullback 
$\phi\times_{c_2} 1_{c_1} : x \times_{c_{2}} c_{1} \rightarrow y \times_{c_{2}} c_{1}$. 

\begin{lem} \label{Fibadj}
Consider any arrow $x \xrightarrow{\phi} y \in Arr(C)$.

\ref{Fibadj}.1. Define a category $L_{\phi} \in Ob(U-\mathfrak{Cat})$ by the following.

\ref{Fibadj}.1.1. Its objects are diagrams consisting of triples of arrows $(u,f,v) \in Arr(C)^{3}$ such that $v \circ f = \phi \circ u$

\ref{Fibadj}.1.2. Its arrows are natural transformations (given by certain pairs of arrows $(a,b) \in Arr(C)$).

\ref{Fibadj}.2. Define the functor $For_{c} : L_{\phi} \longrightarrow Fib(y)$, on objects by $(u,f,v) \mapsto v$, and on arrows by $(a,b) \mapsto b$.

\ref{Fibadj}.3. Define the functor $For_{d} : L_{\phi} \longrightarrow Fib(x)$, on objects by $(u,f,v) \mapsto u$, and on arrows by $(a,b) \mapsto a$.

\ref{Fibadj}.4. Suppose that $G$ is a right adjoint functor to $For_{c}$.

\ref{Fibadj}.5. Then $Fib(\phi) \cong For_{d} \circ G$.
\end{lem}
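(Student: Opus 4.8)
The plan is to recognize $L_{\phi}$ as a comma category and then obtain $Fib(\phi)$ by composing three adjunctions, two of which are elementary and one of which (the base-change adjunction) is standard. Write $\Sigma_{\phi} : C_{/x} \longrightarrow C_{/y}$ for the post-composition functor $(a \xrightarrow{u} x) \mapsto (a \xrightarrow{\phi u} y)$. The defining condition $v \circ f = \phi \circ u$ for an object $(u,f,v)$ of $L_{\phi}$ says exactly that $f$ is a morphism $\Sigma_{\phi}(u) \to v$ in $C_{/y}$; hence $L_{\phi}$ is isomorphic to the comma category $(\Sigma_{\phi} \downarrow Id_{C_{/y}})$, with $For_{d}$ and $For_{c}$ the projections onto the domain factor $C_{/x}$ and the codomain factor $C_{/y}$ respectively. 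First I would make this identification precise, reading off from (\ref{Fibadj}.1.2) that a morphism $(u,f,v) \to (u',f',v')$ of $L_{\phi}$ is a pair $(\alpha,\beta)$ with $u'\alpha = u$, $v'\beta = v$, and $f'\alpha = \beta f$, so that $For_{d}$, $For_{c}$ are patently functorial.

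Next I would exhibit an explicit left adjoint to $For_{d}$. Define $\iota : C_{/x} \longrightarrow L_{\phi}$ by $\iota(u) = (u, id_{dom(u)}, \phi u)$, the ``identity square'' on $u$; this requires no hypotheses on $C$. A direct check shows $Hom_{L_{\phi}}(\iota(u),(u',f',v')) \cong Hom_{C_{/x}}(u,u')$ naturally, since given $\alpha : u \to u'$ over $x$ the companion arrow is forced to be $\beta = f'\alpha$. Thus $\iota \dashv For_{d}$, and moreover $For_{c} \circ \iota = \Sigma_{\phi}$ on the nose. Composing with the assumed adjunction $For_{c} \dashv G$ gives, for $u \in Ob(C_{/x})$ and $w \in Ob(C_{/y})$,
\[
Hom_{C_{/x}}(u, For_{d}(G(w))) \cong Hom_{L_{\phi}}(\iota(u), G(w)) \cong Hom_{C_{/y}}(For_{c}(\iota(u)), w) = Hom_{C_{/y}}(\Sigma_{\phi}(u), w),
\]
naturally in $u$ and $w$; that is, $For_{d} \circ G$ is a right adjoint to $\Sigma_{\phi}$.

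To finish, I would invoke the standard base-change adjunction $\Sigma_{\phi} \dashv Fib(\phi)$: for $u : a \to x$ and $w : d \to y$, a map $\Sigma_{\phi}(u) \to w$ in $C_{/y}$ is an arrow $a \to d$ over $y$, which by the universal property of the fibred product $x \times_{y} d$ (available precisely because $Fib(\phi)$ is defined, see (\ref{fibre}.2)) corresponds bijectively and naturally to a map $u \to Fib(\phi)(w)$ in $C_{/x}$. Hence $Fib(\phi)$ is also right adjoint to $\Sigma_{\phi}$, and by uniqueness of right adjoints up to natural isomorphism one concludes $For_{d} \circ G \cong Fib(\phi)$, which is the assertion (\ref{Fibadj}.5). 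As a cross-check, one may instead compute a right adjoint $G$ of $For_{c}$ directly: it sends $w : d \to y$ to the pullback square $(p_{1}, p_{2}, w)$ over $d$, whence $For_{d}(G(w)) = p_{1} : x \times_{y} d \to x = Fib(\phi)(w)$ verbatim.

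The main obstacle I anticipate is bookkeeping rather than conceptual: pinning down the morphisms of $L_{\phi}$ from the deliberately terse phrase ``natural transformations (given by certain pairs of arrows)'' so that $For_{d}$, $For_{c}$, and $\iota$ are unambiguously functorial, and verifying that all three $Hom$-bijections are natural in \emph{both} variables, so that the composite isomorphism genuinely constitutes an adjunction and not merely an unnatural family of bijections. One should also confirm that the only fibred products invoked are those along $\phi$, which are guaranteed by the hypothesis that $Fib$ is a fibre functor, so that no further completeness of $C$ is required.
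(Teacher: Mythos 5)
Your proof is correct, but it is organized differently from the paper's. The paper argues object-wise: fixing $v' \in Ob(Fib(y))$ and taking the explicit fibred-product square $(u',f',v')$ with $(u',f')\,fibres\,(\phi,v')$, it chains the assumed adjunction bijection $Hom_{L_{\phi}}((u,f,v),G(v')) \cong Hom_{Fib(y)}(v,v')$ with the bijection $Hom_{Fib(y)}(v,v') \cong Hom_{L_{\phi}}((u,f,v),(u',f',v'))$ supplied by the universal property of the pullback (the pair $(a,b)$ with $f'\circ a = b\circ f$ and $u'\circ a = u$), and then invokes Yoneda to conclude $G(v') \cong (u',f',v')$, whence $For_{d}(G(v')) \cong u' = Fib(\phi)(v')$ --- essentially your closing ``cross-check,'' promoted to the whole proof. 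Your main route instead factors the statement through three adjunctions: the elementary $\iota \dashv For_{d}$ built from the identity squares, the hypothesized $For_{c} \dashv G$, and the base-change adjunction $\Sigma_{\phi} \dashv Fib(\phi)$, finishing by uniqueness of right adjoints. What your decomposition buys is that naturality in both variables comes for free from the adjunction formalism, and it isolates precisely where pullbacks along $\phi$ are needed (only in $\Sigma_{\phi} \dashv Fib(\phi)$); what the paper's argument buys is brevity and an explicit identification of $G(v')$ as the pullback square, at the cost of leaving naturality in $v'$ (needed for the isomorphism of functors $Fib(\phi) \cong For_{d}\circ G$, not merely an object-wise bijection) implicit in the Yoneda step. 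Your reading of the tersely described arrows of $L_{\phi}$ as pairs $(\alpha,\beta)$ with $u'\alpha = u$, $v'\beta = v$, $f'\alpha = \beta f$ agrees with the conditions the paper actually uses in its proof, so the comma-category identification $L_{\phi} \cong (\Sigma_{\phi} \downarrow Id_{C_{/y}})$ is sound.
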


\begin{proof}
For any $v' \in Ob(Fib(y))$, $(u,f,v) \in Ob(L_{\phi})$, $u',f' \in Arr(C)$, 
$$
(u',f') fibres (\phi,v')  \Longrightarrow
$$
$$ 
Hom_{L_{\phi}}((u,f,v),G(v')) \cong Hom_{Fib(y)}(For_{c}((u,f,v)),v') =
$$
$$
Hom_{Fib(y)}(v,v') \cong Hom_{L_{\phi}}((u,f,v),(u',f',v')),
$$

where the last isomorphism $F_{(u,f,v)} : Hom_{Fib(y)}(v,v') \rightarrow Hom_{L_{\phi}}((u,f,v),(u',f',v'))$ is given, for any $b \in Hom_{Fib(y)}(v,v')$, by
$$
F_{(u,f,v)} : b \mapsto (a,b) \Longleftrightarrow f'\circ a = b \circ f \text{ and } u' \circ a = u,
$$

so that
$$
F_{(u,f,v)}^{-1}  : (a,b) \mapsto b.
$$

By the Yoneda lemma the right adjoint functor is determined to be the functor which sends an arrow over $y$ to its fibred product with $\phi$.
\end{proof}

\sss{Remark} A fibre functor is a functor determined by fibre diagrams. The previous lemma (\ref{Fibadj}) expresses the notion that the data of the arrows $f$ opposite to the arrows $\phi$ in the fibre diagrams by which one constructs $Fib$ are implicitly retained. The arrows $f$ are to be used in the following as ``localizations" of $\phi$.

\sss{$\bold{Definition}$ of an Admissibility Structure.}\label{admiss}
An admissibility structure $\varepsilon$ is a subfunctor of a fibre functor, with an identity object. I.e. it is a natural transformation
$$
\varepsilon : E \rightarrow Fib
$$

of $\mathfrak{Cat}$-valued functors, so that the following hold.

\ref{admiss}.1. $Fib$ is a fibre functor$(C)$. 

\ref{admiss}.2. For any $c \in Ob(C)$, $\varepsilon(c) : E(c) \hookrightarrow Fib(c)$ is faithful.

\ref{admiss}.3. For any $c \in Ob(C)$, there exists some $e \in Ob(E(c))$ such that $e$ is terminal, and $\varepsilon(c)(e) \cong (c,id_{c},\circ )$ (i.e. $e$ is terminsl, and sent to the terminal object in the arrow category, given by the identity arrow).

\sss{$\bold{Definition}$ of a Spec Datum.}
For any universes $U \in U'$, a spec datum$(U,U')$ is a tuple $(sp, \mathcal{O} , \tau, \varepsilon)$, consisting of two functors $(\mathcal{S} \xleftarrow{\mathcal{O}} \mathcal{R} \xrightarrow{sp} \mathcal{T}) \in Arr(U-\mathfrak{Cat})$ with the same domain, a Grothendieck topology $\tau$ on $codom(sp)$, and an admissibility structure $\varepsilon \in Arr(Hom^{(1)}_{U'-\mathfrak{Cat}^{2}}( codom(sp) , U-\mathfrak{Cat} ) )$ on $\mathcal{T}$.

\begin{rem}
I believe that there ought to be a definition for a category, whose object set is the set of spec data. I'd imagined at first the category of functors from the category with two arrows with the same domain to the category of categories, with the codomain of one functor having a topology and admissibility structure. But I am uncertain of which functors should be allowed between the categories with the topology and admissibility, and the ``correct" direction of functors on the common domain category. I might imagine a path category generated by all possibilities.
\end{rem}

\sss{$\bold{Definition}$ of $\Omega$-lifts of arrows in $\mathcal{R}$} \label{omegalift}

For any spec datum $(sp : \mathcal{R} \rightarrow \mathcal{T}, \mathcal{O} : \mathcal{R} \rightarrow \mathcal{S},\tau,\varepsilon )$, for any $(\phi : x \longrightarrow y) \in Arr(\mathcal{R})$, we say that an arrow $\tilde{\phi} \in Arr(\Omega)$ is an $\Omega$-lift of $\phi$ with respect to $(sp,\mathcal{O},\tau,\varepsilon)$ iff $\tilde{\phi}$ is constructed from the sheafification construction of a left Kan extension of $\mathcal{O}$ along a ``localization" of $sp$, i.e. iff $\tilde{\phi}$ can be constructed by the following procedure. 

To an object $x \in Ob(\mathcal{R})$ we associate an object $\tilde{x}$ in $\Omega$. The category component is the product of the categories $\mathcal{T}$ and the category of $\mathcal{S}$-valued sheaves on the category $E(sp(x))$ assigned by the admissibility to $sp(x)$,
$$
\tilde{x} = (\mathcal{T} \times Sh((E(sp(x)),\tau_{x}),\mathcal{S}^{opp})^{opp}, (sp(x),\mathcal{O}_{x})) \in Ob(\Omega).
$$

Consider the functor between arrow categories, $\sigma : \mathcal{R}_{/x} \longrightarrow \mathcal{T}_{/sp(x)}$ induced by $sp$, and the functor given by the admissibility $\varepsilon(sp(x)) : E(sp(x)) \longrightarrow \mathcal{T}_{/sp(x)}$. The pre-image (under $sp$) category $\mathcal{R}_{/x} \times_{\mathcal{T}_{/sp(x)}} E(sp(x))$ is the domain of the fibred product of the functors $\sigma$ and $\varepsilon(sp(x))$, and the codomain is the category $\mathcal{T}_{/sp(x)}$. The distinguished object $(sp(x),\mathcal{O}_{x})$ is roughly the sheafification of the left Kan extension of the functor $\mathcal{O}_{0} : \mathcal{R}_{/x} \times_{\mathcal{T}_{/sp(x)}} E(sp(x)) \longrightarrow \mathcal{S}$ given by $\mathcal{O}$. 

The functor $\mathcal{O}_{0}$ is a restriction of $\mathcal{O}$, which sends an object $u : U \rightarrow x$ over $x$, to $\mathcal{O}(U) \in Ob(\mathcal{S})$. 

Let the functor $\tilde{\mathcal{O}_{0}} : E(sp(x)) \longrightarrow \mathcal{S}$ be the left Kan extension of $\mathcal{O}_{0}$ along the pullback, $\tilde{\sigma} : \mathcal{R}_{/x} \times_{\mathcal{T}_{/sp(x)}} E(sp(x)) \longrightarrow E(sp(x))$, of the functor $\sigma$. Let the functor $\tilde{\mathcal{O}_{0}}^{opp} : E(sp(x))^{opp} \longrightarrow \mathcal{S}^{opp}$, be the opposite of $\tilde{\mathcal{O}_{0}}$. 

The functor $\mathcal{O}_{x} : E(sp(x))^{opp} \longrightarrow \mathcal{S}^{opp}$ is the sheafification of $\tilde{\mathcal{O}_{0}}^{opp}$ with respect to the pullback topology $\tau_{x}$ on $E(sp(x))$ of the topology $\tau$ on $\mathcal{T}$.

Generally, an arrow $\tilde{\phi} = [(F,\phi')] \in Arr(\Omega)$ consists of a functor $F$ and some $\phi' \in F(x',y')$, where $x'$ and $y'$ are the distiguished objects in the domain and codomain respectively. For any arrow $\phi : x \rightarrow y$ in $\mathcal{R}$, any $\Omega$-lift 
$$
(\mathcal{T} \times_{U-\mathfrak{Cat}} Sh((E(sp(x)),\tau_{x}),\mathcal{S}^{opp})^{opp}, (sp(x),\mathcal{O}_{x})) \xrightarrow{\tilde{\phi}}
$$
$$
(\mathcal{T} \times_{U-\mathfrak{Cat}} Sh((E(sp(y)),\tau_{y}),\mathcal{S}^{opp})^{opp}, (sp(y),\mathcal{O}_{y}))
$$

has the corresponding functor 
$$
F : (\mathcal{T} \times Sh((E(sp(x)),\tau_{x}),\mathcal{S}^{opp})^{opp})^{opp} \times (\mathcal{T} \times Sh((E(sp(y)),\tau_{y}),\mathcal{S}^{opp})^{opp}) \longrightarrow U-\mathfrak{Set}
$$

(so that it is determined by $\phi$). It is the composition of the hom functor of the category $\mathcal{T} \times Sh((E(sp(y)),\tau_{y}),\mathcal{S}^{opp})^{opp}$, with the pushforward functor
$$
id_{\mathcal{T}} \times sp(\phi)_{*} : \mathcal{T} \times Sh((E(sp(x)),\tau_{x}),\mathcal{S}^{opp})^{opp} \longrightarrow \mathcal{T} \times Sh((E(sp(y)),\tau_{y}),\mathcal{S}^{opp})^{opp}
$$

given by the admissibility structure, i.e. the functor $sp(\phi)_{*}$ sends a sheaf $\mathcal{F}$ to the composition $\mathcal{F} \circ E(sp(\phi))$), so that
$$
F = Hom_{\mathcal{T} \times Sh((E(sp(y)),\tau_{y}),\mathcal{S}^{opp})^{opp}} \circ ((id_{\mathcal{T}} \times sp(\phi)_{*}^{opp})^{opp} \times (id_{\mathcal{T} \times Sh((E(sp(y)),\tau_{y}),\mathcal{S}^{opp})^{opp}})).
$$

The second part of the arrow $\tilde{\phi}$, the distinguished element $(sp(\phi),\phi_{\sharp})$, is a natural transformation $\phi_{\sharp} : \mathcal{O}_{y} \rightarrow sp(\phi)_{*}(\mathcal{O}_{x})$ in the category $Sh(E(sp(y)),\tau_{y}),\mathcal{S}^{opp})$. We view it as an arrow in the opposite direction $sp(\phi)_{*}(\mathcal{O}_{x}) \xrightarrow{\phi_{\sharp}} \mathcal{O}_{y}$ in $Sh(E(sp(y)),\tau_{y}),\mathcal{S}^{opp})^{opp}$. We require that the arrow $\phi_{\sharp}$ is locally given by a choice of fibres of the arrow $\mathcal{O}(\phi)$ in $\mathcal{S}$. This is to say, that for any $u \in Ob(E(sp(y)))$, there exists a cover $\Gamma \subseteq Arr(E(sp(y)))$ of $u$ such that for any arrow $ (u' \xrightarrow{v} u) \in \Gamma$, the arrow $\phi_{\sharp}(u') \in Arr(\mathcal{S})$, assigned to $u'$ by the natural transformation $\phi_{\sharp}$, is the fibre along $t \circ \mathcal{O}_{y}(u')$, of the arrow $(\mathcal{O}(x) \xrightarrow{\mathcal{O}(\phi)} \mathcal{O}(y)) \in Arr(\mathcal{S})$, where $t$ is the composition of the universal arrows associated to the Kan extension, from applying the isomorphism
$$
Hom_{Hom_{U-\mathfrak{Cat}^{2}}^{(1)}(\mathcal{R}_{/y} \times_{\mathcal{T}_{/sp(y)}} E(sp(y)),\mathcal{S}^{opp})}(Hom_{U-\mathfrak{Cat}^{2}}^{(1)}(Id_{\mathcal{S}^{opp}},i) (\tilde{\mathcal{O}_{y0}}), \mathcal{O}_{y0})) \cong
$$
$$
Hom_{Hom_{U-\mathfrak{Cat}^{2}}^{(1)}(E(sp(y)),\mathcal{S}^{opp})}(\tilde{\mathcal{O}_{y0}},\tilde{\mathcal{O}_{y0}})
$$

to $id_{\tilde{\mathcal{O}_{y0}}}$, and sheafification, from applying the isomorphism
$$
Hom_{Sh((E(sp(y)),\tau_{y}),\mathcal{S}^{opp})}(\tilde{\tilde{\mathcal{O}_{y0}}},\tilde{\tilde{\mathcal{O}_{y0}}}) \cong
$$
$$
Hom_{Hom_{U-\mathfrak{Cat}^{2}}^{(1)}(E(sp(y)),\mathcal{S}^{opp})}(\tilde{\mathcal{O}_{y0}},For^{Sh((E(sp(y)),\tau_{y}),\mathcal{S}^{opp})}_{Hom_{U-\mathfrak{Cat}^{2}}^{(1)}(E(sp(y)),\mathcal{S}^{opp})}(\tilde{\tilde{\mathcal{O}_{y0}}})).
$$

to $id_{\tilde{\tilde{\mathcal{O}_{y0}}}}$.

All together, one can write $\tilde{\phi} =$
$$
[(Hom_{\mathcal{T} \times Sh((E(sp(y)),\tau_{y}),\mathcal{S}^{opp})^{opp}} \circ ((id_{\mathcal{T}} \times sp_{\phi*}^{opp})^{opp} \times (id_{\mathcal{T} \times Sh((E(sp(y)),\tau_{y}),\mathcal{S}^{opp})^{opp}})), (sp(\phi),\phi_{\sharp}))].
$$

\ref{omegalift}.1. Define a functor
$$
\sigma_{d} : \mathcal{R}_{/x} \longrightarrow \mathcal{T}_{/sp(x)}
$$

between arrow categories over objects corresponding to the domain of $\phi$, induced by $sp$, by $(U \xrightarrow{u} x) \mapsto (sp(U) \xrightarrow{sp(u)} sp(x))$.

\ref{omegalift}.2. The functor $\sigma_{c}$ on arrow categories over objects corresponding to the codomain of $\phi$ is constructed in a similar fashion.

\ref{omegalift}.3. Functors $p_{d},p_{c},q_{d},q_{c}$ are chosen so as to be fibred products of the above functors $\sigma_{d}$ and $\sigma_{c}$ with the sub-category arrows 
$$
\varepsilon(sp(x)) : E(sp(x)) \longrightarrow Fib(sp(x))
$$

and 
$$
\varepsilon(sp(y)) : E(sp(y)) \longrightarrow Fib(sp(y))
$$ 

given to the respective objects by the admissibility structure. I.e. \newline $(p_{d},q_{d})fibres(\sigma_{d},\varepsilon(sp(x)))$ and $(p_{c},q_{c})fibres(\sigma_{c},\varepsilon(sp(y)))$.

\ref{omegalift}.4. Denote by
$$
dob_{d} = dob\downarrow_{(\mathcal{R})}(id_{(U-\mathfrak{Cat})(\mathcal{R})},ob_{(\mathcal{R})}(x)) : \mathcal{R}_{/x} \longrightarrow \mathcal{R}
$$

and 
$$
dob_{c} = dob\downarrow_{(\mathcal{R})}(id_{(U-\mathfrak{Cat})(\mathcal{R})},ob_{(\mathcal{R})}(y)) : \mathcal{R}_{/y} \longrightarrow \mathcal{R}
$$

the domain object functors, defined by sending an arrow to its domain object. Recall that, for any category $C \in Ob(\mathfrak{Cat})$, we denote by 
$$
Yo^{opp}_{C} : C^{opp} \hookrightarrow Hom^{(1)}_{\mathfrak{Cat}^{2}}(C,\mathfrak{Set})
$$

the Yoneda embedding. Choose morphisms between $\mathfrak{Set}$-valued functors 
$$
\Phi_{d} : Yo^{opp}_{Hom^{(1)}_{U-\mathfrak{Cat}^{2}}(E(sp(x)),\mathcal{S})}(\tilde{\mathcal{O}}_{d}) \rightarrow Yo^{opp}_{Hom^{(1)}_{U-\mathfrak{Cat}^{2}}(dom(p_{d}),\mathcal{S})}(\mathcal{O}\circ dob_{d} \circ p_{d}) \circ Hom^{(1)}_{U-\mathfrak{Cat}^{2}}(q_{d},id_{\mathcal{S}})
$$ 

and 
$$
\Phi_{c} : Yo^{opp}_{Hom^{(1)}_{U-\mathfrak{Cat}^{2}}(E(sp(y)),\mathcal{S})}(\tilde{\mathcal{O}}_{c}) \rightarrow Yo^{opp}_{Hom^{(1)}_{U-\mathfrak{Cat}^{2}}(dom(p_{c}),\mathcal{S})}(\mathcal{O} \circ dob_{c} \circ p_{c}) \circ Hom^{(1)}_{U-\mathfrak{Cat}^{2}}(q_{c},id_{\mathcal{S}})
$$

so that they should be isomorphisms of hom sets, determining the left Kan extensions $\tilde{\mathcal{O}}_{d}$ of $\mathcal{O} \circ dob_{d} \circ p_{d}$ along $q_{d}$ and $\tilde{\mathcal{O}}_{c}$ of $\mathcal{O} \circ dob_{c} \circ p_{c}$ along $q_{c}$, respectively.

\ref{omegalift}.5. Choose arrows of functors
$(s_{d} : \tilde{\tilde{\mathcal{O}}}_{d} \rightarrow \tilde{\mathcal{O}}_{d})$ and $(s_{c} : \tilde{\tilde{\mathcal{O}}}_{c} \rightarrow \tilde{\mathcal{O}}_{c})$ so that they are universal arrows going from the sheafification of the Kan extension to the Kan extension in the opposites of their respective categories of sheaves (originating from the adjunction to the forgetful functor from sheaves to presheaves, the arrows having been reversed by the self-adjunction of $^{opp}$ so as to agree with the direction in $\mathcal{R}$).

\ref{omegalift}.6. Define the Grothendieck topology $\tau_{y} \subseteq \bold{2}^{Arr(E(sp(y)))}$ to be that induced from the topology $\tau$ on $\mathcal{T}$ given by the spec datum. I.e., for any set of arrows $\Gamma \in \bold{2}^{Arr(E(sp(y)))}$, $\Gamma \in \tau_{y}$ iff the image of $\Gamma$ in $\mathcal{T}$ by the domain object functor composed with the admissibility functor is a cover in $\mathcal{T}$, i.e. iff
$$
\{ dob \circ \varepsilon(sp(y))(c) \in Arr(\mathcal{T}) ; c \in \Gamma \} \in \tau
$$

where
$$
dob = dob\downarrow_{(\mathcal{T})}(id_{\mathcal{T}},ob_{\mathcal{T}}(sp(y))) : \mathcal{T}_{/sp(y)} \longrightarrow \mathcal{T}
$$

is the domain object functor.

\ref{omegalift}.7. Suppose that of the maps of sets
$$
e_{c}: Ob(E(sp(y))) \rightarrow Arr(E(sp(y)))
$$ 

and 
$$
e_{d}: Ob(E(sp(x))) \rightarrow Arr(E(sp(x)))
$$ 

each sends an object $u$ in its respective category to its terminal arrow $u \rightarrow t_{d},t_{c}$ for some fixed objects $t_{d} \in Ob(E(sp(x)))$ and $t_{c} \in Ob(E(sp(y)))$ satisfying (\ref{admiss}.3).

\ref{omegalift}.8. Composition with the sub-category functor given by the admissibility structure, \newline $E(sp(\phi))$, gives a functor from the category of sheaves over $x$ to that of sheaves over $y$ (the ``pushforward"). Let 
$$
\phi_{\sharp} : sp(\phi)_{*}(\tilde{\tilde{\mathcal{O}_{d}}}) \rightarrow \tilde{\tilde{\mathcal{O}_{c}}}
$$

be any arrow which satisfies the requirement, that for every object $u \in Ob(E(sp(y)))$ in the admissibility structure on the codomain, there is a cover $\Gamma \in \tau_{y}$ of $u$ such that for each $v \in \Gamma$, the arrows in $\mathcal{S}^{opp}$ given by $\phi$ and the terminal arrows from each domain of an arrow of the cover, composed with $s_{d}$ and $s_{c}$, form with $\phi_{\sharp}$ a fibre diagram in $\mathcal{S}$, i.e. 
$$
( \phi_{\sharp}(v) ,\Phi_{d}(\tilde{\mathcal{O}}_{x})(id_{\tilde{\mathcal{O}}_{x}}) \circ s_{d}(t_{d}) \circ \tilde{\tilde{O}}_{x}(e_{d}(E(sp(\phi))(v))) )
$$
$$
fibres_{(\mathcal{S})} 
$$
$$
( \Phi_{c}(\tilde{\mathcal{O}}_{y})(id_{\tilde{\mathcal{O}}_{y}}) \circ s_{c}(t_{c}) \circ \tilde{\tilde{O}}_{y}(e_{c}(v)) , \mathcal{O}(\phi))
$$

(which uses (\ref{admiss}.3), to determine arrows from the Kan extensions on the terminal object to the original objects $\mathcal{O}(x)$, $\mathcal{O}(y)$).

\ref{omegalift}.9. If the functor $F$ is given by the product of the hom functor of $\mathcal{T}$ with the hom functor on the category of sheaves over the codomain composed with the product of the the pushforward with the identity functor for the category of sheaves over the codomain, i.e. by
$$
F \cong Hom_{\mathcal{T} \times Sh((E(sp(y)),\tau_{y}),\mathcal{S}) } \circ (id_{\mathcal{T}} \times sp(\phi)_{*})
$$

then $\tilde{\phi} = [(F,(sp(\phi),\phi_{\sharp}))] \in Arr(\Omega)$.

\ref{omegalift}.10. The object markers (i.e. the domain and codomain) are $(sp(x),\tilde{\tilde{\mathcal{O}_{x}}})$ and $(sp(y),\tilde{\tilde{\mathcal{O}_{y}}})$.

\begin{rem}
The arrows $\tilde{sp}$ being determined by a product of functors and categories, two forgetful functors suggest themselves, one being to the image of $\kappa_{\mathcal{T}}$, and the other being to a subcategory of $\Omega$ containing objects $(Sh (... ) , sp(x))$, for the various topologies on categories given to objects of $\mathcal{R}$ by the admissibility structure.
\end{rem}

\begin{lem}
For any universes $U \in U'$, for any spec datum$(U,U')$, $(sp,\mathcal{O},\tau,\varepsilon)$, for a given $\tilde{\tilde{\mathcal{O}}}_{d}$ and $\tilde{\tilde{\mathcal{O}}}_{c}$.as above, if the cover $\Gamma$ of the previous definition an be chosen so that for all $v \in \Gamma$ with associated terminal arrow $e_{v}$, the arrow $\tilde{\tilde{\mathcal{O}}}_{c}(e_{v})$ is monic, then $\tilde{\phi}$ is unique.
\end{lem}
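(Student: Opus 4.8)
The plan is to reduce uniqueness of the $\Omega$-arrow $\tilde\phi$ to uniqueness of a single sheaf morphism, and then to extract that uniqueness from the monicity hypothesis by a cancellation argument on the local fibre diagrams of (\ref{omegalift}.8). Recall from (\ref{omegalift}) that any $\Omega$-lift of $\phi$ has the shape $\tilde\phi = [(F,(sp(\phi),\phi_{\sharp}))]$, where both the correspondence functor $F$ and the first component $sp(\phi)$ of the distinguished element are forced by the spec datum and by $\phi$ alone. Hence the only freedom lies in the sheaf morphism $\phi_{\sharp} : sp(\phi)_{*}(\tilde{\tilde{\mathcal{O}}}_{d}) \to \tilde{\tilde{\mathcal{O}}}_{c}$, and it suffices to show this morphism is uniquely determined; literal uniqueness of the distinguished element then forces uniqueness of its equivalence class, which is $\tilde\phi$ (two distinct literal representatives can only collapse the class, never enlarge the set of classes).

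First I would reduce to components on the cover. Since $\phi_{\sharp}$ is an arrow in $Sh((E(sp(y)),\tau_{y}),\mathcal{S}^{opp})$, the target sheaf is separated for $\tau_{y}$, so any two candidates agreeing on a covering family of each object $u \in Ob(E(sp(y)))$ must already agree at $u$. Thus it is enough to pin down the components $\phi_{\sharp}(v)$ for $v$ ranging over the cover $\Gamma$ furnished by (\ref{omegalift}.8), which by the hypothesis of the lemma may be chosen so that $\tilde{\tilde{\mathcal{O}}}_{c}(e_{v})$ is monic for every $v \in \Gamma$.

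The core step is then a cancellation. For each $v \in \Gamma$, condition (\ref{omegalift}.8) asserts that $\phi_{\sharp}(v)$, together with the fixed arrow $A := \Phi_{d}(\tilde{\mathcal{O}}_{x})(id_{\tilde{\mathcal{O}}_{x}}) \circ s_{d}(t_{d}) \circ \tilde{\tilde{O}}_{x}(e_{d}(E(sp(\phi))(v)))$, forms a fibred product of the fixed cospan $(B,\mathcal{O}(\phi))$, where $B := \Phi_{c}(\tilde{\mathcal{O}}_{y})(id_{\tilde{\mathcal{O}}_{y}}) \circ s_{c}(t_{c}) \circ \tilde{\tilde{O}}_{y}(e_{c}(v))$, and where $\tilde{\tilde{O}}_{y}(e_{c}(v))$ is exactly the arrow $\tilde{\tilde{\mathcal{O}}}_{c}(e_{v})$ of the hypothesis. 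Crucially, $A$, $B$ and $\mathcal{O}(\phi)$ are independent of the choice of $\phi_{\sharp}$. Commutativity of the pullback square gives $B \circ \phi_{\sharp}(v) = \mathcal{O}(\phi) \circ A$, whose right side is fixed, so any two admissible components satisfy $B \circ \phi_{\sharp}(v) = B \circ \phi'_{\sharp}(v)$. Since $B$ factors through the monic $\tilde{\tilde{\mathcal{O}}}_{c}(e_{v})$, and the interposed factors $\Phi_{c}(\tilde{\mathcal{O}}_{y})(id_{\tilde{\mathcal{O}}_{y}})$ and $s_{c}(t_{c})$ are the universal comparison arrows of the Kan extension and of sheafification of (\ref{omegalift}.4)--(\ref{omegalift}.5), which are invertible and hence monic, the composite $B$ is itself monic. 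Cancelling $B$ gives $\phi_{\sharp}(v) = \phi'_{\sharp}(v)$ for every $v \in \Gamma$, and the separatedness reduction then yields $\phi_{\sharp} = \phi'_{\sharp}$, proving the claim.

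The hard part will be the monicity bookkeeping in the core step: the hypothesis supplies only that $\tilde{\tilde{\mathcal{O}}}_{c}(e_{v})$ is monic, whereas cancellation requires the full composite $B$ to be monic, so one must confirm that $\Phi_{c}(\tilde{\mathcal{O}}_{y})(id_{\tilde{\mathcal{O}}_{y}})$ and $s_{c}(t_{c})$ are monic in $\mathcal{S}$ (e.g. by identifying them with their defining isomorphisms, or by noting that a composite of a monic with isomorphisms is monic). A secondary check is that $\tilde{\tilde{\mathcal{O}}}_{c}$ is genuinely separated for the induced topology $\tau_{y}$ of (\ref{omegalift}.6), legitimizing the passage from the components on $\Gamma$ to the global morphism; both are local verifications in $\mathcal{S}$ and in $Sh((E(sp(y)),\tau_{y}),\mathcal{S}^{opp})$ respectively, and present no conceptual difficulty once the arrows are correctly identified.
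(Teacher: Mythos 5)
You should know at the outset that the paper states this lemma \emph{without} proof: it is followed only by the example remarking that localization of rings is epic, hence monic in the opposite category. So there is no author's argument to compare against, and your proposal must stand on its own. Its skeleton --- observe that $F$ and $sp(\phi)$ are forced so only $\phi_{\sharp}$ is free, pin down the components $\phi_{\sharp}(v)$ on the cover via the fibre diagrams of (\ref{omegalift}.8), then glue by separatedness of the target sheaf --- is clearly the intended argument, and it is the one the paper's example gestures at.

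The genuine gap is precisely in what you call the ``monicity bookkeeping,'' which you flag as the hard part and then resolve incorrectly. Your cancellation requires the whole composite $B = \Phi_{c}(\tilde{\mathcal{O}}_{y})(id_{\tilde{\mathcal{O}}_{y}}) \circ s_{c}(t_{c}) \circ \tilde{\tilde{\mathcal{O}}}_{c}(e_{c}(v))$ to be monic, whereas the hypothesis supplies monicity only of the factor $\tilde{\tilde{\mathcal{O}}}_{c}(e_{c}(v))$, and your claim that the two remaining factors are invertible is false in general. By (\ref{omegalift}.4), $\Phi_{c}$ is an isomorphism of \emph{hom-set-valued functors}, i.e.\ a natural bijection between $Hom(\tilde{\mathcal{O}}_{c},G)$ and $Hom(\mathcal{O}\circ dob_{c}\circ p_{c},\,G\circ q_{c})$; hence $\Phi_{c}(\tilde{\mathcal{O}}_{y})(id_{\tilde{\mathcal{O}}_{y}})$ is the image of an identity under that bijection, namely the unit of the left Kan extension adjunction, and units of Kan extensions along non-fully-faithful functors (here $q_{c}$ is a projection of a fibred product of categories) are not isomorphisms, nor monic, in general. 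Likewise $s_{c}(t_{c})$ is by (\ref{omegalift}.5) a component of the universal comparison between the sheafification and the presheaf being sheafified, again not invertible unless the Kan extension is already a sheaf. In the motivating classical case both of these comparison arrows happen to be isomorphisms (the global sections of the structure sheaf of an affine scheme recover the ring), which is why the lemma's hypothesis suffices there; your argument silently imports that special feature. Note also that the natural repair using the full pullback property rather than mere commutativity --- two fibred products $(\phi_{\sharp}(v),A)$ and $(\phi'_{\sharp}(v),A)$ of the same cospan sharing the leg $A$ differ by an automorphism $\theta$ of the common domain with $A\circ\theta = A$ --- trades the problem for monicity of the \emph{other} leg $A$, which the hypothesis also does not give; so either way one must explicitly establish monicity of the relevant comparison arrows (or add it as a hypothesis). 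A secondary, smaller issue: each admissible $\phi_{\sharp}$ comes with its own witnessing cover in (\ref{omegalift}.8), so two candidates need not satisfy their fibre diagrams over a common family; before the separatedness gluing you must pass to a common refinement and check that the fibre-diagram and monicity conditions persist there, or else read the lemma's hypothesis as asserting one cover valid for every candidate.
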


\begin{ex}
Localization of rings is epic, and so monic in the opposite category.
\end{ex}

\sss{$\bold{Definition}$ of an $\Omega$-lift Functor}
For any universes $U \in U'$, for any spec datum, $(sp,\mathcal{O},\tau,\varepsilon)$, we say that a functor $\tilde{sp} : \mathcal{R} \longrightarrow \Omega$, is an $\Omega$-lift of $(sp,\mathcal{O},\tau,\varepsilon )$ iff for any arrow $\phi \in Arr(\mathcal{R})$, the arrow $\tilde{sp}(\phi)$ is an $\Omega$-lift of $\phi$ with respect to $(sp,\mathcal{O},\tau,\varepsilon)$.

\begin{pro}
There is an isomorphism of functors between any two $\Omega$-lifts having the same spec datum.
\end{pro}

\begin{rem}
One might define a sheaf of categories, by the $\Omega$ lift of the spec datum differing from the original only in replacing $\mathcal{O}$ with a fibre functor on $\mathcal{S}$ composed with $\mathcal{O}$. If each object in the restrictions of such a sheaf over a given object $sp(x)$ gives a different manifestation of the resulting sheafification, then by the preceding proposition I'd imagine that the functor category of $\Omega$-lifts should correspond to a group (if the preceding lemma holds, and a fibre functor on $\mathcal{S}$ taken to be the neutral element, then the elements of the group should be given by the objects), functorially determined on $\mathcal{R}$, which for each $x \in Ob(\mathcal{R})$ has elements in a one-to-one correspondence with the set of $\Omega$-lifts of $sp$ restricted to the arrow category over $x$.
\end{rem}

\begin{lem}
Given a spec datum $\bar{sp} = (sp,\mathcal{O} : \mathcal{R} \rightarrow \mathcal{S},\tau,\varepsilon)$ an functor $F:\mathcal{S} \longrightarrow \mathcal{S}'$ induces an arrow of functors, one from the omega-lift of $\bar{sp}' = (sp,F\circ\mathcal{O},\tau,\varepsilon)$ to that of the original
$$
x \longmapsto \kappa_{tw}(i_{Hom_{U-\mathfrak{Cat}}^{(1)}(dom(\varepsilon)(sp(x),\mathcal{S})}^{Sh(...,\mathcal{S})} ) \circ \kappa_{cotw}(Hom^{(1)}_{U-\mathfrak{Cat}^{2}}(id_{\mathcal{T}} \times Hom^{(1)}_{U-\mathfrak{Cat}^{(2)}}(F ,id_{\mathcal{R}}) )) \circ
$$
$$
\kappa_{Hom_{U-\mathfrak{Cat}}^{(1)}(dom(\varepsilon)(sp(x),\mathcal{S}')}(u) \circ
 \kappa_{cotw}(i_{Hom_{U-\mathfrak{Cat}}^{(1)}(dom(\varepsilon)(sp(x),\mathcal{S}')}^{Sh(...,\mathcal{S}')} ).
$$




Where $u$ is the universal arrow from the sheafification.\ftt{Some smaller version might be desirable here}

\end{lem}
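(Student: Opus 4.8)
The plan is to read the displayed assignment as the component-wise description of a natural transformation $\tilde{sp}' \Rightarrow \tilde{sp}$ between the two $\Omega$-lifts, in the sense of the lemma following (\ref{kappatwist}) (an ``arrow of functors'' in the functor category $Hom^{(1)}(\mathcal{R},\Omega)$), and to build its components out of the three genuine categorical ingredients relating the two sheaf constructions: the inclusion of sheaves into presheaves, post-composition by $F$, and the universal arrow of sheafification. First I would fix, for each $x \in Ob(\mathcal{R})$, the two objects produced by (\ref{omegalift}): $\tilde{sp}(x) = (\mathcal{T} \times Sh((E(sp(x)),\tau_x),\mathcal{S}^{opp})^{opp},(sp(x),\mathcal{O}_x))$ and its primed analogue $\tilde{sp}'(x)$ with $\mathcal{S}'$ in place of $\mathcal{S}$ and $\mathcal{O}'_x$ the sheafification of the left Kan extension of $F \circ \mathcal{O}_0$. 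The point is that these two distinguished sheaves are comparable: post-composition by $F$ sends the Kan extension $\tilde{\mathcal{O}}_d$ of $\mathcal{O}_0$ to a functor receiving, by the universal property of the left Kan extension of (\ref{omegalift}.4), a canonical arrow from the Kan extension of $F \circ \mathcal{O}_0$; sheafifying this arrow by means of the units $s_d$ of (\ref{omegalift}.5) produces the arrow denoted $u$. Because $F$ need not preserve the colimits computing the Kan extension or the sheafification, $u$ is in general not invertible, which is exactly why the lemma asserts only an arrow of functors and not an isomorphism.

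Next I would convert each ingredient into an $\Omega$-arrow. The inclusions $i^{Sh(\ldots,\mathcal{S})}_{Hom(\ldots,\mathcal{S})}$ and $i^{Sh(\ldots,\mathcal{S}')}_{Hom(\ldots,\mathcal{S}')}$ of sheaves into presheaves, together with the post-composition functor $Hom(id_{\mathcal{T}} \times Hom(F,id_{\mathcal{R}}))$, are genuine functors, so by the lemma after (\ref{kappatwist}) their $\kappa$-twists and $\kappa$-cotwists are natural transformations of the associated canonical functors; the arrow $u$ itself, being an arrow inside a fixed presheaf category, contributes the factor $\kappa(u) = [(Hom,u)]$ through (\ref{Can}). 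Composing these four $\Omega$-arrows in the order displayed — cotwisting down from the $\mathcal{S}'$-sheaves to $\mathcal{S}'$-presheaves, applying $u$, cotwisting across $F$, and twisting back up into the $\mathcal{S}$-sheaves — yields for each $x$ an arrow $\tilde{sp}'(x) \to \tilde{sp}(x)$ in $\Omega$. Verifying that this composite matches the stated formula is a direct expansion of the composition law (\ref{omega}.4) together with the definitions of $\kappa_{tw}$ and $\kappa_{cotw}$, tracking the distinguished elements through the equivalence classes $[(\cdot,\cdot)]_{R'}$; the presence of the $^{opp}$ on the sheaf categories is what dictates which factors are twists and which are cotwists.

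The substantive content is naturality in $x$. For each $\phi : x \to y$ in $\mathcal{R}$ I would check that the square formed by the two lift arrows $\tilde{sp}'(\phi), \tilde{sp}(\phi)$ and the two comparison components commutes in $\Omega$. Here the lemma after (\ref{kappatwist}) already supplies the naturality of the twist and cotwist factors, so the remaining work is the compatibility of the middle factor $\kappa(u)$ with the pushforwards $sp(\phi)_*$. Since $sp(\phi)_*$ and post-composition by $F$ act by composition with functors on opposite sides, they commute strictly; the real content is that the sheafified Kan-extension comparison $u$ is natural with respect to $sp(\phi)_*$, which I would derive from the naturality of the universal arrows $\Phi_d, \Phi_c$ of the Kan extensions and of the sheafification units $s_d, s_c$ of (\ref{omegalift}.4)–(\ref{omegalift}.5). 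In $\Omega$ all of this commutativity is required only up to the relation $R'$ of (\ref{omega}.3), which absorbs the associativity isomorphisms of the product functors and the functoriality isomorphisms of the various $Hom$ functors.

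I expect the main obstacle to be precisely this last naturality square: because $F$ commutes neither with sheafification nor with left Kan extension on the nose, one must show that the single comparison arrow $u$ simultaneously mediates between the primed and unprimed pushforwards — that is, that the non-invertible $u$ is natural — and that its image under the composition law of $\Omega$ lands in the correct $R'$-class. Organising the bookkeeping so that the Kan-extension universal property and the sheafification adjunction are invoked coherently across both $x$ and $y$ is the delicate step; everything else reduces to the formal properties of $\kappa$, $\kappa_{tw}$, and $\kappa_{cotw}$ established earlier.
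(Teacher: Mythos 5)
Note first that the paper offers no proof of this lemma at all --- it is stated bare, followed only by the footnote that ``some smaller version might be desirable here'' --- so there is no argument of the author's to compare yours against; your proposal has to stand on its own. On its own terms it is a sensible reconstruction of what was intended: you parse the displayed composite correctly as the component at $x$ of a transformation from $\tilde{sp}'$ to $\tilde{sp}$, you identify the three ingredients (sheaf--presheaf inclusion, post-composition by $F$, the comparison arrow $u$), you see why only an arrow and not an isomorphism of functors can be claimed, and you isolate the right difficulty, namely naturality against the pushforwards, together with the correct observation that post-composition by $F$ and pre-composition by $E(sp(\phi))$ commute strictly.

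The gap is that the step you defer is not bookkeeping; as the construction stands there is no reason for it to hold, and closing it needs inputs you never invoke. The paper's $\Omega$-lift is not canonical: in (\ref{omegalift}.8) the structure map $\phi_{\sharp}$ is \emph{any} arrow satisfying a local fibre condition, and the Kan extensions and sheafification arrows of (\ref{omegalift}.4)--(\ref{omegalift}.5) are likewise chosen. Your naturality square for $\phi : x \rightarrow y$ therefore compares two independently chosen elements, $\phi_{\sharp}$ for $\bar{sp}$ and $\phi'_{\sharp}$ for $\bar{sp}'$, and the universal properties you cite constrain the functors but not these distinguished elements, so nothing forces the two composites to land in the same $R'$-class of (\ref{omega}.3). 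To close this one must either impose the monic condition of the uniqueness lemma immediately preceding the statement (which the paper notes holds for ring localization), or first invoke the proposition that any two $\Omega$-lifts of a fixed spec datum are isomorphic so as to reduce to a compatible system of choices; your proof mentions neither. Two smaller points: the naturality lemma you cite for $\kappa_{tw}$ and $\kappa_{cotw}$ gives naturality in the argument of a \emph{fixed} functor, whereas your square needs a compatibility between twists taken at $x$ and at $y$ across the pushforward --- a composition law for twists that the paper never states and that you would have to prove; and your construction of $u$ leaves unresolved whether it is the bare sheafification unit (as the lemma's wording suggests) or your composite Kan-extension comparison followed by sheafification, a distinction that matters because, under the paper's ${}^{opp}$ conventions, $s_{d}$ runs from the sheafification to the Kan extension, and which arrow occupies the middle slot is precisely what the component formula asserts.
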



\sus{$End_{End^{(1)}_{(U-\mathfrak{Cat}^{2})} ( C )} ( Id_{C} )$ and $\Pi_{Sp}$}

In the present section we define a notion of the global sections functor $\Omega \supseteq \Pi \longrightarrow \mathcal{S}$, appropriate to such sub-categories of $\Omega$ as contain the images of a particular $\Omega$-lift. It is defined with respect to a compatible collection of functors, whose domains are the various categories of sheaves which appear in the objects defining the sub-category. We first define, for every functor $F : I \longrightarrow U-\mathfrak{Cat}$, a subcategory $\Pi_{\Gamma}(F) \subseteq \Omega$, whose arrows are those in $\Omega$ generated by the compositions of hom functors of various categories (the codomains of the functors $F(i)$) with the functors $F(i)$. 

\sss{$\bold{Definition}$ of $\Pi_{\Gamma}(F)$} \label{PiG}
Let $F : I \longrightarrow U-\mathfrak{Cat}$ be a functor.

\ref{PiG}.1. Define the ``category generated by $F$," $\Pi_{\Gamma}(F) \subseteq \Omega$ to be the subcategory of $\Omega$ generated by arrows whose functors factor through the hom functor of some category and some functor in the diagram $F$, i.e. are given by arrows of the form
$$
[(Hom_{codom(F(i))} \circ ( F(i)^{opp} \times id_{codom(F(i))}) , \phi )] \in Arr(\Omega),
$$

where $i \in Arr(I)$ and $\phi \in Arr(codom(F(i)))$.

\ref{PiG}.2. Dually, define the ``category co-generated by $F$," $\Pi_{co\Gamma(F)} \subseteq \Omega$ to be the subcategory generated by arrows of the form
$$
[(Hom_{codom(F(i))} \circ (id_{codom(F(i))}^{opp} \times F(i)) , \phi )] \in Arr(\Omega),
$$

where $i \in Arr(I)$ and $\phi \in Arr(codom(F(i)))$.


\sss{Example} For any category $C \in Ob(U-\mathfrak{Cat})$, if $F$ is the constant functor \newline $\Delta_{(I,U-\mathfrak{Cat})}(C) : I \longrightarrow U-\mathfrak{Cat}$, which sends every $i \in Arr(I)$ to the identity functor $Id_{C}$, then the category $\Pi_{\Gamma}(F)$ is the image in $\Omega$ of the functor $\kappa_{C} : C \longrightarrow \Omega$.

\sss{Example}\label{ExPiGTop} For any $\mathcal{S} \in Ob(U-\mathfrak{Cat})$, for any category $\mathcal{T} \in Ob(U-\mathfrak{Cat})$ with an admissibility structure (inclusion of $U-\mathfrak{Cat}$-valued functors) $\varepsilon : E \hookrightarrow Fib$ on $\mathcal{T}$, define a functor $F : \mathcal{T} \longrightarrow U-\mathfrak{Cat}$, on objects by sending an object $x \in Ob(\mathcal{T})$ to the category of $\mathcal{S}$-valued sheaves on the category $E(x)$, assigned by the admissibility structure to $x$.
$$
x \mapsto Sh((E(x),\tau_{X}),\mathcal{S}),
$$

and on arrows by the usual pushforward,
$$
f \mapsto Hom_{U-\mathfrak{Cat}^{2}}^{(1)}(E(f)^{opp},Id_{\mathcal{S}}) \circ e,
$$

being defined by the composition of any given sheaf with the arrow $E(f)$ given by the subfunctor given by the admissibility structure, where where $e$ is the inclusion of the category of sheaves into the category of presheaves. In this case $\Pi_{\Gamma}(F)$ can be thought of as the category of spaces with $\mathcal{S}$-valued sheaves, i.e. the objects are given by pairs $(x,\mathcal{O}_{x})$, where $x \in Ob(\mathcal{T})$ and $\mathcal{O}_{x} : E(x)^{opp} \longrightarrow \mathcal{S}$ is a sheaf on $E(x)$.

\sss{$\bold{Definition}$ of a Global Sections Datum}\label{GlobD}
For any category $I \in Ob(U-\mathfrak{Cat})$, let $I_{0} \in Ob(U-\mathfrak{Cat})$ be the subcategory which consists of the identity arrows of $I$, and $\varepsilon : I_{0} \longrightarrow I$ be the inclusion functor. A $global \ sections \ datum(I)$ is defined to be an element of the set $Ob(\Gamma)$, where $\Gamma \in Ob(U-\mathfrak{Cat})$ is defined to be $dom(p) = dom(q)$, where $p,q \in Arr(U-\mathfrak{Cat})$ and $(p,q) fibres (\varepsilon^{*},sk_{*})$, where
$$
\varepsilon^{*} : \downarrow_{(Hom_{U-\mathfrak{Cat}^{2}}^{(1)}(I,U-\mathfrak{SCat}))}(id_{(...)},id_{(...)}) \longrightarrow \downarrow_{(Hom_{U-\mathfrak{Cat}^{2}}^{(1)}(I_{0},U-\mathfrak{SCat}))}(id_{(...)},id_{(...)})
$$

and 
$$
sk_{*} : \downarrow_{(Hom_{U-\mathfrak{Cat}^{2}}^{(1)}(I_{0},U-\mathfrak{SCat}))}(id_{(...)},id_{(...)})\longrightarrow \downarrow_{(Hom_{U-\mathfrak{Cat}^{2}}^{(1)}(I_{0},U-\mathfrak{SCat}))}(id_{(...)},id_{(...)})
$$

where the functor $sk : U-\mathfrak{Cat} \longrightarrow U-\mathfrak{SCat}$ is the quotient functor defined by equating naturally isomorphic functors (arrows of categories).

I.e. it is a $sk$-natural transformation of $(U-\mathfrak{Cat})$-valued functors.

%
%


\sss{$\bold{Definition}$ of a Global Sections Functor, Transformation} \label{GlobFT}

Adopt the notation of (\ref{GlobD}). For any $sk$-natural transformation $(F \xrightarrow{t} G) \in Ob(P)$ for which $F$ is injective on objects (we essentially require the $Hom_{U-\mathfrak{Cat}^{2}}^{(1)}(I_{0},U-\mathfrak{Cat})$ data from the fibred product), we make the following constructions. 

\ref{GlobFT}.1. Define a function on objects
$$
\gamma_{t,0} : Ob(\Pi_{\Gamma}(F)) \rightarrow Ob(\Pi_{\Gamma}(G))
$$

by the $\kappa$-twist, i.e. for any $i \in Ob(I)$, for any $x \in Ob(F(i))$,
$$
\gamma_{t,0} : (F(i),x) \mapsto (G(i),t(i)(x)) = codom(\kappa_{tw,t(i)}(x)).
$$

Define a subset $\gamma_{t,1} \subseteq Arr(\Pi_{\Gamma}(F)) \times Arr(\Pi_{\Gamma}(G))$, by
$$
\gamma_{t,1} = \{
$$
$$
([(Hom_{codom(F(f))} \circ (F(f)^{opp} \times id_{codom(F(f))}) , \phi )],
$$
$$
[(Hom_{codom(G(f))} \circ (G(f)^{opp} \times id_{codom(G(f))}) , t(codom(f))(\phi) )])
$$
$$
\in  Arr(\Pi_{\Gamma}(F)) \times Arr(\Pi_{\Gamma}(G)) ; f \in Arr(I) \}.
$$

\ref{GlobFT}.2. Let $\Pi_{\Gamma}(F) \xrightarrow{\varepsilon_{F}} \Omega$ and $\Pi_{\Gamma}(G) \xrightarrow{\varepsilon_{G}} \Omega$ be the inclusion functors. Define a map of sets $\kappa_{\Pi-tw(t)} : Ob(\Pi_{\Gamma}(F)) \rightarrow Arr(\Omega)$ by the various $\kappa$-twists, i.e. so that for any $i \in Ob(I)$, for any $x \in Ob(F(i))$, $\kappa_{\Pi-tw(t)} : (F(i),x) \mapsto \kappa_{tw,t(i)}(x)$.

\sss{Proposition}\label{GlobFTf}
Suppose that for any $i \in Ob(dom(F))$, for any automorphism $\phi \in \newline Aut_{End_{U-\mathfrak{Cat}^{2}}(F(i))}(Id_{F(i)})$, there exists an automorphism $\tilde{\phi} \in Aut_{End_{U-\mathfrak{Cat}^{2}}(G(i))}(Id_{G(i)})$ such that
$$
Hom_{U-\mathfrak{Cat}^{2}}^{(1)}(t(i),Id_{G(i)})(\tilde{\phi}) \cong Hom_{U-\mathfrak{Cat}^{2}}^{(1)}(Id_{F(i)},t(i))(\phi),
$$

i.e. the automorphisms of $Id_{F(i)}$ ``lift" to automorphisms of $Id_{G(i)}$.\ftt{One might slightly weaken this, by requiring only that those automorphisms should lift which might equate two different arrows under the $\kappa_{F(i)}$ functor.
} Then the following hold.

\ref{GlobFTf}.1. The pair of maps $\gamma_{t} = (\gamma_{t,0},\gamma_{t,1})$ of $(\ref{GlobFT}.1)$ defines a functor,
$$
\gamma_{t} : \Pi_{\Gamma}(F) \longrightarrow \Pi_{\Gamma}(G),
$$

so that for any $f \in Arr(I)$,
$$
\gamma_{t,1} : [(Hom_{codom(F(f))} \circ (F(f)^{opp} \times id_{codom(F(f))}) , \phi )] \mapsto
$$
$$
[(Hom_{codom(G(f))} \circ (G(f)^{opp} \times id_{codom(G(f))}) , t(codom(f))(\phi) )].
$$

\ref{GlobFTf}.2. The map $(\ref{GlobFT}.2)$ defines a natural transformation,
$$
\kappa_{\Pi-tw(t)} : \varepsilon_{F} \rightarrow \varepsilon_{G} \circ \gamma_{t}.
$$

\sss{} Suppose that $F : \mathcal{T} \longrightarrow U-\mathfrak{Cat}$ is as in (\ref{ExPiGTop}). Consider, for any $x \in Ob(\mathcal{T})$, the ``global sections functor,"
$$
t(x) : Sh((E(x),\varepsilon(x)^{*}(\tau)),\mathcal{S}) \longrightarrow \mathcal{S}
$$

defined by $\mathcal{O} \mapsto \mathcal{O}(e)$, where $e \in Ob(E(x))$ is the terminal object. Then the map of sets $t : Ob(\mathcal{T}) \rightarrow Arr(U-\mathfrak{Cat})$ is an $sk$-natural transformation of functors, $t : F \rightarrow \Delta_{(\mathcal{T},U-\mathfrak{Cat})}(\mathcal{S})$.

\sss{} If the canonical functor $\kappa_{\mathcal{S}}$ is faithful, then there exists a functor $\gamma' : \Pi_{\Gamma}(F) \rightarrow \mathcal{S}$, unique such that $\kappa_{\mathcal{S}} \circ \gamma' = \gamma_{t}$.

\sss{Proposition on Diagrams in $\Pi_{\Gamma}(F)$} \label{PiDiag}
For any functor $F : I \longrightarrow U-\mathfrak{Cat}$ and any category $J \in Ob(U-\mathfrak{Cat}$, there exists a functor $\bar{F} : \bar{I} \longrightarrow U-\mathfrak{Cat}$ such that there is an equivalence of categories
$$
Hom^{(1)}_{U-\mathfrak{Cat}^{2}}(J,\Pi_{\Gamma}(F)) \cong \Pi_{\Gamma}(\bar{F})
$$

given by the following.

\ref{PiDiag}.1. Suppose that
$$
e : J \longrightarrow U-\mathfrak{Cat}
$$

is the functor defined on objects by sending $j \in Ob(J)$ to the category of arrows over it,
$$
e : j \mapsto J_{/j} = \downarrow_{(J)}(id_{J},ob_{(J)}(j))
$$

and on arrows by composition, so that for any arrow $(j \xrightarrow{f} k) \in Arr(J)$, the functor
$$
e(f) : J_{/j} \longrightarrow J_{/k}
$$

is defined on objects, so that for any $g \in Ob(J_{/j})$, map $(g \mapsto f \circ g)$. The map on arrows is essentially the expected identity map, i.e. one sends a triangle $g_{2} \circ \phi = g_{1}$ to the triangle $f \circ g_{2} \circ \phi = f \circ g_{1}$.

\ref{PiDiag}.2. Define a function 
$$
p : Hom_{U'-\mathfrak{Cat}}(J,\Pi_{\Gamma}(F)) \rightarrow Hom_{U'-\mathfrak{Cat}}(J,U-\mathfrak{Cat})
$$ 

by sending a functor $X : J \longrightarrow \Pi_{\Gamma}(F)$ to the functor $p(X) : J \longrightarrow U-\mathfrak{Cat}$ defined by sending an arrow $X(f)$ to the functor $F(\tilde{f})$ associated to it, i.e. it is defined by requiring that, for any $f \in Arr(J)$, and for any $\tilde{f} \in Arr(I)$,
$$
p(X) : f \mapsto F(\tilde{f})
$$

iff there exists $\phi \in Arr(codom(F(f)))$, such that
$$
X(f) = [(Hom_{codom(F(\tilde{f}))} \circ (F(\tilde{f})^{opp} \times_{U-\mathfrak{Cat}} Id_{codom(F(\tilde{f}))}) , \phi )].
$$

Define a function
$$
p_{1} : Arr(Hom^{(1)}_{U'-\mathfrak{Cat}^{2}}(J,\Pi_{\Gamma}(F))) \rightarrow Arr(Hom^{(1)}_{U'-\mathfrak{Cat}^{2}}(J,U-\mathfrak{Cat}))
$$

similarly, by sending a natural transformation of $\mathfrak{Cat}$-valued functors $t : X \xrightarrow{t} Y$ to the tuple of functors associated to the given tuple of arrows in $\Omega$, i.e. for any $j \in Ob(J)$, and for any $\tilde{f} \in I$, require that $p_{1}(t)(j) = F(\tilde{f})$ iff there exists $\phi \in Arr(codom(F(f)))$, such that
$$
t(j) = [(Hom_{codom(F(\tilde{f}))} \circ (F(\tilde{f})^{opp} \times_{U-\mathfrak{Cat}} Id_{codom(F(tilde{f}))}) , \phi )].
$$

\ref{PiDiag}.3. Define the functor
$$
\bar{F} : \bar{I} := Hom^{(1)}_{U-\mathfrak{Cat}^{2}}(J,\Pi_{\Gamma}(F)) \longrightarrow U-\mathfrak{Cat}
$$

on objects by sending a functor $X : J \longrightarrow \Pi_{\Gamma}(F)$ to the category 
$$
\bar{h}_{WE(U-\mathfrak{Cat},\times_{U-\mathfrak{Cat}})} (L(J),U-\mathfrak{Cat})(e,p(X))
$$

where $L(J)$ is the category $J$ with the trivial $U-\mathfrak{Cat}$ enrichment, where a given hom category is the category whose objects are given by the hom set in $J$, with only identity arrows. Define $\bar{F}$ on arrows by sending a natural transformation $(X \xrightarrow{t} Y) \in Arr(Hom^{(1)}_{U'-\mathfrak{Cat}^{2}}(J,\Pi_{\Gamma}(F)))$ to the functor given by the enriched composition of the diagram category, i.e.
$$
\bar{h}_{WE(U'-\mathfrak{Cat},\times_{U'-\mathfrak{Cat}})} (L(J),U-\mathfrak{Cat})(e,p(X))
$$
$$
\xrightarrow{\rho} \circ \xrightarrow{Id \times_{U'-\mathfrak{Cat}} p_{1}(t)} \circ \xrightarrow{\bar{\circ}_{WE(U-\mathfrak{Cat},\times_{U-\mathfrak{Cat}})}} 
$$
$$
\bar{h}_{WE(U'-\mathfrak{Cat},\times_{U'-\mathfrak{Cat}})} (L(J),U-\mathfrak{Cat})(e,p(Y)),
$$

where $\rho$ is the right unit isomorphism $Id \rightarrow Id \times_{U'-\mathfrak{Cat}} I'$, the functor $Id \times_{U'-\mathfrak{Cat}} p_{1}(t)$ is given by the representation by the unit $I'$ of the objects functor, and $\bar{\circ}_{WE(U-\mathfrak{Cat},\times_{U-\mathfrak{Cat}})}$ is the enriched composition.

\sss{Conjecture on Enrichments}
For any functor $F : I \longrightarrow (A,\otimes)-\mathfrak{Cat}$ and any tensor functor $(For,\rho) : (A,\otimes) \longrightarrow (U-\mathfrak{Set},\times_{U-\mathfrak{Set}})$, the category $\Pi_{\Gamma}(For^{(For,\rho)-\mathfrak{Cat}}_{U-\mathfrak{Cat}}(F))$ carries a natural enrichment over $(A,\otimes)$.

\sss{Remark}
Whether the functor $\kappa_{C}$ is left or right exact in its image seems to depend upon the compatibility of automorphisms of the identity over various objects in $C$. In particular, given a functor $F : I \longrightarrow C$, with a limit $(l,\lambda) \in Ob(C) \times Arr(Hom_{U'-\mathfrak{Cat}}^{(1)}(Hom_{U-\mathfrak{Cat}^{2}}^{(1)}(I,C)^{opp},U-\mathfrak{Set}))$, one might associate to any arrow of functors $\phi : \Delta_{(I,Im(\kappa_{C}))}(a) \rightarrow \kappa_{C} \circ F$ an arrow of functors $\tilde{\phi} : \Delta_{(I,C)}(a) \rightarrow \kappa_{C}$ by an explicit choice of arrows in $C$ representing the arrows appearing in $\phi$, hoping that the image by $\kappa_{C}$ of the limit arrow $\lambda(\bar{\phi})$ should be the limit arrow associated to $\phi$. But such a choice seems difficult, since each arrow in $I$ could a priori be associated to its own automorphism $Id_{C} \rightarrow Id_{C}$, whereas one would like to associate to each object in $I$ such an automorphism. Furthermore, once the candidate limit arrow is chosen, uniqueness seems to rely upon the idea that an assignment of objects in $I$ to automorphisms of $Id_{C}$, such as should relate the component arrows, would determine an automorphism of $Id_{C}$ suitable for relating the limit arrows.

\sss{$\bold{Definition}$ of $\Pi_{Sch}$} \label{PiSch}
For any given spec datum $\bar{sp} = (\mathcal{T} \xleftarrow{sp}  \mathcal{R} \xrightarrow{\mathcal{O}} \mathcal{S} , \tau , \varepsilon  )$, for any $\Omega-lift(U,\bar{sp} )$, $\tilde{sp} : \mathcal{R} \longrightarrow \Omega$, let $F : \mathcal{T} \longrightarrow U-\mathfrak{Cat}$ be as in (\ref{ExPiGTop}), and let 
$$
\pi : \Delta_{(\mathcal{T},U-\mathfrak{Cat})}(\mathcal{T}) \times F \rightarrow \Delta_{(\mathcal{T},U-\mathfrak{Cat})}(\mathcal{T})
$$

be the projection. Define a subcategory $\Pi_{Sch}(\bar{sp}, \tilde{sp}) \subseteq  \Pi_{\Gamma}(\Delta_{(\mathcal{T},U-\mathfrak{Cat})}(\mathcal{T}) \times F) \subset \Omega$ so that its arrows are those arrows $(x \xrightarrow{\phi} y) \in Arr(\Pi_{\Gamma}(\Delta_{(\mathcal{T},U-\mathfrak{Cat})}(\mathcal{T})\times F))$, such that subsets $C_{x}, C_{y} \subset Arr(\Pi_{\Gamma}(\Delta_{(\mathcal{T},U-\mathfrak{Cat})}(\mathcal{T}) \times F))$ exist such that the following hold.

\ref{PiSch}.1. The image of $C_{x}$ under $\gamma_{\pi(x)}$ is a cover of $\gamma_{\pi(x)}(x)$, and the image of $C_{y}$ under $\gamma_{\pi(y)}$ is a cover of $\gamma_{\pi(y)}$, in $(Im(\kappa_{\mathcal{T}}),\tau_{Im(\kappa_{\mathcal{T}})})$, where $\tau_{Im(\kappa_{\mathcal{T}})}$ is the topology generated by all the images of all elements of $\tau$ (i.e. covers in $(\mathcal{T},\tau)$).

\ref{PiSch}.2. Each of the arrows in $C_{x}$, when composed with $\phi$, yields an arrow factoring through an omega lift of some arrow in $\mathcal{R}$, i.e., for any $u \in C_{x}$, there exists $\phi ' \in Arr(\mathcal{R})$, such that there exists $v \in C_{y}$, such that $v \circ \tilde{sp}(\phi ') = \phi \circ u$.

\begin{rem}
The functor $^{opp} : \mathfrak{Cat} \longrightarrow \mathfrak{Cat}$ is its own adjoint, implying a fortiori that it preserves limits, and therefore that $(\mathcal{T} \times Sh( (dom(\varepsilon)(sp(y)) , (\varepsilon (sp(y)))^{*} \circ$

$(dob\downarrow_{(\mathcal{T})} ( id_{\mathcal{T}} , ob_{(\mathcal{T})}(sp(y)) ) )^{*} (\tau) ) , \mathcal{R} ) )^{opp} \cong \mathcal{T}^{opp} \times ( Sh( (dom(\varepsilon)(sp(y)) ,$

$(\varepsilon (sp(y)))^{*} \circ (dob\downarrow_{(\mathcal{T})} ( id_{\mathcal{T}} , ob_{(\mathcal{T})}(sp(y)) ) )^{*} (\tau) ) , \mathcal{R} ) )^{opp}$ . Thus isomorphisms of functors which identify representations of arrows in $\Omega$ might be thought of in a piecewise fashion.
\end{rem}

\sss{Proposition on Finite Diagrams} \label{PropFD}
For any spec datum $\bar{sp} = ...$ with an $\Omega$-lift $\tilde{sp}$, for any category $I \in Ob(U-\mathfrak{Cat})$ such that the set $Arr(I)$ is finite, there exists a spec datum 
$$
\bar{sp}' = ( \circ \xleftarrow{Hom_{U-\mathfrak{Cat}^{2}}^{(1)}(Id_{I},\mathcal{O})} Hom^{(1)}_{U-\mathfrak{Cat}^{2}}(I,\mathcal{R}) \xrightarrow{Hom_{U-\mathfrak{Cat}^{2}}^{(1)}(Id_{I},sp)} \circ , ... )
$$

with an $\Omega$-lift $\tilde{sp}'$, such that $Hom_{U'-\mathfrak{Cat}^{2}}^{(1)}(I,\Pi_{Sch}(\bar{sp},\tilde{sp})) \cong \Pi_{Sch}(\bar{sp}',\tilde{sp}')$.

\begin{lem}
For any ring $k$, $Aut_{End^{(1)}_{(U-\mathfrak{Cat}^{2})} ( k-\mathfrak{Alg} )} ( Id_{k-\mathfrak{Alg}} ) = \{ id_{Id_{k-\mathfrak{Alg} }} \}$
\end{lem}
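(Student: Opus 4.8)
The plan is to show that any natural automorphism $\eta$ of $Id_{k-\mathfrak{Alg}}$ is pinned down by a single polynomial, and that invertibility forces that polynomial to be $x$. Writing $\eta \in \Aut_{\End^{(1)}_{(U-\mathfrak{Cat}^{2})}(k-\mathfrak{Alg})}(Id_{k-\mathfrak{Alg}})$, each component $\eta_{A}\colon A\to A$ is a $k$-algebra automorphism, and these are natural in $A$. The first step is a Yoneda-type reduction using the free $k$-algebra on one generator, the polynomial ring $k[x]$, which satisfies $\Hom_{k-\mathfrak{Alg}}(k[x],A)\cong A$ via $f\mapsto f(x)$. Set $p:=\eta_{k[x]}(x)\in k[x]$. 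For any $A$ and any $a\in A$, the evaluation homomorphism $\mathrm{ev}_{a}\colon k[x]\to A$, $x\mapsto a$, together with naturality of $\eta$, gives $\eta_{A}(a)=\eta_{A}(\mathrm{ev}_{a}(x))=\mathrm{ev}_{a}(\eta_{k[x]}(x))=p(a)$, where $p(a)$ denotes substitution of $a$ into $p$ through the structure map $k\to A$. Thus $\eta$ is completely determined by the one polynomial $p$.

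Next I would extract algebraic constraints on $p$. Since each $\eta_{A}$ is a $k$-algebra homomorphism, $p(0)=0$ and $p(1)=1$; writing $p=\sum_{n\ge 1}c_{n}x^{n}$ this gives $c_{0}=0$ and $\sum_{n}c_{n}=1$. Multiplicativity is obtained from the free $k$-algebra on two generators $k[x,y]$: naturality along the maps $x\mapsto x$, $x\mapsto y$, and $x\mapsto xy$ shows $\eta_{k[x,y]}(x)=p(x)$, $\eta_{k[x,y]}(y)=p(y)$, and $\eta_{k[x,y]}(xy)=p(xy)$, and since $\eta_{k[x,y]}$ preserves products, $p(xy)=p(x)\,p(y)$ in $k[x,y]$. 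Comparing the coefficients of $x^{m}y^{n}$ yields $c_{m}c_{n}=0$ for $m\ne n$ and $c_{n}^{2}=c_{n}$ for all $n$; that is, the $c_{n}$ are orthogonal idempotents of $k$ summing to $1$.

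The heart of the argument is to rule out higher-degree terms using invertibility. Suppose $c_{n}\ne 0$ for some $n\ge 2$, and let $\bar{k}:=kc_{n}\cong k/(1-c_{n})$, a nonzero ring since $c_{n}$ is a nonzero idempotent. Reducing coefficients along the $k$-algebra map $k[x]\to \bar{k}[x]$ and using naturality shows $\eta_{\bar{k}[x]}(x)=\bar{p}(x)=x^{n}$, because every $c_{m}$ with $m\ne n$ dies in $\bar{k}$. But $\eta_{\bar{k}[x]}$ must be an automorphism of $\bar{k}[x]$, whereas $g\mapsto g(x^{n})$ cannot have $x$ in its image for $n\ge 2$ (inspect lowest-degree terms), a contradiction. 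Hence $c_{n}=0$ for all $n\ge 2$, so $p(x)=c_{1}x$ with $c_{1}=1$ by $\sum_{n}c_{n}=1$, i.e.\ $p(x)=x$ and $\eta_{A}=\mathrm{id}_{A}$ for every $A$, giving $\eta=\mathrm{id}_{Id_{k-\mathfrak{Alg}}}$.

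The main obstacle is precisely this last step: over a general commutative ring $k$ a naive degree argument fails, and one must pass to the idempotent components $kc_{n}$ in order to localize the Frobenius-type obstruction $x\mapsto x^{n}$, whose non-surjectivity is what contradicts invertibility. I would emphasize that the automorphism hypothesis is genuinely needed here, since dropping it admits the Frobenius \emph{endomorphism} $a\mapsto a^{p}$ as a nontrivial element of $\End(Id)$ in characteristic $p$; it is only the requirement that $\eta$ be invertible that excludes it. Finally, combined with the earlier lemma characterizing faithfulness of $\kappa_{C}$ in terms of triviality of $\Aut_{\End^{(1)}_{U-\mathfrak{Cat}^{2}}(C)}(Id_{C})$, this yields at once that $\kappa_{k-\mathfrak{Alg}}$ is faithful, which is the property the scheme construction requires.
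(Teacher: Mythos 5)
Your proof is correct, and after the shared opening move it takes a genuinely different route from the paper's. Both arguments begin with the same Yoneda-type reduction: naturality against the evaluation maps $k[x] \rightarrow A$ shows that any $\eta \in Aut_{End^{(1)}_{(U-\mathfrak{Cat}^{2})}(k-\mathfrak{Alg})}(Id_{k-\mathfrak{Alg}})$ is determined by the single polynomial $p = \eta_{k[x]}(x)$. From there the paper stays inside $k[x]$: it invokes the classification of $k$-algebra automorphisms of $k[x]$ as the affine substitutions $x \mapsto ax+b$, and then plays $\eta_{k[x]}$ against endomorphisms $x \mapsto cx+d$ to force the identity. You instead extract the multiplicative constraint $p(xy) = p(x)p(y)$ from $k[x,y]$, conclude that the coefficients $c_{n}$ are orthogonal idempotents summing to $1$, and kill every $c_{n}$ with $n \geq 2$ by passing to the factor ring $\bar{k} = k/(1-c_{n})$, where the component $\eta_{\bar{k}[x]}$ sends $x \mapsto x^{n}$ and carries constants to constants, so its image lies in $\bar{k}[x^{n}]$ and it cannot be surjective, contradicting invertibility. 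What your route buys is robustness over an arbitrary commutative base: the affine classification that the paper relies on is false when $k$ has nilpotents (for instance $x \mapsto x + \epsilon x^{2}$ with $\epsilon^{2}=0$ is an automorphism of $k[x]$), so the paper's proof as written has a gap for non-reduced $k$, whereas your idempotent-decomposition argument needs nothing beyond linear independence of monomials and correctly isolates the only genuine obstruction --- the Frobenius-type endomorphisms $a \mapsto a^{n}$ --- which the automorphism hypothesis then excludes, exactly as your closing remark explains. The paper's route, where it applies (say for reduced $k$), is shorter and more elementary; yours proves the stated lemma in full generality, and still feeds into the faithfulness of $\kappa_{k-\mathfrak{Alg}}$ in the same way.
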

\begin{proof}
If $k = \{ 0 \}$, then the category of $k$-algebras is the category with one arrow, and the result is trivial. If $k$ has at least two elements, then consider the free ring in one variable $k[x]$. But if there were a non-trivial automorphism of $Id_{k-\mathfrak{Alg}}$ as in the preceding lemma, then there would exist $A \in Ob(k-\mathfrak{Alg})$ such that the arrow $\alpha(A) : A \rightarrow A$ associated to $A$ would be other than the identity. Therefore there would exist $a \in A$ such that $\alpha (A)(a) \neq a$, implying that, if $f : k[x] \rightarrow A$ were the arrow given by $x \mapsto a$, $\alpha(k[x]) = id_{k[x]}$ would imply that $a = f (x) = f \circ \alpha(k[x])(x) = \alpha(A) \circ f (x) = \alpha(A) (a) \neq a$, a contradiction. Therefore $\alpha(k[x]) \neq id_{k[x]}$. Therefore the arrow assigned by the natural transformation to $k[x]$ would be other than the identity, and by naturality, it would have to commute with every other arrow $k[x] \mapsto k[x]$. The only automorphisms are $x \mapsto ax + b$, composed with automorphisms of $k$. But if $k$ has at least two elements, then a map $x \mapsto cx + d$ can be found which should make the required commuting square impossible.
\end{proof}

\begin{rem}
We've a more general notion of a ``free object" $x$ over an object $k$, with respect to a ``forgetful" functor $F$, being an object $x$ such that $Yo^{opp}(x) \cong Yo^{opp}(k) \times F$. We hope that that the notion of freedom, and the previous lemma, might be generalized and made precise, so far as possible, the inducement being thoughts of expressing ``finite type" in a recursive fashion, and examining the relationship between it (an ``algebraic" notion) and compactness, a property referring only to the topology. $codom(F)$ seems somewhat flexible, assuming that the category in question is enriched over $codom(F)$, so that the functor category should inherit the enrichment.
\end{rem}

\begin{cor}
For any ring $k$, $\kappa_{k - \mathfrak{Alg} }$ is faithful.
\end{cor}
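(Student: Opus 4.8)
The plan is to combine two results already established in the excerpt, so that the corollary becomes a purely formal consequence. The first input is the lemma characterizing when a canonical functor is faithful: for any category $C$, the functor $\kappa_{C}$ is faithful if and only if the identity functor $Id_{C}$ admits no nontrivial automorphisms, that is, $Aut_{End^{(1)}_{U-\mathfrak{Cat}^{2}}(C)}(Id_{C}) = \{ id_{Id_{C}} \}$. The second input is the immediately preceding lemma, which computes exactly this automorphism group for the category of $k$-algebras and shows $Aut_{End^{(1)}_{(U-\mathfrak{Cat}^{2})}(k-\mathfrak{Alg})}(Id_{k-\mathfrak{Alg}}) = \{ id_{Id_{k-\mathfrak{Alg}}} \}$.

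First I would instantiate the faithfulness lemma at $C = k-\mathfrak{Alg}$; this reduces the assertion entirely to the triviality of $Aut(Id_{k-\mathfrak{Alg}})$. Then I would invoke the preceding lemma to supply precisely that triviality, and conclude that $\kappa_{k-\mathfrak{Alg}}$ is faithful. Because both ingredients are already in hand, no further construction is required: the two lemmas chain directly, the output of the second being the hypothesis of the first.

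Since the substantive work (the test against the free algebra $k[x]$ ruling out nonidentity natural automorphisms of $Id_{k-\mathfrak{Alg}}$) has already been carried out in the preceding lemma, there is no genuine obstacle in this final step. The only point deserving a moment's care is the bookkeeping with universes: the faithfulness criterion is phrased for $\kappa_{C}$ relative to an ambient pair $U \in U'$, and one should confirm that the universe witnessing the $U$-smallness of $k-\mathfrak{Alg}$ is the same one in which the automorphism computation was performed. This is routine, as the argument of the preceding lemma is insensitive to the choice of universe containing $k-\mathfrak{Alg}$, and the faithfulness lemma holds for the corresponding $\kappa_{k-\mathfrak{Alg}}$ verbatim.
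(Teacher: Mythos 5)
Your proof is correct and follows exactly the route the paper intends: the corollary is stated immediately after the lemma computing $Aut_{End^{(1)}_{(U-\mathfrak{Cat}^{2})}(k-\mathfrak{Alg})}(Id_{k-\mathfrak{Alg}}) = \{ id_{Id_{k-\mathfrak{Alg}}} \}$, and it follows by instantiating the earlier characterization (faithfulness of $\kappa_{C}$ iff $Id_{C}$ has only the trivial automorphism) at $C = k-\mathfrak{Alg}$. Your added remark about universe bookkeeping is a reasonable, harmless precaution that the paper leaves implicit.
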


\sss{Corollary}\label{PiSt}
If $\bar{spec} = ( \mathfrak{Top} \xleftarrow{spec} \mathfrak{Ring} \xrightarrow{Id_{\mathfrak{Ring}}} \mathfrak{Ring} ,  )$ is the usual topological spectrum data and $\tilde{spec}$ is its $\Omega$-lift, then the following hold.

\ref{PiSt}.1. The functor $\tilde{spec}$ is faithful.

\ref{PiSt}.2. The left Kan extension of the usual inclusion functor $\mathfrak{Ring} \longrightarrow \mathfrak{Sch}$ along the natural factor of the $\Omega$-lift $\tilde{spec}' : \mathfrak{Ring} \longrightarrow \Pi(\bar{spec},\tilde{spec})$ through its image (i.e. the functor such that $\varepsilon_{\Pi(\bar{spec},\tilde{spec})} \circ \tilde{spec}' = \tilde{spec}$), is an equivalence of categories $\Pi(\bar{spec},\tilde{spec}) \longrightarrow \mathfrak{Sch}$.

\begin{proof}
In this case $\mathcal{O} = Id_{U-k-\mathfrak{Alg}}$, so that the global sections functor exists, and by the usual arguments $\Gamma \circ \tilde{spec} \cong Id_{k-\mathfrak{Alg}}$. An arrow of schemes is uniquely determined by its open affine fibres.
\end{proof}



\sus{Separateness and Extensions of $\Pi(sp,\mathcal{O},...)$}

We interpret separateness as ``the sufficiency of the sheaf category component of $\tilde{x} = (\mathcal{T} \times Sh(X),(x,\mathcal{O}_{x}))$ to determine points," by the following conjecture.

\sss{$\bold{Definition}$ of Separateness}
For any $sk$-natural transformation of functors $F,G : I \longrightarrow \Omega$, $t : F \rightarrow G$, for any $x \in Ob(\Pi_{\Gamma}(F))$, we say that $x$ is $separated(t)$ iff for any $y \in Ob(\Pi_{\Gamma}(F))$, the function $\gamma_{t}(y,x) : Hom_{\Pi_{\Gamma}(F)}(y,x) \rightarrow Hom_{\Pi_{\Gamma(G)}}(\gamma_{t}(y),\gamma_{t}(x))$ is injective.


%

\sss{Remark} By (\ref{PiDiag}) and (\ref{PropFD}) there are at least two ways in which a category of diagrams in some $\Pi_{Sch}(\bar{sp},\tilde{sp})$ can be realized as a subcategory of some $\Pi_{\Gamma}(\bar{F})$. The former is intrinsic to $\Pi_{\Gamma}$, and essentially involves tuples of functors $e(j) \longrightarrow X(j)$ (in this case $F$ of that construction would be the product of the functor in (\ref{ExPiGTop}) with $Id_{\mathcal{T}}$). The latter essentially restricts the spec functor to appropriate diagrams in $\mathcal{R}$, and uses the product of the functor of (\ref{ExPiGTop}) with $Id_{Hom_{U-\mathfrak{Cat}^{2}}^{(1)}(I,\mathcal{T})}$. In either case, I expect that the natural transformation determining separateness on the diagram category in $\Pi_{Sch}(\bar{sp},\tilde{sp})$ should be given by the ``projection which forgets the constant $\mathcal{T}$ component."

\sss{Conjecture on Separateness}
Adopt the conditions and notation of (\ref{ExPiGTop}). Let
$$
\pi : \Delta_{(\mathcal{T},U-\mathfrak{Cat})}(\mathcal{T}) \times F \rightarrow F
$$

be the projection. For any $x \in Ob(\Pi(\bar{sp},\tilde{sp}))$, $x$ is $separated(\bar{sp},\tilde{sp})$ iff for any $y \in Ob(\Pi(\bar{sp},\tilde{sp}))$, $\gamma_{\pi}(y,x) \text{ is injective}$.

Suppose that $( X, X \xrightarrow{f} S, \emptyset ) \in Ob(\mathfrak{Sch})$ is a scheme over $Y$. Then the notions of separateness coincide, i.e. $X \xrightarrow{\Delta} X \times_{S} X \text{ is closed, an immersion iff } X \text{ is separated}(\bar{sp},\tilde{sp})$.

\sss{Remarks on Correspondences}

If $\tilde{sp}$ is an $\Omega$-lift of $\bar{sp}$, $S$ is separated($\bar{sp},\tilde{sp}$), and if $COLIM$ denotes a functor $Hom_{U-\mathfrak{Cat}^{2}}^{(1)}(I,A) \longrightarrow A$ which sends a functor to its colimit (in this case, $I = A$), and $\bar{Yo_{opp}}$ denotes a  $\mathcal{O}_{X \times_{S} Y}-\mathfrak{Mod}$-enriched hom functor (i.e. for any $\mathcal{M} \in Ob(\mathcal{O}_{X \times_{S} Y}-\mathfrak{Mod})$, $\bar{Yo_{opp}}(\mathcal{M}) : a \mapsto Hom^{\sharp}(M,a)$, so that $Yo = (A \xrightarrow{\bar{Yo}} Hom_{U-\mathfrak{Cat}^{2}}^{(1)}(A,A) \xrightarrow{Hom_{U-\mathfrak{Cat}^{2}(1)}^{(1)}(id_{A},For) } \circ$), then it is expected that the set of arrows of the form

$$
[ ( For^{COLIM(\mathcal{O}_{X \times_{S} Y})-\mathfrak{Mod}}_{U-\mathfrak{Set}} \circ COLIM \circ COLIM \circ \bar{Yo}
$$
$$
\circ For^{Sh((X \times_{S} Y , \tau_{X \times_{S} Y} ),\mathfrak{Ring})}_{\mathcal{O}_{X \times_{S} Y}-\mathfrak{Mod}} \circ \otimes_{Sh((X \times_{S} Y , \tau_{X \times_{S} Y} ),\mathfrak{Ring})} \circ ((p^{*})^{opp} \times_{\mathfrak{Cat}} q^{*} ) , \phi )] \in Arr(\Omega),
$$

where $(p,q) fibres_{(\mathfrak{Sch})} ( X \xrightarrow{f} S, Y \xrightarrow{g} S )$, should be isomorphic to the set of correspondences from $X$ to $Y$. \ftt{
The forgetful functor, from sheaves of rings over (in opp) the pullback of the base structure sheaf to modules over the structure sheaf of the fibre, seems suspect. I believe that correspondences are usually in the literature restricted to smooth schemes over a field (and projective, I imagine so that $X \times_{S} Y \times_{S} Z \xrightarrow{\pi} X \times_{S} Z$ should be closed for the sake of composition, but I debate whether this is necessary in the above formulation), and I wonder therefore whether one should replace $For$ with $Diff : Sh((X \times_{S} Y , \tau_{X \times_{S} Y} ),\mathfrak{Ring})_{ \longrightarrow \mathcal{O}_{X \times_{S} Y}-\mathfrak{Mod}}$.
}

We also expect that restricting the ``sub-object diagram" by replacing $COLIM \circ \bar{Yo}$ with $COLIM \circ Hom_{U-\mathfrak{Cat}^{2}}^{(1)}(\varepsilon_{i},id_{\mathcal{O}_{X \times_{S} Y}-\mathfrak{Mod}}) \circ \bar{Yo}$ in the above arrow in $\Omega$, where $\varepsilon_{i} \hookrightarrow \mathcal{O}_{X \times_{S} Y}-\mathfrak{Mod}$ is the inclusion of modules with an epic map $\bigoplus_{j=1}^{i} \mathcal{O}_{X \times_{S} Y} \longrightarrow \mathcal{M}$, yields the set of correspondences given by subschemes $Z \hookrightarrow X \times_{S} Y$ of codimension $i$.

However they would appear here as arrows $(Sh(X)^{opp},\mathcal{O}_{X}) \rightarrow (Sh(Y),\mathcal{O}_{Y})$ in $\Omega$, rather than having $(Sh(X),\mathcal{O}_{X})$ as the domain, since the tensor functor $\otimes$ is covariant in each variable.

\sus{Addenda}

\begin{rem}
For any category $I$, consider a limit $(i,l)$ of some diagram $D : J \longrightarrow I$, i.e. $i \in Ob(I)$ and $l: \Delta_{(J)}(i) \rightarrow D$ is a universal arrow of functors, where $\Delta_{(J)} : C \longrightarrow Hom^{(1)}_{U-\mathfrak{Cat}^{2}}(J,C)$ sends an object to its constant functor, so  that any $f : \Delta_{(J)}(i') \rightarrow D$ factors uniquely through some $\Delta_{(J)}(\tilde{f}) : \Delta_{(J)}(i') \rightarrow \Delta_{(J)}(i)$. Then for any arrow $[(F,\phi)] \in Arr(\Omega)$, the distinguished element of the set, $\phi \in F(c,i)$, determines compatible elements $F(id_{c},l(j))(\phi) \in F(c,D(j))$, where $j \in Ob(J)$. This is to say that one can also distinguish collections of elements in various sets by using a limit object $i \in Ob(I)$ for the distinguished element in the object $(I,i)$ which determines a representable functor in $\Omega$.
\end{rem}

\begin{rem}
I expect that the representatives of many sorts of functors $\Omega \supseteq \Pi \longrightarrow \mathcal{S}$, in $\Pi$ may be sought by the colimits of the relevant $dob\downarrow_{(\Omega)}$ functors from arrow categories in $\Omega$ over $(I,i)$, i.e. by constructing right adjoints to inclusion functors $\Pi \hookrightarrow \bar{\Pi}$, where $\bar{\Pi}$ is $\Pi$ with a single extra object.
\end{rem}

\se{(Weak) $U-\mathfrak{Cat}$-valued sheaves and Steps Toward Homotopy Groups, and (Co)-Homology}

\sss{
Constructions of admissibility structures
}
In (\ref{SusAugAdm}), we develop two methods by which one can construct new admissibility structures from old ones.

In the first construction (see \ref{LemAdmGlu}), we work with a given pair of admissibility structures $\varepsilon_{i} : E_{i} \hookrightarrow Fib_{i}$ attached to a pair of categories $\mathcal{T}_{i}$, for $i \in \{ 1,.2 \}$. Given a functor $\Lambda : \mathcal{T}_{1} \longrightarrow \mathcal{T}_{2}$, between such categories, we use the $\kappa$-cotwists (see \ref{kappatwist}) to define $\mathfrak{Cat}$-valued functors on the image of $\mathcal{T}_{1}$ in $\Omega$ under the canonical functor $\kappa_{\mathcal{T}_{1}}$ (\ref{Can}). 

Such a functor would send an object $x \in \mathcal{T}_{1}$ to the category of arrows over $(\mathcal{T}_{1},x) \in Ob(\Omega)$ of the form $u \circ \kappa_{cotw,\Lambda}(dom(u)) \circ v$, where $u$ and $v$ correspond to objects in $E_{1}(x)$ and $E_{2}(\Lambda (x))$ respectively. When the canonical functors $\kappa_{\mathcal{T}_{i}}$ are faithful, this functor determines an admissibility structure on the subcategory $\bar{\mathcal{T}}$ of $\Omega$ generated by the images of $\kappa_{\mathcal{T}_{i}}$ and the $\kappa$-cotwists $\kappa_{cotw,\Lambda}$. 

In the second construction (see \ref{LemAdmPath}), we consider a category $\mathcal{T}$ with an admissibility structure $\varepsilon : E \hookrightarrow Fib$. In a fashion similar to that by which admissibility strictures $\varepsilon_{i}$ are glued, we use the $\kappa$-twist of the functor $P : Path(For^{\mathfrak{Cat}}_{\mathfrak{PreCat}}(\mathcal{T})) \longrightarrow \mathcal{T}$ (which comes from the adjunction, sending a string of arrows to their composition) to define a $\mathfrak{Cat}$-valued functor, $\mathcal{T} \longrightarrow \mathfrak{Cat}$.  We associate to $x \in Ob(\mathcal{T})$ the set of arrows of the form $[(Hom_{\mathcal{T}} , u)] \circ \kappa_{tw,P} \circ [(Hom_{Path(\mathcal{T})},v)] \in Arr(\Omega)$, where $u \in Ob(E(x))$ and $v$ is a string of arrows corresponding to objects in $E(x)$. 

\subsubsection{
Motivation
}
Recall that in (\ref{ExPiGTop}) we have associated to any category $\mathcal{T}$ with an admissibility structure and Grothendieck topology the $\mathfrak{Cat}$-valued functor $F : \mathcal{T} \longrightarrow \mathfrak{Cat}$. The category of schemes $\Pi_{(\bar{sp},\tilde{sp})} \subseteq \Pi_{\Gamma}(F) \subseteq \Omega$ was constructed in (3.2) as a sub-category of $\Pi_{\Gamma}(F)$ under the guiding principle that ``an object in a geometric category is a sheaf, i.e. a distinguished object within some category of sheaves." 

We expect, in future work, to encounter situations in which we should consider more general geometric objects, given by ``weak sheaves" $\mathcal{O}_{x} \in Ob(WeSh(x))$. The ``category of weak sheaves on $x$", $WeSh(x)$ is to be interpreted according to one's needs, or notion of ``weakness". We have in mind the example in which the objects are maps $Arr(E(x)) \rightarrow Arr(\mathfrak{Cat})$ which respect composition only up to natural isomorphism. This is to say, we would consider objects of the form $\tilde{x}' = (WeSh(x),\mathcal{O}_{x}) \in Ob(\Omega)$, rather than of the form $\tilde{x} = (Sh(x),\mathcal{O}_{x}) \in Ob(\Omega)$. 

We define the category of ``pre-categories", so that its objects are pairs $(S,h)$ such that $S$ is a set and $h$ is a function (``hom set") which assigns to each $s,t \in S$ a set $h(s,t)$ (there is no composition law). A pre-functor is a pair of maps $(F_{0} : S \rightarrow T,F_{1} : S^{2} \rightarrow Arr(\mathfrak{Set}))$ such that $F_{1}(a,b) : h_{S}(a,b) \rightarrow h_{T}(F_{0}(s),F_{0}(t))$. A pre-functor would contain the data for the assignment of individual arrows, while having no composition requirement. The ``path category" $Path(S)$ of a pre-category $S$ has for its objects the underlying set of $S$. The elements of its hom sets are strings of elements of hom sets $(f_{i})_{i \in \{ 1,...,n\}}$, where $f_{i} \in h(s_{i},s_{i+1})$, for $i \in \{ 1,...,n\}$, with composition given by concatenation. 

We loosely think of a ``weak functor" as a pre-functor with some further requirements, e.g. some associativity or identity condition. The data of a weak $C$-valued functor for some category $C$ would then be contained in the data of a functor $Path(For^{\mathfrak{Cat}}_{\mathfrak{PreCat}}(\mathcal{T})) \longrightarrow C$, by the adjunction $Hom_{\mathfrak{PreCat}}(-,For^{\mathfrak{Cat}}_{\mathfrak{PreCat}}(-)) \cong Hom_{\mathfrak{Cat}}(Path(-),-)$.  We think of this as the extension of the admissibility structure on $\mathcal{T}$ to one which assigns to an object $x \in \mathcal{T}$ a category $E'(x)$ such that the set $Hom^{(1)}_{U-\mathfrak{Cat}^{2}}(E'(x),C)$ of $C$-valued functors is isomorphic to the set of weakened functors $E(x) \longrightarrow C$, whose domains would be given by the original admissibility structure.

We intend, in future work, to derive at least ``the" homotopy groups from admissibility structures, by imitating the description of $\pi_{1}$ in terms of covering spaces and adopting the notion that $\pi_{n+1}(x) \cong \pi_{n}(\Lambda(x))$ for some endofunctor $\Lambda : \Pi \longrightarrow \Pi$. It is intended that (\ref{assncat}) should be applied to such admissibility structures as appear in the lemma (\ref{LemAdmGlu}) below, them being in the place of the $\varepsilon(\beta)$ of (\ref{assncat}.2), to define functors $\Pi \longrightarrow n-\mathfrak{Cat}$, so that ``the $n$-categery attached to an object should contain its homotopy data." To this end we define, in (\ref{InvCat}), a certain functor $\mathfrak{Cat} \longrightarrow \mathfrak{PreGroup}$.

\sus{Augmented Admissibility Structures}\label{SusAugAdm}

Given a functor $\Lambda : \mathcal{T}_{1} \longrightarrow \mathcal{T}_{2}$ between categories with admissibility structures $\varepsilon_{i} : E_{i} \hookrightarrow Fib$, we define in (\ref{LemAdmGlu}) a functor $\bar{\mathcal{T}}^{opp} \longrightarrow \mathfrak{Cat}$, where $\bar{\mathcal{T}} \subseteq \Omega$ is generated from arrows in the images of $\kappa_{\mathcal{T}_{i}}$ and arrows of the form $\kappa_{cotw,\Lambda}(x)$. If the canonical functors $\kappa_{\mathcal{T}_{i}}$ are faithful, then this functor is an admissibility structure.

It is expected that one should desire to consider $\mathfrak{Cat}$-valued pseudo-functors attached to a space, such that composition might be respected only up to isomorphism. We use the $\kappa$-twists attached to the concatenation functors $Path(\mathcal{T}) \longrightarrow \mathcal{T}$ to construct, from an admissibility structure on $\mathcal{T}$, an $\mathfrak{Cat}$-valued functor on a category $\bar{\mathcal{T}}^{opp}$, where $\mathcal{T} \subseteq \bar{\mathcal{T}} \subseteq \Omega$. If $\kappa_{\mathcal{T}}$ is faithful, then the restriction of this functor to $\mathcal{T}^{opp}$ is a subfunctor of a fibre functor, so that the categories of functors with domains given the new ``admissibility" should be equivalent to categories of pseudo-functors with domains given by the old admissibility.

\sss{$\bold{Definition}$ of the Category of Pre-Categories} \label{DefPreCat}

Define the category of pre-categories to be the category of sets with hom set assignments, but no composition structure. Explanation follows.

\ref{DefPreCat}.1. For any universe $U$, define the category $U-\mathfrak{PreCat} \in Ob(U'-\mathfrak{Cat})$ to be the category of $U$-small pre-categories. Its set of objects is the set of triples $(O,A,h)$, where $O$ and $A$ are sets and $h : O^{2} \rightarrow 2^{A}$ assigns to every pair of ``objects" a ``hom set," so that distinct objects have disjoint hom sets.
$$
\{ (O,A,h) \in U^{3} ; \ulcorner\ulcorner h \in Hom_{(U-\mathfrak{Set})}(O^{2},2^{A}) \urcorner \text{ and } 
$$
$$
\ulcorner \forall a,b,c,d \in O, \ulcorner\ulcorner (a,b) \neq (c,d) \urcorner \Longrightarrow \ulcorner h((a,b)) \cap \h((c,d)) = \emptyset \urcorner\urcorner\urcorner\urcorner \}
$$

Its set of arrows is the set of ``pre-functors," pairs $F = (F_{0},F_{1})$ between the object sets and the arrow sets which are compatible with the hom sets, i.e. for any two pre-categories $(O_{C},A_{C},h_{C}),(O_{D},A_{D},h_{D}) \in Ob(U-\mathfrak{PreCat})$, define
$$
Hom_{U-\mathfrak{PreCat}}((O_{C},A_{C},h_{C}),(O_{D},A_{D},h_{D})) := 
$$
$$
\{ (F_{0},F_{1}) \in U; F_{0} : O_{C} \rightarrow O_{D} \text{ and } F_{1} : A_{C} \rightarrow A_{D} \text{ and }
$$
$$
\forall c_{1},c_{2} \in O_{C}, Im(F_{1} |_{h_{C}(c_{1},c_{2})}) \subseteq h_{C}(F_{0}(c_{1}),F_{0}(c_{2})) \}
$$

\ref{DefPreCat}.2. Define the functor $For^{U-\mathfrak{Cat}}_{U-\mathfrak{PreCat}} : U-\mathfrak{Cat} \longrightarrow U-\mathfrak{PreCat}$ to be the expected forgetful functor, with the arrow map given by the identity.

\sss{$\bold{Definition}$ of the Functor ``$Path$"}
Define the functor $Path : U-\mathfrak{Cat} \longrightarrow U-\mathfrak{Cat}$ as follows.

For any category $C \in Ob(U-\mathfrak{Cat})$, define the category $Path(C)$ to be the path category of $C$, so that its objects $Ob(Path(C)) = Ob(C)$, are the same, and arrows are composible sequences of arrows in $C$, 
$$
Hom_{Path(C)}(x,y) = \{ \emptyset \} \cup \coprod_{n \in \mathbb{N}^{+}} \coprod_{z : \{ 1,...,n \} \rightarrow Ob(C) ; z(1) = x \text{ and } z(n) = y} \prod_{i=1}^{n-1} Hom_{C}(z(i),z(i+1)),
$$

with composition given by concatenation.

For any functor $F : C \longrightarrow D$, define the functor $Path(F) : Path(C) \longrightarrow Path(D)$ so that for any objects $x,y \in Ob(C)$,
$$
Path(F)|_{Hom_{Path(C)}(x,y)} = id_{\{ \emptyset \}} \cup \coprod_{n \in \mathbb{N}^{+}} \coprod_{z : \{ 1,...,n \} \rightarrow Ob(C) ; z(1) = x \text{ and } z(n) = y} \prod_{i=1}^{n-1} F_{Hom_{C}(z(i),z(i+1))}
$$

\begin{lem}
$For^{U-\mathfrak{Cat}}_{U-\mathfrak{PreCat}}$ has a left adjoint $L$, which when composed with the forgetful functor yields $Path \cong L \circ For^{U-\mathfrak{Cat}}_{U-\mathfrak{PreCat}}$.
\end{lem}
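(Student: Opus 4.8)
The plan is to take $L$ to be the free-category construction, exploiting the observation that a pre-category $(O,A,h)$ is exactly a quiver whose vertices are the elements of $O$ and whose edges from $x$ to $y$ are the elements of $h(x,y)$; its free category is the category of paths. First I would define $L$ on objects: given $S=(O,A,h)\in Ob(U-\mathfrak{PreCat})$, let $L(S)$ have object set $O$ and, for $x,y\in O$, hom set
$$
Hom_{L(S)}(x,y) = \{\emptyset\} \cup \coprod_{n \in \mathbb{N}^{+}} \coprod_{\substack{z:\{1,...,n\}\to O \\ z(1)=x,\ z(n)=y}} \prod_{i=1}^{n-1} h(z(i),z(i+1)),
$$
with composition given by concatenation of strings and the empty string $\emptyset$ as the identity at each object. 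The coproduct tagging by the indexing function $z$ makes the hom sets attached to distinct pairs disjoint, as (\ref{Category}.1) requires, and $U$-smallness follows since $O,A\in U$ and $U$ is closed under the displayed operations. The category axioms (\ref{Category}.2)--(\ref{Category}.3) reduce to associativity of concatenation and the neutrality of the empty string. On arrows, a pre-functor $F=(F_{0},F_{1}):S\to S'$ goes to the functor $L(F)$ acting as $F_{0}$ on objects and sending a string $(f_{1},...,f_{n-1})$ to $(F_{1}(f_{1}),...,F_{1}(f_{n-1}))$ (and $\emptyset\mapsto\emptyset$); the compatibility condition in (\ref{DefPreCat}.1) guarantees each $F_{1}(f_{i})$ lands in the correct hom set, and functoriality is immediate, so $L:U-\mathfrak{PreCat}\to U-\mathfrak{Cat}$ is a functor.

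Next I would establish the adjunction by exhibiting the unit and the natural bijection. Define $\eta_{S}:S\to For(L(S))$ to be the pre-functor which is the identity on objects and sends each $f\in h(x,y)$ to the length-one string $(f)$. The universal property is the standard one for free categories: given a category $C$ and a pre-functor $g:S\to For(C)$, there is a unique functor $\bar{g}:L(S)\to C$ with $For(\bar{g})\circ\eta_{S}=g$, namely the functor agreeing with $g$ on objects and sending $(f_{1},...,f_{n-1})$ to the composite $g(f_{1})\circ\cdots\circ g(f_{n-1})$ in $C$, with $\emptyset$ going to the identity. Uniqueness holds because every arrow of $L(S)$ is a concatenation of length-one strings, on which $\bar{g}$ is forced to equal $g$, and any functor preserves such composites. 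This gives the bijection
$$
Hom_{U-\mathfrak{Cat}}(L(S),C) \cong Hom_{U-\mathfrak{PreCat}}(S,For(C)),
$$
whose naturality in both variables follows by tracing the construction through pre-functors $S'\to S$ and functors $C\to C'$.

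Finally I would compare $Path$ with $L\circ For$. For a category $C$, the pre-category $For(C)$ has object set $Ob(C)$ and the same arrows and hom-set assignment as $C$; applying $L$ then produces a category with objects $Ob(C)$ and hom sets given exactly by the displayed coproduct of composable strings of arrows of $C$, which is term for term the definition of $Path(C)$. The same matching holds on morphisms, since both $Path(F)$ and $L(For(F))$ act by applying $F$ componentwise to each string. Hence $Path$ and $L\circ For$ agree on objects and on arrows, yielding the claimed natural isomorphism $Path\cong L\circ For$ (in fact an equality under the chosen encodings).

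I expect the main obstacle to be not the conceptual content, which is the familiar free-category-on-a-quiver adjunction, but the bookkeeping forced by the paper's explicit set-theoretic encoding: verifying that the coproduct-with-$z$-tags genuinely yields disjoint, $U$-small hom sets, confirming that concatenation is well defined on these tagged strings, and checking that the naturality squares of the hom-set bijection commute at the level of the encoded strings rather than merely up to the evident identification.
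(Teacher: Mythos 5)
The paper states this lemma with no proof at all, so there is nothing to compare against; your free-category-on-a-quiver construction is exactly the adjunction the paper has in mind (it later refers to $P : Path(T) \longrightarrow T$ as ``the concatenation functor (universal arrow from adjunction)''), and the skeleton of your argument --- unit $\eta_{S} : f \mapsto (f)$, extension $\bar{g}$ by composing along strings, uniqueness because every nonempty string is a concatenation of length-one strings --- is the right one.

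There is, however, one concrete defect you inherit from the paper's formula for $Path$ and then carry into your definition of $L$: your displayed hom set places the same untagged element $\emptyset$ in $Hom_{L(S)}(x,y)$ for \emph{every} pair $(x,y)$, including $x \neq y$. Taken literally this breaks three things at once. First, disjointness of hom sets (\ref{Category}.1) fails: your tagging-by-$z$ argument covers only the coproduct summands, not the shared $\{\emptyset\}$. Second, concatenation is ill-typed on these elements: composing $\emptyset \in Hom_{L(S)}(x,y)$ with a string $p \in Hom_{L(S)}(y,z)$ would have to return $p$ itself, which lives in $Hom_{L(S)}(y,z)$ rather than $Hom_{L(S)}(x,z)$, so $L(S)$ is not a category as written. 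Third, the universal property you invoke fails: a functor $\bar{g} : L(S) \rightarrow C$ would have to send $\emptyset : x \rightarrow y$ somewhere, and nothing in $g : S \rightarrow For(C)$ determines or even permits this, so both existence and uniqueness of $\bar{g}$ break and $L$ is not left adjoint to $For^{U-\mathfrak{Cat}}_{U-\mathfrak{PreCat}}$. The repair is small: delete the ``$\{\emptyset\} \cup$'' term entirely and observe that the $n=1$ summand --- whose indexing function $z$ forces $x = y$ and whose product over $i \in \{1,\ldots,0\}$ is empty, hence a singleton tagged by $z$ --- already supplies exactly one identity per object, disjointly across objects. Your own prose (``the empty string as the identity at each object'') shows you intend exactly this, but you should state the correction explicitly rather than silently treating $\emptyset$ as object-indexed, since the same correction must be applied to the paper's definition of $Path(C)$ before the claimed $Path \cong L \circ For^{U-\mathfrak{Cat}}_{U-\mathfrak{PreCat}}$ is an isomorphism between categories at all. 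With that fix, your adjunction argument and the term-for-term identification of $L \circ For^{U-\mathfrak{Cat}}_{U-\mathfrak{PreCat}}$ with $Path$ go through as written.
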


\begin{lem}
For any category $C \in Ob(U-\mathfrak{Cat}), \kappa_{Path(C)} \text{ is faithful}$.
\end{lem}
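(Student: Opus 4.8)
The plan is to deduce the result from the earlier characterization of faithfulness of the canonical functors: the lemma above shows that $\kappa_{D}$ is faithful for a category $D$ precisely when $Aut_{End^{(1)}_{U-\mathfrak{Cat}^{2}}(D)}(Id_{D}) = \{ id_{Id_{D}} \}$. Applying this with $D = Path(C)$, it suffices to prove that the identity functor $Id_{Path(C)}$ admits no nontrivial natural automorphism. Since a natural automorphism $g$ of $Id_{Path(C)}$ assigns to each object $x \in Ob(Path(C)) = Ob(C)$ an automorphism $g(x) : x \to x$ in $Path(C)$, the whole matter reduces to showing that the only isomorphisms in a path category are its identities; then each $g(x)$ is forced to equal $id_{x}$, whence $g = id_{Id_{Path(C)}}$.

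To see that $Path(C)$ has no nontrivial isomorphisms, I would introduce the length function $\ell : Arr(Path(C)) \to \mathbb{N}$ sending a string of composable arrows to the number of arrows it contains, so that the empty string $\emptyset$ and the strings serving as identities are sent to $0$. Concatenation of strings adds their lengths, giving $\ell(s \circ t) = \ell(s) + \ell(t)$ for any composable $s, t \in Arr(Path(C))$, and the identity morphisms are exactly the morphisms of length $0$. Consequently, if $s$ is an isomorphism with inverse $s'$, then $\ell(s) + \ell(s') = \ell(s \circ s') = 0$, whence $\ell(s) = 0$ and $s$ is an identity. Thus every isomorphism in $Path(C)$ is an identity, which is precisely the fact needed above.

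The only genuine subtlety, and the main obstacle, is the bookkeeping in the definition of $Hom_{Path(C)}(x,y)$, which packages the identities both through the explicit $\{ \emptyset \}$ summand and through the $n=1$ summand (an empty product $\prod_{i=1}^{0}$). One must check that $\ell$ is well-defined on this presentation, that concatenation is strictly additive with respect to it, and that the morphisms of length $0$ coincide exactly with the identities rather than furnishing spurious extra invertible elements. Once this is verified the length argument closes immediately, and the reduction through the automorphisms-of-the-identity lemma completes the proof.
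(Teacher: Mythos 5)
Your proposal is correct, but there is nothing in the paper to measure it against: the paper states this lemma bare, with no proof at all. Your route is surely the intended one, since it runs through the only relevant tool the paper actually proves, namely the characterization that $\kappa_{D}$ is faithful iff $Aut_{End^{(1)}_{U-\mathfrak{Cat}^{2}}(D)}(Id_{D}) = \{ id_{Id_{D}} \}$; combining this with the observation that a path (free) category admits no non-identity isomorphisms, via the additive length grading, is the standard argument, and every step you give is sound. One caution about the bookkeeping you flag as the ``only genuine subtlety'': under the paper's literal definition of $Hom_{Path(C)}(x,y)$ that check cannot in fact succeed, because $\{ \emptyset \}$ is adjoined to \emph{every} hom set (not only the endomorphism sets), and for $x = y$ the $n = 1$ summand contributes a second, distinct empty path. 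Taken at face value this does not define a category: two distinct length-zero elements of $Hom_{Path(C)}(x,x)$ would each have to act as a two-sided unit under concatenation, which forces them to be equal, a contradiction with their being distinct members of a disjoint union; and if one insisted on keeping a spurious unit-like length-zero endomorphism $e_{x}$ at every object as a non-identity arrow, the assignment $x \mapsto e_{x}$ would be a nontrivial natural automorphism of $Id_{Path(C)}$, falsifying the lemma itself. So the verification you defer is really a repair of the paper's definition rather than a routine check: one must read $Path(C)$ as the free category on the underlying graph of $C$, with the empty path at each object as the sole identity and with no length-zero arrows between distinct objects. With that (clearly intended) reading, your length argument --- $\ell(s) + \ell(s') = \ell(s \circ s') = 0$ forces any isomorphism $s$ to be an identity, hence every component of a natural automorphism of $Id_{Path(C)}$ is an identity --- closes the proof exactly as you describe.
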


\sss{Remark on a Generalization}
There should be a version of this for $WE_{(A,\otimes)}$ for any associative tensor category $((A,\otimes ),\alpha,\sigma)$, which satisfies a pentagonal requirement for $\alpha$ and $\sigma$, i.e. that the associator is canonical for larger tensors.

\begin{pro}
For any $C \in Ob(U'-\mathfrak{Cat})$, $\kappa_{Hom_{U'-\mathfrak{Cat}^{2}}^{(1)}(Path(C),U-\mathfrak{Cat})}$ is faithful.
\end{pro}

\begin{lem}\label{LemAdmPath}
Suppose that $T \in Ob(U-\mathfrak{Cat})$ is a category, such that $\kappa_{T}$ is faithful. If $P : Path_{(0)}(T) \longrightarrow T$ is the concatenation functor (universal arrow from adjunction), then for any admissibility structure $\varepsilon$ on any $T \in Ob(U-\mathfrak{Cat})$, there is a unique admissibility structure $\varepsilon'$ on the subcategory of $\Omega$ generated by the image of $\kappa_{T}$ such that for any $x \in Ob(T)$, the category $dom(\varepsilon')(x) \subseteq \Omega_{/\kappa_{T}(x)}$ is that whose objects are the set
$$
\{ [(Hom_{T} \circ (P^{opp} \times_{U-\mathfrak{Cat}} Id_{T}), u )] \in Arr(\Omega) ; u \in Im(\varepsilon(x)) \} \cup
$$
$$
\{ [(Hom_{T},u )] \in Arr(\Omega); u \in Im(\varepsilon(x) \}
$$

and whose arrows are the set
$$
\{ [(Hom_{Path(T)} , u )] \in Arr(\Omega) ; P(u) \in Im(\varepsilon(x)) \} \cup
$$
$$
\{ [(Hom_{T} \circ (P^{opp} \times_{U-\mathfrak{Cat}} Id_{T}), u )] \in Arr(\Omega) ; u \in Im(\varepsilon(x)) \} \cup
$$
$$
\{ [(Hom_{T},u )] \in Arr(\Omega); u \in Im(\varepsilon(x) \}
$$
\end{lem}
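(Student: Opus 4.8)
The plan is to produce $\varepsilon'$ explicitly and then check the three clauses of Definition \ref{admiss} together with uniqueness. First I would fix the ambient category: let $\bar{\mathcal{T}} \subseteq \Omega$ be the subcategory containing $Im(\kappa_{T})$ together with the twist arrows $\kappa_{tw,P}(v)$, so that the path-based objects $[(Hom_{T} \circ (P^{opp} \times_{U-\mathfrak{Cat}} Id_{T}), u)]$ of the statement genuinely lie in $\Omega_{/\kappa_{T}(x)}$ (this is the reading of ``$\mathcal{T} \subseteq \bar{\mathcal{T}} \subseteq \Omega$'' from the motivation in \ref{SusAugAdm}). Because $\kappa_{T}$ is faithful, $Hom_{\bar{\mathcal{T}}}(\kappa_{T}(x),\kappa_{T}(y)) \cong Hom_{T}(x,y)$ on the $T$-part, and the pullbacks needed to make $Fib_{\bar{\mathcal{T}}}$ a fibre functor (Definition \ref{fibre}) on that part are transported from $T$, where they exist since $\varepsilon : E \to Fib$ is already an admissibility structure. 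This settles clause (\ref{admiss}.1) on the relevant part of $\bar{\mathcal{T}}$.

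Next I would verify that the prescribed objects and arrows form a subcategory $E'(x) = dom(\varepsilon')(x)$ of the slice. The key observation is that each path-based object is the composite $[(Hom_{T},u)] \circ \kappa_{tw,P}(v)$, and that an arrow $[(Hom_{Path(T)},u)]$ with $P(u) \in Im(\varepsilon(x))$ is a genuine slice morphism between two such objects: this is precisely the naturality square of the $\kappa$-twist (Definition \ref{kappatwist}), which gives $\kappa_{T}(P(u)) \circ \kappa_{tw,P}(v_{0}) = \kappa_{tw,P}(v_{1}) \circ \kappa_{Path(T)}(u)$ in $\Omega$. Closure under composition then reduces to concatenation of paths in $Path(T)$ and to $P$ respecting composition, which holds since $P$ is the universal arrow of the adjunction $Hom_{\mathfrak{PreCat}}(-,For^{\mathfrak{Cat}}_{\mathfrak{PreCat}}(-)) \cong Hom_{\mathfrak{Cat}}(Path(-),-)$. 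With $\varepsilon'(\kappa_{T}(x))$ taken to be the inclusion, clause (\ref{admiss}.2) is automatic, and its content is exactly that $E'(x)$ is well defined as a subcategory; here the preceding lemma that $\kappa_{Path(T)}$ is faithful (and faithfulness of $\kappa_{T}$) guarantees that distinct paths and distinct admissible arrows remain distinct in $\Omega$. For clause (\ref{admiss}.3) the terminal object of $E'(x)$ is $[(Hom_{T},id_{x})] = id_{\kappa_{T}(x)}$, the terminal object of the slice, which lies in $E'(x)$ because $id_{x} \in Im(\varepsilon(x))$ (the original $\varepsilon$ has a terminal $e \in E(x)$ with $\varepsilon(x)(e) \cong (x,id_{x},\circ)$).

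I would then make $x \mapsto E'(x)$ a subfunctor of $Fib_{\bar{\mathcal{T}}}$ by defining $E'$ on an arrow of $\bar{\mathcal{T}}$ to be the restriction of the pullback functor; functoriality and the natural-transformation condition $\varepsilon' : E' \to Fib_{\bar{\mathcal{T}}}$ then follow from the corresponding properties of the original $\varepsilon$ together with Lemma \ref{Fibadj}, which records how the ``localizing'' arrows opposite to the pullback arrows are retained. Uniqueness is then immediate: the object and arrow sets of each $E'(x)$ are prescribed verbatim, and the demand that $\varepsilon'$ be a subfunctor of the fibre functor forces the action on arrows to be pullback, leaving no freedom.

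The hard part will be the pullback-stability underlying the third paragraph: I must show both that the requisite pullbacks exist within $\bar{\mathcal{T}}$ (rather than merely in $\Omega$) and that pulling an admissible path-based object $[(Hom_{T} \circ (P^{opp} \times_{U-\mathfrak{Cat}} Id_{T}), u)]$ back along an arrow of $\bar{\mathcal{T}}$ — especially along a twist arrow — again returns an object of the prescribed form. Since $P$ does not commute strictly with the fibre (pullback) functors, the interaction between path concatenation and pullback is delicate; I expect to need the adjunction description of $Fib(\phi)$ from Lemma \ref{Fibadj} to rewrite the pullback of a path-based admissible object as the path-based admissible object associated to the pulled-back localizing arrows, up to the isomorphism identifying representatives of $\Omega$-arrows.
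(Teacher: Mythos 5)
Your ``soft'' steps are sound and are the right ones: the factorization of a path-based object as $[(Hom_{T}\circ(P^{opp}\times_{U-\mathfrak{Cat}}Id_{T}),u)] = \kappa_{T}(u)\circ\kappa_{tw,P}(w)$, the use of the naturality square of the $\kappa$-twist (\ref{kappatwist}) to convert the slice condition for $[(Hom_{Path(T)},u)]$ into the equation $v_{1}\circ P(u)=v_{0}$ in $T$, the identification of the terminal object with $id_{\kappa_{T}(x)}$, and the observation that uniqueness is forced once the object and arrow sets are prescribed. (For calibration: the paper states this lemma with no proof at all, so the difficulty you isolate is not resolved anywhere in the text.) Two repairs to your setup: first, $\bar{\mathcal{T}}$ as you generate it (image of $\kappa_{T}$ plus twist arrows) does not contain the prescribed arrows $[(Hom_{Path(T)},u)]$, since every composite of $\kappa_{T}$-arrows and twist arrows that starts at a path-object ends at a $T$-object; you must also adjoin $Im(\kappa_{Path(T)})$, as the analogous Lemma \ref{LemAdmGlu} does explicitly with its generating set. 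Second, ``$id_{x}\in Im(\varepsilon(x))$'' is only guaranteed up to isomorphism by (\ref{admiss}.3).

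The genuine gap is the step you defer, and it is not merely hard: under the paper's literal definitions it fails, so no amount of work along your stated lines will close it. By Definition \ref{admiss}, $\varepsilon'$ must be a natural transformation into a fibre functor on the ambient category, and Definition \ref{fibre} demands that $Fib(f)$ be an honest pullback functor for \emph{every} arrow $f$. On your reading (ambient category $\bar{\mathcal{T}}$ containing the path-objects), this requires fibred products in $\bar{\mathcal{T}}$ of a twist-composite $\kappa_{T}(v)\circ\kappa_{tw,P}(w)$ along a $\kappa_{T}$-arrow $\kappa_{T}(g)$. Every arrow of $\bar{\mathcal{T}}$ into a path-object is $\kappa_{Path(T)}$ of a literal path, and $\kappa_{Path(T)}$ is faithful, so factorizations of cone legs are concatenation factorizations of words in the free category $Path(T)$: a candidate universal cone with path-leg $r_{0}$ can only absorb cones whose path-leg has $r_{0}$ as terminal segment. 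Already in the paper's own standard example ($T$ the poset of opens of a space, $\varepsilon$ the open inclusions), if $w\not\subseteq y$ and $y\cap w$ contains two distinct opens, then cones with singleton path-legs $\langle z'\subseteq w\rangle$ for varying $z'$ exist but no single $(z_{0},r_{0},g_{0})$ factors them all; hence the pullback does not exist and (\ref{admiss}.1) fails, and Lemma \ref{Fibadj} cannot help because it presupposes the adjoint (hence the pullbacks) exists. The alternative literal reading (ambient category $=Im(\kappa_{T})\cong T$, with $\varepsilon'(x)$ merely a faithful functor $E'(x)\to T_{/x}$ rather than an inclusion) rescues (\ref{admiss}.1) but breaks (\ref{admiss}.2): distinct parallel paths $u\neq u'$ with $P(u)=P(u')$ (e.g.\ a singleton $\langle f\rangle$ versus $\langle f\rangle$ padded with an identity) give distinct arrows of $E'(x)$ between the same pair of objects, mapping to the same arrow of $T_{/x}$, so $\varepsilon'(x)$ is not faithful. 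So the missing idea is a modification of the definitions themselves — e.g.\ defining $Fib$ on path-objects by transport along $P$ (declaring the fibre over $(Path(T),w)$ to be $T_{/P(w)}$), replacing strict pullbacks by the $(sk)$-limits of Section 2, or quotienting path-arrows by equality of composites — and your proposal should say which weakening it adopts before the pullback-stability step can even be formulated correctly.
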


\sss{Corollary on Iterated ``$Path$ Augmentation"}
Suppose that $\Pi \in Ob(U-\mathfrak{Cat})$ has an admissibility structure $\varepsilon_{0}$. Then there is an admissibility structure $\varepsilon$ on $Im(\kappa_{\Omega(\Pi)}) \subseteq \Omega$ such that for every $n \in \mathbb{N} \cup \{ \infty \}$,  there exists $\varepsilon' \subseteq \varepsilon$, such that for any $x \in Im(\kappa_{\Omega(\Pi)}), dom(\varepsilon')(x) \cong Path^{n}(dom(\varepsilon_{0})(x))$


\begin{rem}
In light of the fact that $\kappa_{Path(C)}$ is faithful, this domain for $\mathfrak{Cat}$-valued functors is naturally attached in $\Omega$ by arrows to a domain for $\mathfrak{Cat}$-valued pseudo-functors. In particular, the set of objects of the category of functors $Hom_{U'-\mathfrak{Cat}^{2}}^{(1)}(Path(C),U-\mathfrak{Cat})$ is by the adjunction isomorphic to the set $Hom_{U'-\mathfrak{PreCat}}(For^{U-\mathfrak{Cat}}_{U-\mathfrak{PreCat}}(C),For^{U'-\mathfrak{Cat}}_{U'-\mathfrak{PreCat}}(U-\mathfrak{Cat}))$ of pseudo-functors . 
\end{rem}

\begin{rem}
For any spec datum $(sp : R \rightarrow T,\mathcal{O},\tau,\varepsilon )$, if $\kappa_{\Omega(codom_{(U-\mathfrak{Cat})}(sp))}$ were faithful then it would be an isomorphism onto its image, and the latter could be given structures so that $(\kappa_{\Omega (T)} \circ sp, \mathcal{O},\tau',\varepsilon' )$ should be a spec datum as well.
\end{rem}

\begin{rem}
Since the subcategory denoted by $\Pi$ associated to $\Omega$-lifts has objects of the form $(\mathcal{T} \times_{U-\mathfrak{Cat}  (0)}(Sh((X,\tau_{X}),\mathcal{R}))^{opp} , (X,\mathcal{O}_{X}))$ the previous lemma can be used to extend the admissibility structure on $\Pi$ in a component-wise fashion. 
\end{rem}

\begin{lem}
Suppose that $F: I \longrightarrow U-\mathfrak{Cat}$ and $(l,\lambda)$ is a $(\mathfrak{Skel})-limit(F)$. 

Suppose that $c : l \longrightarrow \prod_{i \in Ob(I)} F(i)$ is the map induced by the definition of the $(sk)-limit$ as a colimit (as in lemma on $sk$-limits having unique maps into them when the arrow from the $(sk)$-limit to the product is monic) and that $c$ is monic (faithful). Then $\forall G \subseteq Ob(l), \ulcorner \forall i \in Ob(I),\ulcorner  \{ \pi_{i} \circ c(g) ; g \in G \} \text{ generates }F(i) \urcorner\urcorner \Longrightarrow \ulcorner G \text{ generates }l \urcorner$

 $x \in Ob(dom_{(U-\mathfrak{Cat})}(\lambda_{i}))$ from those of the components, such that $\forall i \in Ob(I), \lambda_{(i)}(x) \cong g_{i}$, $g_{i}$ being some generator of $F(i)$
\end{lem}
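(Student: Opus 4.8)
The plan is to unwind the meaning of ``generates'' supplied by (\ref{gen}): a set $G\subseteq Ob(l)$ generates $l$ precisely when the family $\prod_{g\in G} Yo^{opp}_{(l)}(g):l\longrightarrow U-\mathfrak{Set}$ of covariant hom functors $Hom_{l}(g,-)$ is jointly faithful. Concretely this means that for any pair of parallel arrows $\phi\neq\psi:x\rightarrow y$ in $l$ there must be a generator $g\in G$ and an arrow $a:g\rightarrow x$ with $\phi\circ a\neq\psi\circ a$. The whole argument will consist in producing such a separating pair $(g,a)$ out of the corresponding separating data in one of the component categories $F(i)$; throughout I write $c_{i}:=\pi_{i}\circ c:l\rightarrow F(i)$ for the $i$-th component of $c$.

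First I would reduce a separation problem in $l$ to one in a single component. Given $\phi\neq\psi:x\rightarrow y$, the hypothesis that $c$ is monic (faithful) forces $c(\phi)\neq c(\psi)$; since these are tuples indexed by $Ob(I)$, there is some $i\in Ob(I)$ with $c_{i}(\phi)\neq c_{i}(\psi)$ as parallel arrows $c_{i}(x)\rightarrow c_{i}(y)$ in $F(i)$. I would then invoke the standing hypothesis for that index: $\{\,c_{i}(g); g\in G\,\}$ generates $F(i)$, so by the same reformulation of (\ref{gen}) applied inside $F(i)$ there exist $g\in G$ and $b:c_{i}(g)\rightarrow c_{i}(x)$ in $F(i)$ with $c_{i}(\phi)\circ b\neq c_{i}(\psi)\circ b$.

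The decisive step is to lift $b$ to an arrow $a:g\rightarrow x$ of $l$ with $c_{i}(a)=b$; granting this, functoriality of $c_{i}$ gives $c_{i}(\phi\circ a)=c_{i}(\phi)\circ b\neq c_{i}(\psi)\circ b=c_{i}(\psi\circ a)$, whence $\phi\circ a\neq\psi\circ a$ and the family is faithful. To produce $a$ I would use the description of the $(\mathfrak{Skel})$-limit as the colimit (\ref{SkLim}) of the categories $C$ carrying families of functors $f_{i}^{C}:C\rightarrow F(i)$ that are natural up to isomorphism, together with the inclusion lemma (``inclusion via right exactness''), which guarantees that $(l,\lambda)$ itself furnishes an object of $P$ and hence that the component functors $c_{i}$ assemble objects and arrows of $l$ out of compatible component data. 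This is exactly the content of the trailing clause of the statement: a tuple of component generators $g_{i}\cong c_{i}(x)$ is realized by an object $x$ sitting over the product, and arrows out of generators are specified through their components. The monic hypothesis on $c$ then makes any such lift unique, by the ``uniqueness via monic'' lemma, so the lifted $a$ is well defined.

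I expect the lifting in the third paragraph to be the main obstacle, since it is the only place where the particular colimit structure of the $(sk)$-limit, rather than the formal bookkeeping of faithful families, is genuinely needed. One must check that the separating arrow $b$ chosen in $F(i)$ does not merely exist abstractly but arises from an arrow already present in one of the defining categories $C$, so that its image under the universal functor into $l$ supplies the desired $a$; if one only knew that $G$ separates after applying $c_{i}$ without the colimit furnishing actual arrows of $l$, the conclusion could fail. This is why both the monic condition on $c$ and the inclusion lemma enter as genuine hypotheses rather than as decoration.
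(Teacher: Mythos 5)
The paper states this lemma with no proof at all (it sits bare between the remark on component-wise extension of admissibility structures and the corollary on $\kappa_{\Omega(lim_{i\in \mathbb{N}} Path^{(i)}(C))}$), so your proposal can only be judged on its own terms; and on its own terms it has a genuine gap, which you yourself flag but do not close: the lifting step. Your reduction is fine --- faithfulness of $c$ yields an index $i$ with $c_{i}(\phi)\neq c_{i}(\psi)$, and the hypothesis applied to $F(i)$ yields $g\in G$ and $b : c_{i}(g)\rightarrow c_{i}(x)$ separating $c_{i}(\phi)$ from $c_{i}(\psi)$. But nothing you cite produces an arrow $a : g\rightarrow x$ of $l$ with $c_{i}(a)=b$. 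The ``inclusion via right exactness'' lemma only says that $(l,\lambda)$ itself determines an object of $P$, i.e.\ that the cone functors out of $l$ exist; it does not make $c_{i}$ full, and fullness of $c_{i}$ (or something equivalent) is exactly what the lift requires. The ``uniqueness via monic'' lemma, like the monicity of $c$, gives uniqueness of a lift \emph{if one exists}, never existence. The trailing clause of the statement likewise concerns lifting \emph{objects} (tuples of generators $g_{i}$) to objects of $l$, which is beside the point: the obstruction is to lifting \emph{arrows}.

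Moreover, the gap is not repairable, because under this reading the lemma is false, so no argument of this shape can succeed. Take $I$ discrete with two objects; by the paper's own remark that products are not affected by $sk$, the $(\mathfrak{Skel})$-limit is $l = A\times_{U-\mathfrak{Cat}}B$ and $c$ is (isomorphic to) the identity, hence faithful. Let $A$ have objects $g_{1},x_{1},y_{1}$ and nonidentity arrows $b : g_{1}\rightarrow x_{1}$, $\phi_{1},\psi_{1} : x_{1}\rightarrow y_{1}$, and the two distinct composites $\phi_{1}\circ b \neq \psi_{1}\circ b : g_{1}\rightarrow y_{1}$; one checks directly that $\{g_{1}\}$ generates $A$ in the sense of (\ref{gen}). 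Let $B$ be the discrete category on three objects $g_{2},x_{2},y_{2}$; then $\{g_{2}\}$ generates $B$ vacuously, since every hom set of $B$ has at most one element. Put $G=\{(g_{1},g_{2})\}$, whose projections generate both components. Yet $Hom_{l}((g_{1},g_{2}),(x_{1},x_{2})) = Hom_{A}(g_{1},x_{1})\times Hom_{B}(g_{2},x_{2}) = \emptyset$, so the distinct parallel arrows $(\phi_{1},id_{x_{2}})$ and $(\psi_{1},id_{x_{2}}) : (x_{1},x_{2})\rightarrow (y_{1},x_{2})$ cannot be separated by any arrow out of $G$, and $G$ does not generate $l$. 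This pinpoints where your third paragraph fails: the separating arrow $b=b$ in $F(i)$ exists, but no arrow of $l$ maps to it. Any correct version of the lemma needs an extra hypothesis forcing separating arrows in the components to lie in the image of the $c_{i}$ --- for instance fullness of $c$, or nonemptiness of the hom sets $Hom_{l}(g,x)$ for $g\in G$ --- precisely at the spot you call ``the decisive step.''
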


\begin{cor}
$\kappa_{\Omega(lim_{i\in \mathbb{N}} Path^{(i)}(C))}$ is faithful.
\end{cor}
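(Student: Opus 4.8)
The plan is to reduce the claim to the two faithfulness criteria already established: that $\kappa_{D}$ is faithful for a category $D$ exactly when $Aut(Id_{D}) = \{id\}$, and the sharper statement (\ref{gen}.1) that once $D$ is generated by a set $G \subseteq Ob(D)$, faithfulness of $\kappa_{D}$ is equivalent to triviality of $Aut(g)$ for each generator $g$. I read $\Omega(X)$ as the image subcategory $Im(\kappa_{X}) \subseteq \Omega$, so that $\kappa_{\Omega(X)}$ is the canonical functor of that image into the next-level $\Omega$; the corollary then genuinely has two layers, namely the faithfulness of $\kappa_{l}$ for $l := \lim_{i \in \mathbb{N}} Path^{(i)}(C)$, and the passage from $l$ to its image $\Omega(l)$, and only the second involves the $\Omega(-)$ construction.

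First I would set up the inverse system $\cdots \to Path^{(2)}(C) \to Path(C) \to C$ whose transition arrows are the concatenation functors $P : Path(D) \to D$ (the universal arrows from the $Path$-adjunction), and take $l$ to be its $(\mathfrak{Skel})$-limit with cone $\lambda$ and comparison arrow $c : l \to \prod_{i} Path^{(i)}(C)$. The structural engine of the proof is twofold. Each $Path$ construction is the identity on objects, so the object sets of the whole system are constantly $Ob(C)$ and all transition functors are bijective on objects; hence $Ob(l) = Ob(C)$ and each projection $\pi_{i} \circ c$ is a bijection on objects. Moreover, in any path category the only invertible endomorphism of an object is the empty string, i.e. $Aut_{Path^{(i)}(C)}(x) = \{id_{x}\}$ for every $x$, since concatenation only lengthens strings and no non-identity loop can admit a two-sided inverse.

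Next I would invoke the preceding generation lemma for $(\mathfrak{Skel})$-limits. Provided $c$ is monic for this system (which I would verify from the explicit skel-limit construction, using that each morphism of $l$ is determined by its compatible family of images $\pi_{i}c$), any $G \subseteq Ob(l)$ whose projections generate each $Path^{(i)}(C)$ generates $l$. Taking $G = Ob(l) = Ob(C)$, the projections exhaust $Ob(Path^{(i)}(C))$ and hence generate each factor by Yoneda. Each such generator $g$ then has $Aut_{l}(g) \hookrightarrow \prod_{i} Aut_{Path^{(i)}(C)}(\pi_{i}c(g)) = \prod_{i} \{id\} = \{id\}$, because $c$ is monic and the target groups are trivial. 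By (\ref{gen}.1) this gives that $\kappa_{l}$ is faithful; equivalently $Aut(Id_{l}) = \{id\}$, since a natural automorphism of $Id_{l}$ selects, at each object $x$, an element of the trivial group $Aut_{l}(x)$.

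Finally I would address the $\Omega(-)$ layer, which is exactly the point a bare argument about $l$ omits. Because $\kappa_{l}$ is faithful and always injective on objects (the remark after \ref{Can}), the corestriction $l \to \Omega(l) := Im(\kappa_{l})$ is full by construction of the image, faithful, and bijective on objects, hence an isomorphism of categories $l \cong \Omega(l)$. Isomorphic categories have isomorphic monoids of natural endomorphisms of their identities, so $Aut(Id_{\Omega(l)}) \cong Aut(Id_{l}) = \{id\}$, and the criterion lemma then yields that $\kappa_{\Omega(l)}$ is faithful, which is the assertion. I expect the main obstacle to be the careful verification that $c$ is monic for the $(\mathfrak{Skel})$-limit, rather than an ordinary limit, of the concatenation system, since the skeleton construction only records diagrams commuting up to isomorphism; here the triviality of all relevant automorphism groups together with the constancy of the object sets should force the skel-limit to agree with the strict one, but confirming this compatibility, along with the universe bookkeeping needed for $\Omega(l)$ to be small enough to admit its own canonical functor into the next $\Omega$, is where the real work lies.
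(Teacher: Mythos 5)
The paper states this corollary bare, with no proof of its own; it is positioned to follow from the lemma immediately preceding it (generation of a $(\mathfrak{Skel})$-limit from component generators, assuming $c$ monic), the criterion of (\ref{gen}.1), and the triviality of automorphisms in path categories, and your reconstruction assembles exactly these ingredients, so in substance you have supplied the argument the paper leaves implicit. Two remarks on your framing. First, judging from the paper's usage elsewhere (e.g. ``$Im(\kappa_{\Omega(\Pi)})$'' in the corollary on iterated Path augmentation), the subscript $\Omega(X)$ appears to denote simply the canonical functor $\kappa_{X} : X \longrightarrow \Omega$ of (\ref{Can}), so your third layer (passing to the image $Im(\kappa_{l})$ and then to a next-level $\Omega$) is most likely superfluous; it is, however, harmless, since your isomorphism-transfer argument for it is correct.

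The one point where your sketch is genuinely off is the claimed mechanism for verifying that $c$ is monic, namely that ``triviality of all relevant automorphism groups together with the constancy of the object sets should force the skel-limit to agree with the strict one.'' This is true at levels $i \geq 1$: the only isomorphisms in a path category are the empty strings (concatenation strictly increases length), so isomorphic functors into $Path^{(i)}(C)$ are equal, and every cone in the category $P$ of the $(\mathfrak{Skel})$-limit construction is strictly commutative above level $0$. But at level $0$ the target is $C$ itself, which in general has abundant nontrivial natural isomorphisms, so $P$ contains cones whose bottom component is twisted by a natural isomorphism, while morphisms in $P$ are required to commute strictly at every level; consequently the colimit $l$ need not be isomorphic to the strict limit $L$, only (via a cylinder-type zigzag through $X \times \mathbb{I}$, $\mathbb{I}$ the category with two objects and one isomorphism between them) equivalent to it. Two repairs are available: either read the tower as indexed by $i \geq 1$, discarding the bottom copy of $C$, after which your argument goes through verbatim; or, more robustly, bypass the generation lemma and the monicity of $c$ entirely by noting that $Aut_{End^{(1)}_{U-\mathfrak{Cat}^{2}}(D)}(Id_{D})$ is invariant under equivalence of categories, that every isomorphism in the strict limit $L$ is a thread $(g_{0},g_{1},\dots)$ whose components above level $0$ are identities (hence $g_{0} = P(g_{1}) = id$ as well), so $Aut(Id_{L})$ is trivial, and then apply the criterion lemma directly to $l \simeq L$. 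Since the paper itself records no verification of the monicity hypothesis either, this gap is one you share with the source, but your proposed justification for closing it should be amended as above.
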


\sss{Lemma on Gluing Admissibility Structures}\label{LemAdmGlu}
If categories $\Pi_{1},\Pi_{2} \in Ob(U-\mathfrak{Cat})$, such that $\kappa_{\Pi_{1}}$ and $\kappa_{\Pi_{2}}$ are faithful, have admissibility structures $\varepsilon_{1},\varepsilon_{2}$ respectively, then a functor $(\Pi_{1} \xrightarrow{\Lambda} \Pi_{2}) \in Arr(U-\mathfrak{Cat})$ determines an admissibility structure on the subcategory of $\Omega$ generated by arrows of the form $\kappa_{cotw(\Lambda)}(x)$, where $x \in Ob(\Lambda_{1})$, or contained in the image of either $\kappa_{\Pi_{1}}$ or $\kappa_{\Pi_{2}}$. Explanation follows.

\ref{LemAdmGlu}.1. Let $\Lambda : \Pi_{1} \longrightarrow \Pi_{2}$ be a functor between $U$-categories.

\ref{LemAdmGlu}.2. Suppose that $\kappa_{\Pi_{i}}$ is faithful, for $i \in \{1,2\}$.

\ref{LemAdmGlu}.3. Let $\varepsilon_{i}$ be an admissibility structure on $\Pi_{i}$, for $i \in \{1,2\}$.

\ref{LemAdmGlu}.4. Let $\Pi \subseteq \Omega$ be the subcategory generated by the union of the images of the $\kappa$-functors, with the set of arrows appearing in $\kappa$-cotwists, i.e. the set 
$$
Im(\kappa_{\Pi_{1} 1}) \cup Im(\kappa_{\Pi_{2} 1}) \cup \{ \kappa_{cotw(\Lambda)}(x) \in Arr(\Omega) ; x \in Ob(\Pi_{1}) \}
$$.

\ref{LemAdmGlu}.5. Then there is a unique admissibility structure $\varepsilon \in Arr(Hom_{U'-\mathfrak{Cat}^{2}}^{(1)}(\Pi^{opp},U-\mathfrak{Cat}))$ formed from gluing the arrows appearing in the original admissibility structures by the $\kappa$-twist arrows, i.e. such that the following hold.

\ref{LemAdmGlu}.5.1. For any object $x \in Ob(\Pi_{1})$, the objects of the category assigned to $(\Pi_{1},x) \in Ob(\Pi)$ are compositions of arrows appearing in the original admissibility structures, i.e.
$$
Ob(dom(\varepsilon)(x)) = \{ [(Hom_{\Pi_{2}} \circ (Id_{\Pi_{2}}^{opp} \times_{U-\mathfrak{Cat}} \Lambda) , u )] \circ [(Hom_{\Pi_{1}}, v )] \in Arr(\Omega) ;
$$
$$
u \in Im(\varepsilon_{1}(x)) \text{ and } v \in Im(\varepsilon_{2}(\Lambda(dom(u)))) \} \cup
$$
$$
\{ [(Hom_{T},u )] \in Arr(\Omega); u \in Im(\varepsilon_{1}(x) \},
$$

and the set of arrows of the category assigned to $(\Pi_{1},x) \in Ob(\Pi)$ is the pre-image of the original admissibility structures, i.e. arrows of the form $[(Hom_{\Pi_{2}},v)]$, where $v \in Im(\varepsilon_{2}(\Lambda(x)))$ or the form $[Hom_{\Pi_{1}},u)]$, where $u \in Im(\varepsilon_{1}(x))$.

\ref{LemAdmGlu}.5.2. For any object $x \in Ob(\Pi_{2})$, define the category assigned to the object $(\Pi_{2},x) \in Ob(\Omega)$ so as to agree with the original admissibility structure, i.e.
$$
dom(\varepsilon)((\Pi_{2},x)) \cong dom(\varepsilon_{2})(x)
$$

\sus{Remarks toward Invariants from Categories}\label{InvCat}

We define a functor $U-\mathfrak{Cat} \longrightarrow \mathfrak{PreGrp}$, intended to send the category of functors $Hom_{\mathfrak{Cat}^{2}}(E(x),E(x))$ to a pre-group (without inversion) containing the fundamental group $\pi_{1}(x)$.

\begin{rem}
If $dom_{(U'-\mathfrak{Cat})}(P)$ has $U$-small limits, then $P$ has a left adjoint and if it has $U$-small colimits, and $P$ preserves them, then it has a right adjoint. This is a general categorical construction. I think of the standard example of an admissibility structure, the open subsets in a topological space, when considering the feasibility of the existence of the right adjoint.
\end{rem}

\sss{$\bold{Definition}$ of Sweep}
Define the ``sweep functor" by sending a category $C$ to the set of equivalence classes of ordered lists of arrows in $C$, where the equivalence is that generated by allowing the compositions of consecutive pairs of arrows in the list. Composition is given by concatenation.


\begin{lem}
Sweep is functorial.
\end{lem}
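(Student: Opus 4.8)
The plan is to supply the action of $Sweep$ on arrows and then verify the functor axioms, the one substantive point being well-definedness of the arrow map on equivalence classes. First I would record the object map precisely: for a category $C$ the underlying set of $Sweep(C)$ is $(\coprod_{n\geq 1} Arr(C)^{n})/{\sim}$, where $\sim$ is the equivalence relation generated by the elementary moves that replace a list $(f_{1},\dots,f_{i},f_{i+1},\dots,f_{n})$ by $(f_{1},\dots,f_{i}\circ f_{i+1},\dots,f_{n})$, ranging over all indices $i$ for which $f_{i}$ and $f_{i+1}$ are composable (i.e. $codom(f_{i}) = dom(f_{i+1})$), and the binary operation is concatenation of representatives. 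Concatenation descends to $\sim$ because an elementary move carried out inside one factor of a concatenation is again an elementary move of the whole concatenated list, and juxtaposition of lists is strictly associative; hence $Sweep(C) \in Ob(\mathfrak{PreGrp})$.

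For the arrow map, given $(C \xrightarrow{F} D) \in Arr(\mathfrak{Cat})$ I would define $Sweep(F)$ on representatives by applying $F_{(1)}$ term-wise, $(f_{1},\dots,f_{n}) \mapsto (F(f_{1}),\dots,F(f_{n}))$. The key step is that this respects $\sim$. It suffices to send each generating elementary move to a $\sim$-equality in $Sweep(D)$: if $f_{i},f_{i+1}$ are composable then by (\ref{Functor}.1) we have $codom(F(f_{i})) = F(codom(f_{i})) = F(dom(f_{i+1})) = dom(F(f_{i+1}))$, so $F(f_{i}),F(f_{i+1})$ are composable in $D$, and by (\ref{Functor}.2) $F(f_{i}\circ f_{i+1}) = F(f_{i})\circ F(f_{i+1})$. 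Thus the term-wise image of the two sides of an elementary move differ by an elementary move in $D$, hence are $\sim$-equivalent. So the term-wise map carries generating-related representatives to $\sim$-related ones and descends to a well-defined function $Sweep(F) : Sweep(C) \to Sweep(D)$.

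It then remains to check that $Sweep(F)$ is a morphism of $\mathfrak{PreGrp}$ and that the functor identities hold, all immediate from the term-wise description. $Sweep(F)$ respects concatenation because applying $F_{(1)}$ to each entry commutes with juxtaposition of lists. For the identities, $Sweep(id_{C})$ is term-wise application of $id_{Arr(C)}$, hence the identity on representatives and so on classes; and for composable $C \xrightarrow{F} D \xrightarrow{G} E$, term-wise application of $(G\circ F)_{(1)} = G_{(1)}\circ F_{(1)}$ yields $Sweep(G\circ F) = Sweep(G)\circ Sweep(F)$, again using (\ref{Functor}).

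I expect the only genuine obstacle to be the bookkeeping in the well-definedness step, namely the passage from ``the generating moves are preserved'' to ``the whole generated equivalence relation is preserved.'' The clean way to handle this, which I would adopt, is the standard minimality argument: define a relation $R$ on nonempty lists by $x\,R\,y \Longleftrightarrow$ the term-wise images of $x$ and $y$ under $F_{(1)}$ are $\sim$-equivalent in $D$; then $R$ is reflexive, symmetric and transitive (inherited from $\sim$ on $D$) and contains every generating move by the computation above, so by minimality of the equivalence relation generated by those moves we get $\sim\ \subseteq R$, which is exactly the required descent. No categorical input beyond the functor axioms (\ref{Functor}.1)--(\ref{Functor}.3) enters.
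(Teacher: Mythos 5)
Your proof is correct, but there is nothing in the paper to compare it against: the lemma is stated bare, with no proof supplied, so your argument fills a genuine omission rather than paralleling an existing one. The one substantive point is exactly the one you isolate, namely that term-wise application of $F_{(1)}$ descends from lists to equivalence classes, and your minimality argument (the pullback relation $R$ is an equivalence relation containing every generating elementary move, hence contains the generated relation $\sim$) settles it; the preservation of composability and of composition needed there is precisely the content of the functor axioms. The remaining checks --- that concatenation descends and is preserved, and that identities and composites of functors go to identities and composites --- are immediate in the way you describe, so the verification is complete and lands $Sweep$ correctly as a functor into the paper's category of semi-groups without inverses.
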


\begin{rem}
We hope, in future work, to associate to each spec datum $\bar{sp} = (...,\varepsilon : E \hookrightarrow Fib)$ a $\mathfrak{Cat}$ valued functor, which would attach, to each $x \in Ob(\mathcal{T})$, the category of endofunctors of $E(x)$. The expectation is that any two $\Omega$-lifts of the same arrow in $\mathcal{R}$ should be related by automorphisms of the fibres used to determine the $\Omega$-lifts, which would determine an isomorphism in the endofunctor category.
\end{rem}


\se{Appendix}

\sss{An Older Definition of $n$-Categories} This construction is simpler than that which appears in the main text, but does not involve defining a sequence of skeleton functors $sk(n) : n-\mathfrak{Cat} \longrightarrow \cdot$ to weaken the required equalities (e.g. associativity). 

The following five definitions
$$
For^{U(n)-\mathfrak{Cat}}_{U-\mathfrak{Cat}} : U(n)-\mathfrak{Cat} \longrightarrow U-\mathfrak{Cat}
$$
$$
\alpha_{U(n)-\mathfrak{Cat}} Ob(U(n)-\mathfrak{Cat})^{3} \rightarrow Arr(U(n)-\mathfrak{Cat})
$$ 
$$
\times_{U(n+1)-\mathfrak{Cat}} : U(n+1)-\mathfrak{Cat} \times_{U'-\mathfrak{Cat}} U(n+1)-\mathfrak{Cat} \longrightarrow U(n+1)-\mathfrak{Cat}
$$
$$
\sigma_{U(n+1)-\mathfrak{Cat}} : Ob(U(n+1)-\mathfrak{Cat})^{2} \rightarrow Arr(U(n+1)-\mathfrak{Cat})
$$
$$
U(n)-\mathfrak{Cat} \in Ob(U'-\mathfrak{Cat})
$$ 

are simultaneously made, for any $n \in \mathbb{N} \backslash \{ 0 \}$,\ftt{The expected case of sets, for $n = 0$, is excluded, since a set does not seem naturally to be an enriched set $(S,h,\circ)$, unless it takes the enrichment over $\mathfrak{Set}$ given by the trivial category functor $\mathfrak{Set} \longrightarrow \mathfrak{Cat}$.} so that the category of (n+1)-categories (the fifth of these definitions) should be defined as the category of weakly ($n-\mathfrak{Cat}$)-enriched categories. \ftt{The definition applies the unary map of the comprehension schema, attached to the open statement described below, to the universe $U'$. It makes use of the absolute ``$=$" sign, a relation on the one type, to compare the sets $S_{i}$ below, a priori existing, though uncharacterized, to certain sets constructed from them.}

For any pair of universes $U \in U'$, for any tuple $S = (S_{1},S_{2},S_{3},S_{4},S_{5}) \in Ob(U'-\mathfrak{Set})$, we say that $S$ defines higher categories in $U \in U'$ iff





and 

$$
S_{1} = \{ (n , ( F_{1} = \{ ((C,h,\circ) , C ) \in U' ; \exists c \in Ob(U'-\mathfrak{Cat}), (C,h,\circ) \in Ob(c) \text{ and } (n,c) \in S_{5} \},
$$
$$
F_{2} = \{ ((f_{0},f_{1}),f_{2}) , (f_{0},f_{1})) \in U' ;
$$
$$
\exists c \in Ob(U'-\mathfrak{Cat}), ((f_{0},f_{1}),f_{2}) \in Arr(c) \text{ and } (n,c) \in S_{5} \} ) )
$$
$$
\in (\mathbb{N} \backslash \{ 0 \} ) \times U' \}
$$ 

For any $n \in \mathbb{N} \backslash \{0\}$, $For^{U(n)-\mathfrak{Cat}}_{U-\mathfrak{Cat}} : U(n)-\mathfrak{Cat} \longrightarrow U-\mathfrak{Cat}$ is given by $(C,h,\circ) \mapsto C$, the underlying category functor or the $n$-forgetful functor i.e. a pair $For^{U(n)-\mathfrak{Cat}}_{U-\mathfrak{Cat}} = For^{WE(F,id)}_{\mathfrak{Cat}}$. The underlying objects functor for $n$, $F_{U(n)-\mathfrak{Cat}} : U(n)-\mathfrak{Cat} \longrightarrow U-\mathfrak{Set}$ is given by $(C,h,\circ) \mapsto Ob(C)$.

(ii) The set $S_{2}$ consists of all pairs of the form $(n+1,As)$, where $n \in \mathbb{N}$ and $As$ is a $U'$-function (set of pairs), consisting of all pairs of the form $((a,b,c),(F,F_{2}))$, where there exists some $U'$-category $C$ such that $(n+1,C) \in S_{5}$ (i.e. $C$ can be identified with the category of $n+1$-categories), $a = (a_{0},h_{a},\circ_{a}),b = (b_{0},h_{b},\circ_{b}),c = (c_{0},h_{c},\circ_{c}) \in Ob(C)$, and if $a',b',c' \in Ob(U-\mathfrak{Cat})$ are such that $(a,a'),(b,b'),(c,c') \in s$ and $(n+1,s) \in S_{1}$ for some $s \in U'$, (i.e. $s$ provides an $(n+1)$-forgetful functor), then $F_{2}$ is a function which sends a pair of triples $((x_{a},x_{b}),x_{c}),((y_{a},y_{b}),y_{c}) \in Ob((a'\times b') \times c')$ to $\alpha''$ iff there exists $\alpha' \in U'$ such that $(n,\alpha') \in S_{2}$ (i.e. $\alpha'$ provides an $n$-associator), and $\alpha''$ is assigned by $\alpha'$ to the triple $(h_{a}(x_{a},y_{a}),h_{b}(x_{b},y_{b}),h_{c}(x_{c},y_{c}))$.
$$
S_{2} = \{ (n+1 , \{ ((a,b,c), (F,F_{2})) \in U' ;
$$
$$
\exists C \in Ob(U'-\mathfrak{Cat}), (n+1,C) \in S_{5} \text{ and } a,b,c \in Ob(C) \text{ and }
$$
$$
\exists a',b',c' \in Ob(U-\mathfrak{Cat}), \exists s,
$$
$$
(n+1,s) \in S_{1} \text{ and } (a,a'),(b,b'),(c,c') \in s \text{ and } F = \alpha(a',b',c') \text{ and }
$$
$$
\exists \alpha' \in U', (n,\alpha') \in S_{2} \text{ and } \forall x_{a},y_{a} \in Ob(a'), \forall x_{b},y_{b} \in Ob(b'), \forall x_{c},y_{c} \in Ob(c'), \forall \alpha'' \in U',
$$
$$
((((x_{a},x_{b}),x_{c}),((y_{a},y_{b}),y_{c})),\alpha'') \in F_{2} \text{ iff } 
$$
$$
((h_{a}(x_{a},y_{a}),h_{b}(x_{b},y_{b}),h_{c}(x_{c},y_{c})),\alpha'') \in \alpha' \} ) \in U' \}
$$

The associator $\alpha_{U(n)-\mathfrak{Cat}} : Ob(U(n)-\mathfrak{Cat})^{3} \rightarrow Arr(U(n)-\mathfrak{Cat})$ is inductively defined so that for any $n \in \mathbb{N}$ for which $\alpha_{U(n)-\mathfrak{Cat}}$ and $\times_{U(n+1)-\mathfrak{Cat}}$ are defined, for any $a,b,c \in Ob(U(n+1)-\mathfrak{Cat})$
$$
\alpha_{U(n+1)-\mathfrak{Cat}}(a,b,c) : (a \times_{U(n+1)-\mathfrak{Cat}} b) \times_{U(n+1)-\mathfrak{Cat}} c \longrightarrow a \times_{U(n+1)-\mathfrak{Cat}} (b \times_{U(n+1)-\mathfrak{Cat}} c)
$$

is given by the usual associator $\alpha_{U-\mathfrak{Cat}}$ on the underlying category 
$$
For^{U(n+1)-\mathfrak{Cat}}_{U-\mathfrak{Cat}}((a\times_{U(n+1)-\mathfrak{Cat}}b) \times_{U(n+1)-\mathfrak{Cat}} c) =
$$
$$
(For^{U(n+1)-\mathfrak{Cat}}_{U-\mathfrak{Cat}}(a) \times_{U-\mathfrak{Cat}} For^{U(n+1)-\mathfrak{Cat}}_{U-\mathfrak{Cat}}(b)) \times_{U-\mathfrak{Cat}} For^{U(n+1)-\mathfrak{Cat}}_{U-\mathfrak{Cat}}(c)
$$

and 
$$
\alpha_{U(n)-\mathfrak{Cat}}(h_{a}(x_{a},y_{a}),h_{b}(x_{b},y_{b}),h_{c}(x_{c},y_{c}))
$$

on each of the hom $n$-categories.

(iii) The set $S_{3}$ is defined to consist of pairs $(n+1,(F_{0},F_{1}))$, so that $F_{0}$ and $F_{1}$ are functions, defined so that (a). $F_{0}$ consists of pairs associating to the pair $((\bar{C} = (C,h_{C},\circ_{C}),\bar{D} = (D,h_{D},\circ_{D}))$, where there exists $Ca \in Ob(U'-\mathfrak{Cat})$, $(n+1,Ca) \in S_{5}$ (i.e. $Ca$ is identified with the category of $n+1$-categories) such that $\bar{C},\bar{D} \in Ob(Ca)$, the product category $C \times D$, enriched by taking the $n$-products of each hom object, and (b). $F_{1}$ associates to a pair $\Phi = (\phi,\phi_{2}),\Psi = (\psi,\psi_{2}) \in Arr(Ca)$ the product functors on the underlying categories, paired with the function which sends a pair of pairs $((x_{c},x_{d}),(y_{c},y_{d}))$ to the $n$-product of $\phi_{2}$ and $\psi_{2}$.
$$
S_{3} = \{ (n+1, ( \{ ((\bar{C} = (C,h_{C},\circ_{C}), \bar{D} = (D,h_{D},\circ_{D})),
$$
$$
(C\times_{U-\mathfrak{Cat}} D, \{ (((a,b),(c,d)), \times (h_{C}(a,c),h_{D}(b,d))) ; \exists \times \in U', (n,\times) \in S_{3} \}, \circ )) ;
$$
$$
\exists Ca \in Ob(U'-\mathfrak{Cat}), (n+1,Ca) \in S_{5} \text{ and } \bar{C},\bar{D} \in Ob(Ca) \} , \{ Arrows \} )) \in U' \}
$$

The product $\times_{U(n+1)-\mathfrak{Cat}} : U(n+1)-\mathfrak{Cat} \times_{U'-\mathfrak{Cat}} U(n+1)-\mathfrak{Cat} \longrightarrow U(n+1)-\mathfrak{Cat}$ is defined, for any $n \in \mathbb{N}$ such that $\times_{U(n)-\mathfrak{Cat}}$, $\alpha_{U(n)-\mathfrak{Cat}}$, and $\sigma_{U(n)-\mathfrak{Cat}} : Ob(U(n)-\mathfrak{Cat})^{2} \rightarrow Arr(U(n)-\mathfrak{Cat})$ are already defined so as to be given by
$$
((a,h_{a},\circ_{a}),(b,h_{b},\circ_{b})) \mapsto (a \times_{U-\mathfrak{Cat}} b, (((x_{a},x_{b}),(y_{a},y_{b})) \mapsto h_{a}(x_{a},y_{a}) \times_{U(n)-\mathfrak{Cat}} h_{b}(x_{b},y_{b})),...)
$$

with the composition defined component-wise (using the $n$-symmetrizer and associator maps).

(iv). The set $S_{4}$ is defined so as to consist of pairs of the form $(n+1,Sym)$, where $Sym$ is a set consisting of pairs $((\bar{C},\bar{D}),(F,F_{2}))$, where for some $Ca \in Ob(U'-\mathfrak{Cat})$ such that $(n+1,Ca) \in S_{5}$, $\bar{C},\bar{D} \in Ob(Ca)$, $F$ is the usual symmetrizer, and $F_{2}$ sends a pair of pairs $((x_{c},x_{d}),(y_{c},y_{d}))$ to the $n$-symmetrizer.

$$
S_{4} = \{ (n+1, \{ ((\bar{C} = (C,h_{C},\circ_{C}),\bar{D} = (D,h_{D},\circ_{D})), (F,F_{2})) ; F = \sigma (C,D) \text{ and }
$$
$$
F_{2} = \{ ( ((x_{c},x_{d}),(y_{c},y_{d})) ,\sigma') \in U' ; \exists \sigma'' \in U', (n,\sigma'') \in S_{4} \text{ and }
$$
$$
((h_{(C)}(x_{c},y_{c}),h_{D}(x_{d},y_{d})) ,\sigma') \in \sigma'' \} \}) \in U'; \exists Ca \in Ob(U'-\mathfrak{Cat}), (n+1,Ca) \in S_{5} \text{ and } 
$$
$$
\bar{C},\bar{D} \in Ob(Ca) \}
$$

The symmetrizer $\sigma_{U(n+1)-\mathfrak{Cat}} : Ob(U(n+1)-\mathfrak{Cat})^{2} \rightarrow Arr(U(n+1)-\mathfrak{Cat})$ is defined for any $n \in \mathbb{N}$ for which $\sigma_{U(n)-\mathfrak{Cat}}$ and $\times_{U(n+1)-\mathfrak{Cat}}$ are defined, using the usual associator on the underlying categories and $\sigma_{U(n)-\mathfrak{Cat}}$ on the hom $n$-categories, so that 
$$
\sigma_{U(n+1)-\mathfrak{Cat}}(a,b) : a \times_{U(n+1)-\mathfrak{Cat}} b \longrightarrow b \times_{U(n+1)-\mathfrak{Cat}} a
$$

for any $a,b \in Ob(U(n+1)-\mathfrak{Cat})$.

(v). The set $S_{5}$ consists of all pairs of the for $(n+1,D)$, where there exist $D' \in Ob(U'-\mathfrak{Cat})$, $F,\times \in U'$, for which $(n,D') \in S_{5}$, $F = For_{U(n)-\mathfrak{Cat}}$ is the underlying objects functor for $n$, $\times$ is the $n$-product (i.e. $(n,\times) \in S_{3}$), $\rho : (F \circ \times ) \rightarrow \times_{U'-\mathfrak{Set}} \circ (F \times_{U'-\mathfrak{Cat}} F)$ is an isomorphism of functors, given by the identity on each set, and $D$ is the category of weakly $(D',F,\rho,\times)$-enriched categories.
$$
S_{5} = \{ (n+1, WE_{\mathfrak{Cat}}((Ca, F),\times)) \in U' ; (n,Ca) \in S_{5} \text{ and } (n,F) \in S_{1} \text{ and } (n,\times) \in S_{3} \} \cup
$$
$$
\{ (1, U-\mathfrak{Cat} ) \}
$$

For any $n \in \mathbb{N}$ for which $U(n)-\mathfrak{Cat}$ and $\times_{U(n)-\mathfrak{Cat}}$ are defined, $U(n+1)-\mathfrak{Cat}$ is defined to be the category of all categories weakly enriched over $U(n)-\mathfrak{Cat}$, i.e. $U(n+1)-\mathfrak{Cat} := WE((U(n)-\mathfrak{Cat},\times_{U(n)-\mathfrak{Cat}}), F_{U(n)-\mathfrak{Cat}})$.

\begin{lem}
$F_{U( )-\mathfrak{Cat}}$, $\alpha_{U( )-\mathfrak{Cat}}$, $\times_{U( )-\mathfrak{Cat}}$, $\sigma_{U( )-\mathfrak{Cat}}$, and $U( )-\mathfrak{Cat}$ are well defined functions $\mathbb{N} \longrightarrow U'$.
\end{lem}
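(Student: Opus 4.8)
The plan is to prove the lemma by a single simultaneous induction on $n$, establishing at each stage three things for each of the five symbols: existence of a value at level $n$, uniqueness of that value, and membership of the value in $U'$. Because the definitions of $S_1,\dots,S_5$ are given implicitly — each $S_i$ occurs as a bound variable in the first-order statement characterizing the tuple $S=(S_1,\dots,S_5)$ — ``well defined'' here means precisely that the comprehension schema applied to that statement (over the universe $U'$) carves out a unique total single-valued relation on $\mathbb{N}\setminus\{0\}$. So the real content is an existence-and-uniqueness theorem for the fixed point of the recursion, together with a closure check keeping every output inside $U'$.

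First I would observe that although the five definitions are mutually referential, the dependence is stratified by the level index $n$: inspecting the defining equations, every clause producing an element with first coordinate $n+1$ refers only to elements of the $S_j$ with first coordinate $\le n$, together with the single within-level datum $S_5$ at $n+1$. Moreover $S_5$ at level $n+1$ is $WE_{\mathfrak{Cat}}((U(n)-\mathfrak{Cat},\times(n)),F(n))$, which depends only on level-$n$ data. Hence within a level one may order the construction as: first $S_5(n+1)$, then $S_1(n+1)$ (the objects/forgetful datum), then $S_3(n+1)$ (the product), and finally $S_2(n+1)$ and $S_4(n+1)$ (associator and symmetrizer, which read off $\times(n+1)$ on underlying categories and the level-$n$ maps on hom objects). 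This shows the recursion is genuinely well-founded and lets me invoke the recursion theorem inside $U'$.

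For the base case $n=1$ I would take $S_5(1)=U-\mathfrak{Cat}$ together with its usual product functor, associator, symmetrizer, and objects/forgetful functor; each is a $U$-small-category-level object and hence lies in $U'$ since $U\in U'$ and $U'$ is a universe. For the inductive step, membership of $S_5(n+1)$ in $U'$ follows from Lemmas \ref{TCatFun} and \ref{TCatFunC}, which give $WE_{\mathfrak{Cat}}$ landing in $U'-\mathfrak{Cat}$, so $U(n+1)-\mathfrak{Cat}\in Ob(U'-\mathfrak{Cat})\subseteq U'$; the remaining four symbols at $n+1$ are sets of pairs whose coordinates are drawn from $U'$-small sets (objects and arrows of $U(n+1)-\mathfrak{Cat}$ and the level-$n$ outputs), so they too are elements of $U'$ by closure of $U'$ under products, power sets, and comprehension. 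Single-valuedness at each level is read directly from the biconditionals in the definitions: each ``$(\ldots)$ iff $(\ldots)$'' clause determines the second coordinate uniquely from the first, so the level-$(n+1)$ slice of each $S_i$ is a function; totality over $n$ is exactly the inductive existence statement.

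Finally, uniqueness of the whole tuple — and hence genuine well-definedness of the implicitly specified function symbols — I would obtain by the same stratified induction: if $S$ and $S'$ both satisfy the defining statement, agreement of their level-$\le n$ slices forces, through the explicit formulas above, agreement of their level-$(n+1)$ slices. The main obstacle I anticipate is not any single computation but the bookkeeping needed to make the stratification rigorous: one must check carefully that no clause defining a level-$(n+1)$ element secretly quantifies over level-$(n+1)$ data in a way that would create a circular (non-well-founded) dependence, and that the lone within-level reference to $S_5(n+1)$ is acyclic. Verifying this, together with confirming that every intermediate construction ($WE_{\mathfrak{Cat}}$, the product of enriched categories, the induced associators and symmetrizers) stays $U'$-small, is where the substance of the argument lies; once stratification is secured, existence, uniqueness, and the functionhood conclusion $\mathbb{N}\longrightarrow U'$ follow formally.
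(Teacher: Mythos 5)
Your proof is correct and follows essentially the same route as the paper's: an induction on $n$ exploiting the stratification of the mutually recursive definitions, constructing $U(n+1)-\mathfrak{Cat}$ from level-$n$ data first and then reading off the structural maps ($F$, $\alpha$, $\times$, $\sigma$) at level $n+1$ from their level-$n$ counterparts. The paper's own proof is far terser, omitting the universe-membership checks and the fixed-point-uniqueness bookkeeping you spell out, but the underlying induction scheme is identical.
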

\begin{proof}
If $U(n+1)-\mathfrak{Cat}$ is uniquely determined, then $\alpha_{U(n+1)-\mathfrak{Cat}}$, $\times_{U(n+1)-\mathfrak{Cat}}$, and
\newline $\sigma_{U(n+1)-\mathfrak{Cat}}$ are defined directly from $\alpha_{U(n)-\mathfrak{Cat}}$, $\times_{U(n)-\mathfrak{Cat}}$, and $\sigma_{U(n)-\mathfrak{Cat}}$. $U(n+1)-\mathfrak{Cat}$ is defined from $\alpha_{U(n)-\mathfrak{Cat}}$, $\times_{U(n)-\mathfrak{Cat}}$, $\sigma_{U(n)-\mathfrak{Cat}}$, and $U(n)-\mathfrak{Cat}$. Apply induction.  $F_{U(n)-\mathfrak{Cat}}$ is straightforward, using only $U(n)-\mathfrak{Cat}$ and the usual object functor.
\end{proof}

\sss{Inducing Enrichments of $Hom_{\mathfrak{Cat}^{2}}^{(1)}(I,C)$ from Enrichments of $C$} The following is an older version of the hom set enrichment which appears in the main body. The exact relation should be worked out.

\begin{lem}
(Hom-Cat Enrichment)

Suppose that the amnesia $F : A \rightarrow \mathfrak{Set}$ of a weak $(sk : A \rightarrow B)$-associative enrichment $(C,h,\circ)$ over $((A,\otimes ), \alpha )$ is co-represented by a unit object
$$
\Phi : F \longrightarrow Yo^{opp}_{(A)(0)}(I)
$$
with unit arrows (natural transformations)
$$
u_{l} : Id_{A} \longrightarrow I \otimes - \text{ and } u_{r} : Id_{A} \longrightarrow - \otimes I
$$

Then for any category $J$ one can construct a very weak enrichment of $Hom^{(1)}_{U-\mathfrak{Cat}^{2}}(J,C)$ over $(A,\otimes )$ by the following.

(i). Choose for each pair $G,H \in Ob(Hom^{(1)}_{U-\mathfrak{Cat}^{2}}(J,C))$ of functors, products $Dia :_{t}= \prod_{\phi \in Arr(J)} h(G(dom(\phi )),H(codom(\phi)))$ and $Hor :_{t}= \prod_{a \in Ob(J)} h(G(a),H(a))$.

(ii). Choose, for each $G,H \in Ob(Hom^{(1)}_{U-\mathfrak{Cat}^{2}}(J,C))$ arrows $f,g : Hor \rightarrow Dia$ in $A$, determined by the construction of $Dia$ as a product, so that $f$ is given by associating $\phi \in Arr(J)$ with
$$
Hor \xrightarrow{\pi} h(G(dom(\phi)),H(dom(\phi))) \xrightarrow{u_{l}} I \otimes h(G(dom(\phi)),H(dom(\phi))) \xrightarrow{id \otimes \Phi (H(\phi))}
$$
$$
h(H(dom(\phi)),H(codom(\phi))) \otimes h(G(dom(\phi)),H(dom(\phi))) \xrightarrow{\circ}
$$
$$
h(G(dom(\phi)),H(codom(\phi)))
$$
and $g$ is given by associating $\phi \in Arr(J)$ with
$$
Hom \xrightarrow{\pi} h(G(codom(\phi)),H(codom(\phi))) \xrightarrow{u_{r}} h(G(codom(\phi)),H(codom(\phi))) \otimes I \xrightarrow{\Phi(G(\phi)) \otimes id}
$$
$$
h(G(codom(\phi)),H(codom(\phi))) \otimes h(G(dom(\phi)),G(codom(\phi))) \xrightarrow{\circ}
$$
$$
h(G(dom(\phi)),H(codom(\phi)))
$$

(iii). Let $l : Ob(Hom^{(1)}_{U-\mathfrak{Cat}^{2}}(J,C))^{2} \rightarrow Ob(A)$ assign to each pair $(F,G)$ the $(sk,\varepsilon_{0})$-limit of the diagram $D(F,G)$ consisting of the two arrows $f$ and $g$, where $\varepsilon_{0}$ is the inclusion of the sub-category of $dom(D(F,G))$ with the same objects, and only identity arrows.

(iv). Suppose that for each $G_{1},G_{2},G_{3} \in Ob(Hom^{(1)}_{U-\mathfrak{Cat}^{2}}(J,C))$,
$$
s(G_{1},G_{2},G_{3}) : colim(\otimes \circ (p_{D(G_{2},G_{3})} \times p_{D(G_{1},G_{2})})) \rightarrow colim(p_{D(G_{2},G_{3})}) \otimes colim(p_{D(G_{1},G_{2})})
$$
is defined to be that which is induced by sending $((a,\alpha),(b,\beta)) \in Ob(P_{D(G_{2},G_{3})} \times P_{D(G_{1},G_{2})})$ to the tensor of its colimit arrows, $e_{\alpha} \otimes e_{\beta} : a \otimes b \rightarrow colim(p_{D(G_{2},G_{3})}) \otimes colim(p_{D(G_{1},G_{2})})$, and that $s(G_{1},G_{2},G_{3})$ is an isomorphism.

(v). Suppose that for each $G_{1},G_{2} \in Ob(Hom^{(1)}_{U-\mathfrak{Cat}^{2}}(J,C))$, for any $(a,\alpha) \in Ob(P_{D(G_{1},G_{2})})$, the colimit arrow $\lambda_{p_{D(G_{1},G_{2})}}(a,\alpha) : a \rightarrow colimit(p_{D(G_{1},G_{2})})$ is the unique arrow for which $\pi_{i} \circ \lambda_{\pi} \circ \lambda_{p_{D(G_{1},G_{2})}}(a,\alpha) = \alpha (i)$ for each $i \in Ob(dom(D(G_{1},G_{2})))$, where
$$
\lambda_{\pi} : colim(p_{D(G_{1},G_{2})}) \rightarrow \prod_{i \in Ob(dom(D(G_{1},G_{2})))} D(G_{1},G_{2})(i)
$$
is induced by the arrows $\alpha(i)$ for $(a,\alpha) \in Ob(P_{D(G_{1},G_{2})})$.\ftt{This condition is as in the conclusion of the uniqueness via monic lemma of II.1}

(vi). Define a function $t : Ob(Hom^{(1)}_{U-\mathfrak{Cat}^{2}}(J,C))^{3} \rightarrow Arr(A)$ so that for any $G_{1},G_{2},G_{3} \in Ob(Hom^{(1)}_{-\mathfrak{Cat}^{2}}(J,C))$,
$$
t(G_{1},G_{2},G_{3}) : colim(\otimes \circ (p_{D(G_{2},G_{3})} \times p_{D(G_{1},G_{2})})) \rightarrow colim (p_{D(G_{1},G_{3})} )
$$
is the colimit arrow assigned to $(colim(\otimes \circ (p_{D(G_{2},G_{3})} \times p_{D(G_{1},G_{2})}),\alpha) \in Ob(P_{D(G_{1},G_{3})})$, where if $Ob(dom(D(G_{1},G_{2}))) = Ob(dom(D(G_{2},G_{3}))) = Ob(dom(D(G_{1},G_{3}))) = \{ c,d,m \}$ so that
$$
c \text{ corresponds to } \prod_{\phi \in Arr(J)} h(G_{i}(codom(\phi)),G_{j}(codom(\phi))) \text{ and }
$$
$$
d \text{ corresponds to } \prod_{\phi \in Arr(J)} h(G_{i}(dom(\phi)),G_{j}(dom(\phi))) \text{ and }
$$
$$
m \text{ corresponds to } \prod_{\phi \in Arr(J)} H(G_{i}(dom(\phi)),G_{j}(codom(\phi))),
$$

then $\alpha : \Delta_{(dom(\varepsilon_{0}),A)}(colim (\otimes \circ (p_{D(G_{2},G_{3})} \times p_{D(G_{1},G_{2})} )) \rightarrow D(G_{1},G_{3}) \circ \varepsilon_{0}$ is given by 
$$
\alpha (c) : colim (\otimes \circ (\otimes \circ (p_{D(G_{2},G_{3})} \times p_{D(G_{1},G_{2})} )) \rightarrow \prod_{\phi \in Arr(J)} h(G_{1}(codom(\phi)),G_{3}(codom(\phi)))
$$ 
corresponding to the assignment $(\phi \mapsto \circ \circ \otimes (( \pi_{\phi} \circ \beta(c) , \pi_{\phi} \circ \gamma(c))))_{\phi \in Arr(J)}$ for any $(b,\beta ) \in Ob(P_{D(G_{2},G_{3})})$ and any $(c,\gamma ) \in Ob(P_{D(G_{1},G_{2})})$,
$$
\alpha (d) : colim (\otimes \circ (\otimes \circ (p_{D(G_{2},G_{3})} \times p_{D(G_{1},G_{2})} )) \rightarrow \prod_{\phi \in Arr(J)} h(G_{1}(dom(\phi)),G_{3}(dom(\phi)))
$$
corresponding to the assignment $(\phi \mapsto \circ \circ \otimes (( \pi_{\phi} \circ \beta(d) , \pi_{\phi} \circ \gamma(d))))_{\phi \in Arr(J)}$ for any $(b,\beta ) \in Ob(P_{D(G_{2},G_{3})})$ and any $(c,\gamma ) \in Ob(P_{D(G_{1},G_{2})})$,

and any 
$$
\alpha (m) : colim (\otimes \circ (\otimes \circ (p_{D(G_{2},G_{3})} \times p_{D(G_{1},G_{2})} )) \rightarrow \prod_{\phi \in Arr(J)} h(G_{1}(dom(\phi)),G_{3}(codom(\phi)))
$$ 

for which $(colim(\otimes \circ (p_{D(G_{2},G_{3})} \times p_{D(G_{1},G_{2}})) , \alpha ) \in Ob(P_{D(G_{1},G_{3})})$. (Herein is a choice required in the construction)

(vii). Then $\bar{\circ} :_{t}= ((G_{1},G_{2},G_{3}) \mapsto t(G_{1},G_{2},G_{3}) \circ s(G_{1},G_{2},G_{3})^{-1})_{G_{1},G_{2},G_{3} \in Ob(Hom^{(1)}_{U-\mathfrak{Cat}^{2}} (J,C))}$ implies that $(Ob(Hom^{(1)}_{U-\mathfrak{Cat}^{2}}(J,C)), l, \bar{\circ})$ is a very weak $A$-enrichment.

(viii). If the enrichment is $F$-associative, then the above construction yields a weak enrichment over the category whose objects are functors $J \longrightarrow C$ and arrows are \newline $\coprod_{G_{1},G_{2} \in Hom_{U-\mathfrak{Cat}}(J,C)}  F_{(0)}(\lambda (G_{1}, G_{2}))$, with composition given by application of $F_{(1)}$ to the enriched composition arrows in $A$.
\end{lem}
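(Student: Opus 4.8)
The plan is to read clauses (i)--(viii) as the specification of the data of a weak enrichment of the set $Ob(Hom^{(1)}_{U-\mathfrak{Cat}^2}(J,C))$ in the sense of the Definition of a Weakly Enriched Set, and then to verify that this data is well formed under the stated hypotheses. The conceptual anchor is that, for functors $F,G\colon J\to C$, the two arrows $f,g\colon Hor\to Dia$ of clause (ii) encode the two sides of the enriched naturality square: the corepresentation $\Phi$ turns each functor-arrow $G(\phi)$ or $H(\phi)$ into an $A$-arrow out of $I$, and the units $u_l,u_r$ together with the enriched composition $\circ$ assemble these into ``pre-compose with $G(\phi)$'' and ``post-compose with $H(\phi)$''. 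Hence an arrow $\pi\colon a\to Hor$ with $sk(f\circ\pi)=sk(g\circ\pi)$ is exactly an $sk$-natural transformation $F\to G$ valued in $a$, and the $(sk,\varepsilon_0)$-equalizer $l(F,G)$ of clause (iii) is the hom object we seek; its realization as an actual object of the category $P$ of Definition~\ref{SkLim} (not merely a colimit beneath it) is provided by the inclusion and uniqueness lemmas there. First I would fix the products of clause (i) and record that the comparison arrow $s$ of clause (iv) is the map $\tau$ of Lemma~\ref{LemNatTau}, invertible by hypothesis.

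The core of the argument is clause (vi): I must show that the cone $\alpha$ built component-wise from the enriched composition $\circ$ of $C$ genuinely defines an object $(colim(\otimes\circ(p_{D(G_2,G_3)}\times p_{D(G_1,G_2)})),\alpha)$ of $P_{D(G_1,G_3)}$, i.e.\ that the composite of two $sk$-natural families is again $sk$-natural. This is the exact analogue, in the present language of $sk$-equalizers, of the composition functor of Lemma~\ref{PushPull}, and I would prove it by the same mechanism: tensoring the two colimit cones, applying $\circ$ of $C$ on the middle factor, and then invoking functoriality of $\otimes$, functoriality of $sk$, and the $(sk)$-associativity of $\circ$ to push the composition past the naturality arrows. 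Concretely, the required equality of the two arrows into $\prod_\phi h(G_1(dom\,\phi),G_3(codom\,\phi))$ is obtained by rewriting each side using the defining $(sk)$-commutation of the objects of $P_{D(G_1,G_2)}$ and $P_{D(G_2,G_3)}$, exactly as in the chain of equalities showing that $WE_{(A,\otimes)(sk)}$ is a category. Clause (v), the monic/uniqueness hypothesis, is what lets me conclude that the colimit arrow $t(G_1,G_2,G_3)$ assigned to this object is uniquely determined by its projections, so that $t$ is a well-defined function into $Arr(A)$.

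With $t$ and the invertible $s$ in hand, clause (vii) follows formally: set $\bar\circ(G_1,G_2,G_3):=t(G_1,G_2,G_3)\circ s(G_1,G_2,G_3)^{-1}$, an arrow $l(G_2,G_3)\otimes l(G_1,G_2)\to l(G_1,G_3)$, a composition arrow of the form required by the weak-enrichment data of Definition~\ref{CatWE}, so that $(Ob(Hom^{(1)}_{U-\mathfrak{Cat}^2}(J,C)),l,\bar\circ)$ is a weak $(A,\otimes)$-enrichment of the underlying set (the adjective ``very'' reflecting that no coherence beyond correct typing is asserted and that $l,\bar\circ$ depend on the choices of clauses (i) and (iii)). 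For clause (viii) I would then assume the enrichment of $C$ is $F$-associative and upgrade: the pentagon for $\bar\circ$, tested after $F$, reduces---via the monic hypothesis (v) and the coherence of the comparison isomorphisms $s$---to the corresponding pentagon for $\circ$ applied in each product factor, which holds by hypothesis. Passing to $\coprod_{G_1,G_2}F_{(0)}(l(G_1,G_2))$ with composition $F_{(1)}(\bar\circ)$ then yields the asserted honest category, since $F=Yo^{opp}_{(A)(0)}(I)$ is a tensor functor and carries the $A$-level composition to set-level composition.

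The step I expect to be the genuine obstacle is the well-definedness of $t$ in clause (vi): checking that the component-wise composite cone lands in $P_{D(G_1,G_3)}$ requires juggling the three products $c,d,m$, the two naturality arrows $f,g$, and the interchange of $\otimes$ with $colim$, and it is precisely here that hypothesis (iv) (so that $s$ may be inverted to compare $colim\,\otimes$ with $\otimes\,colim$) and hypothesis (v) (so that cone arrows are detected on their projections) are indispensable. All remaining verifications---naturality of $u_l,u_r$, that $f$ and $g$ are well-defined product arrows, and the bookkeeping of $dom/codom$ over $Arr(J)$---are routine once this interchange is controlled.
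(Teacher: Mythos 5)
The paper never proves this lemma: it is stated in the appendix without any proof, explicitly flagged as ``an older version of the hom set enrichment which appears in the main body,'' so the only thing to measure your argument against is the paper's treatment of the newer version, namely the construction (\ref{HomEnr}) and the proof of Lemma \ref{PushPull}. In approach you are doing exactly what that proof does: componentwise composition of cones, functoriality of $\otimes$ and of $sk$, $(sk)$-associativity of $\circ$, the interchange map of hypothesis (iv), and a detection principle; and you correctly locate the crux in clause (vi).

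There is, however, a genuine gap at that crux, and it is worth naming precisely because the paper's own analogue papers over it. What clause (vi) needs is the existence of an admissible choice of $\alpha(m)$, which is equivalent to the single equality $sk(f\circ\alpha(d)) = sk(g\circ\alpha(c))$ of two arrows whose \emph{domain} is the colimit $colim(\otimes\circ(p_{D(G_{2},G_{3})}\times p_{D(G_{1},G_{2})}))$ and whose \emph{codomain} is the product $m=\prod_{\phi\in Arr(J)}h(G_{1}(dom(\phi)),G_{3}(codom(\phi)))$. The PushPull-style computation you invoke establishes this equality only after precomposing with each colimit injection $e_{((b,\beta),(c,\gamma))}$ and postcomposing with each projection $\pi_{\phi}$. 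Stripping off the projections requires the detection assumption that the paper inserts, ad hoc, in the middle of the proof of \ref{PushPull} (for all $u,v\colon x\to\prod_{i}y_{i}$, if $sk(\pi_{i}\circ u)=sk(\pi_{i}\circ v)$ for all $i$ then $sk(u)=sk(v)$); this is not among hypotheses (i)--(vi) and is not implied by (v). Stripping off the injections requires $sk$ to preserve this colimit (the right-exactness hypothesis of the inclusion lemma following (\ref{SkLim}), which you cite but which is likewise not assumed here), or joint epimorphicity of the $sk(e_{((b,\beta),(c,\gamma))})$. Hypothesis (v), as stated, is a uniqueness property of the cocone arrows \emph{into} the colimit relative to their projections; it cannot detect $sk$-equality of arrows \emph{out of} the colimit, so your gloss of it as ``cone arrows are detected on their projections'' claims more than it gives.

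The repair closest to the paper's own machinery is to add the missing hypotheses rather than derive them: assume the conclusion of the inclusion lemma, so that each $l(G_{i},G_{j})$ with its induced cone is itself an object of $P_{D(G_{i},G_{j})}$, and then build the composite cone directly on $l(G_{2},G_{3})\otimes l(G_{1},G_{2})$ from those two cones, which eliminates the colimit-injection step entirely; the product-detection assumption must still be imposed, exactly as in \ref{PushPull}. With that emendation the remainder of your argument --- clause (vii) as a formal consequence of the typing of $t\circ s^{-1}$, and clause (viii) by testing associativity after $F=Hom_{A}(I,-)$ --- goes through at the level of rigor the paper itself maintains.
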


\begin{ex}
In the case in which $C = U-\mathfrak{Cat}$ and $sk = Skel$, if $F$ is the object functor, then the underlying category in the second part of the lemma would have as objects the functors and arrows weak natural transformations, for which $G_{2}(\phi) \circ f_{dom(\phi)} \cong f_{codom(\phi)} \circ G_{1}(\phi)$.
\end{ex}

\begin{lem}
(Induced Enrichments on Categories of Diagrams in $(A,\otimes )-\mathfrak{Cat}$, given $sk: A \rightarrow B$). If the amnesia is representable, and there is an isomorphism $\times_{U-\mathfrak{Set}} \circ (F \times F) \longrightarrow F \circ \otimes$ (i.e. the amnesia can made into a ``strong" arrow in $\mathfrak{TCat}$), then $(A,\otimes)-\mathfrak{Cat}$, has a natural weak enrichment over itself. 
\end{lem}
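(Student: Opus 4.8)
The plan is to reproduce, in the general setting of $(A,\otimes)-\mathfrak{Cat}$, the argument that established self-enrichment of $(U,n)-\mathfrak{Cat}$ in the Theorem on $(U,n)-\mathfrak{Cat}$, substituting the Hom-Cat Enrichment lemma of this appendix for \ref{HomEnr} and invoking \ref{PushPull} for the composition. To exhibit $(A,\otimes)-\mathfrak{Cat}$ as weakly enriched over itself is to produce, for each pair of $(A,\otimes)$-enriched categories $C,D$, an object $\bar{h}(C,D) \in Ob((A,\otimes)-\mathfrak{Cat})$ whose underlying set is the set of $(A,\otimes)$-functors $C \to D$, together with composition arrows $\bar{h}(C,D) \times \bar{h}(D,E) \to \bar{h}(C,E)$ which are arrows of $(A,\otimes)-\mathfrak{Cat}$ and are weakly associative. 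Here the tensor used for the self-enrichment is the product $\times$ on $(A,\otimes)-\mathfrak{Cat}$, which exists because $(A,\otimes)$ has products (see the lemma following \ref{DefWESk} and \ref{LemLimWE}).

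First I would construct $\bar{h}(C,D)$. The hypothesis that the amnesia $F : A \to \mathfrak{Set}$ is representable, co-represented by a unit object $I$ so that $F \cong Yo^{opp}_{(A)(0)}(I)$ with unit arrows $u_l, u_r$, is exactly the input required by the Hom-Cat Enrichment lemma, which assigns to the enriched structure and to each pair of functors $\Phi,\Psi$ the $A$-object $h(\Phi,\Psi)$ as the colimit of the domain-object functor $p$ on the category $P$ of arrows into $\prod_{x} h_D(\Phi(x),\Psi(x))$ that are $(sk)$-compatible with composition through the hom-objects of $C$. Since each $C,D$ is an object of $(A,\otimes)-\mathfrak{Cat}$ its composition is $(sk)$-associative, so part (viii) of that lemma upgrades the resulting very weak enrichment to a genuine weak enrichment; this is what makes $\bar{h}(C,D)$ an $(A,\otimes)$-enriched category. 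The strong-monoidality isomorphism $\times_{\mathfrak{Set}} \circ (F \times F) \to F \circ \otimes$ enters here: it identifies $F$ of a tensor of hom-objects with the product of their images, so that the enriched composition of $\bar{h}(C,D)$ represents, under $F$, the ordinary composition of natural transformations.

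Next I would define the composition. Following \ref{PushPull}.1 and \ref{PushPull}.2, the pushforward $F_*$ and pullback $G^*$ along an $(A,\otimes)$-functor are induced by the evident product maps $\prod_{x} h_D(\Phi(x),\Psi(x)) \to \prod_{x} h_E(F\Phi(x),F\Psi(x))$ and by reindexing; these carry the category $P$ for one hom into the $P$ for another and hence induce arrows on colimits, their naturality being governed by \ref{LemNatTau}. Combining them as in \ref{PushPull}.3 produces the composition arrow $\bar{h}(C,D) \times \bar{h}(D,E) \to \bar{h}(C,E)$ in $(A,\otimes)-\mathfrak{Cat}$, and \ref{PushPull}.4 gives its $(sk)$-associativity with respect to the enriched-arrows functor; the weak associativity of the self-enrichment follows exactly as in the Theorem on $(U,n)-\mathfrak{Cat}$, once the required isomorphism $colim(\otimes_i p_i) \to \otimes_i colim(p_i)$ is in hand.

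That isomorphism is the one step I expect to be the genuine obstacle, since it is precisely the hypothesis guaranteed for $(U,n)-\mathfrak{Cat}$ by the separate $\tau_n$-isomorphism lemma and here must be extracted from the two standing hypotheses. My plan is to verify it by descent along the amnesia: by \ref{LemLimWE} a (co)limit in $WE(A,\otimes)$ is computed on underlying sets together with the hom-objects, and the strong-monoidal isomorphism $F(a \otimes b) \cong F(a) \times F(b)$ reduces the comparison $colim(\otimes_i p_i) \to \otimes_i colim(p_i)$ to the corresponding map of sets $[(a_i)_i] \mapsto ([a_i])_i$, which is manifestly bijective; representability of $F$ then lets one transport this back to an isomorphism in $A$. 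Granting this, the constructions of the Hom-Cat Enrichment lemma and of \ref{PushPull} apply verbatim at each stage, and assembling them yields $((A,\otimes)-\mathfrak{Cat}, \bar{h}, \bar{\circ}) \in Ob(WE((A,\otimes)-\mathfrak{Cat}, \times))$, the asserted enrichment over itself. The subtlety to watch is that, $F$ being only strong monoidal and representable rather than fully faithful, the comparison map must genuinely be checked to be an isomorphism rather than deduced formally, so the compatibility of $\otimes$ with the colimits defining the hom-objects is the real crux.
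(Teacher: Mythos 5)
Your overall skeleton --- hom objects supplied by the appendix's Hom-Cat Enrichment lemma, composition via the forward/backward functors of \ref{PushPull}, associativity handled as in the Theorem on $(U,n)-\mathfrak{Cat}$ --- is exactly what the placement of this lemma intends, so the architecture of your proposal matches the intended route. The genuine gap is in the step you yourself flag as the crux: the claimed derivation of the isomorphism $colim(\otimes_{i}p_{i}) \rightarrow \otimes_{i}colim(p_{i})$ from representability plus strong monoidality does not work. First, \ref{LemLimWE} describes (co)limits in $WE(A,\otimes)$, whereas the colimits defining the hom objects are colimits in $A$ itself; in the $(U,n)-\mathfrak{Cat}$ case this distinction is harmless because there $A$ is again a category of weakly enriched sets and one can induct down to $\mathfrak{Set}$, but here $A$ is arbitrary and no such induction is available. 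Second, representability of the amnesia, $F \cong Yo^{opp}_{(A)(0)}(I)$, makes $F$ preserve limits, not colimits, so $F(colim(\otimes_{i}p_{i}))$ cannot be identified with a colimit of the sets $F(p_{i}(\cdot)\otimes p_{j}(\cdot))$, and the ``manifestly bijective'' map of sets you describe never materializes. Third, even granting a bijection after applying $F$, a representable functor need not be conservative, so the bijection cannot be transported back to an isomorphism in $A$.

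Within the paper's own framework this interchange isomorphism is not a theorem but a standing hypothesis: it is clause (iv) (``Suppose \dots $s(G_{1},G_{2},G_{3})$ \dots is an isomorphism'') of the Hom-Cat Enrichment lemma, together with the uniqueness clause (v); and in the main-text analogue it is discharged only by the separate induction lemma preceding the Theorem on $(U,n)-\mathfrak{Cat}$. So a proof of the present lemma must either carry those clauses as implicit hypotheses (making the ``natural weak enrichment'' conditional on such choices existing), or add a substantive assumption such as $\otimes$ preserving colimits in each variable, from which the interchange follows by $colim_{(i,j)}(p(i)\otimes q(j)) \cong colim_{i}(p(i)\otimes colim_{j}q(j)) \cong colim_{i}p(i)\otimes colim_{j}q(j)$; the two stated hypotheses alone do not yield it by your argument. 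A secondary point: you take the tensor for the self-enrichment to be the product on $WE_{(A,\otimes)(sk)}$, citing the lemma after \ref{DefWESk}, but that lemma assumes $(A,\otimes)$ has products, which is not among the hypotheses here; the natural tensor is instead the one built from $\otimes$ itself (product of underlying categories, $\otimes$ of hom objects, with composition rearranged by a symmetrizer), as in the appendix's definition of $\times_{U(n+1)-\mathfrak{Cat}}$ from $\times_{U(n)-\mathfrak{Cat}}$.
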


\begin{cor}
Auto-enrichment of $(\mathfrak{Ab},\otimes)-\mathfrak{Cat}$.
\end{cor}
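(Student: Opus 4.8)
The plan is to obtain the statement as a direct instance of the preceding lemma (Induced Enrichments on Categories of Diagrams in $(A,\otimes)-\mathfrak{Cat}$), taking $(A,\otimes) = (\mathfrak{Ab},\otimes_{\mathbb{Z}})$ with $\otimes_{\mathbb{Z}}$ the usual tensor product of abelian groups. First I would record the tensor-categorical data: $(\mathfrak{Ab},\otimes_{\mathbb{Z}})$ is a symmetric associative tensor category in the sense of (\ref{ATCat}), with associator and symmetrizer the canonical ones and unit object $I = \mathbb{Z}$; in particular it possesses the symmetrizer and associator required by (\ref{HomEnr}), and it has all small limits and colimits, so the product and coproduct hypotheses used throughout (\ref{HomEnr}) and (\ref{PushPull}) are met. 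Since the construction of the hom-enrichment is stated for $\mathfrak{ATCat}$ and does not invoke the pentagon, the fact that $\mathfrak{Ab}$ satisfies it as well is harmless surplus.

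The two genuine hypotheses to verify are (i) that the amnesia $F:\mathfrak{Ab}\to\mathfrak{Set}$ is representable (indeed co-represented by a unit object), and (ii) the ``strong'' condition. For (i) I would take $F$ to be the forgetful functor and observe $F \cong Yo^{opp}_{(\mathfrak{Ab})(0)}(\mathbb{Z}) = Hom_{\mathfrak{Ab}}(\mathbb{Z},-)$, so that $F$ is corepresented precisely by the monoidal unit $I=\mathbb{Z}$, as the Hom-Cat Enrichment lemma demands; the unit natural transformations $u_{l}, u_{r}$ are then supplied by the left and right unitors $\mathbb{Z}\otimes a \cong a \cong a\otimes \mathbb{Z}$ of $\mathfrak{Ab}$. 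With (i) in hand, for two $\mathfrak{Ab}$-functors $\Phi,\Psi$ between $\mathfrak{Ab}$-categories the hom-object produced by (\ref{HomEnr}.1) is the subgroup of $\prod_{x}h_{D}(\Phi(x),\Psi(x))$ cut out by the naturality (the $(sk)$-commutativity) conditions, i.e. the abelian group of $\mathfrak{Ab}$-natural transformations $\Phi\to\Psi$; the composition of (\ref{HomEnr}.2) is the vertical composition of natural transformations, which is $\mathbb{Z}$-bilinear and hence factors through $\otimes_{\mathbb{Z}}$. Thus on underlying data the output is the expected self-enrichment of additive categories.

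The main obstacle is hypothesis (ii). Read literally, it asks for $\times_{\mathfrak{Set}}\circ(F\times F)\to F\circ\otimes$ to be an isomorphism, i.e. for $F$ to be strong monoidal; but the forgetful functor $\mathfrak{Ab}\to\mathfrak{Set}$ is only lax (the pure-tensor comparison $F(a)\times F(b)\to F(a\otimes b)$ is far from bijective, already for $a=b=\mathbb{Z}/2$), and it fails even the unit normalization since $F(\mathbb{Z})=\mathbb{Z}\neq\{\ast\}$. The way I would resolve this is to observe that the hom-enrichment construction of (\ref{HomEnr}) never actually consumes the naive product comparison: what it needs (see (\ref{HomEnr}.2) together with the naturality-of-$\tau$ lemma (\ref{LemNatTau})) is only that the interchange arrow $s:colim(\otimes\circ(p\times q))\to (colim\ p)\otimes(colim\ q)$ be an isomorphism. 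For $\mathfrak{Ab}$ this holds because $\otimes_{\mathbb{Z}}$ is cocontinuous in each variable, being a left adjoint to the internal hom $\underline{Hom}(-,-)$; hence tensor commutes with the colimits through which the enriched hom-objects and their composition are defined. I would therefore recast hypothesis (ii) in the form genuinely used by the proof, namely representability of the amnesia by the unit together with cocontinuity of $\otimes$, and verify both for $\mathfrak{Ab}$; this is the step where the bookkeeping must be done carefully to confirm that the lax comparison is nowhere needed.

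Finally I would close by checking $(sk)$-associativity of the resulting enrichment. By (\ref{PushPull}.4) the composition $\bar{\circ}$ is $(sk\circ\bar{Arr}_{(\mathfrak{Ab},\otimes)})$-associative; but since $\mathfrak{Ab}$ is strictly associative and the vertical composition of natural transformations is strictly associative, the pentagon of (\ref{HomEnr}.4) commutes on the nose, so no nontrivial $sk$ is required and the enrichment is associative already in the strong sense. This exhibits $(\mathfrak{Ab},\otimes)-\mathfrak{Cat}$ as weakly (indeed strictly) enriched over itself, which is the assertion of the corollary.
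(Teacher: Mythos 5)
You are right that this corollary cannot be obtained by literally citing the preceding lemma, and right about why: the paper supplies no argument beyond the implicit citation, and the forgetful amnesia $F:\mathfrak{Ab}\to\mathfrak{Set}$ is only lax monoidal, never strong (your $\mathbb{Z}/2$ example is decisive; in fact no representable functor $Hom_{\mathfrak{Ab}}(X,-)$ can satisfy $Hom(X,a)\times Hom(X,b)\cong Hom(X,a\otimes b)$ naturally --- test $a=b=\mathbb{Q}/\mathbb{Z}$, whose tensor square vanishes). So your first two paragraphs, including the identification of the hom objects as the groups of natural transformations with vertical composition, and the corepresentation of $F$ by the unit $\mathbb{Z}$, are a correct diagnosis of where the paper's own (absent) proof is in trouble.

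However, your repair does not close the gap. Replacing strongness by cocontinuity of $\otimes_{\mathbb{Z}}$ only secures the interchange isomorphism $colim(\otimes\circ(p\times q))\cong colim(p)\otimes colim(q)$, i.e. condition (iv) of the Hom-Cat Enrichment lemma (equivalently the hypothesis of \ref{LemNatTau}); it does not address the step where strongness is genuinely consumed, namely the construction of the composition functors of the self-enrichment, from the product of $h(C,D)$ and $h(D,E)$ to $h(C,E)$. Your final paragraph invokes (\ref{PushPull}.3)--(\ref{PushPull}.4), but (\ref{PushPull}.3) requires a natural transformation $\alpha:\times_{A}\to\otimes$, and for $A=\mathfrak{Ab}$ the only such natural transformation is zero: naturality against the morphism $(id_{a},0_{b})$ forces any natural arrow $\eta_{a,b}:a\oplus b\to a\otimes b$ to vanish on each summand, hence to vanish. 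This is the categorical shadow of the fact that horizontal composition of natural transformations between additive functors is $\mathbb{Z}$-bilinear, so the map $(\alpha,\beta)\mapsto\beta\ast\alpha$ from $Nat(\Phi_{1},\Phi_{2})\oplus Nat(\Psi_{1},\Psi_{2})$ to $Nat(\Psi_{1}\circ\Phi_{1},\Psi_{2}\circ\Phi_{2})$ is not additive (expand $(\beta_{1}+\beta_{2})\ast(\alpha_{1}+\alpha_{2})$ and compare with $\beta_{1}\ast\alpha_{1}+\beta_{2}\ast\alpha_{2}$) and does not factor through the cartesian product. So with the cartesian product as the tensor structure on $(\mathfrak{Ab},\otimes)-\mathfrak{Cat}$ --- which is what both the lemma's strongness hypothesis and (\ref{PushPull}) presuppose --- your enriched composition is either undefined or trivial. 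The correct route is to change the tensor structure: define $C\otimes' D$ with object set $Ob(C)\times Ob(D)$ and hom objects $h_{C}(c,c')\otimes_{\mathbb{Z}}h_{D}(d,d')$, whose underlying category is built from the lax structure of $F$ and the symmetry (lax suffices here), and observe that horizontal composition, being bilinear, does define an arrow $h(C,D)\otimes' h(D,E)\to h(C,E)$ in $(\mathfrak{Ab},\otimes)-\mathfrak{Cat}$ satisfying (\ref{WECat})/(\ref{CatWE}). That is the classical self-enrichment of additive categories, but it is a genuinely different construction from the paper's lemma and from your weakened version of it, and its verification still has to be carried out.
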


\begin{lem}
A full functor $p$ whose domain is weakly enriched over some $(A,\otimes)$ induces a weak enrichment over the same on the codomain.
\end{lem}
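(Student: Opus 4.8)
The plan is to transport the enrichment datum along $p$. Write the given structure as a weak enrichment $(C,h,\circ,\phi)$ of $C$ over $(A,\otimes)$ with respect to a tensor functor $(F,\rho) : (A,\otimes) \to (U-\mathfrak{Set},\times_{U-\mathfrak{Set}})$, so that for all $a,b \in Ob(C)$ the map $\phi(a,b) : F(h(a,b)) \xrightarrow{\sim} Hom_C(a,b)$ is an isomorphism satisfying (\ref{WECat}.1) and (\ref{WECat}.2). First I would fix, for each object $d$ in the image of $p$, a chosen preimage $a_d \in Ob(C)$ with $p(a_d) = d$ (restricting throughout to the full subcategory of $D$ on the image of $p$, since a general full $p$ carries no information about objects outside that image). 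Then set $h_D(d,d') := h(a_d,a_{d'})$ and define $\phi_D(d,d')$ to be the composite $F(h(a_d,a_{d'})) \xrightarrow{\phi(a_d,a_{d'})} Hom_C(a_d,a_{d'}) \xrightarrow{p} Hom_D(d,d')$, where the second arrow is the action of $p$ on hom-sets.

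The composition $\circ_D$ is then defined by transporting $\circ$: since $p$ is a functor it respects composition, so I would simply set $\circ_D(d,d',d'') := \circ(a_d,a_{d'},a_{d''})$ as an arrow $h_D(d,d') \otimes h_D(d',d'') \to h_D(d,d'')$ in $A$, and verify (\ref{WECat}.2) by precomposing with $\phi_D \otimes \phi_D$ and comparing with the corresponding identity for $C$ through the equality $p(g \circ f) = p(g) \circ p(f)$. This verification is routine; it is precisely the calculation underlying the functoriality of $WE$ (Lemma \ref{TCatFun}), read through the functor $p$ rather than through a tensor functor, so I would cite that computation rather than reproduce it. Independence of the whole construction from the chosen preimages $a_d$ follows from the naturality already built into $\phi$ and $\circ$.

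The hard part will be condition (\ref{WECat}.1), that each $\phi_D(d,d')$ is an \emph{isomorphism}. Fullness of $p$ gives surjectivity of $p : Hom_C(a_d,a_{d'}) \to Hom_D(d,d')$, hence $\phi_D(d,d')$ is surjective; the genuine obstacle is injectivity, i.e. controlling the fibres of $p$ on hom-sets at the level of the hom-objects $h$. My plan is to isolate this as the single nontrivial point and to prove it under the cleanest sufficient condition: when $p$ is in addition faithful the map on hom-sets is bijective and $\phi_D$ is visibly an isomorphism, while if $A$ carries images preserved by $F$ one may instead \emph{define} $h_D(d,d')$ as the object representing the $F$-image of the surjection, forcing $\phi_D$ to be iso by construction. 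I would flag that one of these hypotheses is what makes the bare word ``full'' suffice, prove the lemma in that form, and note that the final check that $(D,h_D,\circ_D,\phi_D)$ again satisfies both clauses of (\ref{WECat}) is then formal.
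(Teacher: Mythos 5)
The paper states this lemma bare in the appendix, with no proof attached, so there is nothing to compare your argument against; it has to stand on its own. Its central critical observation is correct, and it is the essential point: under Definition (\ref{WECat}) a weak enrichment of a category requires each $\phi_{D}(d,d') : F(h_{D}(d,d')) \rightarrow Hom_{D}(d,d')$ to be an \emph{isomorphism}, whereas fullness of $p$ makes your transported map $p \circ \phi(a_{d},a_{d'})$ only surjective. This is not an artifact of your strategy but a genuine failure of the statement when the tensor functor $(F,\rho)$ is kept fixed. Concretely: let $A \subseteq U-\mathfrak{Set}$ be the full subcategory of sets of cardinality $\neq 1$, which is closed under $\times_{U-\mathfrak{Set}}$, and take $(F,\rho)$ to be the inclusion into $(U-\mathfrak{Set},\times_{U-\mathfrak{Set}})$. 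The one-object category $C$ with $Hom_{C}(\ast,\ast) = \mathbb{Z}/2$ (composition the group law) is weakly enriched over $(A,\times_{U-\mathfrak{Set}})$ by taking $h(\ast,\ast)$ to be the two-element set, $\circ$ the group law and $\phi$ the identity; the unique functor $p$ from $C$ to the terminal category $\star$ is full and surjective on objects; yet $\star$ admits no weak enrichment over $(A,\times_{U-\mathfrak{Set}})$ with respect to this $(F,\rho)$, since its unique hom set is a singleton and no object of $A$ has singleton image under $F$. So even your restriction to the image of $p$ cannot save the literal statement, and an additional hypothesis such as faithfulness, or image objects in $A$, is necessary. (The lemma survives unamended only under a weaker reading, namely that the codomain receives a weakly enriched \emph{set} structure, i.e. an object of $WE(A,\otimes)$ as in (\ref{CatWE}), where no isomorphism condition is imposed; in that reading the transport in your first two paragraphs already finishes the proof.)

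Two points in your own write-up need repair. First, the claim that independence of the chosen preimages $a_{d}$ ``follows from the naturality already built into $\phi$ and $\circ$'' is unjustified: distinct preimages $a_{d}, a_{d}'$ of the same object $d$ need not be isomorphic in $C$ (a full functor may identify non-isomorphic objects), so $h(a_{d},a_{d'})$ and $h(a_{d}',a_{d'}')$ need not be isomorphic in $A$, and even a bijection of their $F$-images does not produce an isomorphism in $A$ unless $F$ reflects isomorphisms. Fortunately the lemma only asserts the existence of \emph{some} induced enrichment, so you should simply delete this claim rather than defend it. Second, in your image-factorization variant the composition does not descend ``formally'': to define $\circ_{D}(d,d',d'')$ on the image objects you must factor $\circ(a_{d},a_{d'},a_{d''})$ through the comparison map $h(a_{d},a_{d'}) \otimes h(a_{d'},a_{d''}) \rightarrow h_{D}(d,d') \otimes h_{D}(d',d'')$, which requires both that $\otimes$ carry the relevant epimorphisms (or image factorizations) to epimorphisms and a verification that the composite coequalizes the corresponding kernel pair; the set-level fact that composition in $D$ is well defined because $p$ is a functor lifts to $A$ only under such hypotheses, so they must be stated. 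By contrast, your fully faithful variant is complete as written and is the cleanest true form of the lemma.
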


\newpage

\end{document}